\documentclass[a4paper,11pt]{article}

\usepackage[T1]{fontenc}
\usepackage{amsmath}
\usepackage{amsthm,ifthen,amssymb}
\usepackage{url,enumitem}
\usepackage[normalem]{ulem}
\usepackage[pdftex]{graphicx}
\usepackage[curve,knot]{xypic}
\usepackage[colorinlistoftodos]{todonotes}
\usepackage{upgreek}
\usepackage{sectionbreak}
\usepackage{mathrsfs}

\usepackage{enumitem}
\setlist[enumerate,1]{label=\textup{(\alph*)}}

\usepackage{yhmath}

\usepackage{tikz}
\usepackage{tikz-cd}
\usetikzlibrary{calc,math}
\usepgflibrary{shapes.geometric}
\usepgflibrary{shapes.misc}
\usetikzlibrary{positioning}
\usetikzlibrary{decorations}
\usetikzlibrary{arrows}

\usepackage[a4paper,left=1in,width=458pt,textheight=650pt,top=1.5in]{geometry}

\usepackage{bbm}

\usepackage{hyperref}

\hypersetup{
	pdftitle={Abstract Cluster Structures},
	pdfauthor={Jan E. Grabowski and Sira Gratz},
	pdfstartview=FitH
}


\DeclareMathOperator{\fpmod}{mod}
\DeclareMathOperator{\Mod}{Mod}
\DeclareMathOperator{\fd}{fd}


\tikzcdset{
	infl/.style={rightarrowtail},
	defl/.style={twoheadrightarrow},
	confl/.style={dashed},
	exists/.style={dotted}}
	
\newcommand{\cindspace}{}

\newcommand{\Abcat}{\mathrm{Ab}}
\newcommand{\addcat}[1]{{#1}^{\textup{\text{add}}}}

\newcommand{\adj}[1]{{#1}^{\dagger}}

\newcommand{\bigdsum}{\bigoplus}

\newcommand{\blank}{-}

\newcommand{\card}[1]{\lvert #1 \rvert}

\let\chisave\chi
\renewcommand{\chi}{{%
		\mathchoice{\raisebox{0.25ex}{$\displaystyle\chisave$}}
		{\raisebox{0.2ex}{$\textstyle\chisave$}}
		{\raisebox{0.2ex}{$\scriptstyle\chisave$}}
		{\raisebox{0.1ex}{$\scriptscriptstyle\chisave$}}}}

\newcommand{\coind}[2]{\text{\textup{coind}}_{#1}^{#2}}

\newcommand{\coprodsml}{\amalg}

\newcommand{\cross}{\times}
\newcommand{\curly}[1]{\ensuremath{\mathcal{#1}}}

\newcommand{\defeq}{:=}
\let\degsave\deg
\renewcommand{\deg}{\underline{\degsave}}

\newcommand{\disjointunion}{\sqcup}
\newcommand{\dsum}{\ensuremath{ \oplus}}
\newcommand{\dual}[1]{\ensuremath {#1}^{*}}

\renewcommand{\epsilon}{\varepsilon}
\newcommand{\Ext}[4]{\ensuremath \operatorname{Ext}^{#1}_{#2}(#3,#4)}

\newcommand{\fabcat}{\mathrm{fAb}}

\newcommand{\from}{\leftarrow}

\newcommand{\Hom}[3]{\ensuremath \operatorname{Hom}_{#1}(#2,#3)}

\newcommand{\id}{\ensuremath \textup{id}}
\renewcommand{\iff}{\ensuremath \Longleftrightarrow}

\newcommand{\Image}[1]{\ensuremath \mbox{Im}\>{#1}}

\newcommand{\ind}[2]{\text{\textup{ind}}_{#1}^{#2}}

\newcommand{\inj}{\hookrightarrow}
\newcommand{\integ}{\ensuremath{\mathbb{Z}}}
\newcommand{\intersection}{\mathrel{\cap}}
\newcommand{\ip}[2]{\ensuremath \langle #1,#2 \rangle}
\newcommand{\Pip}[2]{\ensuremath \{ #1,#2 \}}

\newcommand{\iso}{\ensuremath \cong}

\newcommand{\Ker}{\operatorname{Ker}}

\newcommand{\Kgp}[1]{\mathrm{K}_{0}(#1)}



\newcommand{\Mor}[3]{\text{Mor}_{#1}(#2 , #3 )}

\newcommand{\nat}{\ensuremath \mathbb{N}}

\newcommand{\Obj}[1]{\text{Obj}(#1) }

\newcommand{\onto}{\twoheadrightarrow}
\newcommand{\op}[1]{{#1}^{\mbox{\scriptsize \textup{op}}}}

\newcommand{\opevform}[2]{\ip{#1}{#2}_{\textup{ev}}^{\textup{op}}}

\renewcommand{\phi}{\varphi}
\newcommand{\prodsml}{\mathop{\Pi}}

\newcommand{\rank}{\mathop{\mathrm{rank}}}
\newcommand{\real}{\ensuremath \mathbb{R}}

\newcommand{\stab}[1]{\underline{#1}}

\newcommand{\tensor}{\ensuremath \otimes}

\newcommand{\union}{\mathrel{\cup}}

\renewcommand{\leq}{\leqslant}

\newcommand{\stabind}[2]{{\underline{\text{\textup{ind}}}\cindspace}_{#1}^{#2}}
\newcommand{\stabindbar}[2]{{\underline{\overline{\text{\textup{ind}}}}\cindspace}_{#1}^{#2}}
\newcommand{\stabcoind}[2]{{\underline{\text{\textup{coind}}}\cindspace}_{#1}^{#2}}
\newcommand{\stabcoindbar}[2]{{\underline{\overline{\text{\textup{coind}}}}\cindspace}_{#1}^{#2}}

\newcommand{\abgp}{A}

\newcommand{\ex}{\mathbf{ex}}

\newcommand{\Spec}[1]{\operatorname{Spec} #1} 
\newcommand{\Sch}[1]{\operatorname{Sch}\!/#1}


\newcommand{\cf}{cf.\ }
\newcommand{\ie}{i.e.\ }


\newcommand{\bF}{\mathbf{F}}
\newcommand{\cA}{\mathcal{A}}
\newcommand{\cC}{\mathcal{C}}
\newcommand{\cE}{\mathcal{E}}

\newcommand{\cT}{\mathcal{T}}
\newcommand{\cX}{\mathcal{X}}
\newcommand{\uk}{\underline{k}}
\newcommand{\sX}{\underline{\mathcal{X}}}
\newcommand{\sZ}{\underline{\integ}}

\newcommand{\Ark}[1]{\cA\textup{-rank}(#1)}
\newcommand{\Xrk}[1]{\cX\textup{-rank}(#1)}

\newcommand{\trk}[1]{\mathrm{t.rk}(#1)}
\newcommand{\mrk}[1]{\mathrm{m.rk}(#1)}
\newcommand{\frk}[1]{\mathrm{f.rk}(#1)}

\newcommand{\Coker}{\operatorname{Coker}}

\newcommand{\signededge}[2]{#1 \stackrel{\pm}{\to} #2}

\DeclareMathOperator{\lfd}{lfd}

\newcommand{\genericACS}{(\cE,\cX,\beta,\cA,\ip{-}{-})}
\newcommand{\genericAQCS}{(\cE,\cX,\beta,\lambda,\cA,\ip{-}{-})}


\theoremstyle{plain}
\newtheorem{theorem}{Theorem}[section]
\newtheorem*{theorem*}{Theorem}
\newtheorem{proposition}[theorem]{Proposition}
\newtheorem{lemma}[theorem]{Lemma}
\newtheorem{corollary}[theorem]{Corollary}

\theoremstyle{definition}
\newtheorem{definition}[theorem]{Definition}
\newtheorem*{definition*}{Definition}

\newtheorem*{question*}{Question}
\newtheorem*{convention*}{Convention}
\theoremstyle{remark}
\newtheorem{remark}[theorem]{Remark}

\newtheorem{example}[theorem]{Example}

\newtheorem*{example*}{Example}
\newtheorem*{examplectd*}{Example (continued)}
%

\renewcommand{\tau}{\uptau}
\renewcommand{\gamma}{\upgamma}
\let\betasave\beta
\newcommand{\itbeta}{\betasave}
\renewcommand{\beta}{\upbeta}
\renewcommand{\lambda}{\uplambda}

\newcommand{\cB}{\mathcal{B}}
\newcommand{\cU}{\mathcal{U}}



\newcommand{\bK}{\mathbb{K}}

\newcommand{\bigintersection}{\bigcap}
\newcommand{\bigunion}{\bigcup}


\renewcommand{\epsilon}{\varepsilon}

\renewcommand{\iff}{\ensuremath \Longleftrightarrow}

\newcommand{\qtorus}[3]{\mathbb{T}_{#1}^{#2}({#3})}
\newcommand{\qtorusaffine}[3]{\mathbb{TA}_{#1}^{#2}({#3})}

\newcommand{\units}[1]{{#1}^{\cross}}

\newcommand{\ctsubcat}{\mathrel{\subseteq_{\mathrm{ct}\,}}}

\renewcommand{\leq}{\leqslant}

\newcommand{\invfrozen}{\mathbf{inv}}

\newcommand{\leftapp}[2]{L_{#1}{#2}}
\newcommand{\leftcok}[2]{C_{#1}{#2}}
\newcommand{\rightapp}[2]{R_{#1}{#2}}
\newcommand{\rightker}[2]{K_{#1}{#2}}

\newcommand{\bplus}[2]{[#1]_{+}^{#2}}
\newcommand{\bminus}[2]{[#1]_{-}^{#2}}
\newcommand{\bplusminus}[2]{[#1]_{\pm}^{#2}}
\newcommand{\bminusplus}[2]{[#1]_{\mp}^{#2}}

\newcommand{\canform}[3]{\ip{#1}{#2}_{#3}}

\newcommand{\evform}[2]{\ip{#1}{#2}_{\textup{ev}}}





\renewcommand{\emptyset}{\varnothing}

\newcommand{\Setcat}{\textbf{\textup{Set}}}

\newcommand{\Catcat}{\textbf{\textup{Cat}}}

\newcommand{\AQCScat}{\textbf{\textup{AQCS}}}
\newcommand{\ACScat}{\textbf{\textup{ACS}}}

\DeclareMathOperator{\proj}{proj}

\title{Abstract cluster structures}
\author{Jan E. Grabowski\footnotemark[2] 
	\\ \small{\textit{School of Mathematical Sciences, Lancaster University,}}
	\\ \small{\textit{Lancaster, LA1 4YF, United Kingdom}}
	\and Sira Gratz\footnotemark[3]
	\\ \small{\textit{Department of Mathematics, Aarhus University}}
	\\ \small{\textit{Ny Munkegade 118, 8000 Aarhus C, Denmark}}
}
\date{14th May 2026}

\setcounter{tocdepth}{1}

\begin{document}
	
	\maketitle
	
	\renewcommand{\thefootnote}{\fnsymbol{footnote}}
	\footnotetext[2]{Email: \url{j.grabowski@lancaster.ac.uk}.  Website: \url{http://www.maths.lancs.ac.uk/~grabowsj/}}
		\footnotetext[3]{Email: \url{sira@math.au.dk}. Website: \url{https://sites.google.com/view/siragratz}}
\renewcommand{\thefootnote}{\arabic{footnote}}
\setcounter{footnote}{0}

\begin{abstract}
We describe a framework for encoding cluster combinatorics using categorical methods.  We give a definition of an \emph{abstract cluster structure}, which captures the essence of cluster mutation at a tropical level and show that cluster algebras, cluster varieties, cluster categories and surface models all have associated abstract cluster structures.  For the first two classes, we also show that they can be constructed from abstract cluster structures.

By defining a suitable notion of morphism of abstract cluster structures, we introduce a category of these and show that it has several desirable properties, such as initial and terminal objects and finite products and coproducts.  We also prove that rooted cluster morphisms of cluster algebras give rise to morphisms of the associated abstract cluster structures, so that our framework includes a version of the extant category of cluster algebras.  

We can do more, however, because we can relate different types of representation of abstract cluster structures (cluster algebra, varieties, categories) directly via morphisms of their associated abstract cluster structures, even though no direct map from e.g.\ a cluster category to the associated cluster algebra is possible.
	
In fact, we do much of the above in the setting of abstract quantum cluster structures, with some analysis of the difference between the category of these and that of the unquantized version.  In order to show the relationship between abstract quantum cluster structures and quantum cluster algebras, we reformulate the usual construction of the latter in a way that is more amenable to our purposes and which we expect will be of independent interest and use.
	
	\vspace{1em}
	\noindent MSC (2020): 13F60 (Primary) 
\end{abstract}

\vfill
\pagebreak

\setcounter{tocdepth}{2}
\tableofcontents

\sectionbreak
\section*{Introduction}\label{s:intro}
\addcontentsline{toc}{section}{Introduction}

The goal of this work is to introduce a framework in which we can formalise the notion of ``having cluster combinatorics''.  Cluster combinatorics appear in a number of different settings: cluster algebras, cluster varieties, cluster categories, geometric models and potentially others.  It is difficult to handle these all together algebraically, since no one category naturally holds them all.

It is also well-known that there are technical issues with trying to define a category of cluster algebras.  At its heart, this boils down to the fact that admitting a cluster algebra structure means having a presentation by generators and relations of a particular form, and homomorphisms of algebras will rarely respect the presentations.  As a result, very few of the standard constructions of sub-objects, quotients, sums etc., work as one would like.

We will distil out the cluster combinatorics to define a notion of an \emph{abstract cluster structure}, keeping some common features from the different classes of examples above but disregarding others that are individual to a particular type of realisation.  Then, crucially, abstract cluster structures will all be of the same type---in fact, they will be categories with certain additional data---and indeed we can define a category of abstract cluster structures. Having done so, we then have access to the standard constructions, but in a different and better-behaved category. We develop this theory in Part~\ref{P:ACS}.

Philosophically, we regard cluster algebras, cluster varieties and other classes of examples as \emph{representations} of abstract cluster structures. In Part~\ref{P:reps}, we treat each of the different settings for cluster theory and show that they have associated abstract cluster structures.  For cluster algebras and cluster varieties, we can show that this process is reversible: to each abstract cluster structure, we may associate a cluster algebra or variety.  The corresponding claims for cluster categories or geometric models are beyond the scope of this work, however.

Having done this, we also able to build bridges between the various settings.  We can formalise the notion of ``type'' by saying that two cluster-theoretic objects having the same cluster type means having isomorphic abstract cluster structures.  Our lowest-level piece of data in an abstract cluster structure will turn out to be a version of the exchange graph, so this is indeed a plausible approach to take.

For cluster categories admitting decategorifications via cluster characters, we expect that the cluster category and its associated cluster algebra have isomorphic abstract cluster structures.  Similarly, we would hope to link cluster algebras and varieties, or cluster algebras and their geometric models, via isomorphisms of abstract cluster structures.  Much of the detail is left to future work, but we illustrate the principle via an example of the latter type.

Furthermore, we can explore other natural relationships arise from morphisms of abstract cluster structures that are not isomorphisms, for example, products or quotients.  This opens up the possibility of insight into ``cluster algebra'' (as opposed to just ``cluster theory''), through the study of the category of abstract cluster structures. We discuss initial and terminal objects and products and coproducts in detail.

We also treat the quantization of abstract cluster structures and associated objects alongside their classical counterparts.  In particular, we devote a portion of Section~\ref{s:linear-reps} to the construction of quantum cluster algebras in a way that is closely aligned with the approach encouraged by abstract cluster structures.

The two Parts of this work are intimately related.  The examples contained in the second provide motivation and justification for the abstract definitions in the first.  However, the formal statements of the second require the framework of the first.  As such, we recommend a non-linear reading of this paper.  

We anticipate that most readers will be familiar with one or other of the examples in Part~\ref{P:reps} and we suggest keeping one such in mind when first reading Part~\ref{P:ACS}.  Indeed, by looking in the relevant section in Part~\ref{P:reps}, you will find the dictionary between your favourite example and the abstract version.

We now briefly indicate the main definitions and results to be found in each section, avoiding notation or technicality where possible:

\begin{description}
	\item[Part~\ref{P:ACS} Abstracting cluster combinatorics] {\ }
	\begin{description}
		\item[\S\ref{s:pairings}] We give a definition of a pairing between two functors having values in Abelian groups and discuss non-degeneracy and left and right radicals.
		\item[\S\ref{s:ACS} Abstract cluster structures] {\ }
		\begin{description}
			\item[\S\ref{ss:def-of-ACS}] We give our main definition, that of an abstract cluster structure over a simple directed graph $E$.  To do so, we construct a signed path category $\cE$ over the graph and define an abstract cluster structure in terms of two functors $\cX\colon \op{\cE}\to \Abcat$ and $\cA\colon \cE\to \Abcat$, a factorization $\beta\colon \cX \to \cA$ and a right non-degenerate pairing $\canform{\blank}{\blank}{}\colon \cA \tensor_{\integ} \cX \to \sZ$.
			\item[\S\ref{ss:connectedness}] We discuss the various types of connectedness that $E$ could have and their impact on an associated abstract cluster structure.  We also introduce several notions of rank.
			\item[\S\ref{ss:principal-part}] We show that by taking the quotient with respect to the left radical of the pairing, we may obtain an abstract cluster structure on the same underlying graph that behaves as the ``principal part'', i.e.\ corresponds to deleting frozen variables.
			\item[\S\ref{ss:forms-skew-sym}] We discuss skew-symmetrizability of abstract cluster structures, which is not part of the definition, and give several equivalent formulations.
			\item[\S\ref{ss:quantum-str}] We introduce the additional datum $\lambda$ needed to define a quantum abstract cluster structure and show that every quantum cluster structure arises from a choice of retraction of $\beta$.
		\end{description}
		\item[\S\ref{s:cat-of-ACS} The category of abstract (quantum) cluster structures] {\ }
		\begin{description}
			\item[\S\ref{ss:morphisms-in-ACS}] We give a natural definition of morphism of abstract quantum cluster structures, with defining data a functor $F\colon \cE_{1}\to \cE_{2}$ of the signed path categories and $\chi\colon \cX_{1}\to \cX_{2}\op{F}$ and $\alpha\colon \cA_{1}\to \cA_{2}F$ natural transformations with some compatibility conditions.
			\item[\S\ref{ss:cat-of-ACS}] Then the category of abstract quantum cluster structures $\AQCScat$ is defined to have the obvious objects and the aforementioned morphisms.  We also define $\ACScat$, noting that there is a forgetful functor $\mathcal{F}$ from the former to the latter but this lacks several desirable properties.
			\item[\S\ref{ss:monic-epic-iso-in-ACS}] 
			We identify the isomorphisms in $\ACScat$ and $\AQCScat$ as those morphisms whose components are all isomorphisms and comment on why this seemingly strong set of conditions is appropriate.
			\item[\S\ref{ss:initial-terminal}] We show that $\ACScat$ has initial and terminal objects and that the initial object of $\ACScat$ is also initial in $\AQCScat$. However, the quantization of the terminal object of $\ACScat$ is not terminal in $\AQCScat$, due to the necessary morphism failing to exist.  Also, we note that even in $\ACScat$, the initial and terminal objects are not isomorphic.
			\item[\S\ref{ss:props-of-ACS-cat}] We show that $\ACScat$ has all finite products and coproducts, corresponding to a direct product construction and disjoint union respectively, but these are not biproducts.   Similarly, the category $\AQCScat$ has all finite coproducts.
		\end{description}
	\end{description}
	\item[Part~\ref{P:reps} Representations of cluster combinatorics] {\ }
	\begin{description}
		\item[\S\ref{s:linear-reps} Linear representations] This section considers the relationship between (quantum) cluster algebras and abstract (quantum) cluster structures.
		\begin{description}
			\item[\S\ref{ss:quantum-tori}] We begin by reconstructing quantum cluster algebras in a different (but equivalent) way to the usual definition, using a $\integ$-linear map $\beta$ rather than a matrix. Doing so has several technical advantages, not least that it makes the results of \S\ref{ss:AQCS-from-QCA} and \S\ref{ss:mor-from-RCM} possible.
			\item[\S\ref{ss:toric-frames}] Here, we establish the key notions of mutation of the labelling sets for our clusters and of the free Abelian groups over these, where the latter mutations are exactly the \emph{tropical mutation} maps introduced by \cite{FockGoncharov}. We prove the key propositions needed, e.g.\ involutivity, compatibility of mutated data etc.
			\item[\S\ref{ss:QCAs}] We define mutation of quantum cluster variables, show that this is involutive and define quantum cluster algebras. 
			\item[\S\ref{ss:AQCS-from-QCA}] We show that every quantum cluster algebra gives rise to an abstract quantum cluster structure, by ``tropicalization'' or ``taking logarirthms''.
			\item[\S\ref{ss:QCA-from-AQCS}] Conversely, every abstract quantum cluster structure gives rise to a quantum cluster algebra, by ``exponentiation'', in the form of the passage from a free Abelian group $\integ[\cB]$ to a quantum torus $\qtorus{q}{\lambda}{\cB}=(\bK \dual{\integ[\cB]})^{\Omega_{q}^{\lambda}}$ obtained as a cocycle twist of an associated group algebra.
		\end{description}
		\item[\S\ref{s:geom-reps} Geometric representations] {\ }
		\begin{description}
			\item[\S\ref{ss:cluster-varieties}] This section discusses cluster varieties and establishes that Poisson cluster varieties yield abstract quantum cluster structures and vice versa. Indeed, from an abstract quantum cluster structure, we obtain preschemes $\mathbb{A}$ and $\mathbb{X}$ by gluing the tori $\qtorus{\cA}{}{c}=\Hom{\Abcat}{\cA c}{\mathbb{G}_{m}}$ and $\qtorus{\cX}{}{c}=\Hom{\Abcat}{\cX c}{\mathbb{G}_{m}}$ using the morphisms of tori $A\alpha^{+}\colon \qtorus{\cA}{}{c}\to \qtorus{\cA}{}{d}$ (for $\alpha^{+}\colon c\to d$) and $X\alpha^{+}$, respectively. Then, with some assumptions in place, the quantum datum $\lambda$ endows a Poisson structure on $\mathbb{A}$.
			\item[\S\ref{ss:triangulations}] We show that the usual approach to mutating of arcs and  triangulations of marked surfaces can be enhanced to include a notion of mutating quadrilaterals, and hence from a marked surface we may obtain an abstract cluster structure.
		\end{description}
		\item[\S\ref{s:cat-reps} Categorical representations] {\ }
		\begin{description}
			\item[\S\ref{ss:cluster-cats}] We summarize the work of the first author and Pressland in \cite{CSFRT}, which enables us to see that we obtain a natural abstract cluster structure from any cluster category of finite rank.
		\end{description}
		\item[\S\ref{s:morphisms-of-reps} Morphisms of representations] This section is an initial exploration of morphisms in $\ACScat$ associated to the above classes of representations (i.e.\ abstract cluster structures associated to cluster algebra, varieties and categories and triangulations).
		\begin{description}
			\item[\S\ref{ss:mor-of-exch-tree}] We set up some necessary technicalities for the following section.  In particular, we establish a notion of morphism between the exchange trees of two cluster algebras that permits specialisation of variables.  This is done by associating to the tree a functor $\mu_{\cB}\colon \cE(\ex,\cB)\to \Setcat$ taking a mutation sequence $\uk$ to $\mu_{\uk}(\cB)^{0}$ \linebreak (an indexing set for the cluster variables, together with $0$) and arrows in the exchange tree to bijections of these.  Then an $\ex$-admissible map $\phi\colon \cB_{1}^{0}\to \cB_{2}^{0}$ of the initial indexing set induces a functor $\bF$ sending admissible mutation sequences for one cluster algebra to admissible mutation sequences for a second and also induces a natural transformation $\mu_{\cB_{1}}\to \mu_{\cB_{2}}\circ \bF$.
			\item[\S\ref{ss:mor-from-RCM}] The main theorem demonstrating the relationship between our framework and the existing approaches is proved here.  Namely, we show that by $\integ$-linearizing the natural transformation of the previous section, we have that a (consistently positive) rooted cluster morphism of cluster algebras gives rise to a natural morphism of the associated abstract cluster structures.
			\item[\S\ref{ss:classifying-mor}] We finish by explaining, by means of a worked example, how one might construct morphisms of abstract cluster structures between two different types of representation.  Specifically, we look at the cluster algebra structure on $\curly{O}(\mathrm{Gr}(2,6))$, the homogeneous coordinate ring of the $(2,6)$ Grassmannian, and the well-known model for its cluster combinatorics in terms of triangulations of a hexagon.  We see that there is a natural isomorphism of their respective abstract cluster structures, as one would hope.
		\end{description}
	\end{description}
\end{description}

\subsection*{Acknowledgements}

We are particularly grateful to Matthew Pressland for helpful conversations, notably in relation to \cite{CSFRT}, and also to colleagues at seminar and conference talks where preliminary versions of some of the constructions and results here were presented and useful questions and comments were received.  The authors would also like to thank Lancaster University for financial support.

\subsection*{Funding declaration}

Sira Gratz was supported by a research grant (VIL42076) from VILLUM FONDEN.

\pagebreak

\part{Abstracting cluster combinatorics}\label{P:ACS}

\sectionbreak
\section{Pairings and dualities}\label{s:pairings}

For any Abelian group $A$, denote by $A^{*}$ the $\integ$-dual $\Hom{\integ}{A}{\integ}$ and let $\evform{-}{-}$ denote the canonical $\integ$-bilinear form $\evform{-}{-}\colon A\cross A^{*}\to \integ$, $\evform{a}{f}=f(a)$.  We will refer to this as the \emph{evaluation form}.  We will abuse notation mildly and also write $\evform{-}{-}$ for the opposite form $\opevform{-}{-}\colon \dual{A}\cross A\to \integ$, $\opevform{f}{a}=f(a)$.

Let $A,B$ be Abelian groups and let $\ip{-}{-}\colon A\cross B \to \integ$ be any $\integ$-bilinear form. Consider the induced maps $\delta_{A}\colon A\to B^{*}$, $\delta_{A}(a)=\ip{a}{-}$, and $\delta_{B}\colon B\to A^{*}$, $\delta_{B}(b)=\ip{-}{b}$. The kernels of $\delta_A$ and of $\delta_B$ are called the \emph{left radical} and the \emph{right radical} of $\ip{-}{-}$ respectively.

A $\integ$-bilinear form $\ip{-}{-}\colon A\cross B \to \integ$ is said to be a \emph{non-degenerate pairing} if it has trivial left and right radicals.  Further, the form is called \emph{perfect} if $\delta_{A}$ and $\delta_{B}$ are isomorphisms.  

Note that the induced maps associated to the evaluation form in particular have the following properties.  Firstly, $\delta_{A^{*}}^{\text{ev}}=\id_{A^{*}}$, and secondly, $\delta_{A}^{\text{ev}}$ is injective if and only if $A$ is free (and an isomorphism if and only if $A$ is free and finitely generated).

Denote by $\Abcat$ the category of Abelian groups and $\fabcat$ its full subcategory whose objects are the finitely generated free Abelian groups together with $\bigdsum_{\nat} \integ$ and $\prod_{\nat} \integ$. Let $\abgp$ be an Abelian group and (by mild abuse) let $\abgp$ also denote the subcategory of $\Abcat$ with the single object $\abgp$ and unique morphism $\id_{\abgp}\colon \abgp \to \abgp$.

For $A,B\in \fabcat$ of finite rank, if these ranks are equal and $\ip{-}{-}\colon A\cross B\to \integ$ is a non-degenerate pairing then the images of $\delta_{A}$ and $\delta_{B}$ are finite-index subgroups of their codomains.  

\begin{convention*} Throughout this work, functors are covariant unless otherwise stated.  Contravariant functors will be expressed as covariant functors from the opposite category of the domain.
\end{convention*}

\begin{definition} For any category $\cC$ and any functor $F\colon \cC\to \Abcat$, we define the dual functor $F^{*}\colon \op{\cC}\to \Abcat$ to be $F^{*}\defeq \Hom{\integ}{F\blank}{\integ}=\Hom{\integ}{-}{\integ}\circ F$.  Explicitly, $F^{*}C=\Hom{\integ}{FC}{\integ}=(FC)^{*}$ and $F^{*}f=\Hom{\integ}{Ff}{\integ}=(Ff)^{*}$.  
\end{definition}

\begin{remark}
	If $F\colon \cC \to \fabcat$ then $\dual{F}$ takes values in $\fabcat$, so that we may regard $\dual{F}$ as a functor $\dual{F}\colon \op{\cC}\to \fabcat$.  For the functor $\Hom{\integ}{-}{\integ}\colon \fabcat \to \fabcat$ is a duality, since the definition of $\fabcat$ ensures it is closed under taking $\integ$-duals \cite{Specker}.  
	
	Many of the functors we consider will have their images in $\fabcat$ but for a mix of technical and stylistic reasons (e.g.\ to enable use of pre-existing terminology) we will mostly choose $\Abcat$ as their codomain.
\end{remark}

Given functors $F \colon \cC \to \Abcat$ and $G \colon \op{\cC} \to \Abcat$ we define the functor $F \otimes_\integ G \colon \cC \times \op{\cC} \to \Abcat$ to be the composition
\[
\xymatrix{\cC \cross \op{\cC} \ar[r]^-{F \cross G} & \Abcat \cross \Abcat \ar[r]^-{\tensor} & \Abcat}
\]
for $\tensor$ giving the standard monoidal structure on $\Abcat\equiv \Mod \integ$.  We may extend this notion to natural transformations $\alpha\colon F\to F'$ and $\itbeta\colon G\to G'$ to obtain $\alpha \tensor_{\integ} \itbeta$ in the obvious way.

Note that even if $F$ and $G$ land in the full subcategory $\fabcat$ of $\Abcat$, the functor $F \otimes_\integ G$ in general takes values in $\Abcat$, since tensoring with the Baer--Specker group $\prod_\mathbb{N}\integ$ may yield an object outside of $\fabcat$.

For any category $\cC$, let $\sZ\colon \cC\to \Abcat$ denote the constant functor with $\sZ c=\integ$ and $\sZ f=\id_{\integ}$ for any object $c$ and any morphism $f$ in $\cC$. 

We can use the above to extend the notion of a $\integ$-bilinear form on a pair of Abelian groups to that of a form on a pair of functors taking values in $\Abcat$.

\begin{definition}\label{d:pairing}
	Let $F \colon \cC \to \Abcat$ and $G \colon \op{\cC} \to \Abcat$ be functors. A \emph{pairing} of $F$ and $G$ is a dinatural transformation 
	\[ \ip{\blank}{\blank}\colon F\tensor_{\integ} G \to \sZ. \]
\end{definition}

A dinatural transformation has a component for each object of $\cC$; we write $\ip{\blank}{\blank}_{c}$ for this.  The following lemma demonstrates what dinaturality means in practice in this situation: it is the data of bilinear forms $\ip{\blank}{\blank}_{c}\colon Fc\tensor Gc\to \integ$ for each $c\in \cC$ in such a way that we have an adjointness relationship between $Ff$ and $Gf$ for any morphism $f$ in $\cC$.

\begin{lemma}\label{l:adjoint} Let $\ip{\blank}{\blank}\colon F\tensor_{\integ} G \to \sZ$ be a pairing of $F$ and $G$ as above.  Then for all morphisms $f\colon c\to d$ in $\cC$, we have that
	\[ \ip{\blank}{Gf(\blank)}_{c}=\ip{Ff(\blank)}{\blank}_{d}. \]
\end{lemma}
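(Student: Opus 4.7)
The plan is to directly unpack the definition of the dinatural transformation $\ip{-}{-}\colon F\tensor_{\integ} G\to \sZ$, which becomes particularly simple because the codomain $\sZ$ is the constant functor at $\integ$. For any morphism $f\colon c\to d$ in $\cC$, dinaturality provides a hexagonal commutative diagram whose source is $(F\tensor G)(d,c) = Fc\tensor Gd$. The right half of this hexagon is trivialised by the fact that all structural morphisms of $\sZ$ are the identity on $\integ$. What remains is the commutative square asserting the equality
\[ \ip{-}{-}_c\circ (\id_{Fc}\tensor Gf) \;=\; \ip{-}{-}_d\circ (Ff\tensor \id_{Gd}) \]
of morphisms $Fc\tensor Gd \to \integ$.

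To obtain the lemma, evaluate both sides on a pure tensor $x\tensor y$ with $x\in Fc$ and $y\in Gd$. The left-hand side yields $\ip{x}{Gf(y)}_c$ and the right-hand side yields $\ip{Ff(x)}{y}_d$, so the two forms agree on generators and hence as bilinear pairings. The variance conventions work out correctly: since $G\colon \op{\cC}\to \Abcat$, the map $Gf\colon Gd\to Gc$ sends $y$ into $Gc$ so that $\ip{x}{Gf(y)}_c$ is well-defined; symmetrically $Ff(x)\in Fd$ pairs with $y\in Gd$ at $d$.

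There is no substantive obstacle: the statement is essentially the adjointness reformulation of dinaturality for transformations into a constant functor. The only minor technical point is aligning the slot ordering of the tensor functor $F\tensor G\colon \cC\cross \op{\cC}\to \Abcat$ with the usual convention $\op{\cC}\cross \cC$ under which dinatural transformations are classically defined, but this is a harmless relabelling.
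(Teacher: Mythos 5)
Your proof is correct and follows essentially the same route as the paper: unpack the dinaturality hexagon, observe that the constant functor $\sZ$ trivialises the right-hand side, and read off the commuting wedge $\ip{\blank}{\blank}_{c}\circ(\id_{Fc}\tensor Gf)=\ip{\blank}{\blank}_{d}\circ(Ff\tensor\id_{Gd})$, which gives the stated adjointness on pure tensors. No gaps.
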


\begin{proof} The dinaturality condition means the commuting of the following diagram:
	\[ \begin{tikzcd}
		& (F\tensor G)(c,c) \arrow{r}{\ip{\blank}{\blank}_{c}} & \sZ(c,c) \arrow{rd}{\sZ(f,\id_{c})} & \\
		(F\tensor G)(c,d) \arrow{ru}{(F\tensor G)(\id_{c},f)} \arrow{rd}[below left]{(F\tensor G)(f,\id_{d})} &&& \sZ(d,c)\\
		& (F\tensor G)(d,d) \arrow{r}[below]{\ip{\blank}{\blank}_{d}} & \sZ(d,d) \arrow{ru}[below right]{\sZ(\id_{d},f)} &
	\end{tikzcd}\]
	
	From the definitions of $F\tensor G$ and $\sZ$, we see that this simplifies to
	\[ \begin{tikzcd}
		& Fc\tensor Gc \arrow{rd}{\ip{\blank}{\blank}_{c}} & \\
		Fc\tensor Gd \arrow{ru}{\id_{Fc}\tensor Gf} \arrow{rd}[below left]{Ff\tensor \id_{Gd}} && \integ\\
		& Fd\tensor Gd \arrow{ru}[below right]{\ip{\blank}{\blank}_{d}} & 
	\end{tikzcd}
	\]
	from which we obtain the claim.
\end{proof}

Note that the proof makes clear that the converse also holds.

\begin{definition}\label{d:nondeg} 	Let $F \colon \cC \to \Abcat$ and $G \colon \op{\cC} \to \Abcat$ be functors and let $\ip{\blank}{\blank}\colon F\tensor_{\integ} G \to \sZ$ be a pairing of $F$ and $G$.
	\begin{enumerate}[label=(\alph*)]
		\item\label{d:nondeg-left} Let $\delta_{F}\colon F\to \dual{G}$ be the natural transformation with components $(\delta_{F})_{c}=\delta_{Fc}$ for \linebreak $\delta_{Fc}\colon Fc\to \dual{(Gc)}$, $\delta_{Fc}(a)=\ip{a}{\blank}_{c}$. We say $\ip{\blank}{\blank}$ is \emph{left non-degenerate} if $\Ker \delta_{F}=0$.
		\item\label{d:nondeg-right} Let $\delta_{G}\colon G\to \dual{F}$ be the natural transformation with components $(\delta_{G})_{c}=\delta_{Gc}$ for \linebreak $\delta_{Gc}\colon Gc\to \dual{(Fc)}$, $\delta_{Gc}(b)=\ip{\blank}{b}_{c}$. We say $\ip{\blank}{\blank}$ is \emph{right non-degenerate} if $\Ker \delta_{G}=0$.
		\item\label{d:nondeg-both} We say $\ip{\blank}{\blank}$ is \emph{non-degenerate} if every component $\ip{\blank}{\blank}_{c}$ ($c\in \cC$) is non-degenerate, i.e.\ $\ip{\blank}{\blank}$ is left and right non-degenerate.
		
	\end{enumerate}
\end{definition}

Here, $\Ker \delta_{F}$ refers to the functor with $(\Ker \delta_{F})c=\Ker (\delta_{F})_{c}=\Ker \delta_{Fc}$ and similarly for $\Ker \delta_{G}$; also, $0$ refers to the zero functor with $0c=0\in\Abcat$.

The following is easily checked using Lemma~\ref{l:adjoint}.

\begin{corollary}
	Let $F\colon \cC \to \Abcat$ and $\dual{F}$ its dual.  The evaluation forms $\evform{\blank}{\blank}\colon Fc\cross \dual{F}c \to \integ$ define a right non-degenerate pairing, the \emph{evaluation pairing}, $\evform{\blank}{\blank}\colon F\tensor_{\integ} \dual{F}\to \sZ$. \qed
\end{corollary}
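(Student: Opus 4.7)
The plan is to verify the two claims in turn: first, that the componentwise evaluation forms assemble into a dinatural transformation (hence a pairing in the sense of Definition~\ref{d:pairing}), and second, that this pairing is right non-degenerate in the sense of Definition~\ref{d:nondeg}\ref{d:nondeg-right}.

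For the first part, by Lemma~\ref{l:adjoint} (and the remark that its converse also holds), it suffices to check, for every morphism $f\colon c\to d$ in $\cC$, the adjointness identity
\[ \evform{a}{\dual{F}f(g)}_{c} = \evform{Ff(a)}{g}_{d} \]
for all $a\in Fc$ and $g\in \dual{F}d$. Since $\dual{F}f = \Hom{\integ}{Ff}{\integ} = (Ff)^{*}$ by definition of the dual functor, we have $\dual{F}f(g) = g \circ Ff$, and so the left-hand side equals $(g\circ Ff)(a) = g(Ff(a)) = \evform{Ff(a)}{g}_{d}$. This is a purely formal calculation from the definitions, and no obstacle is expected.

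For the second part, I would identify the natural transformation $\delta_{\dual{F}}\colon \dual{F}\to \dual{F}$ whose components are $\delta_{\dual{F}c}\colon \dual{F}c\to \dual{(Fc)}$, $\delta_{\dual{F}c}(g) = \evform{\blank}{g}_{c}$. But by the definition of the evaluation form, $\evform{a}{g}_{c} = g(a)$, so $\delta_{\dual{F}c}(g) = g$, i.e., $\delta_{\dual{F}c} = \id_{\dual{F}c}$. This is precisely the observation $\delta_{A^{*}}^{\text{ev}} = \id_{A^{*}}$ recorded in the preamble to the corollary, now applied to each object $A = Fc$. Consequently $\Ker \delta_{\dual{F}} = 0$ as a functor, proving right non-degeneracy.

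There is no substantive obstacle: both parts reduce to unfolding definitions. The only subtlety worth flagging is that one should \emph{not} expect left non-degeneracy in general, since $\delta_{Fc}^{\text{ev}}\colon Fc\to \dual{(\dual{F}c)}$ is injective only when $Fc$ is free (and may fail even to be well-behaved on the torsion part); this is consistent with the asymmetry in the statement, which only claims right non-degeneracy.
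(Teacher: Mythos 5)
Your proposal is correct and follows essentially the same route as the paper: the paper's (omitted) proof is exactly the observation that dinaturality reduces, via Lemma~\ref{l:adjoint} and its converse, to the identity $\evform{a}{(Ff)^{*}(g)}=g(Ff(a))=\evform{Ff(a)}{g}$, while right non-degeneracy is the componentwise fact $\delta_{A^{*}}^{\text{ev}}=\id_{A^{*}}$ recorded in the preamble, applied with $A=Fc$. Your closing remark about the failure of left non-degeneracy is a harmless aside consistent with the paper's discussion.
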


Similarly, we have a pairing $\op{\ip{\blank}{\blank}}_{\mathrm{ev}}\colon \dual{F}\tensor_{\integ} F\to \sZ$ induced by the opposite forms.  As above, we will abuse notation and write $\evform{\blank}{\blank}$ for both the evaluation pairing and its opposite, since it will be clear from the context which is meant.

Recall that a subgroup $L'$ of a free Abelian group $L$ is said to be \emph{saturated} if for all $l\in L$, $\lambda \in \integ\setminus \{ 0\}$ we have $\lambda l\in L'$ implies $l\in L'$.  This is equivalent to the quotient $L/L'$ being torsion-free, i.e. also free Abelian.  

\begin{definition}
	Let $F\colon \cC \to \Abcat$ be a functor whose essential image is contained in $\fabcat$.  We say that a subfunctor $G$ of $F$ is \emph{saturated} if $Gc$ is a saturated subgroup of $Fc$ for all $c\in \cC$.
\end{definition}

The following lemma will be useful later. 

\begin{lemma}\label{l:Ker-delta-saturated} Let $F\colon \cC \to \Abcat$ and $G\colon \op{\cC}\to \Abcat$ be functors whose essential images are contained in $\fabcat$.  Let $\ip{\blank}{\blank}\colon F\tensor_{\integ} G\to \sZ$ be a pairing of $F$ and $G$.  Then the subfunctor $\Ker \delta_{F}$ of $F$ and the subfunctor $\Ker \delta_{G}$ of $G$ are saturated.
\end{lemma}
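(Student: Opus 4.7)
The plan is to prove the saturation condition pointwise for each object $c \in \cC$; the subfunctor structure is automatic since kernels of natural transformations are subfunctors. So the task reduces to showing that, for each $c$, $\Ker \delta_{Fc}$ is a saturated subgroup of $Fc$, and similarly $\Ker \delta_{Gc}$ is saturated in $Gc$.

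The key observation is that the codomains of $\delta_{Fc}$ and $\delta_{Gc}$, namely $\dual{(Gc)} = \Hom{\integ}{Gc}{\integ}$ and $\dual{(Fc)} = \Hom{\integ}{Fc}{\integ}$, are torsion-free. Indeed, for any Abelian group $H$, the group $\Hom{\integ}{H}{\integ}$ is torsion-free: if $\lambda f = 0$ with $\lambda \in \integ \setminus \{0\}$, then for every $h \in H$ we have $\lambda f(h) = 0$ in $\integ$, and since $\integ$ itself is torsion-free, $f(h) = 0$, hence $f = 0$. (Note that we do not even need the essential-image hypothesis for this part; the assumption that $F,G$ land in $\fabcat$ merely ensures that the notion of saturation is meaningful, since $Fc$ and $Gc$ are free Abelian.)

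Given this, the saturation of $\Ker \delta_{Fc}$ follows at once: suppose $a \in Fc$ and $\lambda \in \integ\setminus\{0\}$ with $\lambda a \in \Ker \delta_{Fc}$. Then $0 = \delta_{Fc}(\lambda a) = \lambda\,\delta_{Fc}(a)$ in $\dual{(Gc)}$, and by torsion-freeness of $\dual{(Gc)}$ we conclude $\delta_{Fc}(a) = 0$, i.e.\ $a \in \Ker \delta_{Fc}$. The argument for $\Ker \delta_{Gc}$ is identical, using torsion-freeness of $\dual{(Fc)}$.

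There is no real obstacle here; the proof is essentially the observation that kernels of homomorphisms from free Abelian groups into torsion-free Abelian groups are saturated. The only thing worth being slightly careful about is that one should explicitly note that $\Ker \delta_F$ is a subfunctor of $F$ (and $\Ker \delta_G$ of $G$), which follows because $\delta_F$ and $\delta_G$ are natural transformations, so the assignment $c \mapsto \Ker \delta_{Fc}$ is functorial via the restrictions of $Ff$ (respectively $Gf$).
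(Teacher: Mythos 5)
Your proof is correct and follows essentially the same route as the paper's: the paper cancels $\lambda$ directly in the values $\ip{\lambda a}{b}_{c}=\lambda\ip{a}{b}_{c}=0$ using $\integ$-bilinearity and torsion-freeness of $\integ$, which is just an unpackaged version of your observation that $\delta_{Fc}(\lambda a)=\lambda\,\delta_{Fc}(a)=0$ in the torsion-free group $\dual{(Gc)}$. Your remark that the $\fabcat$ hypothesis is only needed to make saturation meaningful, and that functoriality of the kernel is automatic, is accurate and consistent with the paper.
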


\begin{proof}
	For all $c\in \cC$, we have that
	\[ \Ker \delta_{Fc} = \{ a\in Fc \mid \ip{a}{Gc}_{c}=0 \}. \]
	Then if $a\in Fc$, $\lambda\in \integ\setminus \{0\}$ and $\lambda a\in \Ker \delta_{Fc}$, we have that $\ip{\lambda a}{b}_{c}=0$ for all $b\in Gc$. Since $\ip{\blank}{\blank}_{c}$ is $\integ$-bilinear and $\lambda\neq 0$, we deduce that $\ip{a}{b}_{c}=0$ for all $b\in Gc$ and hence $a\in \Ker \delta_{Fc}$. As this holds for all $c$, $\Ker \delta_{F}$ is a saturated subfunctor.
	
	The argument for $\Ker \delta_{G}$ is entirely analogous.
\end{proof}

\sectionbreak
\section{Abstract cluster structures}\label{s:ACS}

\subsection{Definition}\label{ss:def-of-ACS}

Let $E$ be a simple\footnote{A directed graph is simple if it has no loops and at most one directed edge having given source and target vertices; $2$-cycles are permitted.} directed graph with vertex set $C$. We assign to it a graph $E_{\pm}$, which is defined to be the (signed\footnote{There are a variety of notions of signed directed graphs in the literature.  The term is usually used to indicate a directed graph in which each arrow is equipped with a sign.  Our graphs are a special case of this, as our construction entails that each signed arrow also has a partner of the opposite sign, which is not required by the general definition.}) graph with the same vertex set $C$, and with arrows defined as follows: for each arrow $\alpha \colon c \to d$ in $E$, we have two arrows $\alpha^+ \colon c \to d$ and $\alpha^- \colon c \to d$ in $E_{\pm}$.  

We denote by $\cE(E)$ the quotient of the path category of $E_{\pm}$ by the relations $\itbeta^- \circ \alpha^+ = \id_c$ and $\itbeta^+ \circ \alpha^- = \id_c$, whenever $\alpha$ and $\itbeta$ are arrows in $E$ with $s(\alpha) = t(\itbeta) = c$ and $s(\itbeta)=t(\alpha)$. Here, $s$ and $t$ denote the functions assigning to an arrow its source and target respectively.  We will call $\cE(E)$ the \emph{signed path category} over $E$.

The graph $E$ plays the role of the exchange graph\footnote{Sometimes it is more convenient to consider the exchange tree, rather than the exchange graph, as we will see later.} from cluster theory, whose vertices are clusters $c\in C$ and edges (one-step) mutations between them, except that we want to allow for situations where mutation is no longer an involution.  This happens, for example, in combinatorial models of mutation via triangulations of surfaces with infinitely many marked points.  So $E$ is directed and when we want to consider the usual exchange graph, when all mutations are involutive, we should think of the `directed double' of the undirected exchange graph (i.e. the graph where every undirected edge is replaced by a pair of oppositely oriented directed edges).

We will continue to write $c\in C$ even when we are thinking of the object $c$ in the signed path category $\cE=\cE(E)$; the role of the (signed path) category here is principally to allow us to speak of functors from $\cE$ since we cannot do the same with just the graph $E$. 

However, another advantage of using $\cE$ is that we may endow $\cE$ with the extra structure of a \emph{site}.  We will follow \cite{MacLaneMoerdijk} in our use of this and associated terminology.  Specifically, we can endow $\cE$ with the indiscrete (also called the trivial or chaotic) Grothendieck topology, by declaring that the unique sieve on $c\in \cE$ is the slice category $\cE/c$ (that is, the category with objects the morphisms in $\cE$ with codomain $c$).

Then a functor $F\colon \op{\cE}\to \Abcat$ is, by definition, a presheaf of Abelian groups on the site $\cE$ and moreover, every presheaf is a sheaf with respect to this topology.

Recall that free Abelian groups are characterized as being obtained from the functor \linebreak $\mathrm{Free}_{\integ}\colon \Setcat \to \Abcat$ that is left adjoint to the forgetful functor $\Abcat \to \Setcat$, with $\mathrm{Free}_{\integ}(S)=\integ[S]$ being the free Abelian group on the set $S$.

We say that a (pre)sheaf $\mathcal{F}\colon \op{\cE}\to \Abcat$ on the site $\cE$ (with the indiscrete topology) is a \emph{free Abelian} sheaf if there exists a (pre)sheaf of sets $\cB\colon \op{\cE}\to \Setcat$ such that $\mathcal{F}=\mathrm{Free}_{\integ}\circ \cB$.  One can show that $\cB \mapsto \mathrm{Free}_{\integ}\circ \cB$ is functorial: indeed, the construction of a free Abelian presheaf from a presheaf of sets is itself adjoint to the forgetful functor from Abelian presheaves to presheaves of sets.

Since we are working with only the indiscrete topology on $\cE$, so that saying ``sheaf'' is equivalent to saying ``functor with domain $\op{\cE}$'', and the same is true for $\op{\cE}$ \emph{mutatis mutandis}, we will also say ``sheaf'' for ``functor with domain $\op{(\op{\cE})}=\cE$'', rather than the more usual ``cosheaf''.

Now we may give the definition of an abstract cluster structure.

\begin{definition}\label{d:ACS} Let $E$ be a simple directed graph and let $\cE=\cE(E)$ be the signed path category over $E$.  An \emph{abstract cluster structure} over $E$ is a tuple $\cC=\genericACS$ where
	\begin{enumerate}[label=(\alph*)]
		\item $\cX\colon \op{\cE}\to \Abcat$ and $\cA\colon \cE\to \Abcat$ are free Abelian sheaves,
		\item\label{beta-factorization} $\beta\colon \cX\to \cA$ is a factorization, i.e.\ a collection of maps $\beta_c$, one for each $c \in \cE$, such that for every morphism $f \colon c \to d$ in $\cE$ the following diagram commutes:
		\[\begin{tikzcd}[column sep=25pt]
			\cX c \arrow[->]{r}[above]{\beta_{c}} & \cA c \arrow[->]{d}[right]{\cA f} \\
			\cX d  \arrow[->]{u}[left]{\cX f} \arrow[->]{r}[below]{\beta_{d}} & \cA d  
		\end{tikzcd}\]

		\item\label{form-adjoint} 
				$\ip{-}{-}\colon \cA \tensor_{\integ} \cX \to \sZ$ is a right non-degenerate pairing of $\cA$ and $\cX$.
	\end{enumerate}
	If $\cC$ is an abstract cluster structure over $E$, we say that $E$ is the \emph{exchange graph} of $\cC$.
\end{definition}

In $\cE$, we have two types of elementary morphism; an arbitrary morphism is a (finite) composition of these.  By construction, any directed edge $c\stackrel{e}{\to} d$ in $E$ gives rise to morphisms $c\stackrel{e^+}{\to} d$ and $c\stackrel{e^-}{\to} d$; let us call these \emph{elementary}.  

To avoid proliferation of superscripts, we will often not explicitly name edges but write $c\to d$ for an edge in $E$ and $\signededge{c}{d}$ as a shorthand for the pair of morphisms $c\stackrel{+}{\to}d$, $c\stackrel{-}{\to}d$ in $\cE$.  Consequently, associated to the pair of morphisms $\signededge{c}{d}$ we have maps $\cA+$, $\cA-$, $\cX+$ and $\cX-$.

Then the factorization condition of the definition means that for all $\signededge{c}{d}$, the following is a superposition of two commuting diagrams (one for each sign):
\[\begin{tikzcd}[column sep=25pt]
	\cX c \arrow[->]{r}[above]{\beta_{c}} & \cA c \arrow[->,xshift=-3pt]{d}[left]{\cA+} \arrow[->,xshift=3pt]{d}[right]{\cA-}   \\
	\cX d  \arrow[->,xshift=3pt]{u}[right]{\cX-} \arrow[->,xshift=-3pt]{u}[left]{\cX+} \arrow[->]{r}[below]{\beta_{d}} & \cA d  
\end{tikzcd}\]

Note that if $(c,d)$ is a signed digon, i.e.\ both $E(c,d)$ and $E(d,c)$ are non-empty, then $\cX$ and $\cA$ being functors (hence respecting the morphism relations in $\cE$) implies that $\cX\pm$ and $\cA\pm$ are invertible.  This corresponds to the situation where the mutation from $c$ to $d$ has an inverse mutation from $d$ back to $c$; in this case, the corresponding maps of Abelian groups are isomorphisms and we see that $\beta_{c}$ determines $\beta_{d}$ and vice versa.

\begin{remark}
	The forms $\ip{-}{-}_{c}$ will play a significant role in what follows.  For later use, we will record a number of equivalent expressions for the values of this form:
	\begin{align*}
		\ip{a}{x}_{c} & = \delta_{\cA c}(a)(x) \\
		& = \evform{\delta_{\cA c}(a)}{x} \\
		& = \delta_{\cX c}(x)(a) \\
		& = \evform{a}{\delta_{\cX c}(x)}
	\end{align*}
	
	We also note that if $\ip{-}{-}_{c}$ is a non-degenerate pairing of free Abelian groups of the same finite rank, then making the canonical identifications of $\cA c^{**}$ with $\cA c$ and $\cX c^{**}$ with $\cX c$, we have $\delta_{\cA c}^{*}=\delta_{\cX c}$ and $\delta_{\cX c}^{*}=\delta_{\cA c}$.  However as we want to give definitions that will work in the infinite rank case, we cannot use this freely.
\end{remark}

We claim that there are abstract cluster structures associated to many instances of mutation, but particularly the most familiar ones of cluster algebras, surface models for these and categorifications.  We will devote Part~\ref{P:reps} to showing this with all the requisite definitions and verifications, but to aid orientation in the general theory that follows in the rest of this section, we sketch very briefly how each of these families gives rise to abstract cluster structures.  Unfamiliar terms will be defined in the respective sections of Part~\ref{P:reps}.

\begin{example}[Cluster algebras,\ \S\ref{ss:AQCS-from-QCA}] {\ }
	
	In Section~\ref{ss:QCAs}, we give a definition of a quantum cluster algebra $\cC_{q}=\cC_{q}(\ex,\cB,\invfrozen,\beta,\lambda)$. Here, roughly, $\cB$ is an indexing set for the cluster variables of an initial cluster with $\ex$ the subset of indices where mutation is allowed. Then $\beta$ and $\lambda$ are linear maps such that with respect to the relevant bases we obtain the usual exchange matrix $B$ and quasi-commutation matrix $L$.  The datum $\invfrozen$ records which indices correspond to invertible frozen variables.
	
	The associated abstract cluster structure is given as follows.
	\begin{enumerate}
		\item The graph $E$ is the usual exchange tree, modified to be bi-directed (i.e.\ with a pair of oppositely oriented arrows between any two vertices) and with vertices indexed by tuples $\uk$ corresponding to mutation sequences from the initial cluster (i.e.\ root vertex);
		\item $\cX\colon \op{\cE}\to \Abcat$ is defined by $\cX \uk=\integ[\mu_{\uk}(\ex)]$ and $\cX {\pm}=\bar{\mu}_{k}^{\pm}$ (where ${\pm}\colon \uk\to (k,\uk)$) for $\bar{\mu}_{k}^{\pm}$ the isomorphisms of Lemma~\ref{l:F-isos}, which are closely related to Fomin--Zelevinsky's $F$-matrices;
				\item $\beta_{\uk}=\mu_{\uk}\beta$ is the linear map corresponding to the exchange matrix at each cluster;
				\item $\lambda_{\uk}=\mu_{\uk}\lambda$  is the linear map corresponding to the quasi-commutation matrix at each cluster;
				\item $\cA\colon \op{\cE}\to \Abcat$ defined by $\cA \uk=\dual{\integ[\mu_{\uk}(\cB)]}$ and $\cA {\pm}=\mu_{k}^{\pm}$ for $\mu_{k}^{\pm}$ the isomorphisms of Lemma~\ref{l:E-isos}, related to the $E$-matrices; and
				\item $\ip{\blank}{\blank}_{\uk}\colon \cA \uk \tensor \cX \uk \to \sZ$ given by $\ip{\dual{b}}{c}_{\uk}=\evform{\dual{b}}{c}$ (for $\dual{b}\in \dual{\mu_{\uk}(\cB)}$ and $c\in \mu_{\uk}(\ex)$) is just standard duality.
			\end{enumerate}		
\end{example}

\begin{example}[Triangulations of oriented surfaces,\ \S\ref{ss:triangulations}] {\ }
	
	The graph $E$ is the graph with vertices the triangulations of an oriented surface with marked points and a pair of oppositely oriented arrows between any two triangulations that differ by a single arc flip.
	
	Given a triangulation $\cT$ of the surface, $\cA \cT$ is the free Abelian group generated by the set of arcs (including boundary arcs).  The triangulation defines a collection of quadrilaterals, each of which has a unique interior arc as a diagonal, and $\cX \cT$ is the free Abelian group generated by these quadrilaterals.  The pairing between $\cA \cT$ and $\cX \cT$ is that which has each quadrilateral paired with the interior arc it contains.
	
	Mutation of arcs via diagonal flip in a quadrilateral gives rise to $\cA\pm$ and similarly mutation of quadrilaterals gives rise to $\cX\pm$.
	
	The component map $\beta_{\cT}$ is a boundary map: it takes a quadrilateral $q\in \cX \cT$ to the signed sum of its edges (an element of $\cA \cT$).
\end{example}

\begin{example}[Cluster categories, following \cite{CSFRT},\ \S\ref{ss:cluster-cats}] {\ }

	From a cluster category\footnote{As described in more detail later, we use the definition of cluster category from \cite{CSFRT}, which includes (but also extends) the cases of 2-Calabi--Yau triangulated or exact categories with cluster-tilting subcategories.} of finite rank $\cC$, we obtain an abstract cluster structure where
	\begin{enumerate}
		\item $E$ is the complete\footnote{It is shown in \cite{CSFRT} that the maps in \ref{ex:cl-cats-cindbar} and \ref{ex:cl-cats-cind} below are defined for any pair of cluster-tilting subcategories $\cT\!,\cU \ctsubcat \cC$. We comment on this further in \S\ref{ss:cluster-cats}.} bi-directed graph on the set of cluster-tilting subcategories of $\cC$;
		\item\label{ex:cl-cats-cindbar} $\cX \cT=\Kgp{\fd \stab{\cT}}$ (the Grothendieck group of finite-dimensional $\cT$-modules) and $\cX {+}=\stabcoindbar{\cU}{\cT}$, $\cX {-}=\stabindbar{\cU}{\cT}$ for ${\pm}\colon \cT \to \cU$, these maps being (restrictions of) adjoints to $\cA \pm$ below;
		\item $\beta_{\cT}=-p_{\cT}$ is (essentially) given by taking projective resolutions (or via \eqref{eq:p-vs-ind-coind});
		\item\label{ex:cl-cats-cind} $\cA \cT=\Kgp{\cT}$ (the Grothendieck group of $\cT$) and $\cA {+}=\ind{\cT}{\cU}$, $\cA {-}=\coind{\cT}{\cU}$ for ${\pm}\colon \cT \to \cU$, where these are the index and coindex maps associated to $\cT$-approximations; and
		\item $\ip{\blank}{\blank}$ has $\cT$-component given by $\ip{[T]}{[M]}_{\cT}=\dim_{\bK} M(T)$.
	\end{enumerate}
\end{example}

\subsection{Connectedness and ranks}\label{ss:connectedness}

The graph $E$ may have various levels of connectedness.  A directed graph is said to be \emph{weakly connected} if the underlying undirected graph associated to $E$ is connected and \emph{strongly connected} if there exists a directed path between any two vertices.  A directed graph is said to be \emph{complete} if for every pair of vertices $c$, $d$, $(c,d)$ is a digon, that is, there exist directed edges from $c$ to $d$ and from $d$ to $c$.  

We will introduce the term \emph{bi-directed} to mean that for every pair of vertices $c,d$, either $E(c,d)=\emptyset$ or $E(c,d)$ is a digon.  Clearly, complete implies strongly connected implies weakly connected.  Also, complete implies bi-directed. We will refer to the connected components with respect to weak (respectively strong) connectedness as weakly (resp.\ strongly) connected components.  Note that if $E$ is bi-directed then it is weakly connected if and only if it is strongly connected, so for bi-directed graphs we will simply say ``connected''.

Furthermore, we will say that a directed graph $E$ is \emph{rootable} if there exists $r\in C$ such that for all $c\in C$, there exists a directed path from $r$ to $c$.  If $E$ is rootable and some such $r$ is chosen, we will say that $E$ is \emph{rooted} with \emph{root} $r$ and that $(E,r)$ is a rooted graph.  Similarly we will say that $E$ is \emph{corootable} if there exists $r^{\vee}\in C$ such that for all $c\in C$, there exists a directed path from $c$ to $r^{\vee}$, and in this situation say that $E$ (or $(E,r^{\vee})$) is \emph{corooted} with \emph{coroot} $r^{\vee}$.

Note that complete implies rootable and corootable, and either implies weakly connected, but (co)rootable does not imply strongly connected.

We will apply these terms to abstract cluster structures: an abstract cluster structure $\cC$ will be said to be weakly connected (respectively strongly connected, complete, (co)rootable, (co)rooted) if its exchange graph $E$ is.

\begin{definition} Let $E$ be a simple directed graph and $\cC=\genericACS$ an abstract cluster structure on $E$. We define the following:
	\begin{enumerate}[label=\textup{(\alph*)}]
		\item the \emph{$\cA$-rank} of $\cC$, $\Ark{\cC}=\max\{ \rank{\cA c} \mid c\in C\}$, being the maximum of the ranks (as free Abelian groups) of the $\cA c$ for any $c\in C$;
		\item the \emph{$\cX$-rank} of $\cC$, $\Xrk{\cC}=\max\{ \rank{\cX c} \mid c\in C\}$, being the maximum of the ranks (as free Abelian groups) of the $\sX c$ for any $c\in C$.
	\end{enumerate}
	
	We say that $\cC$ has \emph{finite rank} if $\Ark{\cC}\in \nat$ and $\Xrk{\cC}\in \nat$, and that $\cC$ has \emph{infinite rank} otherwise.
\end{definition}

\begin{definition} Let $E$ be a simple directed graph and $\cC=\genericACS$ an abstract cluster structure on $E$.  
	\begin{enumerate}[label=(\alph*)] 
		\item We say that $\cC$ has \emph{weakly} (respectively, \emph{strongly}) \emph{locally constant} rank if the functions $c\mapsto \rank{\cA c}$ and $c\mapsto \rank{\cX c}$ are constant on the weakly (resp.\ strongly) connected components of $E$.
		\item If for all $c\in C$ we have $\rank{\cA c}=\Ark{\cC}$ and $\rank{\cX c}=\Xrk{\cC}$, we say that $\cC$ has \emph{constant rank}.  In this situation we call 
		\begin{enumerate}[label=(\roman*)]
			\item $\Ark{\cC}$ the \emph{total rank} of $\cC$, and write $\trk{\cC}$;
			\item $\Xrk{\cC}$ the \emph{mutable rank} of $\cC$, and write $\mrk{\cC}$, and
			\item $\frk{\cC}\defeq \trk{\cC}-\mrk{\cC}$ the \emph{frozen rank} of $\cC$.
		\end{enumerate}
	\end{enumerate}
\end{definition}

\begin{remark} When $\cC$ is finite rank, the frozen rank of $\cC$ is non-negative because $\ip{\blank}{\blank}$ is required to be right non-degenerate.  Imposing this condition clearly breaks some symmetry between the $\cA$ and $\cX$ sides but we believe it is reasonable as a result of the examples. 
\end{remark}

With these properties, some standard features of cluster theory start to fall into place.

\begin{lemma}\label{l:constant-rank} Let $E$ be a connected simple bi-directed graph and $\cC$ an abstract cluster structure over $E$.  Then $\cC$ has constant rank.
\end{lemma}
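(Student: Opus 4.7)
The plan is to exploit the hypothesis that $E$ is bi-directed, so that every arrow $\alpha\colon c\to d$ in $E$ comes paired with an arrow $\itbeta\colon d\to c$, which by the defining relations of $\cE=\cE(E)$ forces the associated maps on $\cA$ and $\cX$ to be isomorphisms. Then, using connectedness, the ranks $\rank \cA c$ and $\rank \cX c$ do not depend on $c$, giving the constant rank property.

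More concretely, I would first fix a pair of vertices $c,d\in C$ connected by an edge in $E$; by the bi-directed assumption, $E(c,d)$ and $E(d,c)$ are both non-empty, so we may pick $\alpha\colon c\to d$ and $\itbeta\colon d\to c$ in $E$. The signed path category relations $\itbeta^-\circ \alpha^+=\id_c$, $\itbeta^+\circ\alpha^-=\id_c$, and symmetrically $\alpha^-\circ\itbeta^+=\id_d$, $\alpha^+\circ\itbeta^-=\id_d$, hold in $\cE$. Applying the functor $\cA$ then gives $\cA(\itbeta^-)\circ \cA(\alpha^+)=\id_{\cA c}$ and $\cA(\alpha^+)\circ \cA(\itbeta^-)=\id_{\cA d}$, so $\cA(\alpha^+)\colon \cA c\to \cA d$ is an isomorphism. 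Applying the contravariant functor $\cX$ to the same relations yields, analogously, that $\cX(\alpha^+)\colon \cX d\to \cX c$ is an isomorphism. In particular, $\rank \cA c=\rank \cA d$ and $\rank \cX c=\rank \cX d$.

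Next, because $E$ is bi-directed the remark before the statement tells us that weak connectedness is equivalent to strong connectedness, so any two vertices $c, c'\in C$ are joined by a path in the underlying undirected graph of $E$. Iterating the previous paragraph along such a path, the rank functions $c\mapsto \rank \cA c$ and $c\mapsto \rank \cX c$ are therefore constant on $C$. It follows that for every $c\in C$ one has $\rank \cA c=\Ark{\cC}$ and $\rank \cX c=\Xrk{\cC}$, which is exactly the definition of constant rank.

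There is no real obstacle here: the only thing to check carefully is that the defining relations of $\cE$ give both composites (on both sides) as identities for each of the signs $\pm$, which is an immediate bookkeeping exercise using all four relations provided by the pair $(\alpha,\itbeta)$. The only mild subtlety is remembering that $\cX$ is contravariant, so the isomorphism it produces goes from $\cX d$ to $\cX c$ rather than the other way around, but this is irrelevant for the rank conclusion.
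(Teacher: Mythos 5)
Your proof is correct and follows essentially the same route as the paper: bi-directedness gives signed digons, the relations in $\cE$ force $\cA\pm$ and $\cX\pm$ to be isomorphisms (you just spell out the two-sided inverses explicitly), and connectedness then propagates equality of ranks to all vertices.
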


\begin{proof} Since $E$ is bi-directed, every pair $(c,d)$ of vertices such that $E(c,d)\neq \emptyset$ forms a signed digon and so each of the associated morphisms $\cX+$, $\cX-$, $\cA+$ and $\cA-$ is an isomorphism. So $\cC$ has locally constant rank and hence (since $E$ is connected) constant rank.
\end{proof}

Note too that in this situation, the ranks as $\integ$-linear maps of the $\beta_{c}$ are all equal; when this is defined, we will call this value $\rank \beta$.

\begin{lemma} Let $E$ be a connected simple bi-directed graph and $\cC$ an abstract cluster structure over $E$.  Then there exists an Abelian group $G(\cC)$ such that for all $c\in C$, $\Coker {\beta_{c}}\iso G(\cC)$.  We call $G(\cC)$ the \emph{fundamental group} of $\cC$.
\end{lemma}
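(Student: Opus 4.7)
The plan is to show that adjacent vertices in $E$ yield isomorphic cokernels of the component maps of $\beta$, and then use connectedness to propagate this. The group $G(\cC)$ can then be defined as $\Coker\beta_{c_0}$ for any chosen vertex $c_0 \in C$.

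First, I would note that by Lemma~\ref{l:constant-rank} (and its proof), since $E$ is connected and bi-directed, for every pair $(c,d)$ with $E(c,d)\neq\emptyset$ the component maps $\cX\pm\colon \cX d \to \cX c$ and $\cA\pm \colon \cA c \to \cA d$ associated to each elementary morphism are isomorphisms. Pick any such edge $c\to d$ in $E$ and let $f\colon c\to d$ be, say, the elementary morphism $(c\to d)^{+}$ in $\cE$. The factorization condition \ref{beta-factorization} of Definition~\ref{d:ACS} applied to $f$ yields
\[ \cA f\circ \beta_{c}\circ \cX f=\beta_{d}. \]
Since $\cX f$ is surjective and $\cA f$ is injective (both being isomorphisms), taking images gives $\cA f(\image\beta_{c})=\image\beta_{d}$. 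Hence $\cA f$ descends to an isomorphism $\overline{\cA f}\colon \Coker \beta_{c}\stackrel{\sim}{\to}\Coker\beta_{d}$.

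Next, I would use the fact that since $E$ is bi-directed and connected, it is strongly connected, so for any two vertices $c,d\in C$ there exists a directed path $c=c_{0}\to c_{1}\to \cdots \to c_{n}=d$ in $E$. Composing the isomorphisms of cokernels obtained at each edge of the path yields an isomorphism $\Coker\beta_{c}\iso \Coker\beta_{d}$. Fixing any root vertex $c_{0}\in C$ and setting $G(\cC)\defeq \Coker \beta_{c_{0}}$ then gives a group with the required property.

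There is no serious obstacle here: the key computational input is the factorization diagram, and the only structural input is that bi-directed plus connected forces the component maps along edges to be isomorphisms, which has already been observed in the proof of Lemma~\ref{l:constant-rank}. The one mild subtlety is that the isomorphism $\Coker \beta_{c}\iso \Coker\beta_{d}$ need not be canonical (it depends on the chosen path and on the choice of sign $+$ or $-$ at each step), but for the existence statement of the lemma this is immaterial, since only the isomorphism class of $G(\cC)$ is specified. If one wanted the definition to be path-independent, one would need to verify that the two elementary morphisms $(c\to d)^{\pm}$ induce the same isomorphism on cokernels, and that the isomorphisms induced by the two morphisms in a signed digon are mutually inverse; both follow directly from the defining relations of $\cE$ and the functoriality of $\cA$, but this is not required for the statement as given.
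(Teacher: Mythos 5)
Your proposal is correct and follows essentially the same route as the paper, whose one-line proof simply invokes the argument of Lemma~\ref{l:constant-rank}: on a signed digon the maps $\cX\pm$ and $\cA\pm$ are isomorphisms, the factorization square $\cA f\circ\beta_{c}\circ\cX f=\beta_{d}$ transports $\image\beta_{c}$ onto $\image\beta_{d}$, and connectedness propagates the resulting cokernel isomorphism to all vertices. Your remarks on surjectivity/injectivity and on the (immaterial) path-dependence of the isomorphism are accurate fillings-in of details the paper leaves implicit.
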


\begin{proof} Similarly to the previous lemma, $E$ being connected bi-directed implies that $\Coker {\beta_{c}}\iso \Coker {\beta_{d}}$ for all $c$, $d$.
\end{proof}

\subsection{The principal part}\label{ss:principal-part}

Let $\cC=\genericACS$ be an abstract cluster structure over a simple directed graph $E$.  We show how the pairing $\ip{\blank}{\blank}$ of $\cA$ and $\cX$ enables us to construct the \emph{principal part} of the abstract cluster structure, analogous to the usual notion for an exchange matrix.

From the pairing $\ip{\blank}{\blank}$, we have the associated \emph{left radical} functor $\Ker \delta_{\cA}\colon \cE \to \Abcat$, with $(\Ker \delta_{\cA})c=\Ker \delta_{\cA c}$.  This is a saturated subfunctor of $\cA$ by Lemma~\ref{l:Ker-delta-saturated}.

By definition, 
\[ \Ker \delta_{\cA c}= \{ a\in \cA c \mid \ip{a}{\cX c}_{c}=0 \} \]
so we will write $\cX c^{\perp}\defeq \Ker \delta_{\cA c}$ for this subgroup of $\cA c$.

Now define $\cA^{p} c=\cA c/\cX c^{\perp}$ and let $\pi_{c}^{p}\colon \cA c\to \cA c/\cX c^{\perp}=\cA^{p} c$ be the canonical surjection.  By construction, the induced form $\ip{-}{-}_{c}^{p}\colon \cA^{p} c \cross \cX c \to \integ$, $\ip{a+\cX c^{\perp}}{x}_{c}^{p}=\ip{a}{x}_{c}$ has trivial left radical.  Since $\ip{\blank}{\blank}$ is already right non-degenerate, we see that $\ip{\blank}{\blank}^{p}_{c}$ is non-degenerate.

Denote by $\delta_{\cX^{p} c}\colon \cX c \to (\cA^{p}c)^{*}$ the natural map associated to $\ip{-}{-}^{p}_{c}$, so that we distinguish this from $\delta_{\cX}$ formed with respect to $\ip{-}{-}_{c}$.  

\begin{remark}\label{rmk:equalities-for-principal-part-form} As before, we may infer a number of equivalent expressions for the value of the form $\ip{-}{-}_{c}^{p}$:
	\begin{alignat*}{5}
		&\ip{a+\cX c^{\perp}}{x}_{c}^{p} && = \ip{\pi_{c}^{p}(a)}{x}_{c}^{p} && = \ip{a}{x}_{c} && = \evform{\delta_{\cA c}(a)}{x} && = \evform{a}{\delta_{\cX c}(x)}\\
		& && = \delta_{\cA^{p} c}(\pi_{c}^{p}(a))(x) && && =\evform{\delta_{\cA^{p} c}(\pi_{c}^{p}(a))}{x} && = \delta_{\cA c}(a)(x)\\
		& && = \delta_{\cX c}(x)(\pi_{c}^{p}(a)) && && = \evform{\pi_{c}^{p}(a)}{\delta_{\cX c}(x)} && = \delta_{\cX c}(x)(a)
	\end{alignat*}
	
	Note in particular, as it will be useful later, that for all $a\in \cA c$ and $x\in \cX$ we have
	\begin{align*} ((\pi_{c}^{p})^{*}\circ \delta_{\cX^{p}c})(x)(a) & = \ip{\pi_{c}^{p}(a)}{x}_{c}^{p} \\
		& = \ip{a+\cX c^{\perp}}{x}_{c}^{p} \\
		& = \ip{a}{x}_{c} \\
		& = \delta_{\cX c}(x)(a) 
	\end{align*}
	so that 
	\begin{equation}\label{eq:pi-star-equals-iota} (\pi_{c}^{p})^{*}\circ \delta_{\cX^{p}c}=\delta_{\cX} \end{equation}
	and similarly
	\begin{equation}\label{eq:iota-star-equals-pi} \delta_{\cA c}=\delta_{\cA^{p} c}\circ \pi_{c}^{p}. \end{equation}
\end{remark}

Our next goal is to show that the assignment $c\mapsto \cA^{p} c$ can be enhanced to a functor.

\begin{lemma}\label{l:wmd-props} Let $\cC=\genericACS$ be an abstract cluster structure over a simple directed graph $E$. Then there are well-defined homomorphisms $\overline{\cA +}$ and $\overline{\cA -}$ defined as follows:
	\begin{align*} \overline{\cA +} & \colon \cA c/\cX c^{\perp}\to \cA d/\cX d^{\perp},\ \overline{\cA +}(a+\cX c^{\perp})=(\cA +)(a)+\cX d^{\perp}; \\
		\overline{\cA -} & \colon \cA c/\cX c^{\perp}\to \cA d/\cX d^{\perp},\ \overline{\cA -}(a+\cX c^{\perp})=(\cA -)(a)+\cX d^{\perp}. \\
	\end{align*}
\end{lemma}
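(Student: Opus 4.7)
The plan is to prove that the maps are well-defined first, and then observe that the homomorphism property is immediate once well-definedness is established. The only actual content is showing $(\cA\pm)(\cX c^{\perp}) \subseteq \cX d^{\perp}$, which will follow directly from the adjointness relation of Lemma~\ref{l:adjoint} applied to the pairing $\ip{\blank}{\blank}$.

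More precisely, I would fix either of the elementary morphisms $f\colon c \to d$ in $\cE$ coming from the signed path category (i.e.\ $f = +$ or $f = -$). Suppose $a \in \cX c^{\perp}$, so that $\ip{a}{y}_{c} = 0$ for every $y \in \cX c$. I want to show $(\cA f)(a) \in \cX d^{\perp}$, i.e.\ $\ip{(\cA f)(a)}{x}_{d} = 0$ for all $x \in \cX d$. By Lemma~\ref{l:adjoint},
\[ \ip{(\cA f)(a)}{x}_{d} = \ip{a}{(\cX f)(x)}_{c}, \]
and the right-hand side vanishes since $a \in \cX c^{\perp}$ and $(\cX f)(x) \in \cX c$. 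This establishes $(\cA\pm)(\cX c^{\perp}) \subseteq \cX d^{\perp}$.

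From this, well-definedness follows in the standard way: if $a + \cX c^{\perp} = a' + \cX c^{\perp}$ then $a - a' \in \cX c^{\perp}$, hence $(\cA\pm)(a) - (\cA\pm)(a') = (\cA\pm)(a - a') \in \cX d^{\perp}$, so that $(\cA\pm)(a) + \cX d^{\perp} = (\cA\pm)(a') + \cX d^{\perp}$. The fact that $\overline{\cA\pm}$ are group homomorphisms is then inherited from $\cA\pm$ being homomorphisms composed with the canonical quotient map $\pi_{d}^{p}\colon \cA d \to \cA d/\cX d^{\perp}$; indeed one can write $\overline{\cA\pm} = \pi_{d}^{p} \circ (\cA\pm)$ viewed through the universal property of the quotient.

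There is no real obstacle here; the whole argument rests on the dinaturality/adjointness of the pairing, which is exactly the content of Lemma~\ref{l:adjoint}. The key conceptual point is that the saturated subfunctor $\cX^{\perp} = \Ker\delta_{\cA}$ is automatically preserved by every morphism $\cA f$ in the image of $\cA$, so passing to the quotient $\cA^{p}$ gives a functor on $\cE$ by assembling the maps $\overline{\cA\pm}$ componentwise; this lemma is just the first verification step of that larger statement.
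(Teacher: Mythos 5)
Your proposal is correct and follows essentially the same route as the paper: both reduce well-definedness to showing $(\cA\pm)(\cX c^{\perp})\subseteq \cX d^{\perp}$ and obtain this directly from the adjointness relation $\ip{(\cA\pm)(a)}{x}_{d}=\ip{a}{(\cX\pm)(x)}_{c}$ of Lemma~\ref{l:adjoint}. Your additional remarks on the homomorphism property via $\pi_{d}^{p}\circ(\cA\pm)$ and on functoriality of the quotient are consistent with how the paper proceeds in the surrounding text.
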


\begin{proof} Let $a'=(\cA \pm)(a)\in (\cA \pm)(\cX c^{\perp})$ for some $a\in \cX c^{\perp}$. Then for $x'\in \cX d$ we have that by adjointness of $\cA\pm$ and $\cX\pm$ (Lemma~\ref{l:adjoint}), \[ \ip{a'}{x'}_{d}=\ip{(\cA \pm)(a)}{x'}_{d}=\ip{a}{(\cX \pm)(x')}_{c}=0 \]
	since $a\in \cX c^{\perp}$; hence $a'\in \cX d^{\perp}$. Thus $(\cA \pm)(\cX c^{\perp})\leq \cX d^{\perp}$ and the maps are well-defined.
\end{proof}

This lemma allows us to define the \emph{principal part} of our abstract cluster structure and see that this is itself an abstract cluster structure.

\begin{definition} Let $\cC=\genericACS$ be an abstract cluster structure over a simple directed graph $E$. Define $\cA^{p}\colon \cE \to \Abcat$ by $\cA^{p}c \defeq \cA^{p} c=\cA c/\cX c^{\perp}$ and $\cA^{p}\pm=\overline{\cA \pm}$.
\end{definition}

Since $\cA$ is functorial, so is $\cA^{p}$.  Moreover, as noted above, $\Ker \delta_{\cA}$ is a saturated subfunctor of $\cA$, and hence $\cA^{p}$ is again a free Abelian sheaf, since saturation implies that $\cA^{p}c=\cA c/\Ker \delta_{\cA c}$ is torsion-free for all $c$.

It is also immediate from the definitions that the family $\{ \pi_{c}^{p} \}$ defines a natural transformation $\pi^{p}\colon \cA \to \cA^{p}$.  Define $\beta^{p}=\pi^{p} \circ \beta$.

\begin{proposition}\label{p:prin-part-ACS} Let $\cC=\genericACS$ be an abstract cluster structure over a simple directed graph $E$. Then the tuple $\cC^{p}\defeq (\cE,\cX,\beta^{p},\cA^{p},\ip{-}{-}^{p})$ is an abstract cluster structure over $E$.
\end{proposition}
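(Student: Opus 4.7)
The plan is to verify in turn the three conditions of Definition~\ref{d:ACS}: (a) that $\cA^{p}$ is a free Abelian sheaf, noting that $\cX$ is unchanged so nothing needs to be checked there; (b) that $\beta^{p}$ is a factorization $\cX \to \cA^{p}$; and (c) that $\ip{-}{-}^{p}$ is a right non-degenerate pairing of $\cA^{p}$ and $\cX$. Most of the work has already been done in the paragraphs preceding the statement; the task is essentially to assemble these pieces and to confirm that each component of the putative abstract cluster structure enjoys the required properties.

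For (a), I first argue that $\cA^{p}$ is a well-defined functor: the well-definedness on morphisms is Lemma~\ref{l:wmd-props}, and compatibility with the defining relations $\itbeta^{-} \circ \alpha^{+} = \id_{c} = \itbeta^{+} \circ \alpha^{-}$ of $\cE$ follows by applying $\pi^{p}$ to the corresponding identities in $\cA$, which hold because $\cA$ is a functor. To see that each $\cA^{p}c$ is free Abelian I invoke the saturation of $\cX c^{\perp} = \Ker \delta_{\cA c}$ established in Lemma~\ref{l:Ker-delta-saturated}: the quotient of a free Abelian group by a saturated subgroup is torsion-free, hence free.

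For (b), the family $\{\pi_{c}^{p}\}_{c \in \cE}$ constitutes, essentially by definition of $\cA^{p}\pm$, a natural transformation $\pi^{p} \colon \cA \to \cA^{p}$. The required commuting square for $\beta^{p} = \pi^{p} \circ \beta$ at a morphism $f \colon c \to d$ then follows by a short diagram chase: $\cA^{p}f \circ \beta^{p}_{c} = \cA^{p}f \circ \pi^{p}_{c} \circ \beta_{c} = \pi^{p}_{d} \circ \cA f \circ \beta_{c} = \pi^{p}_{d} \circ \beta_{d} \circ \cX f = \beta^{p}_{d} \circ \cX f$, using in succession the naturality of $\pi^{p}$ and the factorization property of $\beta$. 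For (c), dinaturality of $\ip{-}{-}^{p}$ reduces to that of $\ip{-}{-}$: for $a \in \cA c$, $x' \in \cX d$ and $f \colon c \to d$, the chain $\ip{\pi^{p}_{c}(a)}{\cX f(x')}^{p}_{c} = \ip{a}{\cX f(x')}_{c} = \ip{\cA f(a)}{x'}_{d} = \ip{\cA^{p}f(\pi^{p}_{c}(a))}{x'}^{p}_{d}$ uses only the definition of the quotient form and Lemma~\ref{l:adjoint}. Right non-degeneracy is equally direct: if $x \in \cX c$ satisfies $\delta_{\cX^{p}c}(x) = 0$, then $\ip{a}{x}_{c} = \ip{\pi^{p}_{c}(a)}{x}^{p}_{c} = 0$ for all $a \in \cA c$, so $x \in \Ker \delta_{\cX c}$, which is trivial by the right non-degeneracy of the original pairing.

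The main subtlety, and the one that motivates having Lemma~\ref{l:Ker-delta-saturated} on hand, is the freeness of the groups $\cA^{p}c$: without saturation of $\cX c^{\perp}$, the quotient could in principle acquire torsion and fail to be a free Abelian group, which would break condition (a) of Definition~\ref{d:ACS}. Everything else is a routine bookkeeping exercise once the naturality of $\pi^{p}$ and the dinaturality of the original form are in place.
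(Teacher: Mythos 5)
Your proposal is correct and follows essentially the same route as the paper: the paper's proof simply assembles the preceding observations (saturation of $\Ker\delta_{\cA}$ via Lemma~\ref{l:Ker-delta-saturated} giving freeness of each $\cA^{p}c$, well-definedness of $\cA^{p}\pm$ via Lemma~\ref{l:wmd-props}, naturality of $\pi^{p}$, the definition $\beta^{p}=\pi^{p}\circ\beta$, and non-degeneracy of the induced form), and you spell out exactly these verifications. One small transcription error to fix in your check of condition~\ref{beta-factorization}: since $\cX$ is contravariant, $\cX f\colon \cX d\to\cX c$, so the equation $\cA f\circ\beta_{c}=\beta_{d}\circ\cX f$ you invoke is ill-typed (the two sides have different domains); the factorization property of $\beta$ is the commutativity $\cA f\circ\beta_{c}\circ\cX f=\beta_{d}$, and the corrected chase is
\[
\cA^{p}f\circ\beta^{p}_{c}\circ\cX f=\cA^{p}f\circ\pi^{p}_{c}\circ\beta_{c}\circ\cX f=\pi^{p}_{d}\circ\cA f\circ\beta_{c}\circ\cX f=\pi^{p}_{d}\circ\beta_{d}=\beta^{p}_{d},
\]
using naturality of $\pi^{p}$ and then the factorization of $\beta$, exactly as you intended. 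The remaining points (dinaturality and right non-degeneracy of $\ip{\blank}{\blank}^{p}$, and freeness of the quotients from saturation) are argued as in the paper.
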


\begin{proof} That $\cC^{p}$ is an abstract cluster structure now follows immediately from our definitions, notably of $\beta^{p}$ as a composition of the natural transformations $\beta$ and $\pi^{p}$.
\end{proof}

We will refer to $\cC^{p}$ as the \emph{principal part} of $\cC$.

Note that if $\cC$ has constant finite rank, then $\frk{\cC^{p}}=0$; this is the abstract version of the principal part of the exchange matrix corresponding to only the mutable rows and columns.  Similarly, that the principal part of the exchange matrix is obtained by deleting the frozen part is encapsulated by the following lemma.

\begin{lemma}\label{l:prin-beta-ip} We have
	\[ \ip{\beta_{c}^{p}(x)}{y}_{c}^{p}=\ip{\beta_{c}(x)}{y}_{c} \]
	for all $x,y\in \cX c$.
\end{lemma}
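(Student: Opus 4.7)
The plan is to unwind the definitions. By construction $\beta^{p}=\pi^{p}\circ \beta$, so on components $\beta_{c}^{p}=\pi_{c}^{p}\circ \beta_{c}$. Hence for any $x\in \cX c$ we have $\beta_{c}^{p}(x)=\pi_{c}^{p}(\beta_{c}(x))=\beta_{c}(x)+\cX c^{\perp}$.

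Substituting this into the left-hand side and then applying the defining formula for the induced form $\ip{-}{-}^{p}_{c}$ (namely $\ip{a+\cX c^{\perp}}{y}^{p}_{c}=\ip{a}{y}_{c}$, one of the equalities recorded in Remark~\ref{rmk:equalities-for-principal-part-form}), we obtain
\[ \ip{\beta_{c}^{p}(x)}{y}_{c}^{p}=\ip{\pi_{c}^{p}(\beta_{c}(x))}{y}_{c}^{p}=\ip{\beta_{c}(x)+\cX c^{\perp}}{y}_{c}^{p}=\ip{\beta_{c}(x)}{y}_{c}, \]
as required.

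There is no real obstacle: the statement is a direct consequence of how $\beta^{p}$ and $\ip{-}{-}^{p}$ are defined, and the only thing to check is that the argument $\beta_{c}(x)$ lies in $\cA c$ (so that $\pi_{c}^{p}$ can indeed be applied to it and the induced form evaluated on the resulting coset), which is immediate from the factorization $\beta_{c}\colon \cX c\to \cA c$ in Definition~\ref{d:ACS}\ref{beta-factorization}.
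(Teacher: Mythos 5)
Your proof is correct and follows essentially the same route as the paper: the paper's proof simply cites the equalities of Remark~\ref{rmk:equalities-for-principal-part-form}, which are exactly the identities $\ip{a+\cX c^{\perp}}{y}^{p}_{c}=\ip{a}{y}_{c}$ that you unwind explicitly via $\beta^{p}_{c}=\pi^{p}_{c}\circ\beta_{c}$. Your version just spells out the substitution the paper leaves implicit.
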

	
\begin{proof}
	This is immediate from the equations in Remark~\ref{rmk:equalities-for-principal-part-form}.
\end{proof}
		
	\subsection{Forms and skew-symmetry}\label{ss:forms-skew-sym}
	
	As hinted at by the previous lemma, there is a second natural family of forms associated to an abstract cluster structure.  Unlike the above, these are not in general non-degenerate.
	
	\begin{definition}\label{d:sX-form} Let $\cC=\genericACS$ be an abstract cluster structure over a simple directed graph $E$.  For $c\in C$, define $\ip{-}{-}_{\cX c}\colon \cX c \cross \cX c \to \integ$ by $\ip{x}{y}_{\cX c}=\ip{\beta_{c}(x)}{y}_{c}$.
	\end{definition}
	
	We will refer to the form $\ip{\blank}{\blank}_{\cX c}$ as the \emph{$\cX$-form} at $c$.
	
	By Remark~\ref{rmk:equalities-for-principal-part-form}, we have
	\begin{equation}\label{eq:expressions-for-sX-form} \ip{x}{y}_{\cX c}=\ip{\beta_{c}(x)}{y}_{c}=\evform{(\delta_{\cA c}\circ \beta_{c})(x)}{y}=\evform{\beta_{c}(x)}{\delta_{\cX c}(y)}.
	\end{equation}
	
	\begin{definition}\label{d:skew-symmetrizable} We say that $\cC$ is \emph{skew-symmetrizable} at $c\in C$ if $\ip{-}{-}_{\cX c}$ is skew-symmetric, i.e.\  
		\[ \ip{x}{y}_{\cX c}=-\ip{y}{x}_{\cX c} .\]
		Furthermore, we say that $\cC$ is skew-symmetrizable if it is skew-symmetrizable at every $c$.
	\end{definition}
	
	Note that since, by Lemma~\ref{l:prin-beta-ip}, we have $\ip{\beta_{c}^{p}(x)}{y}_{c}^{p}=\ip{\beta_{c}(x)}{y}_{c}$, $\cC$ and its principal part $\cC^{p}$ define the same form on $\cX c$.  Therefore, the principal part $\cC^{p}$ is skew-symmetrizable if and only if $\cC$ is.  As such, we will simply say ``is skew-symmetrizable'' rather than ``has skew-symmetrizable principal part''.
	
	The apparent disparity between the term ``skew-symmetrizable'' in the terminology and the requirement that the form $\ip{-}{-}_{\cX c}$ be skew-symmetric is explained by the following definition and proposition.  Recall that $\delta_{A}^{\text{ev}}\colon A\to A^{**}$, $a\mapsto (f\mapsto f(a))$ is the canonical evaluation morphism obtained from the evaluation form.
	
	\begin{definition}\label{d:skew-symmetric}
		Let $A$ be a free Abelian group and $\phi\colon A \to \dual{A}$ a homomorphism. We say $\phi$ is skew-symmetric if $\phi^{*}\circ \delta_{A}^{\text{ev}}=-\phi$.
	\end{definition}
	
	Since $\delta_{\dual{A}}^{\text{ev}}=\id_{\dual{A}}$, the condition of the definition is equivalent to $\delta_{\dual{A}}^{\text{ev}}\circ \phi^{*}\circ \delta_{A}^{\text{ev}}=-\phi$.  But $\delta_{\dual{A}}^{\text{ev}}\circ \phi^{*}\circ \delta_{A}^{\text{ev}}$ is exactly the adjoint of $\phi$ with respect to the evaluation form, which we denote $\adj{\phi}$, so that the condition becomes the more familiar $\adj{\phi}=-\phi$ (at the cost of suppressing in the notation which form is being used to take the adjoint).
	
	\begin{remark}
		We can compute the adjoint as follows: for $a,b\in A$,
			\begin{align*} 
			\adj{\phi}(a)(b) & = (\delta_{\dual{A}}^{\text{ev}} \circ \dual{\phi} \circ \delta_{A}^{\text{ev}})(a)(b) \\
			& = (\delta_{\dual{A}}^{\text{ev}} \circ \dual{\phi})(\evform{a}{\blank})(b) \\
			& = \delta_{\dual{A}}^{\text{ev}}(\evform{a}{\phi(\blank)})(b) \\
			& = \evform{b}{\evform{a}{\phi(\blank)}} \\
			&  = \evform{a}{\phi(b)} \\
			& = \phi(b)(a)
		\end{align*}
		and so skew-symmetry (with respect to the evaluation form) is equivalent to $\phi(a)(b)=-\phi(b)(a)$.
	\end{remark}
	
	\begin{proposition}\label{p:ACS-skew-symmetrizable} The following are equivalent:
		\begin{enumerate}
			\item The abstract cluster structure $\cC$ is skew-symmetrizable at $c$.
			\item The principal part of $\cC$, $\cC^{p}$, is skew-symmetrizable at $c$.
			\item The map $\delta_{\cA c}\circ \beta_{c}$ is skew-symmetric.
			\item The map $\delta_{\cA^{p} c}\circ \beta_{c}^{p}$ is skew-symmetric.
		\end{enumerate}
	\end{proposition}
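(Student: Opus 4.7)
The plan is to establish the chain of equivalences (1) $\iff$ (2), (1) $\iff$ (3), and (3) $\iff$ (4), from which the full proposition follows. Each of these rests on routine unpacking of the definitions together with one of the equations recorded in Remark~\ref{rmk:equalities-for-principal-part-form} and Lemma~\ref{l:prin-beta-ip}.

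First I would dispose of (1) $\iff$ (2). As noted immediately after Definition~\ref{d:skew-symmetrizable}, Lemma~\ref{l:prin-beta-ip} gives $\ip{x}{y}_{\cX c}^{p} = \ip{x}{y}_{\cX c}$ for all $x,y \in \cX c$, where the left-hand side is the $\cX$-form associated to the principal part $\cC^p$ (defined via $\beta_c^p$ and $\ip{\blank}{\blank}^p_c$). Hence the two $\cX$-forms coincide and one is skew-symmetric if and only if the other is.

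Next I would prove (1) $\iff$ (3). By the expression \eqref{eq:expressions-for-sX-form} for the $\cX$-form, we have
\[
\ip{x}{y}_{\cX c} = \evform{(\delta_{\cA c} \circ \beta_c)(x)}{y} = (\delta_{\cA c} \circ \beta_c)(x)(y)
\]
for all $x,y \in \cX c$. The map $\phi \defeq \delta_{\cA c} \circ \beta_c$ goes $\cX c \to \dual{\cX c}$, so Definition~\ref{d:skew-symmetric} applies, and the remark following it reformulates $\phi$ being skew-symmetric as $\phi(x)(y) = -\phi(y)(x)$. Combining these two observations, skew-symmetry of $\ip{\blank}{\blank}_{\cX c}$ is literally the condition that $\phi$ is skew-symmetric in the sense of Definition~\ref{d:skew-symmetric}.

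Finally, for (3) $\iff$ (4), I would show that the two maps in question are actually equal. Using $\beta_c^p = \pi_c^p \circ \beta_c$ and equation \eqref{eq:iota-star-equals-pi}, namely $\delta_{\cA c} = \delta_{\cA^p c} \circ \pi_c^p$, we obtain
\[
\delta_{\cA^p c} \circ \beta_c^p = \delta_{\cA^p c} \circ \pi_c^p \circ \beta_c = \delta_{\cA c} \circ \beta_c,
\]
so the equivalence is immediate. The only potentially subtle point in this whole argument is making sure that Definition~\ref{d:skew-symmetric} really does translate into the bilinear skew-symmetry condition in the way needed; but that is precisely the content of the remark following Definition~\ref{d:skew-symmetric}, so there is no real obstacle.
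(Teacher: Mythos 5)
Your proof is correct, and at its core it is the same argument as the paper's: both come down to unwinding the adjoint condition of Definition~\ref{d:skew-symmetric} against the $\cX$-form via $\phi(x)(y)=\ip{x}{y}_{\cX c}$, and both get (1) $\iff$ (2) from the coincidence of the forms in Lemma~\ref{l:prin-beta-ip}. The one genuine difference is organisational: the paper proves (1) $\iff$ (4) by an explicit computation with the principal part and then remarks that ``the same argument without the superscript $p$'' gives (1) $\iff$ (3), whereas you dispose of (3) $\iff$ (4) in one line by observing that the two maps are literally equal, $\delta_{\cA^{p} c}\circ \beta^{p}_{c}=\delta_{\cA^{p} c}\circ \pi_{c}^{p}\circ \beta_{c}=\delta_{\cA c}\circ \beta_{c}$, using \eqref{eq:iota-star-equals-pi}; you also delegate the adjoint unwinding to the remark following Definition~\ref{d:skew-symmetric} instead of redoing it. This buys a slightly cleaner proof (no repetition of the computation, and it makes transparent why the ``$p$'' and ``non-$p$'' statements are interchangeable), at no loss of rigour.
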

	
	\begin{proof}
		We first show that
		\[ \ip{x}{y}_{\cX c}=-\ip{y}{x}_{\cX c} \iff (\delta_{\cA^{p} c}\circ \beta^{p}_{c})^{*}\circ \delta_{\cA^{p} c}^{\text{ev}}=-(\delta_{\cA^{p} c}\circ \beta^{p}_{c}). 
		\]
		Computing, we have 
		\begin{align*} 
			((\delta_{\cA^{p} c}\circ \beta^{p}_{c})^{*}\circ \delta_{\cA^{p} c}^{\text{ev}})(x)(y) & = \delta_{\cA^{p} c}^{\text{ev}}(x)((\delta_{\cA^{p} c} \circ \beta_{c}^{p})(y)) \\
			& = \delta_{\cA^{p} c}^{\text{ev}}(x)(\ip{\beta_{c}^{p}(y)}{-}_{c}^{p}) \\
			& = \delta_{\cA^{p} c}^{\text{ev}}(x)(\ip{y}{-}_{\cX c}) \\
			& = \ip{y}{x}_{\cX c}
		\end{align*}
		and
		\[ (\delta_{\cA^{p} c}\circ \beta_{c}^{p})(x)(y)=\ip{\beta^{p}_{c}(x)}{y}_{c}^{p}=\ip{x}{y}_{\cX c}, \]
		from which the claim follows immediately. Now, we see that the same argument without the superscript ``$p$'' is also valid, which together with the comments immediately after Definition~\ref{d:skew-symmetrizable} gives the equivalence of the four claims.
	\end{proof}
	
	From this result, we see that $\delta_{\cA c}$ (or equivalently $\delta_{\cA^{p} c}$) plays the role of the skew-symmetrizer matrix.  It is somewhat hidden in Definition~\ref{d:sX-form}, until one looks at the alternative expressions in \eqref{eq:expressions-for-sX-form}, where one sees the corresponding $\delta_{\cA c}\circ \beta$.
	
	\begin{proposition}\label{p:X-form-mut} Let $\cC=\genericACS$ be an abstract cluster structure over a simple directed graph $E$.  Then for $c \to d$, the forms $\ip{-}{-}_{\cX c}$ and $\ip{-}{-}_{\cX d}$ are related by
		\[ \ip{-}{-}_{\cX d}=\ip{-}{-}_{\cX c}\circ (\cX{+} \cross \cX+)=\ip{-}{-}_{\cX c}\circ (\cX{-} \cross \cX-). \]
	\end{proposition}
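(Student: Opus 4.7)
The plan is to unpack the definition of the $\cX$-form on the right-hand side, apply the factorization condition on $\beta$ to relate $\beta_d$ to $\beta_c$, and then use the adjointness property of the pairing from Lemma~\ref{l:adjoint} to transfer the action across the form. Concretely, for $x,y\in \cX d$, I would start from the definition $\ip{x}{y}_{\cX d}=\ip{\beta_d(x)}{y}_d$ and aim to reach $\ip{\beta_c(\cX\pm(x))}{\cX\pm(y)}_c$, which by Definition~\ref{d:sX-form} equals $\ip{\cX\pm(x)}{\cX\pm(y)}_{\cX c}$.

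First, I invoke the commuting square in part \ref{beta-factorization} of Definition~\ref{d:ACS} applied to the morphism $\pm\colon c\to d$. Reading off the diagram, we get $\beta_d = \cA\pm\circ \beta_c\circ \cX\pm$ (since we must go from $\cX d$ up by $\cX\pm$, across by $\beta_c$, and down by $\cA\pm$ to reach $\cA d$). Substituting, $\ip{\beta_d(x)}{y}_d = \ip{\cA\pm(\beta_c(\cX\pm(x)))}{y}_d$.

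Next, I apply Lemma~\ref{l:adjoint} to the morphism $\pm\colon c\to d$ with $a\defeq \beta_c(\cX\pm(x))\in \cA c$: this gives \[\ip{\cA\pm(a)}{y}_d = \ip{a}{\cX\pm(y)}_c.\] Plugging back in yields $\ip{\beta_c(\cX\pm(x))}{\cX\pm(y)}_c$, which is by definition $\ip{\cX\pm(x)}{\cX\pm(y)}_{\cX c}$. Doing this separately for the $+$ and $-$ elementary morphisms gives the two claimed equalities.

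There is no real obstacle; the statement is a direct bookkeeping consequence of the two structural axioms (the factorization and the adjointness built into a dinatural pairing). The only minor subtlety is to read the factorization square in the correct direction given the contravariance of $\cX$, so that one correctly obtains $\beta_d = \cA\pm\circ\beta_c\circ\cX\pm$ rather than an equation that mixes $\cX\pm$ on the wrong side.
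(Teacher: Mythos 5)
Your proposal is correct and follows exactly the paper's own argument: unpack $\ip{x}{y}_{\cX d}=\ip{\beta_d(x)}{y}_d$, substitute $\beta_d=\cA\pm\circ\beta_c\circ\cX\pm$ from the factorization axiom, apply the adjointness of Lemma~\ref{l:adjoint} to move $\cA\pm$ across the pairing, and repackage as $\ip{\cX\pm(x)}{\cX\pm(y)}_{\cX c}$. Nothing further is needed.
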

	
	\begin{proof} Consider $x,y\in \cX d$.  Then
		\begin{align*} \ip{x}{y}_{\cX d} & = \ip{\beta_{d}(x)}{y}_{d} \\
			& = \ip{(\cA \pm \circ \beta_{c} \circ \cX \pm)(x)}{y}_{d} \\
			& = \ip{\beta_{c}((\cX \pm)(x))}{(\cX \pm)(y)}_{c} \\
			& = \ip{(\cX \pm)(x)}{(\cX \pm)(y)}_{\cX c}. \qedhere
		\end{align*}
	\end{proof}

	The interpretation of this proposition is that by requiring the factorization property for the part of the definition of an abstract cluster structure relating to $\beta$, the $\cX$-forms are related by mutation (controlled by the edges of $E$) in a consistent way.
	
	Since $\cX\pm$ are linear, the following is immediate.
	
	\begin{corollary} For $c \to d$, if
		$\ip{-}{-}_{\cX c}$ is skew-symmetric then $\ip{-}{-}_{\cX d}$ is. \qed
	\end{corollary}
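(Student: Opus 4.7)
The statement is a direct corollary of Proposition~\ref{p:X-form-mut}, and the plan is essentially to chain the identity of that proposition with the skew-symmetry of $\ip{-}{-}_{\cX c}$, in and out. Specifically, given $x, y \in \cX d$, I would first apply Proposition~\ref{p:X-form-mut} (say using the $+$-signed version, though either choice works identically) to rewrite
\[ \ip{x}{y}_{\cX d} = \ip{(\cX+)(x)}{(\cX+)(y)}_{\cX c}. \]
Then I would invoke the hypothesis that $\ip{-}{-}_{\cX c}$ is skew-symmetric to swap the two arguments at the cost of a sign, obtaining $-\ip{(\cX+)(y)}{(\cX+)(x)}_{\cX c}$. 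Finally, a second application of Proposition~\ref{p:X-form-mut}, now read from right to left, gives $-\ip{y}{x}_{\cX d}$, as required.

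The role of the linearity of $\cX\pm$ (highlighted in the sentence preceding the corollary) is simply to guarantee that the composition $\ip{-}{-}_{\cX c} \circ (\cX\pm \cross \cX\pm)$ is a $\integ$-bilinear form in the first place, so that it makes sense to even speak of its skew-symmetry; skew-symmetry itself transfers along any pullback. As the whole argument is a three-line computation, I do not expect any genuine obstacle — the work was already done in establishing Proposition~\ref{p:X-form-mut}, and this corollary simply harvests it. The only minor stylistic choice is whether to write the argument using $\cX+$, $\cX-$, or both; since the proposition gives the two expressions as equal, it suffices to use either one.
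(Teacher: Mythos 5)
Your argument is correct and is exactly the route the paper intends: the corollary is left as immediate from Proposition~\ref{p:X-form-mut}, and your three-step computation (pull back to $c$ via $\cX+$, use skew-symmetry there, pull back again) is precisely that immediate argument spelled out. The remark about linearity of $\cX\pm$ matches the paper's one-line justification preceding the corollary.
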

	
	\begin{remark} Let $c\to d \to e$ be a path of length two in $E$. Then by functoriality and the previous proposition,
		\[ \ip{-}{-}_{\cX e}=\ip{-}{-}_{\cX c}\circ (\cX(+\circ+) \cross \cX(+\circ +))=\ip{-}{-}_{\cX c}\circ (\cX(-\circ-) \cross \cX(-\circ -)). \]
		Indeed, this extends to paths of arbitrary finite length, i.e.\ to all morphisms in $\cE$ that are sign-coherent.
		
		When the underlying graph $E$ is bi-directed, as in the case of the usual exchange graph, it follows that the value of the $\cX$-form at one $c\in C$ determines all the others and skew-symmetry propagates from one $c$ to all.  Equivalently, one may observe that this true of $\beta$ in the bi-directed setting too.
	\end{remark}
	
	The family of forms $\ip{\blank}{\blank}_{\cX c}$, $c\in C$ fit together as follows.  Consider the natural transformation $\delta_{\cA}\circ \beta\colon \cX \to \dual{\cX}$. This has components $(\delta_{\cA}\circ \beta)_{c}=\delta_{\cA c}\circ \beta_{c}$ for $c\in C$ and as in the proof of Proposition~\ref{p:ACS-skew-symmetrizable}, $(\delta_{\cA c}\circ \beta_{c})(x)(y)=\ip{x}{y}_{\cX c}$.
	
	We may rewrite this as follows.  Let $\cX \tensor_{\integ} \cX\colon \op{\cE} \to \Abcat$ be the functor given by \linebreak $(\cX \tensor_{\integ} \cX)c=\cX c \tensor \cX c$ on objects and $(\cX \tensor_{\integ} \cX)f=f\tensor f$ on morphisms.  Then we may define $\ip{\blank}{\blank}_{\cX}\colon \cX \tensor_{\integ} \cX \to \sZ$ to have components $(\ip{\blank}{\blank}_{\cX})_{c}=\ip{\blank}{\blank}_{\cX c}$.  By Proposition~\ref{p:X-form-mut}, we see that we have $\ip{-}{-}_{\cX d}=\ip{\cX\pm(-)}{\cX\pm(-)}_{\cX c}$.
	
	However, this is not a pairing of $\cX$ with itself, in the sense of Definition~\ref{d:pairing}, for the definition of a pairing takes as input one covariant and one contravariant functor.  Rather, we have a variation on this, with a variance change in one position, akin to the difference between a natural transformation and a factorization. Nevertheless, we see that either via $\ip{\blank}{\blank}_{\cX}$ or $\delta_{\cA} \circ \beta$, we have ``naturality'' of the forms $\ip{\blank}{\blank}_{\cX c}$.
	
	We remarked at the start of this section that the forms $\ip{\blank}{\blank}_{\cX c}$ are typically not non-degenerate.  Indeed, we can see from the above discussion that their non-degeneracy determines and is determined by the corresponding property of $\beta_{c}$.
	
	\begin{proposition}\label{p:inj-iff-left-non-deg-X-form} Assume that $\cC$ is skew-symmetrizable at $c$. The following are equivalent:
		\begin{enumerate}
			\item\label{p:inj-iff-left-non-deg-X-form-inj} The map $\beta_{c}$ is injective.
			\item\label{p:inj-iff-left-non-deg-X-form-left-nd} The form $\ip{\blank}{\blank}_{\cX c}$ is left non-degenerate.
		\end{enumerate}
	\end{proposition}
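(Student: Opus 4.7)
The plan is to identify the left radical of $\ip{-}{-}_{\cX c}$ explicitly in terms of $\beta_{c}$ and $\delta_{\cA c}$. From Definition~\ref{d:sX-form} and the equivalent expressions in~\eqref{eq:expressions-for-sX-form}, we have $\ip{x}{y}_{\cX c}=\delta_{\cA c}(\beta_{c}(x))(y)$, so the left radical is
\[ \Ker(\delta_{\cA c}\circ\beta_{c})=\beta_{c}^{-1}(\Ker\delta_{\cA c})=\beta_{c}^{-1}(\cX c^{\perp}). \]

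For the implication \ref{p:inj-iff-left-non-deg-X-form-left-nd}$\Rightarrow$\ref{p:inj-iff-left-non-deg-X-form-inj}, the inclusion $\Ker\beta_{c}\subseteq\beta_{c}^{-1}(\cX c^{\perp})$ makes the argument immediate: if the preimage on the right is trivial, then so is $\Ker\beta_{c}$, i.e.\ $\beta_{c}$ is injective.

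For the converse \ref{p:inj-iff-left-non-deg-X-form-inj}$\Rightarrow$\ref{p:inj-iff-left-non-deg-X-form-left-nd}, I would invoke Lemma~\ref{l:prin-beta-ip} to rewrite the form as $\ip{x}{y}_{\cX c}=\ip{\beta_{c}^{p}(x)}{y}_{c}^{p}$. Since $\ip{-}{-}_{c}^{p}$ is non-degenerate by construction of the principal part, the map $\delta_{\cA^{p}c}$ is injective and (using \eqref{eq:iota-star-equals-pi}) the left radical is equally well described as $\Ker\beta_{c}^{p}$. Thus left non-degeneracy of $\ip{-}{-}_{\cX c}$ is equivalent to injectivity of $\beta_{c}^{p}=\pi_{c}^{p}\circ\beta_{c}$.

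The main step is therefore to show that $\beta_{c}$ injective forces $\beta_{c}^{p}$ injective, or equivalently that the only $x\in\cX c$ satisfying $\beta_{c}(x)\in\cX c^{\perp}$ is $x=0$, i.e.\ $\image\beta_{c}\cap\cX c^{\perp}=0$. This is the expected obstacle, and should be handled by exploiting the interaction between $\beta_{c}$ and the right non-degeneracy of $\ip{-}{-}_{c}$ (together with the freeness of the sheaves $\cX$ and $\cA$); once it is in place, the two equivalent characterisations of the left radical collapse to a single condition and the proposition follows.
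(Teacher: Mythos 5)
Your computation of the left radical of $\ip{\blank}{\blank}_{\cX c}$ as $\Ker(\delta_{\cA c}\circ\beta_{c})=\beta_{c}^{-1}(\cX c^{\perp})$ is correct, and your argument that left non-degeneracy forces $\beta_{c}$ injective is complete and is essentially the paper's own argument for that direction. The gap is in the other implication, which you correctly reduce to the claim $\image\beta_{c}\intersection\cX c^{\perp}=0$ (equivalently, injectivity of $\beta_{c}^{p}=\pi_{c}^{p}\circ\beta_{c}$) but then defer to ``the interaction between $\beta_{c}$ and the right non-degeneracy of $\ip{\blank}{\blank}_{c}$''. This deferral cannot be cashed in: right non-degeneracy only says that no nonzero element of $\cX c$ pairs to zero against all of $\cA c$; it gives no control over an element of $\cA c$ such as $\beta_{c}(x)$ pairing to zero against all of $\cX c$, which is exactly membership in the left radical $\cX c^{\perp}$, and that radical is nonzero whenever there are ``frozen'' directions. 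Concretely, take $E$ a single vertex $c$ with no arrows, $\cX c=\integ$, $\cA c=\integ^{2}$, $\beta_{c}(x)=(0,x)$ and $\ip{(a_{1},a_{2})}{x}_{c}=a_{1}x$: the pairing is right non-degenerate and $\beta_{c}$ is injective, yet $\ip{\blank}{\blank}_{\cX c}$ is identically zero. This is the tropical shadow of an $A_{1}$ seed with one frozen variable, whose extended exchange matrix has full column rank while its principal part is the $1\times1$ zero matrix, so the failure persists even within the motivating class of examples.

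For comparison, the paper's proof of this direction takes $x$ in the left radical, observes $\ip{\beta_{c}(x)}{y}_{c}=0$ for all $y\in\cX c$, and concludes $\beta_{c}(x)=0$ ``by right non-degeneracy''---precisely the move you (rightly) hesitated to make, since it invokes non-degeneracy in the wrong argument of the form. What your reduction actually establishes is the clean statement that $\ip{\blank}{\blank}_{\cX c}$ is left non-degenerate if and only if $\beta_{c}^{p}$ is injective, i.e.\ if and only if $\beta_{c}$ is injective \emph{and} $\image\beta_{c}\intersection\cX c^{\perp}=0$; the second condition is automatic only under additional hypotheses (for instance $\cX c^{\perp}=0$, i.e.\ the pairing $\ip{\blank}{\blank}_{c}$ is also left non-degenerate). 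So the missing step is not a routine verification but a genuine obstruction, and the equivalence as stated needs either such an extra hypothesis or a reformulation in terms of $\beta_{c}^{p}$.
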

	
	\begin{proof}
		
		Assume $\beta_{c}$ is injective and consider $\epsilon_{L}\colon \cX c \to \dual{\cX c}$, $\epsilon_{L}(x)=\ip{x}{\blank}_{\cX c}$. Then $\Ker \epsilon_{L}$ is the left radical of $\ip{\blank}{\blank}_{\cX c}$; we use $\epsilon$ rather than $\delta$ to avoid confusion with the maps induced by the form $\ip{\blank}{\blank}$ and use ``$L$'' to distinguish from the other induced map.
		
		Let $x\in \Ker \epsilon_{L}$.  Then for all $y\in \cX$, $\epsilon_{L}(x)(y)=0$, that is $\ip{x}{y}_{\cX c}=\ip{\beta_{c}(x)}{y}_{c}=0$.  Since $\cC$ is skew-symmetrizable at $c$, we deduce that $\ip{y}{\beta_{c}(x)}_{c}=0$ for all $y$. But $\ip{\blank}{\blank}_{c}$ is right non-degenerate, so $\beta_{c}(x)=0$.  Since $\beta_{c}$ is injective, then $x=0$ and $\Ker \epsilon_{L}=0$, so that $\ip{\blank}{\blank}_{\cX c}$ is left non-degenerate.
		
		Conversely, assume that $\ip{\blank}{\blank}_{\cX c}$ is left non-degenerate and let $x,x'\in \cX c$ be such that $\beta_{c}(x)=\beta_{c}(x')$.  Then for all $y\in \cX$, $\ip{x}{y}_{\cX c}=\ip{x'}{y}_{\cX c}$, so $\ip{x-x'}{y}_{\cX c}=0$ for all $y$ and so left non-degeneracy implies $x=x'$ and $\beta_{c}$ injective.
		
	\end{proof}
	
	\begin{proposition}\label{p:surj-iff-right-non-deg-X-form} Let $\cC$ be a finite rank abstract cluster structure.  The following are equivalent:
		\begin{enumerate}
			\item\label{p:surj-iff-right-non-deg-X-form-surj} The map $\beta_{c}$ is surjective.
			\item\label{p:surj-iff-right-non-deg-X-form-left-nd} The form $\ip{\blank}{\blank}_{\cX c}$ is right non-degenerate.
		\end{enumerate}
	\end{proposition}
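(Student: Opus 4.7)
The plan is to mirror the proof of Proposition~\ref{p:inj-iff-left-non-deg-X-form}, now working with the right radical of the $\cX$-form and translating between surjectivity of $\beta_c$ and injectivity of its transpose via duality. I introduce the map $\epsilon_R\colon \cX c \to \dual{\cX c}$, $\epsilon_R(y)=\ip{\blank}{y}_{\cX c}$, whose kernel is the right radical of $\ip{\blank}{\blank}_{\cX c}$. From the expressions in \eqref{eq:expressions-for-sX-form},
\[
\epsilon_R(y)(x)=\ip{\beta_c(x)}{y}_c=\delta_{\cX c}(y)(\beta_c(x))=\bigl(\dual{\beta_c}\circ \delta_{\cX c}\bigr)(y)(x),
\]
so $\epsilon_R=\dual{\beta_c}\circ\delta_{\cX c}$, and the task reduces to showing that $\beta_c$ is surjective if and only if this composite is injective.

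For $\ref{p:surj-iff-right-non-deg-X-form-surj}\Rightarrow \ref{p:surj-iff-right-non-deg-X-form-left-nd}$ the argument is short: surjectivity of $\beta_c$ dualises, for free Abelian groups, to injectivity of $\dual{\beta_c}$. Combined with the injectivity of $\delta_{\cX c}$ given by right non-degeneracy of the ambient pairing $\ip{\blank}{\blank}_c$, the composite $\epsilon_R$ is injective, and so $\ip{\blank}{\blank}_{\cX c}$ is right non-degenerate.

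For the converse I pass to the principal part via Lemma~\ref{l:prin-beta-ip}, obtaining the factorization $\epsilon_R=\dual{\beta^p_c}\circ \delta_{\cX^p c}$, where the induced pairing $\ip{\blank}{\blank}^p_c$ is non-degenerate on both sides. In finite rank this forces $\rank\cA^p c=\rank\cX c$ and $\delta_{\cX^p c}$ to have finite-index image in $\dual{\cA^p c}$. Injectivity of $\epsilon_R$ then propagates, after tensoring with $\bQ$, to injectivity of $\dual{\beta^p_c}$, and matched finite ranks give that $\beta^p_c$ has finite-index image in $\cA^p c$. One then uses the factorization $\beta^p_c=\pi_c^p\circ \beta_c$ together with the saturation of $\cX c^\perp=\Ker\delta_{\cA c}$ provided by Lemma~\ref{l:Ker-delta-saturated} to lift this information back to the surjectivity of $\beta_c$ itself.

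The principal obstacle is this last step: translating surjectivity of $\beta^p_c$ on the principal part to surjectivity of the unreduced $\beta_c$. This is precisely where the finite-rank hypothesis is essential, since it is the finite-dimensional duality between $\cA^p c$ and $\cX c$ that allows one to pass between vanishing conditions on their duals; one must also handle carefully the interaction with the ``frozen'' direction $\cX c^\perp$ so that the argument does not suffer from a loss of integrality when moving between $\cA c$, $\cA^p c$, and their $\bQ$-versions.
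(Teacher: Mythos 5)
Your forward implication is correct and is essentially the paper's own argument in functorial dress: the identity $\epsilon_{R}=\dual{\beta_{c}}\circ\delta_{\cX c}$ packages exactly the elementwise computation in the paper, with injectivity of $\dual{\beta_{c}}$ (from surjectivity of $\beta_{c}$) replacing the sentence that $\ip{\beta_{c}(x)}{y}_{c}=0$ for all $x$ forces $\ip{a}{y}_{c}=0$ for all $a$.

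The converse is where the proposal is not a proof, and the step you flag as the ``principal obstacle'' is a genuine gap, not a routine verification. The paper's converse never passes through the principal part: it identifies $\Ker\epsilon_{R}$ with the annihilator, inside $\cX c$ and with respect to $\ip{\blank}{\blank}_{c}$, of the image of $\beta_{c}$, and concludes directly from the vanishing of that annihilator together with the finite-rank hypothesis. Your route discards precisely the information such a conclusion needs. By Lemma~\ref{l:prin-beta-ip} the form $\ip{\blank}{\blank}_{\cX c}$ only sees $\beta^{p}_{c}=\pi^{p}_{c}\circ\beta_{c}$, so after reducing to the principal part nothing can be recovered about $\beta_{c}$ beyond its composite with $\pi^{p}_{c}$; and tensoring with $\bQ$ turns injectivity statements into rank statements, so the most your chain yields is that $\beta^{p}_{c}$ has finite-index image in $\cA^{p}c$. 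Finite index does not improve to surjectivity: saturation of $\cX c^{\perp}$ (Lemma~\ref{l:Ker-delta-saturated}) guarantees that $\cA^{p}c$ is free, but says nothing about $\Coker\beta^{p}_{c}$, which can be a nontrivial finite group while the $\cX$-form remains non-degenerate --- already for $\cA c=\cX c=\integ$ with the multiplication pairing and $\beta_{c}$ given by multiplication by $2$. So the final ``lifting'' step cannot be carried out from the ingredients you cite: what survives your reduction (non-degeneracy of the $\cX$-form, equivalently finite-index image of $\beta^{p}_{c}$) is strictly weaker than integral surjectivity of $\beta_{c}$, which is the ``full rank'' phenomenon the surrounding discussion alludes to. Any argument for the stated conclusion has to exploit $\Ker\epsilon_{R}=(\text{image of }\beta_{c})^{\perp_{c}}=0$ directly over $\integ$, as the paper's proof does, rather than its rationalisation.
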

	
	\begin{proof}
		
		Assume that $\beta_{c}$ is surjective and let $\epsilon_{R}\colon \cX c\to \dual{\cX c}$, $\epsilon_{R}(y)=\ip{\blank}{y}_{\cX c}$.  Then $\Ker \epsilon_{R}$ is the right radical of $\ip{\blank}{\blank}_{\cX c}$.
		
		Let $y\in \Ker \epsilon_{R}$. Then for all $x\in \cX c$, $\epsilon_{R}(y)(x)=0$, that is $\ip{x}{y}_{\cX c}=\ip{\beta_{c}(x)}{y}_{c}=0$. But $\beta_{c}$ is surjective, so $\ip{\beta_{c}(x)}{y}_{c}=0$ for all $x\in \cX c$ if and only if $\ip{a}{y}_{c}=0$ for all $a\in \cA c$. Since $\ip{\blank}{\blank}_{c}$ is right non-degenerate, this implies that $y=0$ and so $\ip{\blank}{\blank}_{\cX c}$ is right non-degenerate.
		
		Conversely, we have
		\[ \Ker \epsilon_{R} = \{ y\in \cX c \mid \ip{\beta_{c}(x)}{y}_{c}=0\ \text{for all}\ x\in \cX \}=(\Image \beta_{c})^{\perp_{c}} \] 
		where we use ``$\perp_{c}$'' to indicate that this is with respect to the form $\ip{\blank}{\blank}_{c}$.	Since $\ip{\blank}{\blank}_{\cX c}$ is right non-degenerate, $(\Image \beta_{c})^{\perp_{c}}=\Ker \epsilon_{R}=0$.  Then since $\cC$ has finite rank, we deduce that $\Image \beta_{c}=0^{\perp_{c}}=\cA_{c}$.
				
	\end{proof}
	
	\begin{corollary}
		Let $\cC$ be a finite rank abstract cluster structure. Then the map $\beta_{c}$ is an isomorphism if and only if $\ip{\blank}{\blank}_{\cX c}$ is non-degenerate. \qed
	\end{corollary}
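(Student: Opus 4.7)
The plan is to obtain this corollary as a direct combination of the two preceding propositions, since the two halves of the biconditional in the corollary factor cleanly into injectivity/surjectivity on the $\beta_{c}$ side and left/right non-degeneracy on the form side.

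First I would observe the two standard facts: for a homomorphism of Abelian groups, being an isomorphism is equivalent to being both injective and surjective; and for a $\integ$-bilinear form, non-degeneracy in the sense of Definition~\ref{d:nondeg}\ref{d:nondeg-both} means having both trivial left radical and trivial right radical, i.e.\ being simultaneously left and right non-degenerate. Both of these observations apply without additional hypotheses.

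Next I would invoke Proposition~\ref{p:inj-iff-left-non-deg-X-form}, which gives that $\beta_{c}$ is injective if and only if $\ip{\blank}{\blank}_{\cX c}$ is left non-degenerate, and Proposition~\ref{p:surj-iff-right-non-deg-X-form}, which (using the finite rank hypothesis on $\cC$) gives that $\beta_{c}$ is surjective if and only if $\ip{\blank}{\blank}_{\cX c}$ is right non-degenerate. Conjoining these two equivalences and using the two observations above yields the claim.

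There is no real obstacle here; the only subtlety worth flagging is that the finite rank assumption in the corollary is inherited from Proposition~\ref{p:surj-iff-right-non-deg-X-form}, where it is used to pass from $(\Image \beta_{c})^{\perp_{c}}=0$ to $\Image \beta_{c}=\cA c$. Thus the hypothesis cannot be dropped without substantially rethinking the surjectivity direction.
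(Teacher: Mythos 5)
Your proposal is correct and is exactly what the paper intends: the corollary is stated with an immediate \qed precisely because it follows by combining Proposition~\ref{p:inj-iff-left-non-deg-X-form} (injectivity $\Leftrightarrow$ left non-degeneracy) with Proposition~\ref{p:surj-iff-right-non-deg-X-form} (surjectivity $\Leftrightarrow$ right non-degeneracy, where finite rank is needed). Your remark about where the finite rank hypothesis enters is also accurate.
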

	
	The conditions that $\beta_{c}$ be injective or surjective correspond to statements about the exchange matrix being ``full rank''.  In the most general situation, when wanting to consider cluster algebras of infinite rank (i.e.\ when our clusters are not necessarily finite), we should be careful to use the correct condition, when e.g.\ rank and nullity comparisons are no longer appropriate. In particular, we see from the above that it is injectivity of $\beta_{c}$ that behaves uniformly between finite and infinite rank, rather than being ``full rank'', i.e. surjective.
	
	\subsection{Quantum structures}\label{ss:quantum-str}
	
	As we will see shortly from the construction, quantum cluster data is dual to the exchange data encoded in $\beta$.  Not every abstract cluster structure admits a quantum structure, although many important examples do.
	
	More precisely, the definition of an abstract cluster structure has an asymmetry: the factorization $\beta$ has domain $\cX$ and codomain $\cA$.  To restore this symmetry, we try to choose a retraction (as a factorization) of $\beta$ , $\rho\colon \cA\to \cX$.  That is, $\rho \circ \beta = \id_{\cX}$.
	
	\begin{remark}
	Asking that $\rho$ is a retraction of $\beta$, as opposed to being an arbitrary factorization with domain $\cA$ and codomain $\cX$, has two aspects that we comment on briefly.  First, it means that we are not fully restoring symmetry: we could try to choose a section rather than a retraction.  But, for reasons that are unclear to us, seeking a retraction is the right thing to do in many examples.
	
	Secondly, choosing some condition---rather than none---is sensible, as we want to have $\beta$ and $\rho$ interact in some way, or else all we have is a system of two abstract cluster structures with no relationship to each other beyond being built on the same underlying data.  The retraction condition is what we choose as our notion of \emph{compatibility}, and we will see several ways to express this.
	\end{remark}
	
	Since we are seeking a retraction, there are conditions that must be fulfilled for this to exist.
	
	\begin{lemma}\label{l:retract-iff-mono} Let $\cC=\genericACS$ be an abstract cluster structure.  The following are equivalent:
		\begin{enumerate}
			\item the factorization $\beta$ admits a retraction $\rho$;
			\item $\beta$ is a split monomorphism; and
			\item every component $\beta_{c}$ of $\beta$ is injective.
		\end{enumerate}
	\end{lemma}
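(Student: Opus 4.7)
The argument is naturally structured as a round-robin (1) $\Rightarrow$ (2) $\Rightarrow$ (3) $\Rightarrow$ (1), and the first two implications are essentially unpacking definitions. For (1) $\Rightarrow$ (2), a retraction $\rho$ of $\beta$ is by definition a left inverse, which exhibits $\beta$ as a split monomorphism in the natural category of factorizations between the fixed pair of functors $\cX$ and $\cA$. For (2) $\Rightarrow$ (3), evaluating the splitting identity $\rho \circ \beta = \id_{\cX}$ at each $c \in \cE$ yields $\rho_c \circ \beta_c = \id_{\cX c}$, so each $\beta_c$ is left-invertible in $\Abcat$ and hence injective.

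The substantive direction is (3) $\Rightarrow$ (1). Given component-wise injectivity, we must construct a family of $\integ$-linear maps $\rho_c\colon \cA c \to \cX c$ satisfying both the pointwise retraction identity $\rho_c \circ \beta_c = \id_{\cX c}$ and the factorization-compatibility $\rho_c = \cX f \circ \rho_d \circ \cA f$ for every $f\colon c \to d$ in $\cE$ (this being the form taken by factoriality in view of the contravariance of $\cX$ and covariance of $\cA$). The local problem of choosing some left inverse to $\beta_c$ is handled by exploiting that $\cX c$ is free abelian: one selects $\rho_c$ by splitting off the image $\beta_c(\cX c)$, once one observes that this image is a saturated subgroup of $\cA c$ (a point that should be checked, since otherwise injectivity alone does not suffice for splittability over $\integ$).

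The principal obstacle is the \emph{global} coordination: arbitrary independent pointwise choices will typically fail the factorization compatibility. My plan is to fix $\rho_c$ at a base vertex $c$ (using rootability of $E$ when it holds, or otherwise proceeding on one weakly connected component at a time) and propagate along each arrow $f\colon c \to d$ using the factorization identity $\beta_d = \cA f \circ \beta_c \circ \cX f$ already satisfied by $\beta$; consistency around signed digons follows from the defining relations $\alpha^{\mp} \circ \alpha^{\pm} = \id$ in $\cE$. The trickiest step will be verifying path-independence around longer cycles in $\cE$, where one expects to need to reformulate the choice of $\rho$ canonically in terms of the pairing $\ip{\blank}{\blank}$ (which, by Lemma~\ref{l:adjoint}, automatically provides natural compatibility between $\cA f$ and $\cX f$), thereby bypassing arbitrary choices altogether.
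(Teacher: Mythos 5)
Your implications (1)~$\Rightarrow$~(2)~$\Rightarrow$~(3) are fine and correspond to what the paper dismisses as standard. The substantive direction (3)~$\Rightarrow$~(1), however, is in your write-up only a programme, and it stalls at exactly the two points you yourself flag. The first point is fatal as planned: the saturation of $\beta_{c}(\cX c)$ in $\cA c$ is not a fact waiting to be "checked" --- it does not follow from injectivity, and nothing in Definition~\ref{d:ACS} supplies it. Concretely, let $E$ be a single vertex $*$ with no arrows, $\cX *=\cA *=\integ$, $\beta_{*}$ multiplication by $2$, and $\ip{a}{x}_{*}=ax$; this satisfies every axiom of an abstract cluster structure (the pairing is even perfect), $\beta_{*}$ is injective, yet $\rho_{*}\circ\beta_{*}=\id_{\integ}$ has no solution, so not even a componentwise retraction exists. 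What your local step actually requires is that each image $\beta_{c}(\cX c)$ be saturated (equivalently, in finite rank, that $\Coker\beta_{c}$ be free), which is strictly stronger than injectivity of $\beta_{c}$. The second point --- path-independence of the propagated choices, i.e.\ that the family $\{\rho_{c}\}$ satisfies $\rho_{c}=\cX f\circ \rho_{d}\circ \cA f$ for \emph{all} morphisms $f$ of $\cE$ and not just for the elementary arrows along a chosen spanning tree --- is likewise left as a hope; Lemma~\ref{l:adjoint} relates $\cA f$ and $\cX f$ through the pairing, but it says nothing about the complementary summands you would have to choose, so "reformulate $\rho$ canonically via the pairing" is not yet an argument.

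For comparison, the paper's own proof is one sentence: (1)~$\Leftrightarrow$~(2) is taken as the definition of split monomorphism, and the equivalence with (3) is asserted to hold "because $\cX$ and $\cA$ are free Abelian sheaves". So you will not find the missing ingredients there: the paper neither performs the construction you attempt nor addresses the saturation issue you correctly isolated (freeness of the values gives that a monomorphism with free cokernel splits, not that $\Coker\beta_{c}$ is torsion-free), and it is silent on the global compatibility of the splittings. Your proposal is thus more candid than the printed proof about where the content of (3)~$\Rightarrow$~(1) would have to lie, but as it stands it does not establish that direction, and the route you describe cannot be completed from componentwise injectivity alone.
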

	
	\begin{proof} The first two equivalences are standard and the third is equivalent to those because $\cX$ and $\cA$ are free Abelian sheaves.
	\end{proof}
	
	By Proposition~\ref{p:inj-iff-left-non-deg-X-form}, we could also express this by saying that the forms $\canform{\blank}{\blank}{\cX c}$ are left non-degenerate for all $c$.  As per the discussion around that Proposition, this expresses the usual notion of ``full rank''.
	
	Now, to define a quantum structure in a way that aligns with the usual approach, we should not use $\rho$ directly.  Recall that an abstract cluster structure comes with a right non-degenerate pairing $\canform{\blank}{\blank}{}\colon \cA\tensor_{\integ} \cX \to \sZ$ and that there is an associated natural transformation $\delta_{\cX}\colon \cX \to \dual{\cA}$, which is itself a monomorphism.
	
	Then it is $\lambda=\delta_{\cX}\circ \rho$ that we will put in the data of an abstract quantum cluster structure.
	
	\begin{definition} Let $\cC=\genericACS$ be an abstract cluster structure over a simple directed graph $E$.
		
		A \emph{quantum structure} for $\cC$ is a factorization $\lambda\colon \cA \to \dual{\cA}$ such that $\lambda=\delta_{\cX}\circ \rho$ for some retraction $\rho$ of $\beta$.
		If $\cC$ admits a quantum structure, we will say that the extended tuple $\genericAQCS$ is an \emph{abstract quantum cluster structure}.
	\end{definition}
	
	We will usually call abstract quantum cluster structures $\cC_{q}$, to highlight the additional data.	By Lemma~\ref{l:retract-iff-mono}, $\cC$ admits a quantum structure if and only if $\beta$ is a split monomorphism.
	
	Recalling too that we have a dinatural transformation $\evform{\blank}{\blank}\colon \dual{\cA}\tensor_{\integ} \cA \to \sZ$ and associated natural transformations $\delta_{\cA}^{\text{ev}}$ and $\delta_{\dual{\cA}}^{\text{ev}}=\id_{\dual{\cA}}$, we define the \emph{adjoint} $\adj{\lambda}$ of $\lambda$ to be the factorization $\delta_{\dual{\cA}}^{\text{ev}}\circ \dual{\lambda} \circ \delta_{\cA}^{\text{ev}}$.
	
	\begin{lemma}\label{l:compat-equiv-conds}
		The following are equivalent:
		\begin{enumerate}
			\item\label{l:compat-equiv-conds-q-str} $\lambda=\delta_{\cX}\circ \rho$ is a quantum structure for $\cC$;
			\item\label{l:compat-equiv-conds-retract} $\rho$ is a retraction of $\beta$, i.e.\ $\rho \circ \beta=\id_{\cX}$;
			\item\label{l:compat-equiv-conds-forms} $\evform{\blank}{\blank}\circ (\lambda \tensor_{\integ} \beta) =\canform{\blank}{\blank}{}$;
			\item\label{l:compat-equiv-conds-LTB} $\adj{\lambda}\circ \beta=\delta_{\cX}$; and
			\item\label{l:compat-equiv-conds-BTL} $\adj{\beta} \circ \lambda = \delta_{\cA}$.
		\end{enumerate}
	\end{lemma}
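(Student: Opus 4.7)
The strategy has three stages. First, I would observe that (1) $\iff$ (2) is immediate from the definition of a quantum structure: since $\lambda$ is presented as $\delta_{\cX} \circ \rho$, the assertion that $\lambda$ is a quantum structure is by definition the same as the assertion that $\rho$ is a retraction of $\beta$.

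Second, I would unpack (3), (4), and (5) componentwise and show that each encodes the same pointwise identity. Using the explicit formula $\adj{\lambda}(a)(a') = \lambda(a')(a)$ from the discussion following Definition~\ref{d:skew-symmetric}, and reading $\adj{\beta}$ as the dual $\dual{\beta}\colon \dual{\cA} \to \dual{\cX}$, I would evaluate the displayed equalities at an object $c$ on arbitrary elements. In each case they collapse to the same compatibility relation between $\lambda$, $\beta$, and $\canform{\blank}{\blank}{}$: for (3), by identifying $\evform{\lambda_c(a)}{\beta_c(x)}$ with $\lambda_c(a)(\beta_c(x))$; for (4), by comparing $(\adj{\lambda}_c \circ \beta_c)(x)(a)$ with $\delta_{\cX c}(x)(a) = \canform{a}{x}{c}$; and for (5), by comparing $(\dual{\beta}_c \circ \lambda_c)(a)(x)$ with $\delta_{\cA c}(a)(x) = \canform{a}{x}{c}$. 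So (3), (4), (5) are mutually equivalent, all expressing a single compatibility identity (call it $(\ast)$).

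Third, I would show that $(\ast)$ is equivalent to the retraction condition (2). Substituting $\lambda = \delta_{\cX} \circ \rho$ into $(\ast)$, expanding using Definition~\ref{d:nondeg}\ref{d:nondeg-right}, and then using $\rho \circ \beta = \id_{\cX}$ yields the implication (2) $\implies$ $(\ast)$. For the converse, one views $(\ast)$ as an equality of $\integ$-linear functionals on $\cA c$ (one for each fixed $x \in \cX c$) and applies right non-degeneracy of $\canform{\blank}{\blank}{c}$, equivalently injectivity of $\delta_{\cX c}$, to conclude $\rho_c(\beta_c(x)) = x$ as an identity in $\cX c$.

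The main obstacle is this converse direction in the third step: the other equivalences amount to formal manipulation of adjoints and evaluation forms, but $(\ast) \implies (2)$ is the unique place where right non-degeneracy of the pairing plays an essential role, and care is needed to organize the components of the identity so that the non-degeneracy lemma can be applied cleanly at each $c \in \cE$.
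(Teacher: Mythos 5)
Your first two stages are correct and essentially reproduce the paper's argument: (1) $\iff$ (2) is definitional, and unpacking (3), (4), (5) componentwise shows that all three assert one and the same identity, namely $\lambda_{c}(a)(\beta_{c}(x))=\canform{a}{x}{c}$ for all $a\in\cA c$, $x\in\cX c$ (your $(\ast)$); the paper gets the same equivalences by rewriting $\evform{\lambda_{c}(\blank)}{\beta_{c}(\blank)}$ as $\evform{\blank}{\adj{\lambda_{c}}\circ\beta_{c}(\blank)}$ and then taking adjoints.

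The gap is in your third stage, and it is a concrete transposition problem rather than a matter of organisation. With $\lambda=\delta_{\cX}\circ\rho$, your identity $(\ast)$ reads $\canform{\beta_{c}(x)}{\rho_{c}(a)}{c}=\canform{a}{x}{c}$: the retraction $\rho$ is applied to the $\cA$-argument and $\beta$ to the $\cX$-argument, so the composite $\rho_{c}\circ\beta_{c}$ simply never appears and the instruction ``then use $\rho\circ\beta=\id_{\cX}$'' has nothing to act on. Likewise, to invoke injectivity of $\delta_{\cX c}$ in the converse you need the identity organised as $\delta_{\cX c}(\rho_{c}\beta_{c}(x))=\delta_{\cX c}(x)$, i.e.\ as $\lambda_{c}\circ\beta_{c}=\delta_{\cX c}$, which is the \emph{transpose} of $(\ast)$, not $(\ast)$ itself. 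The condition genuinely equivalent to (2) is $\lambda\circ\beta=\delta_{\cX}$: one direction is the one-line computation $\lambda\circ\beta=\delta_{\cX}\circ\rho\circ\beta=\delta_{\cX}$, and the converse follows from injectivity of the components $\delta_{\cX c}$, i.e.\ right non-degeneracy --- this is exactly the route the paper's proof takes (``Since $\delta_{\cX}\circ\rho\circ\beta=\delta_{\cX}$\dots''). Bridging from $\lambda\circ\beta=\delta_{\cX}$ to the $\adj{\lambda}$-form in (4), equivalently to your $(\ast)$, requires $\adj{\lambda}\circ\beta=\lambda\circ\beta$, which the stated hypotheses do not supply (skew-symmetry of $\lambda$ is deliberately not assumed at this point, and even assuming it would produce a sign rather than an equality). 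So as written your stage 3 does not close the loop between (2) and (3)--(5); to repair it you must either extract the untransposed identity $\lambda\circ\beta=\delta_{\cX}$ from (3)--(5) in stage 2, or justify the missing transposition explicitly --- a point on which the paper's own very terse proof is also silent, so you should flag it rather than assert that the substitution goes through.
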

	
	\begin{proof} We have \ref{l:compat-equiv-conds-q-str} if and only if \ref{l:compat-equiv-conds-retract} by definition. Since $\delta_{\cX}\circ \rho\circ \beta=\delta_{\cX}$, these are equivalent to \ref{l:compat-equiv-conds-forms}.	Rewriting the left-hand side of  $\evform{\lambda_{c}(\blank)}{\beta_{c}(\blank)}=\canform{\blank}{\blank}{c}$ as $\evform{\blank}{\adj{\lambda_{c}}\circ \beta_{c}(\blank)}$, we obtain the equivalency with \ref{l:compat-equiv-conds-LTB}. Finally, taking adjoints of both sides of the latter, we obtain \ref{l:compat-equiv-conds-BTL}.		
	\end{proof}
	
	We will refer to any of these equivalent conditions as the \emph{compatibility} of $\beta$ and $\lambda$ in an abstract quantum cluster structure.
	
	\begin{remark} As we progress through the list of equivalent conditions of the lemma, we come closer to expressions that will be familiar to those who have seen the original definition of a quantum cluster algebra.  Namely, for an exchange matrix $B$ and quasi-commutation matrix $L$, the compatibility condition is expressed by saying that $B^{T}L$ should (up to reordering columns) have the form of a block matrix consisting of a diagonal block with positive integer entries and a zero block.  Furthermore, the positive integers appearing are closely related to the skew-symmetrizer of $B$.  
		
	Now $\adj{\ }$ corresponds to taking the matrix transpose and, by Proposition~\ref{p:ACS-skew-symmetrizable}, $\delta_{\cA c}\circ \beta_{c}$ is skew-symmetric, i.e.\ $\delta_{\cA c}$ is a skew-symmetrizer for $\beta_{c}$.  So condition~\ref{l:compat-equiv-conds-BTL} matches the usual compatibility condition.
	\end{remark}

	A further (seeming) departure from the standard approach is that we have not required $\lambda$ to be skew-symmetric, which is important for the construction of quantum cluster algebras.  Specifically, the skew-symmetry of $\lambda$ enables us to twist commutative Laurent polynomial algebras by the associated bicharacter and obtain an algebra where the generating variables quasi-commute.
	
	However, we have avoided insisting on more than is needed, until the crucial point.  The following tidies this up and also introduces the analogous notions and theory to that in Section~\ref{ss:forms-skew-sym} for $\beta$.
	
	\begin{definition}\label{d:A-form}
		Let $\cC_{q}=\genericAQCS$ be an abstract quantum cluster structure over a simple directed graph $E$.  For $c\in C$, define $\canform{\blank}{\blank}{\cA c}\colon \cA c \cross \cA c \to \integ$ by $\canform{a}{b}{\cA c}=\evform{\lambda_{c}(a)}{b}$.
	\end{definition}
	
	We will call this form the \emph{$\cA$-form} at $c$, by analogy with the $\cX$-form, $\canform{\blank}{\blank}{\cX c}$.
	
	\begin{lemma}
		The $\cA$-form $\canform{\blank}{\blank}{\cA c}$ is the pullback of $\canform{\blank}{\blank}{\cX c}$ along $\rho_{c}$, i.e.\
		\[ \canform{\blank}{\blank}{\cA c}=\canform{\blank}{\blank}{\cX c}\circ (\rho_{c} \cross \rho_{c}). \]
	\end{lemma}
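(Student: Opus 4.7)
The strategy is to unwind both sides using Definitions~\ref{d:sX-form} and~\ref{d:A-form} and the compatibility identities of Lemma~\ref{l:compat-equiv-conds}, thereby reducing the claim to a vanishing statement about the pairing $\canform{\blank}{\blank}{c}$ on the subgroup $\Ker \rho_c$ of $\cA c$.

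First I would rewrite the right-hand side using Definition~\ref{d:sX-form} as $\canform{\beta_c(\rho_c(a))}{\rho_c(b)}{c}$, and then apply condition~\ref{l:compat-equiv-conds-forms} of Lemma~\ref{l:compat-equiv-conds} to express this as $\evform{\lambda_c(\beta_c(\rho_c(a)))}{\beta_c(\rho_c(b))}$. Using $\lambda_c = \delta_{\cX c}\circ \rho_c$ together with the retraction identity $\rho_c \circ \beta_c = \id_{\cX c}$, one has $\lambda_c(\beta_c(\rho_c(a))) = \lambda_c(a)$, so the right-hand side simplifies to $\evform{\lambda_c(a)}{\beta_c(\rho_c(b))}$. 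The left-hand side, by Definition~\ref{d:A-form}, is $\evform{\lambda_c(a)}{b}$. The claim therefore reduces to the identity
\[ \evform{\lambda_c(a)}{b-\beta_c(\rho_c(b))}=0 \qquad \text{for all } a,b\in \cA c. \]

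Setting $k=b-\beta_c(\rho_c(b))$, the retraction identity immediately gives $\rho_c(k)=0$, i.e.\ $k\in \Ker \rho_c$. I would next invoke condition~\ref{l:compat-equiv-conds-BTL} of Lemma~\ref{l:compat-equiv-conds}, namely $\adj{\beta}\circ \lambda=\delta_{\cA}$, to compute $\delta_{\cA c}(k)=\adj{\beta_c}(\delta_{\cX c}(\rho_c(k)))=0$, which translates into $\canform{k}{x}{c}=0$ for every $x\in \cX c$. Expanding the pairing that remains to be shown, $\evform{\lambda_c(a)}{k}=\delta_{\cX c}(\rho_c(a))(k)=\canform{k}{\rho_c(a)}{c}$, and specialising $x=\rho_c(a)$ gives the required vanishing and completes the proof.

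The only substantive input is the inclusion $\Ker \rho_c \subseteq \Ker \delta_{\cA c}$. It captures the fact that although $\beta_c\circ \rho_c$ is not the identity on $\cA c$ in general, its ``error'' $b-\beta_c(\rho_c(b))$ lies in a subgroup on which the pairing $\canform{\blank}{\blank}{c}$ vanishes identically; this is exactly what makes the pullback description of the $\cA$-form clean and is the essential use of the compatibility of $\beta$ and $\lambda$. Everything else is routine manipulation.
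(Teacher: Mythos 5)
Your proof is correct. It uses the same ingredients as the paper's argument---the definitions of the two forms, the identity $\lambda_{c}=\delta_{\cX c}\circ \rho_{c}$, the retraction property, and Lemma~\ref{l:compat-equiv-conds}---but it is organised differently. The paper proves the identity by a single chain of equalities from $\canform{a}{b}{\cA c}$ to $\canform{\rho_{c}(a)}{\rho_{c}(b)}{\cX c}$, invoking compatibility only in the form~\ref{l:compat-equiv-conds-forms} and never producing a remainder term. You instead reduce both sides to evaluations against $\lambda_{c}(a)$, isolate the error $k=b-\beta_{c}(\rho_{c}(b))$, and kill it via the inclusion $\Ker \rho_{c}\subseteq \Ker \delta_{\cA c}$, which you extract from condition~\ref{l:compat-equiv-conds-BTL}. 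What your route buys is the conceptual statement that $\canform{\blank}{\blank}{c}$ vanishes identically on $\Ker\rho_{c}$ (i.e.\ $\Ker\rho_{c}$ lies in the left radical), which explains cleanly why the failure of $\beta_{c}\circ\rho_{c}$ to be the identity is harmless; the cost is an extra appeal to~\ref{l:compat-equiv-conds-BTL} that the paper's chain avoids. Note also that your final vanishing can be obtained a little more directly: since $\lambda_{c}(k)=\delta_{\cX c}(\rho_{c}(k))=0$, condition~\ref{l:compat-equiv-conds-forms} already gives $\canform{k}{\rho_{c}(a)}{c}=\evform{\lambda_{c}(k)}{\beta_{c}(\rho_{c}(a))}=0$, without passing through $\delta_{\cA c}$ at all.
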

	
	\begin{proof}
		We have 
		\begin{align*} \canform{a}{b}{\cA c} & = \evform{\lambda_{c}(a)}{b} \\
		 & = \lambda_{c}(a)(b) \\
		& = (\delta_{\cX c}\circ \rho_{c})(a)(b) \\
		& = \canform{b}{\rho_{c}(a)}{c} \\
		& \stackrel{\ref{l:compat-equiv-conds}\ref{l:compat-equiv-conds-forms}}{=} \evform{\lambda_{c}(b)}{(\beta_{c} \circ \rho_{c})(a)} \\
		& = \evform{(\delta_{\cX c}\circ \rho_{c})(b)}{(\beta_{c} \circ \rho_{c})(a)} \\
		& = \canform{\beta_{c}(\rho_{c}(a))}{\rho_{c}(b)}{c} \\
		& = \canform{\rho_{c}(a)}{\rho_{c}(b)}{\cX c}. \qedhere
		\end{align*}
	\end{proof}
	
	Then, if a compatible $\lambda$ exists for $\beta$, i.e.\ there is a retraction $\rho$ of $\beta$, we also see that
	\[ \canform{\blank}{\blank}{\cX c}=\canform{\blank}{\blank}{\cX c}\circ ((\rho_{c}\circ \beta_{c}) \cross (\rho_{c}\circ \beta_{c}))=\canform{\blank}{\blank}{\cA c}\circ (\beta_{c}\cross \beta_{c}) \]
	so that the $\cX$-form is the pullback of the $\cA$-form along $\beta$.
	
	\begin{corollary} Let $\cC_{q}=\genericAQCS$ be an abstract quantum cluster structure.
		
		The following are equivalent:
		\begin{enumerate}
			\item $\cC_{q}$ is skew-symmetrizable;
			\item $\canform{\blank}{\blank}{\cX}$ is skew-symmetric;
			\item $\canform{\blank}{\blank}{\cA}$ is skew-symmetric;
			\item $\delta_{\cA}\circ \beta$ is skew-symmetric; and 
			\item $\lambda=\delta_{\cX} \circ \rho$ is skew-symmetric.
		\end{enumerate}
	\end{corollary}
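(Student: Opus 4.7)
The plan is to chain together several small equivalences, most of which are immediate from the preceding definitions and propositions. The only content beyond bookkeeping is that the $\cA$-form and the $\cX$-form are mutual pullbacks under $\beta$ and $\rho$, which transports skew-symmetry in both directions.

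First I would observe that \textup{(1)}$\iff$\textup{(2)} holds by Definition~\ref{d:skew-symmetrizable}, together with componentwise application, and \textup{(2)}$\iff$\textup{(4)} is exactly the statement of Proposition~\ref{p:ACS-skew-symmetrizable}. So it remains to bring the $\cA$-side conditions \textup{(3)} and \textup{(5)} into the chain.

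Next I would handle \textup{(3)}$\iff$\textup{(5)} directly from Definition~\ref{d:A-form}: writing $\canform{a}{b}{\cA c}=\evform{\lambda_c(a)}{b}=\lambda_c(a)(b)$, skew-symmetry of $\canform{\blank}{\blank}{\cA c}$ reads $\lambda_c(a)(b)=-\lambda_c(b)(a)$, which, via the remark after Definition~\ref{d:skew-symmetric} expressing the adjoint as $\adj{\lambda_c}(a)(b)=\lambda_c(b)(a)$, is exactly $\adj{\lambda_c}=-\lambda_c$, i.e.\ $\lambda_c$ skew-symmetric. Iterating over $c$ gives the equivalence.

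Finally, for \textup{(2)}$\iff$\textup{(3)} I would use the two pullback identities established just before this corollary, namely
\[ \canform{\blank}{\blank}{\cA c}=\canform{\blank}{\blank}{\cX c}\circ(\rho_c\cross\rho_c) \quad\text{and}\quad \canform{\blank}{\blank}{\cX c}=\canform{\blank}{\blank}{\cA c}\circ(\beta_c\cross\beta_c). \]
Skew-symmetry of a bilinear form is preserved under pullback along any homomorphism, so each identity gives one direction of the equivalence. This closes the loop $\textup{(1)}\iff\textup{(2)}\iff\textup{(3)}\iff\textup{(5)}$ and $\textup{(2)}\iff\textup{(4)}$, proving all five conditions equivalent. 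There is no serious obstacle here; the only point requiring care is to make sure the $\cA$-form inherits skew-symmetry from the $\cX$-form without assuming $\beta_c$ is surjective, which is handled cleanly by the pullback formula through $\rho_c$ (which exists precisely because we are in an abstract \emph{quantum} cluster structure, so $\beta$ admits a retraction by Lemma~\ref{l:retract-iff-mono}).
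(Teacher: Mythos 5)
Your argument is correct and is essentially the paper's own: the paper's proof simply cites "the above" (the pullback identities $\canform{\blank}{\blank}{\cA c}=\canform{\blank}{\blank}{\cX c}\circ(\rho_c\cross\rho_c)$ and $\canform{\blank}{\blank}{\cX c}=\canform{\blank}{\blank}{\cA c}\circ(\beta_c\cross\beta_c)$) together with Proposition~\ref{p:ACS-skew-symmetrizable}, which is exactly the chain of equivalences you spell out. Your write-up just makes explicit the steps the paper leaves to the reader, including the unwinding of Definition~\ref{d:A-form} for the equivalence of (3) and (5).
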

		
	\begin{proof} This now follows from the above and Proposition~\ref{p:ACS-skew-symmetrizable}.
	\end{proof}
		
	Explicitly, the factorization condition means that $\lambda$ is a collection of maps $\lambda_c$, one for each $c \in \cE$, such that for every morphism $f \colon c \to d$ in $\cE$ the following diagram commutes:
	\[\begin{tikzcd}[column sep=25pt]
		\cA c \arrow[->]{r}[above]{\lambda_{c}} \arrow[->]{d}[left]{\cA f} & \dual{\cA} c  \\
		\cA d  \arrow[->]{r}[below]{\lambda_{d}} & \dual{\cA} d  \arrow[->]{u}[right]{\dual{\cA} f}
	\end{tikzcd}\]

	We have the analogous claim to Proposition~\ref{p:X-form-mut}.
	
	\begin{proposition}
		Let $\cC_{q}=\genericAQCS$ be an abstract quantum cluster structure over a simple directed graph $E$.  Then for $c\to d$, the forms $\ip{\blank}{\blank}_{\cA c}$ and $\ip{\blank}{\blank}_{\cA d}$ are related by
		\[ \ip{\blank}{\blank}_{\cA c}=\ip{\blank}{\blank}_{\cA d}\circ (\cA + \cross \cA +)=\ip{\blank}{\blank}_{\cA d}\circ (\cA- \cross \cA -) . \]
	\end{proposition}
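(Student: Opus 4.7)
The plan is to mirror the proof of Proposition~\ref{p:X-form-mut}, but using the factorization data for $\lambda$ rather than that for $\beta$. Everything should reduce to chasing the commuting square for $\lambda$ via the evaluation form.

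First, I would unpack the $\cA$-form via its definition: $\canform{a}{b}{\cA c} = \evform{\lambda_{c}(a)}{b}$. The goal is then to substitute $\lambda_{c}$ using the factorization condition. Because $\lambda$ is a factorization of covariant $\cA$ with its contravariant dual $\dual{\cA}$, for each of the elementary morphisms $f = \pm\colon c\to d$ the commuting diagram displayed just before the proposition gives
\[ \lambda_{c} = \dual{\cA f} \circ \lambda_{d}\circ \cA f. \]
Applying this to an element $a\in \cA c$ and evaluating against $b\in \cA c$ under $\evform{\blank}{\blank}$, together with the fact that $\dual{\cA f}(\phi) = \phi\circ \cA f$ for any $\phi \in \dual{\cA} d$, yields
\[ \evform{\lambda_{c}(a)}{b} = \evform{\lambda_{d}(\cA f(a))\circ \cA f}{b} = \lambda_{d}(\cA f(a))(\cA f(b)) = \evform{\lambda_{d}(\cA f(a))}{\cA f(b)}. \]
Recognising the right-hand side as $\canform{\cA f(a)}{\cA f(b)}{\cA d}$ gives the desired identity for $f = +$, and the identical argument with $f = -$ gives the second equation. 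Since the derivation uses only the commutativity of the $\lambda$-factorization square (not the specific choice of sign), both equalities come out simultaneously.

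The argument is essentially routine dinatural-transformation bookkeeping; the only potential subtlety, and the one step worth being careful about, is the interpretation of the factorization diagram for $\lambda$ (whose domain is covariant and codomain contravariant), which means the square reads $\lambda_{c} = \dual{\cA f}\circ \lambda_{d}\circ \cA f$ rather than the more symmetric form appearing for $\beta$. Once that identity is set up correctly, the computation above is a one-line verification.
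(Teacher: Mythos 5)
Your proposal is correct and follows essentially the same route as the paper: unpack $\canform{a}{b}{\cA c}=\evform{\lambda_{c}(a)}{b}$, substitute $\lambda_{c}=\dual{\cA}f\circ \lambda_{d}\circ \cA f$ from the factorization square for $\lambda$, and evaluate to get $\evform{\lambda_{d}(\cA f(a))}{\cA f(b)}=\canform{\cA f(a)}{\cA f(b)}{\cA d}$, with the sign of $f$ playing no role. Your remark on interpreting $\dual{\cA}f$ as precomposition with $\cA f$ is exactly the point the paper's computation relies on.
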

	
	\begin{proof}
		Consider $a,b\in \cA c$.  Then
		\begin{align*} \ip{a}{b}_{\cA c} = & \evform{\lambda_{c}(a)}{b} \\
			 & = \lambda_{c}(a)(b) \\
			 & = (\dual{\cA}\!\pm \circ \lambda_{d} \circ \cA \pm)(a)(b) \\
			 & = \lambda_{d}((\cA \pm)(a))((\cA \pm)(b)) \\
			 & = \evform{(\lambda_{d} \circ \cA \pm)(a)}{(\cA \pm)(b)} \\
			 & = \canform{(\cA \pm)(a)}{(\cA \pm)(b)}{\cA d}. \qedhere
		\end{align*} 
	\end{proof}
	
	Then, again as for the $\cX$-form, it follows that if $E$ is bi-directed then the corresponding transformation $\canform{\blank}{\blank}{\cA}\colon \cA\tensor_{\integ} \cA \to \sZ$ is determined by a choice of one form $\canform{\blank}{\blank}{\cA c}$ for some $c$.  Again, this is already implicit in $\lambda$ being a factorization.
	
	\sectionbreak
		\section{The category of abstract (quantum) cluster structures}\label{s:cat-of-ACS}

	In order to achieve the benefits we claim for our abstract approach, we want the collection of abstract cluster structures to form a category.
	
	Rather than repeat ourselves unnecessarily, we will give the definitions in this section for abstract \emph{quantum} cluster structures and indicate where one should omit additional data or conditions for the non-quantum case.
		
		\subsection{Morphisms}\label{ss:morphisms-in-ACS}

	\begin{definition}\label{d:morphism-in-ACS}
		Let $\cC_{1}=(\cE_{1},\cX_{1},\beta_{1},\lambda_{1},\cA_{1},\ip{\blank}{\blank}^{1})$, $\cC_{2}=(\cE_{2},\cX_{2},\beta_{2},\lambda_{2},\cA_{2},\ip{\blank}{\blank}^{2})$ be a pair of abstract quantum cluster structures.  We define a \emph{morphism} of abstract quantum cluster structures to be a tuple $(F,\chi,\alpha)$ such that
		\begin{enumerate}[label=(\textup{\alph*})]
			\item $F\colon \cE_{1}\to \cE_{2}$ is a functor;
			\item\label{chi-nt} $\chi\colon \cX_{1}\to \cX_{2}\op{F}$ is a natural transformation;
			\item\label{alpha-nt} $\alpha\colon \cA_{1}\to \cA_{2}F$ is a natural transformation;
			\item\label{beta-squares}\setcounter{enumi}{16}  $\alpha\circ \beta_1=\beta_{2}\op{F}\circ \chi$;
			\item\label{lambda-squares} $\lambda_{1}=\dual{\alpha}\circ \lambda_{2}F\circ \alpha$.
		\end{enumerate}
	\end{definition}
	
	\begin{remark}
	Note that:
	\begin{itemize}
		\item conditions \ref{chi-nt} and \ref{alpha-nt} are precisely saying that $\chi$ and $\alpha$ are morphisms of the given sheaves;
		\item the conditions in \ref{chi-nt}, \ref{alpha-nt} and \ref{beta-squares} ensure that for all $c\to d$, the following cube commutes:
		\[
		\xymatrix{ \cX_{1}c \ar[rr]^-{(\beta_{1})_{c}} \ar[rd]_{\chi_{c}} & &  \cA_{1}c \ar[dd]^(0.66){\cA_{1}\pm} \ar[rd]^-{\alpha_{c}} & \\
			& \cX_{2}\op{F}c \ar[rr]^(0.33){(\beta_{2})_{Fc}} & & \cA_{2}Fc \ar[dd]^-{\cA_{2}F\pm} \\
			\cX_{1}d \ar[uu]^-{\cX_{1}\pm} \ar[rd]_{\chi_d} \ar[rr]_(0.66){(\beta_{1})_{d}} & & \cA_{1}d \ar[rd]^-{\alpha_{d}} & \\
			& \cX_{2}\op{F}d \ar[uu]^(0.66){\cX_{2}\op{F}\pm} \ar[rr]_-{(\beta_{2})_{Fd}} & & \cA_{2}Fd}
		\]
		\item in the quantum case, conditions \ref{beta-squares} and \ref{lambda-squares} imply the following:
		\begin{equation}
			\label{adjoint-pres} \canform{\blank}{\blank}{}^{1}=\canform{F\blank}{\op{F}\blank}{}^{2}\circ (\alpha \cross \chi)
		\end{equation}
		since 
		\begin{align*}
			\canform{\blank}{\blank}{}^{1} & =\evform{\blank}{\blank}\circ (\lambda_{1} \tensor_{\integ} \beta_{1}) \\
			& =\evform{\blank}{\blank}\circ (\dual{\alpha}\circ \lambda_{2}F\circ \alpha \tensor_{\integ} \beta_{1}) \\
			& = \evform{(\lambda_{2}F\circ \alpha)(\blank)}{(\alpha\circ \beta_{1})(\blank)} \\
			& = \evform{(\lambda_{2}F\circ \alpha)(\blank)}{(\beta_{2}\op{F}\circ \chi)(\blank)} \\
			& = \canform{F\blank}{\op{F}\blank}{}^{2}\circ (\alpha \cross \chi).
		\end{align*}
		\item for the non-quantum case, condition~\ref{lambda-squares} should be omitted.  We do not ask for \eqref{adjoint-pres} instead, despite this being a natural condition on the preservation of the associated forms.  This choice has potential implications for categorical properties of $\ACScat$; with the current proto-theory of abstract cluster structures, it is unclear what the correct definition should be, so we have opted for the weakest for the time being. 
	\end{itemize} 
	\end{remark}
	
	\subsection{The category of abstract quantum cluster structures}\label{ss:cat-of-ACS}
	
	\begin{theorem}
		The collection of abstract quantum cluster structures over simple directed graphs together with morphisms of these forms a category.
	\end{theorem}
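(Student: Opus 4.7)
The plan is to verify the standard categorical axioms: define composition and identity morphisms, then check that the data defining a morphism of abstract quantum cluster structures is preserved under composition (in particular the compatibility conditions \ref{beta-squares} and \ref{lambda-squares}), and finally check associativity and unit laws.

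Given two composable morphisms $(F,\chi,\alpha)\colon \cC_{1}\to \cC_{2}$ and $(G,\psi,\gamma)\colon \cC_{2}\to \cC_{3}$, I would define the composite to be $(GF,(\psi\op{F})\circ \chi,(\gamma F)\circ \alpha)$. The functor $GF\colon \cE_{1}\to \cE_{3}$ is the usual composition. The natural transformation $(\psi\op{F})\circ \chi\colon \cX_{1}\to \cX_{3}\op{(GF)}=\cX_{3}\op{G}\op{F}$ is obtained by whiskering $\psi$ by $\op{F}$ and composing vertically with $\chi$; analogously for $(\gamma F)\circ \alpha\colon \cA_{1}\to \cA_{3}GF$. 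The identity morphism on $\cC=\genericAQCS$ is $(\id_{\cE},\id_{\cX},\id_{\cA})$, and it trivially satisfies conditions \ref{beta-squares} and \ref{lambda-squares}.

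The main (and essentially only) technical step is verifying that the composite satisfies \ref{beta-squares} and \ref{lambda-squares}. For \ref{beta-squares}, at component $c\in \cE_{1}$, a direct chase using first the $\alpha$-compatibility of $(F,\chi,\alpha)$ and then the $\gamma$-compatibility of $(G,\psi,\gamma)$ gives
\[ \gamma_{Fc}\circ \alpha_{c}\circ (\beta_{1})_{c}=\gamma_{Fc}\circ (\beta_{2})_{Fc}\circ \chi_{c}=(\beta_{3})_{GFc}\circ \psi_{Fc}\circ \chi_{c}, \]
which is exactly the required equality for the composite. For \ref{lambda-squares}, using $\dual{(\gamma_{Fc}\circ \alpha_{c})}=\dual{\alpha_{c}}\circ \dual{\gamma_{Fc}}$, one applies the $\lambda$-condition for $(G,\psi,\gamma)$ at $Fc$ to rewrite $\dual{\gamma_{Fc}}\circ (\lambda_{3})_{GFc}\circ \gamma_{Fc}=(\lambda_{2})_{Fc}$, and then the $\lambda$-condition for $(F,\chi,\alpha)$ at $c$ to conclude $\dual{\alpha_{c}}\circ (\lambda_{2})_{Fc}\circ \alpha_{c}=(\lambda_{1})_{c}$. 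One must also check naturality of the whiskered/composed transformations, but this is automatic from the interchange law for $2$-categorical composition in $\Catcat$.

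Associativity and the unit laws then reduce to the associativity of composition of functors and the interchange law for natural transformations, which are standard. I do not anticipate any genuine obstacle; the potentially fiddly point is being careful with the direction of whiskering on the $\cX$-side (where $\op{F}$ appears) versus the $\cA$-side, and ensuring the $\lambda$-compatibility, which involves both $\alpha$ and its dual $\dual{\alpha}$, composes correctly. Once the notational setup is in place, every verification is a diagram chase of one or two lines.
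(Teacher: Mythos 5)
Your proposal is correct and follows essentially the same route as the paper: composition via whiskering and vertical composition, identity $(\id_{\cE},\id_{\cX},\id_{\cA})$, and associativity/unit laws reduced to standard facts in $\Catcat$. Your explicit chases for conditions \ref{beta-squares} and \ref{lambda-squares} simply supply the detail the paper dismisses as ``straightforward to check by concatenating the relevant diagrams.''
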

	
	\begin{proof}
		Composition of morphisms is defined as follows.  For $\curly{F}=(F,\chi,\alpha)\colon \cC_{1}\to \cC_{2}$ and $\curly{G}=(G,\chi',\alpha')\colon \cC_{2}\to \cC_{3}$, we set $\curly{G}\circ \curly{F}=(G\circ F,\chi'\circ \chi,\alpha'\circ \alpha)$, where the composition of natural transformations $\chi' \circ \chi$ and $\alpha' \circ \alpha$ are respectively 
		\[ \chi'\circ \chi\colon \cX_{1}\to \cX_{3}\op{(G\circ F)},\ (\chi'\circ \chi)_{c}=(\chi'\op{F})_{Fc}\circ \chi_{c} \] and \[ \alpha'\circ \alpha\colon \cA_{1}\to \cA_{3}(G\circ F),\ (\alpha'\circ \alpha)_{c}=(\alpha'F)_{Fc}\circ \alpha_{c}.\]  It is straightforward to check that this is again a morphism of abstract quantum cluster structures $\curly{G}\circ \curly{F}\colon \cC_{1}\to \cC_{3}$, by concatenating the relevant diagrams, and moreover that this composition is associative, similarly.
		
		Since we also have the identity morphism of abstract quantum cluster structures given by $\id_{\cC}=(\id_{\cE},\id_{\cX},\id_{\cA})$, for which the functors and (components of) the natural transformations are the respective identities, and this indeed satisfies $\curly{F}\circ \id_{\cC_{1}}=\curly{F}=\id_{\cC_{2}}\circ \curly{F}$ for all $\curly{F}\colon \cC_{1}\to \cC_{2}$, we have a category as claimed.
	\end{proof}
	
	\begin{corollary}
		The collection of abstract cluster structures over simple directed graphs together with morphisms of these forms a category. \qed
	\end{corollary}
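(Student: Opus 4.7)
The plan is to mimic the proof of the preceding theorem, observing that the non-quantum setting is obtained by discarding the $\lambda$ data and the corresponding compatibility condition \ref{lambda-squares} from Definition~\ref{d:morphism-in-ACS}. Since nothing in the construction of composition or the identity morphism used $\lambda$ in an essential way, the same recipe will work; indeed, there are strictly fewer conditions to verify.

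First, I would define composition of morphisms $\curly{F}=(F,\chi,\alpha)\colon \cC_{1}\to \cC_{2}$ and $\curly{G}=(G,\chi',\alpha')\colon \cC_{2}\to \cC_{3}$ exactly as in the quantum case, by $\curly{G}\circ \curly{F}\defeq (G\circ F,\chi'\circ \chi,\alpha'\circ \alpha)$, with the component formulas $(\chi'\circ \chi)_{c}=(\chi'\op{F})_{Fc}\circ \chi_{c}$ and $(\alpha'\circ \alpha)_{c}=(\alpha'F)_{Fc}\circ \alpha_{c}$. I would then check that $\chi'\circ \chi$ and $\alpha'\circ \alpha$ are again natural transformations (conditions \ref{chi-nt}, \ref{alpha-nt}) by pasting naturality squares, and that the factorization condition \ref{beta-squares} for the composite is obtained by stacking the two cubes for $\curly{F}$ and $\curly{G}$ along the $\cA$--$\cX$ faces. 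Associativity of composition follows from associativity of composition of functors and of natural transformations.

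Next, I would exhibit the identity morphism $\id_{\cC}=(\id_{\cE},\id_{\cX},\id_{\cA})$ and observe that all required conditions are trivially satisfied, together with the unit laws $\curly{F}\circ \id_{\cC_{1}}=\curly{F}=\id_{\cC_{2}}\circ \curly{F}$.

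I do not anticipate a genuine obstacle: the only conceptual point is simply to record that the corollary can be obtained either directly by repeating the proof of the theorem verbatim with conditions \ref{lambda-squares} (and the accompanying data $\lambda$) removed, or, more elegantly, by exhibiting a forgetful functor $\mathcal{F}\colon \AQCScat \to \ACScat$ (as alluded to in \S\ref{ss:cat-of-ACS}) and noting that the axioms of a category for $\ACScat$ follow from those for $\AQCScat$ together with the fact that the image of $\mathcal{F}$ on objects covers all of $\ACScat$ up to the absence of quantum data---which however is not quite a proof in itself, since not every abstract cluster structure admits a quantum structure. Accordingly the cleanest presentation is simply to invoke the preceding proof \emph{mutatis mutandis}.
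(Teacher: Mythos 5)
Your proposal is correct and matches the paper's intent exactly: the corollary is deduced from the proof of the theorem (not its statement) by simply omitting the $\lambda$ datum and condition \ref{lambda-squares}, with composition and identities defined by the same formulas. Your caveat that one cannot shortcut via the forgetful functor alone—since not every abstract cluster structure admits a quantization—is also the caution the paper itself raises.
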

		
	\begin{definition}
		Let $\AQCScat$ denote the category of abstract quantum cluster structures and $\ACScat$ denote the category of abstract cluster structures.
	\end{definition}
	
	There is a forgetful functor $\mathscr{F}\colon \AQCScat\to \ACScat$ sending $(\cE,\cX,\beta,\lambda,\cA,\ip{\blank}{\blank})$ to \linebreak $(\cE,\cX,\beta,\cA,\ip{\blank}{\blank})$ and equal to the identity on morphisms. Note, however, that $\mathscr{F}$ is not (even essentially) surjective: since a $\lambda$ compatible with $\beta$ can only exist if $\beta$ is a split monomorphism, there are many abstract cluster structures that are not in the image of $\mathscr{F}$.  
	
	In what follows, we will see that when we prove results about abstract quantum cluster structures, we may sometimes obtain the same claim for abstract cluster structures (i.e.\ with no injectivity assumption on $\beta$) by omitting the no-longer relevant parts of the proof.  As such, we will ``deduce'' the corresponding claim in the non-quantum case from the quantum one as a corollary, but of the proof rather than directly of the statement.  In this way, we will avoid having to repeat ourselves excessively.
	
	However, some caution is required: since the definition of morphism in $\AQCScat$ is stronger than that in $\ACScat$, $\mathscr{F}$ is not full.  That is, there are abstract cluster structures for which quantizations exists and morphisms in $\ACScat$ between them, but where there is no corresponding morphism in $\AQCScat$ between their quantizations.  The consequence of this is that the two categories $\ACScat$ and $\AQCScat$ have different categorical properties, as we shall see in Section~\ref{ss:initial-terminal}.
	
	\subsection{Isomorphisms}\label{ss:monic-epic-iso-in-ACS}
	
	The isomorphisms in $\ACScat$ and $\AQCScat$ are as follows.
	
	\begin{proposition}
		A morphism of abstract cluster structures (respectively, abstract quantum cluster structures) $\curly{F}=(F,\chi,\alpha)$ is an isomorphism in $\ACScat$ (respectively, $\AQCScat$) if and only if $F$ is an isomorphism of categories and $\chi$ and $\alpha$ are natural isomorphisms.
	\end{proposition}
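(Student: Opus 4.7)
The plan is to prove the two directions separately, the forward direction by constructing an explicit candidate inverse morphism, the reverse direction by unpacking what it means for the composites $\mathcal{G}\circ \mathcal{F}$ and $\mathcal{F}\circ \mathcal{G}$ to equal the identity morphisms.

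For the reverse (``only if'') direction, suppose $\mathcal{F} = (F,\chi,\alpha)$ admits a two-sided inverse $\mathcal{G} = (G,\chi',\alpha')$ in $\AQCScat$. Since the identity morphism of an abstract (quantum) cluster structure has the identity functor and identity natural transformations as its components, the equalities $\mathcal{G}\circ \mathcal{F} = \id_{\cC_1}$ and $\mathcal{F}\circ \mathcal{G} = \id_{\cC_2}$ read off, at the level of the signed path categories, as $G\circ F = \id_{\cE_1}$ and $F\circ G = \id_{\cE_2}$, so $F$ is an isomorphism of categories. Using the composition formula from the previous subsection, the $\cX$-part of $\mathcal{G}\circ \mathcal{F} = \id_{\cC_1}$ gives, for each $c\in \cE_1$, $\chi'_{Fc}\circ \chi_c = \id_{\cX_1 c}$, while the $\cX$-part of $\mathcal{F}\circ \mathcal{G} = \id_{\cC_2}$ gives, for each $c'\in \cE_2$, $\chi_{Gc'}\circ \chi'_{c'} = \id_{\cX_2 c'}$. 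Setting $c'=Fc$ in the latter and using $GF=\id_{\cE_1}$, we obtain $\chi_c\circ \chi'_{Fc} = \id_{\cX_2 Fc}$, so each $\chi_c$ is invertible, hence $\chi$ is a natural isomorphism. The same argument applied to the $\cA$-parts shows that $\alpha$ is a natural isomorphism.

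For the forward (``if'') direction, suppose $F$ is an isomorphism of categories with inverse $G$, and $\chi$, $\alpha$ are natural isomorphisms. The candidate inverse morphism is $\mathcal{G} = (G,\chi',\alpha')$ with components $\chi'_c = (\chi_{Gc})^{-1}\colon \cX_2 c \to \cX_1 Gc$ and $\alpha'_c = (\alpha_{Gc})^{-1}\colon \cA_2 c \to \cA_1 Gc$, noting that $FGc = c$ so that these are well-typed. Naturality of $\chi'$ and $\alpha'$ follows from the naturality of $\chi$ and $\alpha$ by pre- and post-composing with the relevant inverses. For the $\beta$-compatibility condition $\alpha'\circ \beta_2 = \beta_1\op{G}\circ \chi'$, I would take the original identity $\alpha\circ \beta_1 = \beta_2\op{F}\circ \chi$, evaluate it at $Gc$, and left- and right-multiply by the appropriate inverses. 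For the quantum compatibility $\lambda_2 = \dual{\alpha'}\circ \lambda_1 G\circ \alpha'$, apply the corresponding manoeuvre to the condition $\lambda_1 = \dual{\alpha}\circ \lambda_2 F\circ \alpha$, evaluated at $Gc$. Finally, one checks directly using the composition formula that $\mathcal{G}\circ \mathcal{F}$ and $\mathcal{F}\circ \mathcal{G}$ have identity components, so are indeed the respective identity morphisms.

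The main obstacle is not really conceptual, but rather bookkeeping the various appearances of $F$, $G$, $\op{F}$, and $\op{G}$ so that the domain/codomain matching in the definitions of $\chi'$, $\alpha'$, and the compatibility diagrams is manifestly correct; once this is done, every verification is either a standard fact about natural isomorphisms or an immediate consequence of inverting a commutative square. The non-quantum case is obtained by ignoring the $\lambda$-compatibility step throughout.
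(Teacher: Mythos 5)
Your proposal is correct and takes essentially the same approach as the paper, which simply compresses both directions into the observation that a two-sided inverse forces $F$, $\chi$ and $\alpha$ to be invertible componentwise and that the converse (constructing $\mathcal{G}=(G,(\chi_{G\blank})^{-1},(\alpha_{G\blank})^{-1})$) is clear. Your fleshed-out verifications of naturality, the $\beta$-compatibility and the $\lambda$-condition are exactly the routine checks the paper leaves implicit.
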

	
	\begin{proof} If $\curly{F}$ has a two-sided inverse $\curly{G}=(G,\chi',\alpha')$, then by the definitions of composition and of the identity morphism in $\ACScat$ or $\AQCScat$, each of $F$, $\chi$ and $\alpha$ is invertible, so that they have the claimed properties.  The converse is also clear.
	\end{proof}
	
	Each component being an isomorphism, especially the functor $F$, might initially seem like a very strict condition (including with respect to the usual notion of strictness in category theory).
	However, the following phenomenon suggests that we should take care not to be too weak in what we ask for.  
	
	For two abstract cluster structures---which are designed precisely to capture cluster combin\-atorics---to be isomorphic, we want that they have the same pattern of clusters and mutations, i.e.\ the same exchange graphs.  In our setting, the exchange graph is the input data, $E$, a signed directed graph.  Now if $E$ and $E'$ are signed bi-drected graphs (which is in particular the case for involutive mutations), we have that $\curly{E}(E)$ and $\curly{E}(E')$ are groupoids and hence equivalent as categories to a bunch of copies of the 1-object category, one copy for each connected component.  So it is far too easy for $\curly{E}(E)$ to be equivalent to $\curly{E}(E')$.
		
	On the other hand, since we see in an isomorphism of abstract cluster structures $(F,\chi,\alpha)$ that the functor $F$ is an isomorphism of categories, we immediately deduce that the underlying exchange graphs are isomorphic.
	
	\begin{remark}
		
	A natural next question is to enquire about monic and epic morphisms.  We make the following opening observations in response.
		
	A morphism $\curly{F}=(F,\chi,\alpha)$ in $\AQCScat$ will be monic, respectively epic, if its components are, in their natural categories.  In particular, $\curly{F}$ is monic if and only if $F$ is an embedding of categories (i.e.\ injective-on-objects and faithful) and $\chi$ and $\alpha$ have injective components\footnote{That we have such a simple characterization is due to choosing to work with (free) Abelian sheaves: it is the fact that $\Abcat$ is Abelian that means that natural transformations of functors having values there are monic if and only if they have injective components, and dually for epic.}.  
				
	If $F$ is surjective-on-objects and full and $\chi$ and $\alpha$ have surjective components, then $\curly{F}$ is epic; we would be interested to know whether the converse holds and in particular, whether the general characterization of when functors are epic given by Isbell's zig-zag theorem \cite{Isbell} simplifies given the rather specific form of $\cE$.
	\end{remark}
	
	\subsection{Initial and terminal objects}\label{ss:initial-terminal}
	
	The category $\ACScat$ has initial and terminal objects, and the initial object of $\ACScat$ is also initial in $\AQCScat$, as we show now.  
	
	\begin{definition} Let $\curly{I}$ be the abstract quantum cluster structure with
		\[ \curly{I}=(\cE(\emptyset), \cX=\emptyset_{\op{\cE(\emptyset)}},\beta=\emptyset,\lambda=\emptyset,\cA=\emptyset_{\cE(\emptyset)},\ip{\blank}{\blank}=\emptyset). \]
	\end{definition}
	
	Here, we start with $E=\emptyset$, the empty graph, whose signed path category $\cE(\emptyset)$ is the empty category.  Then $\cX\colon \op{\cE(\emptyset)}\to \Abcat$ is the unique functor $\emptyset_{\op{\cE(\emptyset)}}$ arising from the empty category being initial in $\mathbf{Cat}$, and similarly for $\cA$.  All of $\beta$, $\lambda$ and $\ip{\blank}{\blank}$ are empty because the indexing set for their components is empty.
	
	\begin{proposition}
		$\curly{I}$ is initial in the category $\AQCScat$.
	\end{proposition}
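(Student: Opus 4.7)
The plan is to exploit the universal property that the empty category is initial in $\mathbf{Cat}$, and then observe that every other piece of data and every compatibility condition becomes vacuous because it is indexed over an empty collection.

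Concretely, I would fix an arbitrary abstract quantum cluster structure $\cC=\genericAQCS$ with signed path category $\cE$, and construct a morphism $(F,\chi,\alpha)\colon \curly{I}\to\cC$ as follows. Since $\cE(\emptyset)$ is the empty category, there is a unique functor $F\colon \cE(\emptyset)\to \cE$, namely the empty functor. A natural transformation $\chi\colon \cX_{\curly{I}}\to \cX \op{F}$ is by definition a family of component morphisms $\chi_{c}$ indexed by objects $c\in \cE(\emptyset)$; since there are no such objects, there is a unique (empty) such family, and similarly for $\alpha\colon \cA_{\curly{I}}\to \cA F$. Thus $(F,\chi,\alpha)$ exists and is uniquely determined by the data of $\curly{I}$ and $\cC$.

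Next I would verify the conditions \ref{beta-squares} and \ref{lambda-squares} of Definition~\ref{d:morphism-in-ACS}. Each of these is an equality of natural transformations whose components are indexed by objects of $\cE(\emptyset)$. As this indexing set is empty, both equalities hold vacuously. The naturality squares for $\chi$ and $\alpha$ are likewise indexed by morphisms of $\cE(\emptyset)$, of which there are none, so these also hold vacuously.

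For uniqueness, suppose $(F',\chi',\alpha')\colon \curly{I}\to \cC$ is any morphism. The empty category admits a unique functor to $\cE$, forcing $F'=F$. The natural transformations $\chi'$ and $\alpha'$ are then determined since their components are indexed by $\mathrm{Obj}(\cE(\emptyset))=\emptyset$, so $\chi'=\chi$ and $\alpha'=\alpha$. Hence the morphism is unique, and $\curly{I}$ is initial in $\AQCScat$. There is no real obstacle here; the only subtlety is being sure that the factorization/compatibility conditions are stated componentwise so that vacuity applies, which is indeed the case from the formulation in Definition~\ref{d:morphism-in-ACS}.
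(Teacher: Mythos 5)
Your proposal is correct and is essentially the paper's own argument: the unique functor out of the empty signed path category together with the (necessarily empty) natural transformations $\chi$ and $\alpha$ give the unique morphism, with all conditions holding vacuously. The only difference is that you spell out the vacuity of conditions \ref{beta-squares} and \ref{lambda-squares} and the uniqueness check explicitly, which the paper leaves implicit.
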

	
	\begin{proof} Given an abstract quantum cluster structure $\cC$, the unique morphism in $\AQCScat$ from $\curly{I}$ to $\cC$ is $(\emptyset_{\cE},\chi,\alpha)$ with $\emptyset_{\cE}\colon \emptyset \to \cE$ the unique such functor, $\chi\colon \emptyset_{\Abcat}\to \chi \op{\emptyset_{\cE}}=\emptyset_{\Abcat}$ the unique such natural transformation and similarly for $\alpha\colon \emptyset_{\Abcat}\to \alpha \emptyset_{\cE}=\emptyset_{\Abcat}$.	 
	\end{proof}
	
	Note that the image $\mathscr{F}\curly{I}$ of $\curly{I}$ under the forgetful functor from $\AQCScat$ to $\ACScat$ is initial in $\ACScat$, by an identical argument\footnote{There is a small subtlety here: to prove the claim in $\ACScat$, we have more objects to check the claims for. However, since the proofs make no reference to $\beta$, its injectivity or otherwise is moot.}.
	
	Let $\mathbbm{1}$ be the terminal object in $\mathbf{Cat}$, so that $\mathbbm{1}$ has one object $*$ and one morphism $\id_{*}$.  Let $E=*$ be the graph with one vertex and no arrows, so that $\cE(*)=\mathbbm{1}$.
	
	\begin{definition}
		Let $\cT$ be the abstract cluster structure with
		\[ \cT=(\mathbbm{1},\cX=\op{\mathbf{0}},\beta=\id_{\mathbf{0}},\cA=\mathbf{0},\ip{\blank}{\blank}_{\mathbf{0}}) \]
		where
		\begin{itemize}
			\item $\op{\mathbf{0}}\colon \op{\mathbbm{1}}\to \Abcat$, $\op{\mathbf{0}}(*)=\{0\}$, $\op{\mathbf{0}}(\id_{*})=\id_{\{0\}}$;
			\item $\mathbf{0}\colon \mathbbm{1}\to \Abcat$, $\mathbf{0}(*)=\{0\}$, $\mathbf{0}(\id_{*})=\id_{\{0\}}$;
			\item $\beta=\id_{\mathbf{0}}\defeq \{ \beta_{*}=\id_{\{0\}} \}$; and
			\item $\ip{\blank}{\blank}_{\mathbf{0}}\defeq \{ \ip{\blank}{\blank}_{*}\colon \{0\}\cross \{0\}\to \{0\},\ \ip{0}{0}_{*}=0 \}$.
		\end{itemize}
	\end{definition}
	
	\noindent This is an abstract cluster structure: 
	\begin{itemize}
		\item $\cX$ and $\cA$ are free Abelian sheaves on $\mathbbm{1}=\op{\mathbbm{1}}$;
		\item the factorization condition for $\beta$ is trivial since $\mathbbm{1}$ has only one object and the identity morphism; 
		\item $\ip{\blank}{\blank}_{\mathbf{0}}$ has trivial right radical and hence is right non-degenerate.
	
	\end{itemize}
	
	\begin{proposition}
		$\curly{T}$ is terminal in the category $\ACScat$.
	\end{proposition}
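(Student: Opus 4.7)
The plan is to show that for every abstract cluster structure $\cC=\genericACS$, there exists a unique morphism $\cC\to \cT$ in $\ACScat$. Since a morphism is a triple $(F,\chi,\alpha)$, I will treat each component in turn and then verify that the single compatibility condition \ref{beta-squares} from Definition~\ref{d:morphism-in-ACS} (the only one relevant in the non-quantum setting) is automatically satisfied.

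First I would observe that $\mathbbm{1}$ is the terminal object in $\mathbf{Cat}$, so there is a unique functor $F\colon \cE\to \mathbbm{1}$, namely the one sending every object of $\cE$ to $*$ and every morphism to $\id_{*}$. Next, since $\cA_{\cT}=\mathbf{0}$ and $F$ is as above, the composition $\mathbf{0} F\colon \cE \to \Abcat$ is the constant functor at $\{0\}$. A natural transformation $\alpha\colon \cA \to \mathbf{0} F$ then consists of components $\alpha_{c}\colon \cA c\to \{0\}$, and there is a unique such group homomorphism for each $c$; the naturality squares commute trivially because all morphisms in the codomain are the identity of $\{0\}$. The same reasoning applied to $\op{F}$ and $\op{\mathbf{0}}$ shows that there is a unique natural transformation $\chi\colon \cX \to \op{\mathbf{0}}\op{F}$.

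Finally, I would verify condition \ref{beta-squares}, namely $\alpha\circ \beta=\beta_{\cT}\op{F}\circ \chi$. Componentwise, for every $c\in \cE$ both sides are homomorphisms $\cX c\to \{0\}$, of which there is only one, so the equality holds automatically. Hence the triple $(F,\chi,\alpha)$ is a well-defined morphism in $\ACScat$, and by the uniqueness at each step it is the unique morphism $\cC \to \cT$.

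There is no real obstacle here; the proof is essentially a verification that each piece of the data into a terminal-style object is forced, exploiting that $\{0\}$ is a zero object in $\Abcat$ and $\mathbbm{1}$ is terminal in $\mathbf{Cat}$. The only mild subtlety worth noting is that this argument does not lift to $\AQCScat$: a quantization of $\cT$ would come with $\lambda_{*}\colon \{0\}\to \{0\}$, and condition \ref{lambda-squares} of Definition~\ref{d:morphism-in-ACS} would demand $\lambda=\dual{\alpha}\circ \lambda_{*}F\circ \alpha=0$, which fails for any $\cC_{q}$ whose $\lambda$ is nonzero. This explains the comment in the introduction that the quantization of the terminal object of $\ACScat$ is not terminal in $\AQCScat$, and confirms that no adaptation of the above argument is possible in the quantum setting.
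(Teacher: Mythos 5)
Your proof is correct and follows essentially the same route as the paper: the unique functor $\cE\to\mathbbm{1}$ from terminality in $\mathbf{Cat}$, the unique natural transformations into the constant functor at $\{0\}$, and the automatic verification of the compatibility condition, which the paper leaves as a ``readily check'' step. The extra remark about why the argument fails in $\AQCScat$ matches the paper's subsequent discussion of the quantization of $\cT$.
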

	
	\begin{proof} There exists a unique functor $F=\mathbbm{1}_{\curly{E}}\colon \curly{E}\to \mathbbm{1}$, since $\mathbbm{1}$ is terminal in $\mathbf{Cat}$.
		
		There exists a unique $\chi\colon \cX\to \mathbf{0}\op{\mathbbm{1}}_{\curly{E}}$ whose components $\chi_{c}\colon \cX c\to \{0\}$ are all equal to the unique such surjective map, and similarly for $\alpha$.  One may then readily check that $(F,\chi,\alpha)$ is a morphism in $\ACScat$.		
	\end{proof}

	\begin{remark} The abstract cluster structure $\cT$ has a quantization $\cT_{q}$, with \hfill \hfill \linebreak $\lambda_{\cT}=\{ \lambda_{*}\colon \{0\} \to \dual{\{0\}} \}$ for $\lambda_{*}$ the unique such map, with $\lambda_{*}(0)=\hat{0}$ for $\hat{0}\colon \{0\}\to \integ$, $\hat{0}(0)=0$.  That this yields an abstract quantum cluster structure follows since the factorization condition for $\lambda$ is trivial and $\lambda_{*}$ is skew-symmetric and compatible with $\beta$ since all maps involved are zero.
		
	However, $\cT_{q}$ is not terminal in $\AQCScat$, since the necessary morphism as in the above proof is not a morphism in $\AQCScat$.  For condition~\ref{lambda-squares} requires that $\lambda_{1}=\dual{\alpha}\circ \lambda_{2}F\circ \alpha$, but the right-hand side is zero whereas the left is not.
	
	As this choice of $\lambda_{\cT}$ is the only possible quantization of $\cT$, we expect that $\AQCScat$ in fact has no terminal object.
	
	This is the previously signposted example of how the categorical properties of $\AQCScat$ differ from those of $\ACScat$, due to the forgetful functor not being full.
	\end{remark}

	Note that since the empty graph is not isomorphic to the graph $*$ with one vertex and no arrows, $\curly{I}$ is not isomorphic to $\cT$ in $\ACScat$.  Consequently, the category $\ACScat$ does not have a zero object and so cannot be additive\footnote{Noting that the first piece of data in an abstract (quantum) cluster structure is a (signed) path category $\cE$ over a graph and the first piece of data in a morphism of abstract (quantum) cluster structures is a functor of these, it is in some sense inevitable that $\ACScat$ behaves somewhat like a category of graphs. The discussion at \url{https://ncatlab.org/nlab/show/category+of+simple+graphs} is particularly relevant.}, and similarly for $\AQCScat$.
	
	\subsection{Products and coproducts}\label{ss:props-of-ACS-cat}

	We now show that $\ACScat$ has all finite products and coproducts, and that $\AQCScat$ has all finite coproducts. However, $\ACScat$ has no zero object (as its initial and terminal objects are not isomorphic) so that it does not have all finite biproducts.  Similarly, the (expected) lack of a terminal object in $\AQCScat$ prevents the existence even of all finite products.
	
	The construction for products we give is that of a direct sum, and bootstraps from the product (i.e.\ Cartesian product) in $\Catcat$ and the biproduct (i.e.\ direct sum) in $\Abcat$.  As with (the special case of) terminal objects, the failure for this construction to be a biproduct in $\ACScat$ (or $\AQCScat$) is due more to the lack of suitable morphisms rather than objects.
	
	For a simple directed graph $E$ with vertex set $C$, recall that we denote by $E(c,d)$ the set of arrows $c\to d$ in $E$, for $c,d\in C$.

	Let $E_{1}$ and $E_{2}$ be simple directed graphs with vertex sets $C_{1}$ and $C_{2}$.  Then let $E_{1}\prodsml E_{2}$ denote the simple directed graph with vertex set $C_{1}\cross C_{2}$ and arrows \[ (E_{1}\prodsml E_{2})((c_{1},c_{2}),(d_{1},d_{2}))=E_{1}(c_{1},d_{1})\union E_{2}(c_{2},d_{2}). \]
	This models taking the (disjoint) union of clusters in two exchange graphs and mutations in the new exchange graph being those associated to the first cluster together with those associated to the second.  (Note, though, that this is not any of the common graph products, such as the Cartesian or Kronecker/tensor product.)
	
	We see that the signed path category $\cE(E_{1}\prodsml E_{2})$ has objects $C_{1}\cross C_{2}$ and morphisms (i.e.\ paths) generated by $(\alpha,e)$ and $(e,\itbeta)$ where $\alpha$ and $\itbeta$ stand for arrows originating from $E_{1}$ and $E_{2}$ respectively and $e$ stands for the trivial path at a vertex.  Then since $(\alpha,e)\circ(e,\beta)=(\alpha,\beta)$, in the (signed) path category, we obtain that \[ \Mor{\cE(E_{1}\prodsml E_{2})}{(c_{1},c_{2})}{(d_{1},d_{2})}=\Mor{\cE_{1}}{c_{1}}{d_{1}})\cross \Mor{\cE_{2}}{c_{2}}{d_{2}} \] so that $\cE(E_{1}\prodsml E_{2})=\cE_{1}\cross \cE_{2}$, the Cartesian product of categories.
	
	For $F_{1}$ and $F_{2}$ presheaves of Abelian groups over categories $\cC_{1}$ and $\cC_{2}$ respectively, we have a product presheaf $F_{1}\dsum F_{2}$ on $\cC_{1}\cross \cC_{2}$ given by $F_{1}\dsum F_{2}\colon \op{(\cC_{1}\cross \cC_{2})}\to \Abcat$, where $(F_{1}\dsum F_{2})(c_{1},c_{2})=F_{1}c_{1}\dsum F_{2}c_{2}$ on objects and $(F_{1}\dsum F_{2})(f_{1},f_{2})=F_{1}f_{1}\dsum F_{2}f_{2}$ on morphisms.  Here, $F_{1}c_{1}\dsum F_{2}c_{2}$ and $F_{1}f_{1}\dsum F_{2}f_{2}$ are the usual constructions in $\Abcat$, namely direct sums of Abelian groups and ``block diagonal'' sum of homomorphisms, arising from the existence of finite products and coproducts in $\Abcat$.  
	
	We also have a direct sum of natural transformations or factorizations: if $\alpha_{1}\colon F_{1}\to G_{1}$ and $\alpha_{2}\colon F_{2}\to G_{2}$ are natural transformations of presheaves of Abelian groups $F_{1},G_{1}\colon \op{\cC_{1}}\to \Abcat$ and $F_{2},G_{2}\colon \op{\cC_{2}}\to \Abcat$, we have $\alpha_{1}\dsum \alpha_{2}\colon F_{1}\dsum F_{2}\to G_{1}\dsum G_{2}$ induced by $(\alpha_{1}\dsum \alpha_{2})_{(c_{1},c_{2})}=(\alpha_{1})_{c_{1}}\dsum (\alpha_{2})_{c_{2}}$.
	
	Similarly, for dinatural transformations of the form $\ip{\blank}{\blank}^{1}\colon F_{1}\tensor_{\integ} G_{1}\to \sZ$ and $\ip{\blank}{\blank}^{2}\colon F_{2}\tensor_{\integ} G_{2}\to \sZ$, we have an induced dinatural transformation 
	\[ \ip{\blank}{\blank}^{\dsum}\colon (F_{1}\dsum F_{1})\tensor_{\integ} (G_{1}\dsum G_{2})\to \sZ \]
	defined by $\ip{\blank}{\blank}^{\dsum}_{(c_{1},c_{2})}=\ip{\blank}{\blank}^{1}_{c_{1}}+\ip{\blank}{\blank}^{2}_{c_{2}}$.
	
	We claim that there is a natural abstract quantum cluster structure over $\cE(E_{1}\prodsml E_{2})$.
	
	\begin{proposition} Let $E_{1}$ and $E_{2}$ be simple directed graphs and let $\cC_{1}=(\cE_{1},\cX_{1},\beta_{1},\lambda_{1},\cA_{1},\ip{\blank}{\blank}^{1})$ and  $\cC_{2}=(\cE_{2},\cX_{2},\beta_{2},\lambda_{2},\cA_{2},\ip{\blank}{\blank}^{2})$ be abstract quantum cluster structures over $E_{1}$ and $E_{2}$ respectively.
	
	Then \[ \cC_{1}\prodsml \cC_{2}=(\cE(E_{1}\prodsml E_{2}),\cX_{1}\dsum \cX_{2},\beta_{1}\dsum \beta_{2},\lambda_{1}\dsum \lambda_{2},\cA_{1}\dsum \cA_{2},\ip{\blank}{\blank}^{\dsum}) \]
	is an abstract quantum cluster structure.
	\end{proposition}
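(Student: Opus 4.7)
The plan is to verify each ingredient in the definition of an abstract quantum cluster structure (Definition~\ref{d:ACS} together with the quantum datum of Section~\ref{ss:quantum-str}) for the direct sum $\cC_1 \prodsml \cC_2$, exploiting the observation already made in the excerpt that $\cE(E_1 \prodsml E_2) = \cE_1 \cross \cE_2$. Because every piece of structure is defined componentwise in the two factors, each axiom for the product should decouple into the corresponding axiom for $\cC_1$ and $\cC_2$.

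First I would check that $\cX_1 \dsum \cX_2$ and $\cA_1 \dsum \cA_2$ are free Abelian sheaves: if $\cB_i$ is the sheaf of sets with $\cX_i = \mathrm{Free}_\integ \circ \cB_i$, then the objectwise disjoint union $(c_1, c_2) \mapsto \cB_1 c_1 \disjointunion \cB_2 c_2$ is again a sheaf of sets, and $\mathrm{Free}_\integ$ converts disjoint union into direct sum, so $\cX_1 \dsum \cX_2$ is of the required form; the same argument handles $\cA_1 \dsum \cA_2$. Next, since any morphism in $\cE_1 \cross \cE_2$ has the form $(f_1, f_2)$, the commuting square expressing the factorization condition for $\beta_1 \dsum \beta_2$ is the direct sum (in $\Abcat$) of the squares for $\beta_1$ and $\beta_2$, so it commutes; likewise for $\lambda_1 \dsum \lambda_2$. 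For the pairing $\ip{\blank}{\blank}^{\dsum}$, dinaturality (i.e.\ the adjointness characterization of Lemma~\ref{l:adjoint}) again decouples: the required identity at $(f_1, f_2)$ is the sum of the identities for $\ip{\blank}{\blank}^1$ at $f_1$ and $\ip{\blank}{\blank}^2$ at $f_2$. Right non-degeneracy is then immediate: if $(x_1, x_2)$ lies in the right radical at $(c_1, c_2)$, then testing against pure tensors $(a_1, 0)$ and $(0, a_2)$ forces $x_1$ and $x_2$ into the right radicals of $\ip{\blank}{\blank}^1_{c_1}$ and $\ip{\blank}{\blank}^2_{c_2}$ respectively, hence both vanish.

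The subtlest step, and the main obstacle, is the quantum compatibility of $\lambda_1 \dsum \lambda_2$ with $\beta_1 \dsum \beta_2$. I would use the equivalent form $\adj{\beta} \circ \lambda = \delta_{\cA}$ from Lemma~\ref{l:compat-equiv-conds}\ref{l:compat-equiv-conds-BTL}. This requires the canonical identification $\dual{(\cA_1 c_1 \dsum \cA_2 c_2)} \iso \dual{\cA_1 c_1} \dsum \dual{\cA_2 c_2}$, under which one must check two routine compatibilities: that the adjoint of a direct sum of maps is the direct sum of the adjoints (with respect to the evaluation pairing on a direct sum, which splits as the sum of the two evaluation pairings), and that $\delta_{\cA_1 \dsum \cA_2} = \delta_{\cA_1} \dsum \delta_{\cA_2}$ as natural transformations to $\dual{(\cX_1 \dsum \cX_2)}$; the latter follows directly from the definition of $\ip{\blank}{\blank}^{\dsum}$. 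Once these identifications are in place, the equation $\adj{(\beta_1 \dsum \beta_2)} \circ (\lambda_1 \dsum \lambda_2) = \delta_{\cA_1 \dsum \cA_2}$ decomposes into the two compatibility equations for $\cC_1$ and $\cC_2$, which hold by hypothesis, completing the proof.
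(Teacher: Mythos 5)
Your overall strategy---verify every axiom componentwise after identifying $\cE(E_{1}\prodsml E_{2})=\cE_{1}\cross \cE_{2}$---is exactly the paper's, which is in fact terser: it notes that the direct sum of free Abelian groups is free Abelian on the union of bases, that the factorization squares for $\beta_{1}\dsum\beta_{2}$ and $\lambda_{1}\dsum\lambda_{2}$ are direct sums of the given ones, and that right non-degeneracy holds because kernels are computed locally. Your versions of these steps (the presheaf-of-sets argument for freeness, dinaturality via Lemma~\ref{l:adjoint}, and the pure-tensor test showing the right radical vanishes componentwise) are all correct.

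The one step that does not quite close as written is the quantum one. By definition, $\lambda_{1}\dsum\lambda_{2}$ is a quantum structure only if it equals $\delta_{\cX_{1}\dsum\cX_{2}}\circ\rho$ for some retraction $\rho$ of $\beta_{1}\dsum\beta_{2}$. Lemma~\ref{l:compat-equiv-conds}\ref{l:compat-equiv-conds-BTL} reformulates this only in the presence of such a factorization (its proof uses $\lambda=\delta_{\cX}\circ\rho$ throughout), and on its own the identity $\adj{\beta}\circ\lambda=\delta_{\cA}$ is strictly weaker: for instance, if at some object $\cX c=0$ and $\cA c=\integ$, then $\dual{(\cX c)}=0$, so every $\lambda_{c}$ satisfies $\adj{\beta_{c}}\circ\lambda_{c}=\delta_{\cA c}$, whereas only $\lambda_{c}=0$ factors through $\delta_{\cX c}$. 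Hence checking that $\adj{(\beta_{1}\dsum\beta_{2})}\circ(\lambda_{1}\dsum\lambda_{2})=\delta_{\cA_{1}\dsum\cA_{2}}$ does not by itself certify that the direct sum carries a quantum structure. The repair is immediate, and is what the paper does explicitly in the coproduct case: choose retractions $\rho_{i}$ of $\beta_{i}$ with $\lambda_{i}=\delta_{\cX_{i}}\circ\rho_{i}$, note that $\rho_{1}\dsum\rho_{2}$ is a retraction of $\beta_{1}\dsum\beta_{2}$, and use the $\cX$-side analogue of the identification you already record for $\cA$ (namely $\delta_{\cX_{1}\dsum\cX_{2}}=\delta_{\cX_{1}}\dsum\delta_{\cX_{2}}$ under $\dual{(\cA_{1}c_{1}\dsum\cA_{2}c_{2})}\iso\dual{\cA_{1}c_{1}}\dsum\dual{\cA_{2}c_{2}}$) to conclude $\lambda_{1}\dsum\lambda_{2}=\delta_{\cX_{1}\dsum\cX_{2}}\circ(\rho_{1}\dsum\rho_{2})$. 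With that substitution (or addition) your proof is complete.
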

	
	\begin{proof} Since the direct sum of free Abelian groups is free Abelian (on the union of the respective bases) $\cX_{1}\dsum \cX_{2}$ and $\cA_{1}\dsum \cA_{2}$ are free Abelian sheaves on $\cE(E_{1}\prodsml E_{2})=\cE_{1}\cross \cE_{2}$.  It is also straightforward to see that $\beta_{1}\dsum \beta_{2}$ is a factorization, by taking the direct sum of the respective factorization diagrams, and similarly for $\lambda_{1}\dsum \lambda_{2}$.  Skew-symmetrizability is preserved under the direct sum construction too.
		
		Since kernels are computed locally, the right non-degeneracy follows from that for the constituent parts of the direct sum form.
	\end{proof}
	
	\begin{corollary}
	Let $E_{1}$ and $E_{2}$ be simple directed graphs and let $\cC_{1}=(\cE_{1},\cX_{1},\beta_{1},\cA_{1},\ip{\blank}{\blank}^{1})$ and  $\cC_{2}=(\cE_{2},\cX_{2},\beta_{2},\cA_{2},\ip{\blank}{\blank}^{2})$ be abstract cluster structures over $E_{1}$ and $E_{2}$ respectively.
		
		Then \[ \cC_{1}\prodsml \cC_{2}=(\cE(E_{1}\prodsml E_{2}),\cX_{1}\dsum \cX_{2},\beta_{1}\dsum \beta_{2},\cA_{1}\dsum \cA_{2},\ip{\blank}{\blank}^{\dsum}) \]
		is an abstract cluster structure. \qed
	\end{corollary}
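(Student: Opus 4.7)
The plan is essentially to read off the corollary from the proof of the preceding proposition, using the principle (articulated earlier in \S\ref{ss:cat-of-ACS}) that a claim about $\AQCScat$ often yields the corresponding claim in $\ACScat$ by omitting the parts concerning the quantum datum $\lambda$. So I would first observe that the tuple $(\cE(E_{1}\prodsml E_{2}),\cX_{1}\dsum \cX_{2},\beta_{1}\dsum \beta_{2},\cA_{1}\dsum \cA_{2},\ip{\blank}{\blank}^{\dsum})$ is obtained from $\cC_{1}\prodsml \cC_{2}$ in the quantum sense by forgetting $\lambda_{1}\dsum \lambda_{2}$, so it suffices to check the axioms of Definition~\ref{d:ACS} directly, using only the non-quantum hypotheses on $\cC_{1}$ and $\cC_{2}$.

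Concretely, I would carry out the same three verifications as in the preceding proof but now without any appeal to the existence of retractions. First, $\cX_{1}\dsum \cX_{2}$ and $\cA_{1}\dsum \cA_{2}$ are free Abelian sheaves over $\cE(E_{1}\prodsml E_{2})=\cE_{1}\cross \cE_{2}$, because a direct sum of free Abelian sheaves is again free Abelian (its underlying presheaf of sets is the disjoint union of the two bases). Second, $\beta_{1}\dsum \beta_{2}$ is a factorization $\cX_{1}\dsum \cX_{2}\to \cA_{1}\dsum \cA_{2}$: for any morphism $(f_{1},f_{2})\colon (c_{1},c_{2})\to (d_{1},d_{2})$ in $\cE_{1}\cross \cE_{2}$, the relevant square decomposes as a direct sum of the two factorization squares for $\beta_{1}$ and $\beta_{2}$, which commute by assumption.

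Third, one needs to check that $\ip{\blank}{\blank}^{\dsum}$ is a right non-degenerate pairing. Dinaturality of $\ip{\blank}{\blank}^{\dsum}$ reduces by Lemma~\ref{l:adjoint} to checking, for each morphism $(f_{1},f_{2})$ in $\cE_{1}\cross \cE_{2}$, an adjointness identity that splits as the direct sum of the two adjointness identities already enjoyed by $\ip{\blank}{\blank}^{1}$ and $\ip{\blank}{\blank}^{2}$. For right non-degeneracy, note that $\delta_{(\cX_{1}\dsum \cX_{2})(c_{1},c_{2})}=\delta_{\cX_{1}c_{1}}\dsum \delta_{\cX_{2}c_{2}}$ as a map into $\dual{(\cA_{1}c_{1}\dsum \cA_{2}c_{2})}\iso \dual{\cA_{1}c_{1}}\dsum \dual{\cA_{2}c_{2}}$, so its kernel is the direct sum of the two component kernels, both of which vanish by hypothesis.

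There is no real obstacle here: every assertion is a diagonal/componentwise statement, and no compatibility condition involving $\lambda$ is ever invoked. The only minor point to be careful about is the identification $\dual{(\cA_{1}c_{1}\dsum \cA_{2}c_{2})}\iso \dual{\cA_{1}c_{1}}\dsum \dual{\cA_{2}c_{2}}$ used in the non-degeneracy step; this is valid because each $\cA_{i}c_{i}$ is finitely generated free (or, if infinite rank is allowed, because we only need that the natural inclusion of the direct sum of duals into the dual of the direct sum is injective, which is enough to conclude triviality of the relevant kernel). Hence $\cC_{1}\prodsml \cC_{2}$ is an abstract cluster structure.
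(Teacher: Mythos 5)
Your proposal is correct and takes essentially the same route as the paper: the corollary is deduced from the proof of the preceding quantum proposition by omitting the parts concerning $\lambda$, with the free-sheaf, factorization and componentwise (``kernels are computed locally'') right non-degeneracy checks exactly as you describe. Your caveat about the identification $\dual{(\cA_{1}c_{1}\dsum \cA_{2}c_{2})}\iso \dual{\cA_{1}c_{1}}\dsum \dual{\cA_{2}c_{2}}$ is harmless but unnecessary, since for a binary direct sum the dual is always the direct sum of the duals, with no finiteness hypothesis needed.
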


	\begin{theorem}\label{p:ACS-has-products} 
				The abstract cluster structure $\cC_{1}\prodsml \cC_{2}$ is a categorical product of $\cC_{1}$ and $\cC_{2}$ in $\ACScat$. Hence, $\ACScat$ has finite products.
	\end{theorem}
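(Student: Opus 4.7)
The plan is to exhibit canonical projection morphisms $\pi_i \colon \cC_1 \prodsml \cC_2 \to \cC_i$ for $i=1,2$ and then verify the universal property of the categorical product by bootstrapping from the known universal properties of the Cartesian product in $\Catcat$ and of the biproduct in $\Abcat$, applied pointwise.

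First, I would construct the projections $\pi_i = (P_i, \chi^i, \alpha^i)$. Here $P_i \colon \cE_1 \cross \cE_2 \to \cE_i$ is the canonical product projection (using the identification $\cE(E_1 \prodsml E_2) = \cE_1 \cross \cE_2$ established just before the Proposition), and $\chi^i$, $\alpha^i$ have components at $(c_1,c_2)$ equal to the canonical biproduct projections $\cX_1 c_1 \dsum \cX_2 c_2 \to \cX_i c_i$ and $\cA_1 c_1 \dsum \cA_2 c_2 \to \cA_i c_i$ respectively. Naturality is inherited from the naturality of biproduct projections, and the factorization condition $\alpha^i \circ (\beta_1 \dsum \beta_2) = \beta_i \op{P_i} \circ \chi^i$ is immediate from the block-diagonal form of the direct sum factorization.

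Next, for the universal property, given an abstract cluster structure $\curly{D}=(\cE_{\curly{D}},\cX_{\curly{D}},\beta_{\curly{D}},\cA_{\curly{D}},\ip{\blank}{\blank}_{\curly{D}})$ equipped with morphisms $\curly{G}_i = (G_i, \psi_i, \gamma_i) \colon \curly{D} \to \cC_i$, I would build the mediating morphism $\curly{G} = (G, \psi, \gamma)$ componentwise. The universal property of $\cross$ in $\Catcat$ produces a unique functor $G \colon \cE_{\curly{D}} \to \cE_1 \cross \cE_2$, namely $G(d) = (G_1 d, G_2 d)$, satisfying $P_i \circ G = G_i$. Applying the universal property of the biproduct in $\Abcat$ pointwise produces unique natural transformations $\psi \colon \cX_{\curly{D}} \to (\cX_1 \dsum \cX_2)\op{G}$ and $\gamma \colon \cA_{\curly{D}} \to (\cA_1 \dsum \cA_2) G$ with $(\chi^i \op{G}) \circ \psi = \psi_i$ and $(\alpha^i G) \circ \gamma = \gamma_i$. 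The factorization condition $\gamma \circ \beta_{\curly{D}} = (\beta_1 \dsum \beta_2)\op{G} \circ \psi$ for $\curly{G}$ reduces, after post-composing with each projection $\alpha^i G$, to the factorization conditions for the $\curly{G}_i$, which hold by assumption; since the biproduct projections are jointly monic, this implies the equality.

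The equalities $\pi_i \circ \curly{G} = \curly{G}_i$ then hold by construction on all three layers of data, and uniqueness of $\curly{G}$ follows from the uniqueness clauses in the two universal properties already invoked. I do not expect any genuine obstacle: the argument is a direct componentwise bootstrap, and the only mild bookkeeping is tracking the three layers (one functor and two natural transformations) and remembering the contravariant indexing of $\chi$. Finally, since $\ACScat$ has a terminal object by \S\ref{ss:initial-terminal}, serving as the empty product, and the construction above supplies all binary products, $\ACScat$ has all finite products by an easy induction.
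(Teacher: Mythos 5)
Your proposal is correct and follows essentially the same route as the paper's proof: construct the projections from the Cartesian-product projections in $\Catcat$ together with the componentwise biproduct projections in $\Abcat$, then obtain the mediating morphism from the two universal properties applied pointwise, with the factorization condition inherited from the given morphisms. The only step you gloss over is verifying that the pointwise-defined families $\psi$ and $\gamma$ are actually natural transformations (the paper notes this follows from the block-diagonal action of $\cX_1\dsum\cX_2$ and $\cA_1\dsum\cA_2$ on morphisms), but this is routine and does not affect the argument.
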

	
	\begin{proof}
			Keeping the notation of the previous discussion and Proposition, we claim that there are morphisms of abstract cluster structures 
			\begin{equation*}
				\begin{tikzcd} \cC_{1} & \cC_{1}\prodsml \cC_{2} \arrow{l}[above]{\curly{P}_{1}} \arrow{r}{\curly{P}_{2}} & \cC_{2} \end{tikzcd}
			\end{equation*}
			such that for any $\cC\in \ACScat$ and morphisms $\curly{F}_{i}\colon \cC\to \cC_{i}$ for $i=1,2$, we have a unique morphism $\curly{F}\colon \cC\to \cC_{1}\prodsml \cC_{2}$ such that $\curly{P}_{i}\circ \curly{F} = \curly{F}_{i}$.
			That is,
			\begin{equation*}
				\begin{tikzcd} & \cC \arrow{dl}[above left]{\curly{F}_{1}} \arrow{dr}{\curly{F}_{2}} \arrow[dashed]{d}{\curly{F}} & \\ \cC_{1} & \cC_{1}\prodsml \cC_{2} \arrow{l}[below]{\curly{P}_{1}} \arrow{r}[below]{\curly{P}_{2}} & \cC_{2}  \end{tikzcd}
			\end{equation*}
			
			Given $\cC_{1}=(\cE_{1},\cX_{1},\beta_{1},\cA_{1},\ip{\blank}{\blank}^{1})$,  $\cC_{2}=(\cE_{2},\cX_{2},\beta_{2},\cA_{2},\ip{\blank}{\blank}^{2})$ abstract cluster structures over $E_{1}$ and $E_{2}$ respectively, as above we have \[ \cC_{1}\prodsml \cC_{2}=(\cE(E_{1}\prodsml E_{2}),\cX_{1}\dsum \cX_{2},\beta_{1}\dsum \beta_{2},\cA_{1}\dsum \cA_{2},\ip{\blank}{\blank}^{\dsum}) \]
			
			Now, the Cartesian product of categories is the product in $\Catcat$.  So, we may let $P_{i}$, $i=1,2$ be the natural functors associated with the Cartesian product decomposition of $\cE(E_{1}\prodsml E_{2})=\cE_{1}\cross \cE_{2}$, i.e.\ 
			\begin{equation*}
				\begin{tikzcd} \cE_{1} & \cE_{1}\cross \cE_{2} \arrow{l}[above]{{P}_{1}} \arrow{r}{{P}_{2}} & \cE_{2} \end{tikzcd}
			\end{equation*}
			
			Recall that $\cX_{1}\dsum \cX_{2}\colon \op{(\cE_{1}\cross \cE_{2})}\to \Abcat$ is given by $(\cX_{1}\dsum \cX_{2})(c_{1},c_{2})=\cX_{1} c_{1} \dsum \cX_{2} c_{2}$ on objects and by $(\cX_{1}\dsum \cX_{2})(f_{1},f_{2})=\cX_{1} f_{1}\dsum \cX_{2} f_{2}$ on morphisms.  Since $\dsum$ is the biproduct in $\Abcat$, for all $(c_{1},c_{2})\in C_{1}\cross C_{2}$ and $i=1,2$, there exist homomorphisms 
			\[ (\pi^{\chi}_{i})(c_{1},c_{2})\colon \cX_{1}c_{1} \dsum \cX_{2}c_{2} \to \cX_{i}c_{i} \]
			and
			\[ (\iota^{\chi}_{i})(c_{1},c_{2})\colon \cX_{i}c_{i} \to \cX_{1}c_{1} \dsum \cX_{2}c_{2} \]
			such that $(\pi^{\chi}_{i})(c_{1},c_{2})$ and $(\iota^{\chi}_{i})(c_{1},c_{2})$ satisfy the universal properties for products and coproducts respectively, $\pi^{\chi}_{i}(c_{1},c_{2})\circ \iota^{\chi}_{i}(c_{1},c_{2})=\id_{\cX_{i}c_{i}}$ and \[ \iota^{\chi}_{1}(c_{1},c_{2}) \circ \pi^{\chi}_{1}(c_{1},c_{2})\circ \iota^{\chi}_{2}(c_{1},c_{2}) \circ \pi^{\chi}_{2}(c_{1},c_{2}) = \iota^{\chi}_{2}(c_{1},c_{2}) \circ \pi^{\chi}_{2}(c_{1},c_{2}) \circ \iota^{\chi}_{1}(c_{1},c_{2}) \circ \pi^{\chi}_{1}(c_{1},c_{2}) \]
			That is, we have
			\begin{equation*}
				\begin{tikzcd}[column sep=40pt] \cX_{1}c_{1} \arrow[yshift=-0.5em]{r}[below]{(\iota^{\chi}_{1})(c_{1},c_{2})} & \cX_{1}c_{1} \dsum \cX_{2}c_{2} \arrow[yshift=0.5em]{l}[above]{(\pi^{\chi}_{1})(c_{1},c_{2})} \arrow[yshift=0.5em]{r}{(\pi^{\chi}_{2})(c_{1},c_{2})} & \cX_{2}c_{2} \arrow[yshift=-0.5em]{l}{(\iota^{\chi}_{2})(c_{1},c_{2})} \end{tikzcd}
			\end{equation*}
			
			For $i=1,2$, define $\pi^{\chi}_{i}\colon \cX_{1}\dsum \cX_{2}\to \cX_{i}$ to be the natural transformation with components $(\pi^{\chi}_{i})(c_{1},c_{2})$.  Naturality is the commuting of the square
			\[\begin{tikzcd}[column sep=40pt]
				(\cX_{1} \dsum \cX_{2})(c_{1},c_{2}) \arrow[->]{d}[left]{(\cX_{1}\dsum \cX_{2})(f_{1},f_{2})} \arrow[->]{r}[above]{(\pi^{\chi}_{i})(c_{1},c_{2})} & \cX_{i} c_{i} \arrow[->]{d}[right]{\cX_{i}f_{i}} \\
				(\cX_{1}\dsum \cX_{2})(d_{1},d_{2})  \arrow[->]{r}[below]{(\pi^{\chi}_{i})(d_{1},d_{2})} & \cX_{i}d_{i} 
			\end{tikzcd}\]
			for all $(f_{1},f_{2})\colon (c_{1},c_{2})\to (d_{1},d_{2})$, which follows since $(\cX_{1}\dsum \cX_{2})(f_{1},f_{2})=\cX_{1} f_{1}\dsum \cX_{2} f_{2}$.
			
			Arguing similarly with respect to $\cA_{1}\dsum \cA_{2}$, we obtain natural transformations \hfill \hfill \linebreak $\pi^{\alpha}_{i}\colon \cA_{1}\dsum \cA_{2} \to \cA_{i}$, also defined by projection onto the respective component.
			
			We claim that $\curly{P}_{i}\defeq (P_{i},\pi^{\chi}_{i},\pi^{\alpha}_{i})$ ($i=1,2$) are morphisms of abstract cluster structures.  To show this, it remains to check that $\pi^{\alpha}_{i}\circ (\beta_{1}\dsum \beta_{2})=\beta_{i}\op{P_{i}}\circ \pi^{\chi}_{i}$. But this follows immediately since $(\beta_{1}\dsum \beta_{2})_{(c_{1},c_{2})}=(\beta_{1})_{c_{1}}\dsum (\beta_{2})_{c_{2}}$, which is compatible with projection onto the relevant component.
			
			We then need to show the universality of $(\cC_{1}\prodsml \cC_{2},\curly{P}_{1},\curly{P}_{2})$.  Let $\cC=\genericACS$ be any abstract cluster structure and assume that $\curly{F}_{i}=(F_{i},\chi_{i},\alpha_{i})\colon \cC \to \cC_{i}$ ($i=1,2$) are morphisms of abstract cluster structures.
			
			Hence, since we have functors $F_{i}\colon \cE \to \cE_{i}$ as part of the data of $\curly{F}_{i}$, there exists a unique functor $F\colon \cE \to \cE_{1}\cross \cE_{2}$ such that $P_{i}\circ F=F_{i}$ for $i=1,2$:
			\begin{equation*}
				\begin{tikzcd} & \cE \arrow{dl}[above left]{F_{1}} \arrow{dr}{F_{2}} \arrow[dashed]{d}{F} & \\ \cE_{1} & \cE_{1}\cross \cE_{2} \arrow{l}[below]{P_{1}} \arrow{r}[below]{P_{2}} & \cE_{2}  \end{tikzcd}
			\end{equation*}
			In particular, on objects, writing $Fc=(c_{1},c_{2})$ for objects $c_{1}\in \cE_{1}$ and $c_{2}\in \cE_{2}$, we have $F_{i}c=(P_{i}\circ F)c=c_{i}$.
			
			Next, consider the natural transformations $\chi_{i}\colon \cX \to \cX_{i}\op{F_{i}}$ and $\alpha_{i}\colon \cA \to \cA_{i}F$ that we have as the remaining parts of the data of the $\curly{F}_{i}$.  Concentrating on one component $c\in C$, we have $\integ$-linear maps $(\chi_{i})_{c}\colon \cX c \to \cX_{i}\op{F_{i}}c$ and $(\alpha_{i})_{c}\colon \cA c \to \cA_{i}F_{i}c$.  
			
			As $\dsum$ is the biproduct in $\Abcat$, there exist 
			\[ p_{i}\colon \cX_{1}\op{F_{1}}c\dsum \cX_{2}\op{F_{2}}c \to \cX_{i}\op{F_{i}}c \]
			and (a unique) $\chi_{c}\colon \cX c \to \cX_{1}\op{F_{1}}c\dsum \cX_{2}\op{F_{2}}c$ such that $p_{i}\circ \chi_{c}=(\chi_{i})_{c}$:
			\begin{equation*}
				\begin{tikzcd} & \cX c \arrow{dl}[above left]{(\chi_{1})_{c}} \arrow{dr}{(\chi_{2})_{c}} \arrow[dashed]{d}{\chi_{c}} & \\ \cX_{1}\op{F_{1}}c & \cX_{1}\op{F_{1}}c \dsum \cX_{2}\op{F_{2}}c \arrow{l}[below]{p_{1}} \arrow{r}[below]{p_{2}} & \cX_{2}\op{F_{2}}c  \end{tikzcd}
			\end{equation*}
			Similarly, there exist 
			\[ q_{i}\colon \cA_{1}F_{1}c\dsum \cA_{2}F_{2}c \to \cA_{i}F_{i}c \]
			and (a unique) $\alpha_{c}\colon \cA c \to \cA_{1}F_{1}c\dsum \cA_{2}F_{2}c$ such that $q_{i}\circ \alpha_{c}=(\alpha_{i})_{c}$:
			\begin{equation*}
				\begin{tikzcd} & \cA c \arrow{dl}[above left]{(\alpha_{1})_{c}} \arrow{dr}{(\alpha_{2})_{c}} \arrow[dashed]{d}{\alpha_{c}} & \\ \cA_{1}F_{1}c & \cA_{1}F_{1}c \dsum \cA_{2}F_{2}c \arrow{l}[below]{q_{1}} \arrow{r}[below]{q_{2}} & \cA_{2}F_{2}c  \end{tikzcd}
			\end{equation*}
			
			Then
			\[ \chi\colon \cX\to (\cX_{1}\dsum \cX_{2})\op{F},\ \chi_{c}\colon \cX c\to \cX_{1}\op{F_{1}}c\dsum \cX_{2}\op{F_{2}}c=(\cX_{1}\dsum \cX_{2})_{\op{F}c} \]
			and
			\[ \alpha\colon \cA\to (\cA_{1}\dsum \cA_{2})F,\ \alpha_{c}\colon \cA c\to \cA_{1}F_{1}c\dsum \cA_{2}F_{2}c=(\cA_{1}\dsum \cA_{2})_{Fc} \]			
			defined by the above diagrams are natural transformations, by bifunctoriality of $\dsum$, and we have $\pi^{\chi}_{i} \circ \chi=\chi_{i}$ and $\pi^{\alpha}_{i} \circ \alpha=\alpha_{i}$ (where the $\op{F_{i}}$ and $F_{i}$ are notationally absorbed in the definition of composition of natural transformations).
			
			Define $\curly{F}=(F,\chi,\alpha)$.  We claim that $\curly{F}$ is a morphism of abstract cluster structures \linebreak $\curly{F}\colon \cC \to \cC_{1}\prodsml \cC_{2}$ such that $\curly{P}_{i}\circ \curly{F}=\curly{F}_{i}$.
	
			The chosen data has the required functoriality and naturality properties, so what remains is to check that $\alpha\circ \beta=(\beta_{1}\dsum \beta_{2})\op{F} \circ \chi$.  But tracking through the above definitions, we see that this follows from the $\curly{F}_{i}$ being morphisms of abstract cluster structures, so that $\alpha_{i}\circ \beta=\beta_{i}\op{F_{i}}\circ \chi_{i}$.
						
			Finally, 
			\[ \curly{P}_{i}\circ \curly{F}=(P_{i}\circ F,\pi^{\chi}_{i}\circ \chi,\pi^{\alpha}_{i}\circ \alpha)= (F_{i},\chi_{i},\alpha_{i})=\curly{F}_{i} \]
			as required.
	\end{proof}
	
	\begin{remark}
		As discussed previously, we expect that $\AQCScat$ has no terminal object and hence cannot have all finite products. The question of when $\cC_{1}\prodsml \cC_{2}$ is a product of $\cC_{1},\cC_{2}\in \AQCScat$ remains open.
	\end{remark}
	
	Let us now turn our attention to coproducts.
	
		Note that the Cartesian product is not the coproduct in $\Catcat$.  Rather the coproduct $\cE_{1} \coprodsml \cE_{2}$ of $\cE_{1}$ and $\cE_{2}$ is the ``disjoint union''.  That is, $\cE_{1}\coprodsml \cE_{2}$ is the category with objects 
		\[ \Obj{\cE_{1}\coprodsml \cE_{2}}=\Obj{\cE_{1}}\disjointunion \Obj{\cE_{2}} \] the disjoint union of the objects of $\cE_{1}$ and $\cE_{2}$, with morphisms sets empty unless the two objects come from the same factor, in which case the morphism set is that from the relevant factor:
		\[ \Hom{\cE_{1}\coprodsml \cE_{2}}{c}{d} = \begin{cases} \Hom{\cE_{i}}{c}{d} & \text{if}\ c,d\in \cE_{i} \\ \emptyset & \text{otherwise} \end{cases} . \]
		
		If $E_{1}$ and $E_{2}$ are simple directed graphs and $\cE_{1}=\cE(E_{1})$, $\cE_{2}=\cE(E_{2})$ their signed path categories, we see that the coproduct $\cE_{1}\coprodsml \cE_{2}$ is exactly $\cE(E_{1}\disjointunion E_{2})$, the signed path category of the disjoint union (also called sum) of the graphs $E_{1}$ and $E_{2}$.
		
		Now, given $\cX_{i}\colon \op{\cE_{i}}\to \Abcat$, for $i=1,2$, there is a functor $\cX_{1}\coprodsml \cX_{2}\colon \op{(\cE_{1}\coprodsml \cE_{2})}\to \Abcat$ given on objects by 
		\[ (\cX_{1}\coprodsml \cX_{2})(c) = \begin{cases} \cX_{1}c & \text{if}\ c\in \cE_{1} \\ \cX_{2}c & \text{if}\ c\in \cE_{2} \end{cases} \]
		and on morphisms $f\colon c\to d$ by
		\[ (\cX_{1}\coprodsml \cX_{2})(f) = \begin{cases} \cX_{1}f & \text{if}\ c,d\in \cE_{1} \\ \cX_{2}f & \text{if}\ c,d\in \cE_{2} \end{cases}. \]
		Indeed, $\cX_{1}\coprodsml \cX_{2}$ is a free Abelian sheaf on $\cE_{1}\coprodsml \cE_{2}$ and we may define $\cA_{1}\coprodsml \cA_{2}$ in exactly the same way.
		
		Similarly, given factorizations $\beta_{i}\colon \cX_{i}\to \cA_{i}$, $i=1,2$, there is a factorization $\beta_{1}\coprodsml \beta_{2}$ with component
		\[ (\beta_{1}\coprodsml \beta_{2})_{c} = \begin{cases} (\beta_{1})_{c} & \text{if}\ c\in \cE_{1} \\ (\beta_{2})_{c} & \text{if}\ c\in \cE_{2} \end{cases}. \]
		In the same way, we may define $\lambda_{1}\coprodsml \lambda_{2}$ and $\canform{\blank}{\blank}{}^{\coprodsml}$.
					
			\begin{proposition} Let $E_{1}$ and $E_{2}$ be simple directed graphs and let $\cC_{1}=(\cE_{1},\cX_{1},\beta_{1},\lambda_{1},\cA_{1},\ip{\blank}{\blank}^{1})$ and  $\cC_{2}=(\cE_{2},\cX_{2},\beta_{2},\lambda_{2},\cA_{2},\ip{\blank}{\blank}^{2})$ be abstract quantum cluster structures over $E_{1}$ and $E_{2}$ respectively.
			
			Then \[ \cC_{1}\coprodsml \cC_{2}=(\cE_{1}\coprodsml \cE_{2},\cX_{1}\coprodsml \cX_{2},\beta_{1}\coprodsml \beta_{2},\lambda_{1}\coprodsml \lambda_{2},\cA_{1}\coprodsml \cA_{2},\ip{\blank}{\blank}^{\coprodsml}) \]
			is an abstract quantum cluster structure.
		\end{proposition}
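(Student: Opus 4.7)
The plan is to verify each defining condition of an abstract quantum cluster structure for $\cC_{1}\coprodsml \cC_{2}$, exploiting one essential structural feature: by the definition of the coproduct of categories recalled just before the proposition, whenever $c$ and $d$ lie in different components we have $\Hom{\cE_{1}\coprodsml \cE_{2}}{c}{d}=\emptyset$. Consequently every morphism in $\cE_{1}\coprodsml \cE_{2}$ lives entirely inside $\cE_{1}$ or entirely inside $\cE_{2}$, and therefore every square or cube that needs to commute, and every form that needs a property checked, reduces at once to a diagram living wholly in $\cC_{1}$ or wholly in $\cC_{2}$, where the relevant fact holds by hypothesis.

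First I would note that $\cX_{1}\coprodsml \cX_{2}$ and $\cA_{1}\coprodsml \cA_{2}$ are free Abelian sheaves: at any object the value is by definition either $\cX_{i}c$ or $\cA_{i}c$, which is free Abelian, and the construction on morphisms manifestly factors as $\mathrm{Free}_{\integ}$ applied to the coproduct of the underlying set-valued sheaves. Next, I would verify the factorization conditions on $\beta_{1}\coprodsml \beta_{2}$ and $\lambda_{1}\coprodsml \lambda_{2}$: for any $f\colon c\to d$ in $\cE_{1}\coprodsml \cE_{2}$, both $c$ and $d$ necessarily lie in the same $\cE_{i}$, and the square in question is simply the factorization square for $\beta_{i}$ or $\lambda_{i}$, which commutes by assumption.

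For the compatibility of $\beta_{1}\coprodsml \beta_{2}$ with $\lambda_{1}\coprodsml \lambda_{2}$, I would invoke Lemma~\ref{l:compat-equiv-conds}\ref{l:compat-equiv-conds-retract}: a quantum structure on $\cC_{i}$ yields a retraction $\rho_{i}$ of $\beta_{i}$, and then $\rho_{1}\coprodsml \rho_{2}$ is straightforwardly a retraction of $\beta_{1}\coprodsml \beta_{2}$ satisfying $\lambda_{1}\coprodsml \lambda_{2}=\delta_{\cX_{1}\coprodsml \cX_{2}}\circ (\rho_{1}\coprodsml \rho_{2})$, componentwise. Finally, right non-degeneracy of $\ip{\blank}{\blank}^{\coprodsml}$ is immediate because at any $c$ the form is literally the form $\ip{\blank}{\blank}^{i}_{c}$ of the factor containing $c$, which is right non-degenerate by hypothesis; the associated natural transformation $\delta$ has trivial kernel at each object, so trivial kernel as a subfunctor.

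There is no real obstacle in this proof: it is a routine componentwise verification, genuinely trivialised by the absence of cross-component morphisms in the disjoint union. The only thing one must be slightly careful about is the dinaturality of $\ip{\blank}{\blank}^{\coprodsml}$, but this follows by Lemma~\ref{l:adjoint} applied to each factor together with the vacuous fact that there are no morphisms to test the dinaturality condition against across factors. The non-quantum analogue (with $\lambda$ and the retraction clause deleted) is then immediate, giving the corresponding corollary for $\ACScat$.
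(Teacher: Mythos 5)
Your proof is correct and follows essentially the same route as the paper: the objectwise/componentwise verification of the sheaf, factorization and non-degeneracy conditions (justified by the absence of cross-component morphisms in $\cE_{1}\coprodsml\cE_{2}$), together with the observation that $\rho_{1}\coprodsml\rho_{2}$ is a retraction of $\beta_{1}\coprodsml\beta_{2}$ witnessing $\lambda_{1}\coprodsml\lambda_{2}=\delta_{\cX_{1}\coprodsml\cX_{2}}\circ(\rho_{1}\coprodsml\rho_{2})$, is exactly the paper's argument.
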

		
		\begin{proof} Beyond the discussion above, it remains to check that $\canform{\blank}{\blank}{}^{\coprodsml}$ is right non-degenerate and that $\beta_{1}\coprodsml\beta_{2}$ and $\lambda_{1}\coprodsml\lambda_{2}$ are compatible.
			
		For the former, this follows from observing that we check right non-degeneracy component-wise.  For the latter, we see that for $\rho_{1}$ and $\rho_{2}$ retractions of $\beta_{1}$ and $\beta_{2}$ respectively, $\lambda_{1}\coprodsml\lambda_{2}=\delta_{\cX_{1}\coprodsml\cX_{2}}\circ (\rho_{1}\coprodsml \rho_{2})$, as required, also by working component-wise.
		\end{proof}
		
		\begin{theorem} {\ }
			\begin{enumerate}
			\item The abstract cluster structure $\cC_{1}\coprodsml  \cC_{2}$ is a categorical coproduct of $\cC_{1}$ and $\cC_{2}$ in $\ACScat$. Hence, $\ACScat$ has finite coproducts.
			\item The abstract quantum cluster structure $\cC_{1}\coprodsml \cC_{2}$ is a categorical coproduct of $\cC_{1}$ and $\cC_{2}$ in $\AQCScat$. Hence, $\AQCScat$ has finite coproducts.
			\end{enumerate}
		\end{theorem}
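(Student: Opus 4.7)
The plan is to verify the universal property of coproducts component-wise, leveraging that all the structural data on $\cC_1 \coprodsml \cC_2$ was defined case-by-case (first factor versus second factor) in the discussion preceding the theorem.

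First I would construct the inclusion morphisms $\iota_i = (I_i, \chi^{\iota}_i, \alpha^{\iota}_i) \colon \cC_i \to \cC_1 \coprodsml \cC_2$ for $i=1,2$. Here $I_i \colon \cE_i \to \cE_1 \coprodsml \cE_2$ is the canonical inclusion arising from the coproduct in $\Catcat$, and by construction $(\cX_1 \coprodsml \cX_2)(I_i c) = \cX_i c$ and $(\cA_1 \coprodsml \cA_2)(I_i c) = \cA_i c$ for $c \in \cE_i$, so $\chi^{\iota}_i$ and $\alpha^{\iota}_i$ may be taken to have identity components. Checking that $\iota_i$ is a morphism in $\AQCScat$ reduces to observing that the factorization identities for $\beta_1 \coprodsml \beta_2$ and $\lambda_1 \coprodsml \lambda_2$ restrict on the image of $I_i$ to the identities for $\beta_i$ and $\lambda_i$, which is immediate.

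Next, given any $\cC$ and morphisms $\mathcal{F}_i = (F_i, \chi_i, \alpha_i) \colon \cC_i \to \cC$, I would build the unique mediating morphism $\mathcal{F} = (F, \chi, \alpha) \colon \cC_1 \coprodsml \cC_2 \to \cC$. By the universal property of the coproduct in $\Catcat$, there is a unique functor $F$ with $F \circ I_i = F_i$. For the natural transformations, since each object of $\cE_1 \coprodsml \cE_2$ lies in exactly one factor and the morphism set between objects in different factors is empty, I simply define $\chi_c = (\chi_i)_c$ and $\alpha_c = (\alpha_i)_c$ whenever $c \in \cE_i$. Naturality holds automatically because every morphism in $\cE_1 \coprodsml \cE_2$ is internal to one of the two factors, where the naturality of $\chi_i$ or $\alpha_i$ applies.

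The compatibility condition $\alpha \circ (\beta_1 \coprodsml \beta_2) = \beta \op{F} \circ \chi$, and in the quantum case the analogous identity $\lambda_1 \coprodsml \lambda_2 = \dual{\alpha} \circ \lambda F \circ \alpha$, reduce on each object $c \in \cE_i$ to the corresponding equation from $\mathcal{F}_i$, which holds by assumption. The factorization $\mathcal{F} \circ \iota_i = \mathcal{F}_i$ holds by construction, and uniqueness of $\mathcal{F}$ follows from the uniqueness of $F$ (via the coproduct in $\Catcat$) together with the fact that the natural transformations $\chi$ and $\alpha$ are uniquely determined by prescription on each factor. The most delicate point, rather than a genuine obstacle, is simply to note that the quantum and non-quantum arguments coincide verbatim apart from the $\lambda$-compatibility check; crucially there is no analogue here of the failure for the terminal object in $\AQCScat$, because the mediating morphism is built out of the $\mathcal{F}_i$, which already carry the required compatibility, rather than being constrained a priori by the target. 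Hence both parts of the theorem follow from essentially the same verification.
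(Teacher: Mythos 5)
Your proposal is correct and follows essentially the same route as the paper: inclusion morphisms built from the coproduct in $\Catcat$ with identity-component natural transformations, a mediating morphism defined factor-wise (using that morphism sets between objects in different factors are empty), and all compatibility conditions, including the quantum $\lambda$-condition, checked component-wise. Your explicit treatment of the mediating morphism's $\lambda$-compatibility and uniqueness is slightly more detailed than the paper's terse argument, but it is the same proof.
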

		
		\begin{proof} Since the proof of this proposition is very similar in spirit to Theorem~\ref{p:ACS-has-products}, mainly consisting of categorically dual claims, we will give fewer details.
			\begin{enumerate}
			\item We claim that there are morphisms of abstract cluster structures 
		\begin{equation*}
			\begin{tikzcd} \cC_{1} \arrow{r}[above]{\curly{I}_{1}} & \cC_{1}\coprodsml \cC_{2}  & \cC_{2} \arrow{l}[above]{\curly{I}_{2}} \end{tikzcd}
		\end{equation*}
		such that for any $\cC\in \ACScat$ and morphisms $\curly{G}_{i}\colon \cC_{i}\to \cC$ for $i=1,2$, we have a unique morphism $\curly{G}\colon \cC_{1}\coprodsml \cC_{2} \to \cC$ such that $\curly{G}\circ \curly{I}_{i}=\curly{G}_{i}$.
		That is,
		\begin{equation*}
			\begin{tikzcd} \cC_{1} \arrow{r}[above]{\curly{I}_{1}} \arrow{dr}[below left]{\curly{G}_{1}} & \cC_{1}\coprodsml \cC_{2} \arrow[dashed]{d}{\curly{G}} & \cC_{2} \arrow{l}[above]{\curly{I}_{2}} \arrow{dl}{\curly{G}_{2}} \\ & \cC  & \end{tikzcd}
		\end{equation*}
		
		We need to find $\curly{I}_{i}=(I_{i},\iota_{i}^{\chi},\iota_{i}^{\alpha})$, $i=1,2$, morphisms of abstract cluster structures.  (Note that $\iota_{i}^{\chi}$ and $\iota_{i}^{\alpha}$ will not be the same maps as in the proof of Theorem~\ref{p:ACS-has-products}.)  First, we may obtain $I_{i}\colon \cE_{i}\to \cE_{1}\coprodsml \cE_{2}$ as the functors associated with the coproduct of categories.
		
		Next, $\iota_{i}^{\chi}\colon \cX_{1}\to (\cX_{1}\coprodsml\cX_{2})\op{I}_{i}$ should be a natural transformation: in fact, its components are the relevant identity map, since $((\cX_{1}\coprodsml \cX_{2})\op{I}_{i}){c}=\cX_{i}c$.  Similarly, $\iota_{i}^{\alpha}\colon \cA_{i}\to (\cA_{1}\coprodsml \cA_{2})\op{I}_{i}$ is a natural transformation with identity components.  
		
		Then the condition $\iota_{i}^{\alpha}\circ \beta_{i}=(\beta_{1}\coprodsml\beta_{2})\op{I}_{i}\circ \iota_{i}^{\chi}$ is also immediate: $(\beta_{1}\coprodsml\beta_{2})\op{I}_{i}=\beta_{i}$, as one sees by looking component-wise.
		
		This gives us candidate morphisms in $\ACScat$ and it remains to check that they have the required universal property, as in the above diagram.  Constructing $\curly{G}$ as $\curly{G}_{1}\coprodsml \curly{G}_{2}$ in the way suggested by the previous discussion, we see that this holds.
		
		\item The additional requirement in the quantum case is that the morphisms of abstract quantum cluster structures $\curly{I}_{i}$ should also satisfy $\lambda_{i}=\dual{(\iota_{i}^{\alpha})}\circ (\lambda_{1}\coprodsml \lambda_{2})\op{I}_{i}\circ \iota_{i}^{\alpha}$.  But similarly to the above, $(\lambda_{1}\coprodsml \lambda_{2})\op{I}_{i}=\lambda_{i}$ and $\iota_{i}^{\alpha}$ and its dual have identity components, so this holds.
		\qedhere
		\end{enumerate}
		\end{proof}
	
\sectionbreak
\pagebreak
\part{Representations of cluster combinatorics}\label{P:reps}

The philosophy of this part is to think of the various settings in which cluster combinatorics appear as being \emph{representations} of abstract cluster structures.  This is closest to being a formal construction for cluster varieties, where as we will see, the main idea is to form the torus associated to each lattice $\cA c$ and $\cX c$ and glue using the maps $\cA f$ and $\cX f$.  

The construction for cluster algebras is analogous to this but more complicated, yet still intuitive at heart: one passes between groups and algebras by ``exponentiation'' and ``taking logs''.  For an individual $c\in \cE$, we would use the group algebra functor and its adjoint, the group of units functor, to take us between $\cA c$ (respectively $\cX c$) and a Laurent polynomial ring $\bK[\cA c]$ (resp.\ $\bK[\cX c]$).  The technicality of course comes in the gluing step, i.e.\ connecting the different Laurent polynomial rings via mutation.  An additional nuance is that we want to cover the quantum case, so need to use twisted group algebras.

For cluster categories, the picture is incomplete, however.  We will see that to a cluster category one can associate a very natural abstract cluster structure, but the converse is not possible with current technology.  Essentially, this boils down to the question of finding categories with a suitable collection of subcategories where the Grothendieck groups of these match some pre-specified groups.

Similarly, we will see that triangulations of marked surfaces give rise to abstract cluster structures, again in a very natural way, but are not able to give a reverse construction.

In the final section, we will discuss morphisms between the different abstract cluster structures arising from the above classes of examples.

\sectionbreak
\section{Linear representations}\label{s:linear-reps}

In this section, we will consider linear representations of abstract cluster structures.  In fact, we will concentrate on some very specific representations, where the vector spaces in the image of our functor are actually algebras---even more specifically, they are (quantum) tori.

This is because our main goal here will be to show that (quantum) cluster algebras give rise to (quantum) abstract cluster structures, and vice versa.

To do this in as efficient a manner as possible, we will adopt an approach to constructing quantum cluster algebras that is somewhat different in style to the original definition, but uses a philosophy close to the geometric construction of cluster varieties and associated tropical spaces.  Of course, this is also closely aligned to the approach of Part~\ref{P:ACS} but since it is a less familiar setup, we will give full details.

Rather than treat commutative cluster algebras separately from the quantum case, we will simply start with the quantum version and point out at the relevant times how to recover the commutative setting if one prefers.  As with abstract cluster structures, this essentially boils down to simply forgetting one piece of the data.

\subsection{Quantum tori}\label{ss:quantum-tori}

For $\cB$ a finite or countably infinite set, we will write $\integ[\cB]$ for the free Abelian group on $\cB$, so that $\cB$ is a $\integ$-basis for $\integ[\cB]$.  Note that we do not assume $\cB$ has been enumerated or ordered at this point, so that although there exist isomorphisms $\integ[\cB]\iso \integ^{\card{\cB}}$, we have not chosen a preferred one.  We will refer to groups of the form $\integ[\cB]$ as lattices and say that $\integ[\cB]$ is the lattice generated by $\cB$.  For brevity, let us call such a set $\cB$ simply ``countable''.

In what follows, we write $\dual{A}=\Hom{\integ}{A}{\integ}$ for the $\integ$-linear dual of an Abelian group $A$. Note that for $A$ a finitely generated free Abelian group, $\dual{A}$ is again a finitely generated free Abelian group of the same rank as $A$. 

When $\cB$ is finite, we will canonically identify the double dual of a lattice $\integ[\cB]$, $\integ[\cB]^{**}$, with $\integ[\cB]$; that is, we will suppress the $\integ$-linear isomorphism $\delta\colon \integ[\cB]\to \integ[\cB]^{**}$, $\delta(b)=(f\mapsto f(b))$ from notation.  Since for $\cB$ countably infinite this map is not an isomorphism, at some points, extra vigilance will be needed in this case.

We will write $\evform{\blank}{\blank}\colon \dual{A}\cross A\to \integ$ for the \emph{evaluation form} $\evform{f}{a}=f(a)$ and abuse notation by also writing $\evform{\blank}{\blank}$ for the opposite form $\evform{\blank}{\blank}\colon A\cross \dual{A}\to \integ$, $\evform{a}{f}=f(a)$, since the context will always make it clear which is meant.  So, in particular, $\delta(b)=\evform{\blank}{b}$.

Let $\cB$ be a countable set and let $\lambda\colon \dual{\integ[\cB]}\to \integ[\cB]$ be a homomorphism\footnote{The choice of the dual of $\integ[\cB]$ as the domain for $\lambda$ may look surprising; we do it to more cleanly line up with conventions for the exchange matrix, which will be introduced in the following subsection.}.  We say $\lambda$ is \emph{skew-symmetric} if $\dual{\lambda}=-\delta\circ\lambda$. 

Associated to $\lambda$ is a skew-symmetric $\integ$-bilinear form $\canform{\blank}{\blank}{\lambda}\colon \dual{\integ[\cB]}\cross \dual{\integ[\cB]}\to \integ$, $\canform{\blank}{\blank}{\lambda}\defeq \evform{\lambda(\blank)}{\blank}$.  For $\cB$ finite, let $\dual{\cB}=\{ \dual{b} \mid b\in \cB \}$ be a dual basis for $\dual{\integ[\cB]}$, so that $\evform{\dual{b}}{c}=\delta_{bc}=\delta_{\dual{b}\dual{c}}$.

The Gram matrix of this form with respect to the basis $\dual{\cB}$ is equal to the matrix of $\lambda$ with respect to $\dual{\cB}$ and $\cB$: we define $\lambda_{\dual{b},c}$ by the equation $\lambda(\dual{b})=\sum_{c\in \cB} \lambda_{\dual{b},c}c$ and hence have $\canform{\dual{b}}{\dual{c}}{\lambda}= \lambda(\dual{b})(\dual{c})=\lambda_{\dual{b},c}$. Note that the sum defining $\lambda_{\dual{b},c}$ is finite since $\lambda$ is a homomorphism to $\integ[\cB]$, which consists of \emph{finite} sums of elements of $\cB$.  These matrices are skew-symmetric in the usual sense.

Recall that a group bicharacter $\Omega\colon G\cross G \to \units{\bK}$ is a map from $G\cross G$ to the units of $\bK$ satisfying
\[ \Omega(gh,k)=\Omega(g,k)\Omega(h,k)\ \text{and}\ \Omega(g,hk)=\Omega(g,h)\Omega(g,k) . \]
It is skew-symmetric if $\Omega(h,g)=\Omega(g,h)^{-1}$.

Now, any skew-symmetric bilinear form gives rise to a family of skew-symmetric bicharacters, by exponentiation.  Specifically, let $q^{\frac{1}{2}}\in \units{\bK}$ and define $\Omega_{q}^{\lambda}\colon \dual{\integ[\cB]}\cross \dual{\integ[\cB]}\to \units{\bK}$ by
\begin{equation}
	\label{eq:Omega-def}
	\Omega_{q}^{\lambda}(v,w)=q^{\frac{1}{2}\canform{v}{w}{\lambda}}.
\end{equation}
Here, in common with other authors, we write $q^{\frac{1}{2}}\in \units{\bK}$ as a shorthand for the choice of a pair $(v,q)$ of elements of $\units{\bK}$ such that $v^{2}=q$; that is, we have $(q^{\frac{1}{2}})^{2}=q$.

From $\lambda\colon \dual{\integ[\cB]} \to \integ[\cB]$ skew-symmetric, $q^{\frac{1}{2}}\in \units{\bK}$ and the associated skew-symmetric bicharacter $\Omega_{q}^{\lambda}$, we may construct a quantum torus, as the twisted group algebra.

\begin{definition}\label{d:q-torus}
	Let $\qtorus{q}{\lambda}{\cB}\defeq (\bK \dual{\integ[\cB]})^{\Omega_{q}^{\lambda}}$ be the $\bK$-algebra with underlying vector space 
	\[ \bK \dual{\integ[\cB]}=\operatorname{span}_{\bK}\{ x^{v} \mid v\in \dual{\integ[\cB]}\} \]
	and multiplication defined on basis elements by
	\[ x^{v}x^{w}=\Omega_{q}^{\lambda}(v,w)x^{v+w}, \]
	extended linearly.
\end{definition}

We will refer to the basis $\{ x^{v} \}$ as the canonical basis of $\qtorus{q}{\lambda}{\cB}$.  As is well-known, the bicharacter property ensures that $\qtorus{q}{\lambda}{\cB}$ is an associative algebra.  Note that 
\[ x^{v}x^{-v}=\Omega_{q}^{\lambda}(v,-v)x^{0}=1=\Omega_{q}^{\lambda}(-v,v)x^{0}=x^{-v}x^{v} \]
so that $x^{v}$ is invertible with inverse $x^{-v}$.

It follows from the skew-symmetry of $\Omega_{q}^{\lambda}$ that
\[ x^{w}x^{v}=\Omega_{q}^{\lambda}(w,v)x^{w+v}=\Omega_{q}^{\lambda}(v,w)^{-1}x^{v+w}=\Omega_{q}^{\lambda}(v,w)^{-2}x^{v}x^{w} \]
or equivalently
\[ x^{v}x^{w}=\Omega_{q}^{\lambda}(v,w)^{2}x^{w}x^{v} \]
so that the elements $x^{v}$ and $x^{w}$ quasi-commute. Comparing to \eqref{eq:Omega-def}, we see that the power of $q$ appearing in this quasi-commutation relation is $\canform{v}{w}{\lambda}$, without any factor of $\frac{1}{2}$. That is, the appearance of a square root of $q$ is only for the convenience of having our form $\canform{\blank}{\blank}{\lambda}$ exactly encode our quasi-commutation rules.

Combining the above observations, we see that $\qtorus{q}{\lambda}{\cB}$ is isomorphic to a quantum torus algebra in $\card{\cB}$ variables and its specialization at $q^{\frac{1}{2}}=1$, $\qtorus{1}{\lambda}{\cB}$, is isomorphic to a (commutative) Laurent polynomial algebra.

As we will see shortly, the canonical basis $\{x^{v}\}$ has a favourable invariance property.  Before explaining this, we make a link to the more common approach in the literature---notably \cite{GoodearlYakimovQCA}---of constructing based quantum tori. In order to do this, we will explicitly choose an indexing of $\cB$ and compare ordered monomials with respect to this with the canonical basis.

Given a bijection $\epsilon\colon \{1,\dotsc, \card{\cB}\} \to \dual{\cB}$, we may equip $\qtorus{q}{\lambda}{\cB}$ with a $\bK$-basis $\{ x^{\underline{v}} \mid \underline{v}\in \dual{\integ[\cB]} \}$
of standard monomials with respect to the enumeration $\epsilon$, where for $\underline{v}\in \dual{\integ[\cB]}$ expressed as $\sum_{i=1}^{\card{\cB}} v_{i}\epsilon(i)$ ($v_{i}\in \integ$) we define $x^{\underline{v}}\defeq \overrightarrow{\prod}_{i=1}^{\card{\cB}} x^{v_{i}\epsilon(i)}$; here the arrow reminds us that this is an ordered product. The following lemma tells us how to express $x^v$ in this basis.

\begin{lemma} For $v=\sum_{i} v_{i}\epsilon(i)$, we have
	\[ x^{v}=\left(\prod_{i<j} \Omega_{q}^{\lambda}(\epsilon(i),\epsilon(j))^{-v_{i}v_{j}} \right)(x^{\epsilon(1)})^{v_{1}}\dotsm (x^{\epsilon(n)})^{v_{n}}=\left(\prod_{i<j} \Omega_{q}^{\lambda}(\epsilon(i),\epsilon(j))^{-v_{i}v_{j}} \right)x^{\underline{v}} \]
\end{lemma}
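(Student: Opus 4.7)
The plan is to unwind the definitions and apply the multiplication rule of Definition~\ref{d:q-torus} repeatedly to the ordered product on the right-hand side, keeping careful track of the accumulated bicharacter factors.

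First I would verify the two preliminary observations that make the formula well-posed. The bicharacter $\Omega_{q}^{\lambda}$ is $\integ$-bilinear in the exponents of its arguments, so
\[ \Omega_{q}^{\lambda}(v_{i}\epsilon(i),v_{j}\epsilon(j))=\Omega_{q}^{\lambda}(\epsilon(i),\epsilon(j))^{v_{i}v_{j}}. \]
Moreover, skew-symmetry of $\canform{\blank}{\blank}{\lambda}$ forces $\canform{w}{w}{\lambda}=0$ for all $w$, hence $\Omega_{q}^{\lambda}(w,w)=1$; applying this with $w=\epsilon(i)$ and induction on $v_{i}$ shows $(x^{\epsilon(i)})^{v_{i}}=x^{v_{i}\epsilon(i)}$. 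This reduces the claim to showing that
\[ x^{v_{1}\epsilon(1)}x^{v_{2}\epsilon(2)}\dotsm x^{v_{n}\epsilon(n)}=\Bigl(\prod_{i<j}\Omega_{q}^{\lambda}(\epsilon(i),\epsilon(j))^{v_{i}v_{j}}\Bigr)x^{v}, \]
since multiplying both sides by $\prod_{i<j}\Omega_{q}^{\lambda}(\epsilon(i),\epsilon(j))^{-v_{i}v_{j}}$ yields the stated expression for $x^{v}$.

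I would then prove the displayed identity by induction on $n$. The case $n=1$ is trivial. For the inductive step, apply the multiplication rule of Definition~\ref{d:q-torus} to absorb $x^{v_{n}\epsilon(n)}$ into the product $x^{v_{1}\epsilon(1)+\dotsm +v_{n-1}\epsilon(n-1)}$ supplied by the induction hypothesis. This introduces the factor
\[ \Omega_{q}^{\lambda}\Bigl(\sum_{i<n}v_{i}\epsilon(i),\,v_{n}\epsilon(n)\Bigr)=\prod_{i<n}\Omega_{q}^{\lambda}(\epsilon(i),\epsilon(n))^{v_{i}v_{n}} \]
by the bicharacter property in the first argument combined with the bilinearity observation above. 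Combining with the inductive factor $\prod_{i<j<n}\Omega_{q}^{\lambda}(\epsilon(i),\epsilon(j))^{v_{i}v_{j}}$ produces $\prod_{i<j\leq n}\Omega_{q}^{\lambda}(\epsilon(i),\epsilon(j))^{v_{i}v_{j}}$, as required, and the exponent sum collapses to $v$.

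There is no real obstacle here: the argument is a bookkeeping exercise in the bicharacter property. The only point requiring mild care is the correct use of bilinearity to identify the exponent $v_{i}v_{j}$, and the observation that $\Omega_{q}^{\lambda}(\epsilon(i),\epsilon(i))=1$ so that no diagonal contribution $i=j$ appears in the final product.
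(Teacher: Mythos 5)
Your proof is correct and is essentially the paper's argument: the paper peels factors off $x^{v}$ by repeatedly applying the multiplication rule in the form $x^{a+b}=\Omega_{q}^{\lambda}(a,b)^{-1}x^{a}x^{b}$, while you run the same induction in the opposite direction, building up the ordered product and then dividing by the accumulated symmetrization factor. The auxiliary observations you spell out ($\Omega_{q}^{\lambda}(\epsilon(i),\epsilon(i))=1$ and $(x^{\epsilon(i)})^{v_{i}}=x^{v_{i}\epsilon(i)}$) are used tacitly in the paper's version, so nothing is missing.
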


\begin{proof} This follows by repeated application of the following ``expansion formula'' approach:
	\begin{align*} x^{v} & = x^{(v-v_{n}\epsilon(n))+v_{n}\epsilon(n)} \\
		& = \Omega_{q}^{\lambda}(v-v_{n}\epsilon(n),v_{n}\epsilon(n))^{-1}x^{v-v_{n}\epsilon(n)}x^{v_{n}\epsilon(n)} \\
		& = \Omega_{q}^{\lambda}(v_{n}\epsilon(n),v-v_{n}\epsilon(n))x^{v-v_{n}\epsilon(n)}(x^{\epsilon(n)})^{v_{n}}.\qedhere
	\end{align*}
\end{proof}

The coefficient appearing here is that referred to in \cite[\S 2.2]{GoodearlYakimovQCA} as the ``symmetrization scalar''; for future use, we define
\begin{equation}\label{eq:def-of-symm-scalar} \mathcal{S}_{q}^{\lambda,\epsilon}(v)=\prod_{i<j} \Omega_{q}^{\lambda}(\epsilon(i),\epsilon(j))^{-v_{i}v_{j}}
\end{equation}
for $v\in \dual{\integ[\cB]}$ decomposed as $v=\sum_{i} v_{i}\epsilon(i)$, noting that this depends on both the bicharacter $\Omega_{q}^{\lambda}$ and a choice of enumeration $\epsilon$.

Now, continuing to follow \cite[\S 2.2]{GoodearlYakimovQCA}, we have the invariance of the basis $\{ x^{v} \}$ referred to above.

\begin{lemma}\label{l:can-basis-invar} Let $\cB$, $\cC$ be countable sets such that $\card{\cB}=\card{\cC}$.  Let $\alpha\colon \dual{\integ[\cC]}\stackrel{\iso}{\to} \dual{\integ[\cB]}$ be an isomorphism of the duals of the associated lattices and define $\Omega_{q}^{\lambda,\alpha}=\Omega_{q}^{\lambda}\circ (\alpha \cross \alpha)$ the induced skew-symmetric bicharacter on $\dual{\integ[\cC]}$. 
	\begin{enumerate}[label=\textup{(\alph*)}]
		\item The map
		\[ \psi_{\alpha}\colon (\bK \dual{\integ[\cC]})^{\Omega_{q}^{\lambda,\alpha}}\to (\bK \dual{\integ[\cB]})^{\Omega_{q}^{\lambda}} \]
		defined by $\psi_{\alpha}(x^{w})=x^{\alpha(w)}$ is an isomorphism, with inverse given by $\psi_{\alpha}^{-1}(x^{w})=x^{\alpha^{-1}(w)}$.
		\item\label{l:can-basis-invar-b} We have 
		\[ \{ x^{\alpha(v)}=\psi_{\alpha}(x^{v}) \mid v \in \dual{\integ[\cC]} \} = \{ x^{v} \mid v\in \dual{\integ[\cB]} \}. \]
	\end{enumerate}
\end{lemma}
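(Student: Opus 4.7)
The plan is straightforward: part (a) reduces to checking that the prescribed map is a well-defined linear bijection that respects the twisted multiplication, and part (b) is then an immediate consequence of (a) combined with the bijectivity of $\alpha$.

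For part (a), I would first observe that since $\alpha$ is an isomorphism of Abelian groups, it sends the canonical basis $\{ x^{v} \mid v\in \dual{\integ[\cC]}\}$ of $(\bK\dual{\integ[\cC]})^{\Omega_{q}^{\lambda,\alpha}}$ bijectively onto the canonical basis $\{ x^{v} \mid v\in \dual{\integ[\cB]}\}$ of $(\bK\dual{\integ[\cB]})^{\Omega_{q}^{\lambda}}$. Hence $\psi_{\alpha}$ extends uniquely to a $\bK$-linear isomorphism of the underlying vector spaces, with two-sided inverse given on basis elements by $x^{w}\mapsto x^{\alpha^{-1}(w)}$. It remains to verify that $\psi_{\alpha}$ is multiplicative. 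For $v,w\in \dual{\integ[\cC]}$, using Definition~\ref{d:q-torus} and the definition of $\Omega_{q}^{\lambda,\alpha}$ we compute
\[
\psi_{\alpha}(x^{v}x^{w}) = \psi_{\alpha}\bigl(\Omega_{q}^{\lambda,\alpha}(v,w)x^{v+w}\bigr) = \Omega_{q}^{\lambda}(\alpha(v),\alpha(w))x^{\alpha(v)+\alpha(w)} = x^{\alpha(v)}x^{\alpha(w)} = \psi_{\alpha}(x^{v})\psi_{\alpha}(x^{w}),
\]
where we also used that $\alpha$ is a group homomorphism, so $\alpha(v+w)=\alpha(v)+\alpha(w)$. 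This computation, together with $\psi_{\alpha}(x^{0})=x^{0}=1$, shows that $\psi_{\alpha}$ is a $\bK$-algebra isomorphism.

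Part (b) is then essentially a set-theoretic observation: since $\alpha\colon \dual{\integ[\cC]}\to \dual{\integ[\cB]}$ is a bijection, the image of the canonical basis of the domain under $\psi_{\alpha}$ is exactly $\{ x^{\alpha(v)} \mid v\in \dual{\integ[\cC]}\} = \{ x^{u} \mid u\in \dual{\integ[\cB]}\}$, which is the canonical basis of the codomain. No genuine obstacle arises here; the main point worth flagging in the write-up is that the compatibility of multiplications is precisely engineered by the definition $\Omega_{q}^{\lambda,\alpha}=\Omega_{q}^{\lambda}\circ (\alpha\cross \alpha)$, so the lemma is really a formal statement that the construction $\cB\mapsto \qtorus{q}{\lambda}{\cB}$ is natural in the pair $(\cB,\lambda)$ under pullback along lattice isomorphisms, and in particular the canonical basis is invariant under such transport.
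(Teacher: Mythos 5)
Your proof is correct and follows essentially the same route as the paper: the multiplicativity computation $\psi_{\alpha}(x^{v}x^{w})=\Omega_{q}^{\lambda}(\alpha(v),\alpha(w))x^{\alpha(v)+\alpha(w)}=\psi_{\alpha}(x^{v})\psi_{\alpha}(x^{w})$ is exactly the paper's argument, and the bijectivity of $\psi_{\alpha}$ together with part (b) are the details the paper dismisses as ``straightforward to check'', which you have simply written out.
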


\begin{proof} For the first part, we have that
	\begin{align*}
		\psi_{\alpha}(x^{v}x^{w}) & = \psi_{\alpha}(\Omega_{q}^{\lambda,\alpha}(v,w)x^{v+w}) \\
		& = \Omega_{q}^{\lambda}(\alpha(v),\alpha(w))x^{\alpha(v+w)} \\
		& = \Omega_{q}^{\lambda}(\alpha(v),\alpha(w))x^{\alpha(v)+\alpha(w)} \\
		& = x^{\alpha(v)}x^{\alpha(w)} \\
		& =\psi_{\alpha}(x^{v})\psi_{\alpha}(x^{w}).
	\end{align*}
	The remainder is straightforward to check.
\end{proof}

Note the stronger claim in part~\ref{l:can-basis-invar-b}: that $\psi_{\alpha}$ maps the canonical basis of $(\bK \dual{\integ[\cC]})^{\Omega_{q}^{\lambda,\alpha}}$ precisely to that of $(\bK \dual{\integ[\cB]})^{\Omega_{q}^{\lambda}}$.  In particular, from the case $\cB=\cC$, we deduce invariance of the canonical basis $\{ x^{v} \}$ with respect to change of lattice (dual) basis.

\subsection{Toric frames and mutation}\label{ss:toric-frames}

Next, we will see that a choice of a skew-symmetric map $\lambda\colon \dual{\integ[\cB]}\to \integ[\cB]$ gives rise to a canonical \emph{toric frame}, in the sense of \cite{BZ-QCA}.  Let $\mathcal{F}(\qtorus{q}{\lambda}{\cB})$ denote the skew-field of fractions of $\qtorus{q}{\lambda}{\cB}$.

\begin{definition} Let $\lambda\colon \dual{\integ[\cB]}\to \integ[\cB]$ be skew-symmetric.  The function $M_{\lambda}\colon \dual{\integ[\cB]}\to \mathcal{F}(\qtorus{q}{\lambda}{\cB})$, $M_{\lambda}(\dual{b})=x^{\dual{b}}$ will be called the \emph{canonical toric frame} associated to $\lambda$.
\end{definition}

This is a toric frame: comparing with the definition as stated in \cite[Definition~2.2]{GoodearlYakimovQCA}, the $\bK$-span of the image of $M_{\lambda}$ is exactly $\qtorus{q}{\lambda}{\cB}$.  

Notice that this definition permits one to consider pre-composition by maps $\alpha\colon \dual{\integ[\cC]}\to \dual{\integ[\cB]}$.  If $\alpha$ is an isomorphism, then $M_{\lambda}\circ \alpha$ is again a toric frame. It follows from Lemma~\ref{l:can-basis-invar} that the $\bK$-span of the image of $M_{\lambda}\circ \alpha$ is $(\bK \dual{\integ[\cC]})^{\Omega_{q}^{\lambda,\alpha}}\iso \qtorus{q}{\lambda}{\cB}$ (corresponding to \cite[(2.11)]{GoodearlYakimovQCA}). 

Note too that the resulting toric frame is \emph{not} the correct candidate for mutation of $M_{\lambda}$.  From Lemma~\ref{l:can-basis-invar}, we see that $M_{\lambda}\circ \alpha$ is a ``monomial transformation'', reflecting a change of lattices, but this is at the tropical level.  Even for one-step mutation, cluster variables transform in a non-monomial way.  So this pre-composition cannot be the whole story: indeed, one needs a further ingredient, an automorphism of the field of fractions coming from the exchange matrix, of ``wall-crossing'' type.

Let $\ex\subseteq \cB$ and denote by $\iota\colon \integ[\ex]\inj \integ[\cB]$ the induced inclusion.  Fix $\beta\colon \integ[\ex]\to \dual{\integ[\cB]}$.  Denote by $\canform{\blank}{\blank}{\beta}\colon \integ[\ex]\cross\integ[\cB]\to \integ$ the $\integ$-bilinear form $\canform{v}{w}{\beta}=\evform{\beta(v)}{w}$.  The Gram matrix of this form is defined by $\beta(b)=\sum_{c\in \cB} \beta_{bc}\dual{c}$, a $\card{\ex}\cross \card{\cB}$ integer matrix.

Letting $\canform{\blank}{\blank}{\beta}^{p}\defeq \canform{\blank}{\blank}{\beta} \circ (\id \cross \iota)\colon \integ[\ex]\cross \integ[\ex] \to \integ$, we say that $\beta$ is \emph{skew-symmetrizable} if $\canform{\blank}{\blank}{\beta}$ is skew-symmetric (see Section~\ref{ss:forms-skew-sym} re this terminology).

From now on, we will assume $\cB$ is a countable set, $\ex \subseteq \cB$, $\beta\colon \integ[\ex]\to \dual{\integ[\cB]}$ is skew-symmetrizable and $\lambda\colon \dual{\integ[\cB]}\to \integ[\cB]$ is skew-symmetric.

\begin{definition}\label{d:compatible} We say that $\beta$ and $\lambda$ are \emph{compatible} if $\dual{\lambda}\circ \beta=\delta\circ \iota$.
\end{definition}

Note that if $\beta$ and $\lambda$ are compatible, then $\beta$ is necessarily injective.  This compatibility condition is dual to the usual one in terms of matrices, i.e.\ a condition on $B^{T}L$ for $B$ the matrix of $\beta$ and $L$ the matrix of $\lambda$, but is preferred in this approach due to the appearance of $\iota$ rather than its dual, where we would have $\dual{\beta}\circ \lambda$ equal to the canonical projection from $\dual{\integ[\cB]}$ onto $\dual{\integ[\ex]}$.  However the two are entirely equivalent so this is an \ae sthetic choice.

Also, for $k\in \ex$, we may re-write compatibility as
\[ \evform{\lambda(\dual{b})}{\beta(k)}=\evform{\dual{b}}{k}=\delta_{bk}. \]

For $\cB$ a countable set and $\integ[\cB]$ its associated lattice, we have a canonical submonoid $\nat[\cB]\subseteq \integ[\cB]$ obtained by taking the $\nat$-span of $\cB$; this submonoid is the discrete version of the canonical positive cone $\real_{+}[\cB]$ inside $\real[\cB]$.  

Given $b\in \integ[\cB]$ there is a unique decomposition $b=\bplus{b}{\cB}-\bminus{b}{\cB}$ with $\bplus{b}{\cB},\bminus{b}{\cB}\in \nat[\cB]$.  If $\ex \subseteq \cB$ and $b\in \integ[\ex]$, we similarly have $b=\bplus{b}{\ex}-\bminus{b}{\ex}$ with $\bplus{b}{\ex},\bminus{b}{\ex}\in \nat[\ex]$.  Furthermore, letting $\pi\colon \integ[\cB] \onto \integ[\ex]$ be the canonical splitting of $\iota$, we have $\pi([b]_{\pm}^{\cB})=[\pi(b)]_{\pm}^{\ex}$.

By identifying $\dual{\integ[\cB]}$ with $\integ[\dual{\cB}]$, we also obtain $\bplus{\dual{b}}{\dual{\cB}}$ and $\bminus{\dual{b}}{\dual{\cB}}$ in the same way.

Note that $\canform{b}{c}{\beta}=\beta(b)(c)$ implies that \[ \bplusminus{\beta(b)}{\dual{\cB}}=\sum_{\dual{c}\in \dual{\cB}} [\canform{b}{c}{\beta}]_{\pm}\dual{c},\] where for $n\in \integ$, $[n]_{+}\defeq \max\{n,0\}$ and $[n]_{-}\defeq [-n]_{+}=\max \{ 0,-n\}$ (noting that \hfill \hfill \linebreak $n=\bplus{n}{}-\bminus{n}{}$).  In the following and subsequently, we will use the assumption that $\beta$ is skew-symmetrizable to observe that for $k\in \ex$ we have $\beta(k)(k)=0$ and hence $\evform{\beta(k)}{k}=0$, or equivalently $\beta(k)\in \integ[\dual{\cB}\setminus \{\dual{k}\}]$.  It immmediately follows that $\bplusminus{\beta(k)}{\dual{\cB}}\in \nat[\dual{\cB}\setminus \{ \dual{k}\}]$. 

\begin{definition}\label{d:mut-of-index-sets} Let $\cB$ be a finite set and $\ex \subseteq \cB$. Fix $k\in \ex$ and let $\mu_{\cB}(k)$ denote a new element distinct from every element of $\cB$.  Then define
	\[ \mu_{k}(\cB) \defeq (\cB \setminus \{ k \}) \union \{ \mu_{\cB}(k) \}. \]
	Also, define
	\[ \mu_{k}(\ex) \defeq (\ex \setminus \{ k \}) \union \{ \mu_{\cB}(k) \} \]
	so that $\mu_{k}(\ex)=\mu_{k}(\cB)\setminus \ex^{c} \subseteq \mu_{k}(\cB)$.
\end{definition}

That is, mutation of our initial index set $\cB$ at an exchangeable (\ie mutable) index $k$ is achieved by removing $k$ and replacing it by a new (and, by fiat, different) indexing element, drawn from some unspecified universe of labels.  At this level, we are working by analogy with the exchange \emph{tree}, rather than the exchange graph: no equivalence relation is imposed, not even involutivity (see Lemma~\ref{l:F-mut-invol} below).

\begin{lemma}\label{l:F-isos} Let $k\in \ex$. There exist isomorphisms 
	\begin{align*}
		\mu_{k}^{+} & \colon \integ[\cB]\to \integ[\mu_{k}(\cB)], \mu_{k}^{+}(b)=\begin{cases} b+\evform{\bplus{\beta(k)}{\dual{\cB}}}{b}\mu_{\cB}(k) & \text{if}\ b\neq k \\ -\mu_{\cB}(k) & \text{if}\ b=k \end{cases} \quad\text{and} \\
		\mu_{k}^{-} & \colon \integ[\cB]\to \integ[\mu_{k}(\cB)], \mu_{k}^{-}(b)=\begin{cases} b+\evform{\bminus{\beta(k)}{\dual{\cB}}}{b}\mu_{\cB}(k) & \text{if}\ b\neq k \\ -\mu_{\cB}(k) & \text{if}\ b=k \end{cases} 
	\end{align*}
	whose inverses are, respectively,
	\begin{align*}
		\bar{\mu}_{k}^{+} & \colon \integ[\mu_{k}(\cB)]\to \integ[\cB], \bar{\mu}_{k}^{+}(b)=\begin{cases} b+\evform{\bplus{\beta(k)}{\dual{\cB}}}{b}k & \text{if}\ b\neq \mu_{\cB}(k) \\ -k & \text{if}\ b=\mu_{\cB}(k) \end{cases} \quad\text{and}\\
		\bar{\mu}_{k}^{-} & \colon \integ[\mu_{k}(\cB)]\to \integ[\cB], \bar{\mu}_{k}^{-}(b)=\begin{cases} b+\evform{\bminus{\beta(k)}{\dual{\cB}}}{b}k & \text{if}\ b\neq \mu_{\cB}(k) \\ -k & \text{if}\ b=\mu_{\cB}(k) \end{cases} .
	\end{align*}
	Furthermore, restricting to $\mu_{k}(\ex)$ (respectively, $\ex$), we have isomorphisms $\mu_{k}^{\pm}\colon \integ[\ex]\to \integ[\mu_{k}(\ex)]$ (respectively, $\bar{\mu}_{k}^{\pm}\colon \integ[\mu_{k}(\ex)]\to \integ[\ex]$).
\end{lemma}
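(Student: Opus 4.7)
The plan is straightforward: the two formulas define $\integ$-linear maps $\mu_k^{\pm}\colon \integ[\cB]\to \integ[\mu_k(\cB)]$ by specifying their values on the basis $\cB$ and extending linearly, and likewise for $\bar{\mu}_k^{\pm}$. The content of the lemma is then just that $\bar{\mu}_k^{\pm}$ is a two-sided inverse of $\mu_k^{\pm}$, together with the statement about the restriction to $\integ[\ex]$. Both are verified by direct computation on basis elements.

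First I would verify invertibility on $\cB$. For $b\in \cB$ with $b\neq k$, since $\mu_\cB(k)\notin \cB$ we have $\bar{\mu}_k^{+}(b)=b+\evform{\bplus{\beta(k)}{\dual{\cB}}}{b}\,k$ and $\bar{\mu}_k^{+}(\mu_\cB(k))=-k$, so linearity gives
\[
\bar{\mu}_k^{+}(\mu_k^{+}(b))=b+\evform{\bplus{\beta(k)}{\dual{\cB}}}{b}\,k-\evform{\bplus{\beta(k)}{\dual{\cB}}}{b}\,k=b,
\]
and $\bar{\mu}_k^{+}(\mu_k^{+}(k))=\bar{\mu}_k^{+}(-\mu_\cB(k))=k$. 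The computation of $\mu_k^{+}\circ \bar{\mu}_k^{+}$ on $\mu_k(\cB)$ is identical after swapping the roles of $k$ and $\mu_\cB(k)$. The argument for $\mu_k^{-}$ and $\bar{\mu}_k^{-}$ is word-for-word the same with $\bplus{\beta(k)}{\dual{\cB}}$ replaced by $\bminus{\beta(k)}{\dual{\cB}}$. So one really writes this out once.

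For the restriction claim, I need to check that $\mu_k^{\pm}(\integ[\ex])\subseteq \integ[\mu_k(\ex)]$ and likewise for $\bar{\mu}_k^{\pm}$. By linearity it suffices to check on basis elements: for $b\in \ex$ with $b\neq k$, the image $\mu_k^{\pm}(b)=b+\evform{\bplusminus{\beta(k)}{\dual{\cB}}}{b}\mu_\cB(k)$ lies in $\integ[\mu_k(\ex)]$ because $b\in \ex\setminus\{k\}\subseteq \mu_k(\ex)$ and $\mu_\cB(k)\in \mu_k(\ex)$; while $\mu_k^{\pm}(k)=-\mu_\cB(k)\in \integ[\mu_k(\ex)]$. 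The same inspection works for $\bar{\mu}_k^{\pm}$. Combined with invertibility this gives the claimed restricted isomorphisms.

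The only subtle point, which I would flag but not dwell on, is that the formulas a priori refer to the $\dual{k}$-coefficient of $\bplusminus{\beta(k)}{\dual{\cB}}$; the skew-symmetrizability of $\beta$, as noted in the paragraph preceding Definition~\ref{d:mut-of-index-sets}, ensures $\beta(k)(k)=0$ and hence $\bplusminus{\beta(k)}{\dual{\cB}}\in \nat[\dual{\cB}\setminus\{\dual{k}\}]$. This is what makes the formulas well-behaved: when substituting $\mu_\cB(k)$ for $k$ in the ``output'', there is no ambiguity coming from a $\dual{k}$-term that would need to be reinterpreted. Apart from keeping track of this, there is no real obstacle—the lemma is a bookkeeping statement.
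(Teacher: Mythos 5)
Your proof is correct and follows essentially the same route as the paper: a direct check on basis elements that $\bar{\mu}_{k}^{\pm}$ and $\mu_{k}^{\pm}$ are mutually inverse, with the restriction to $\integ[\ex]$ verified by inspection (a step the paper leaves implicit). One quibble: your skew-symmetrizability remark is harmless but not actually needed here, since the formulas never evaluate $\bplusminus{\beta(k)}{\dual{\cB}}$ at $k$ itself; that observation is really what is needed for the dual Lemma~\ref{l:E-isos}.
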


\begin{proof} We immediately see that $(\bar{\mu}_{k}^{\pm}\circ \mu_{k}^{\pm})(k)=k$ and $(\mu_{k}^{\pm}\circ \bar{\mu}_{k}^{\pm})(\mu_{\cB}(k))=\mu_{\cB}(k)$.  So, consider $b\in \cB\setminus \{ k \}$.  Then
	\begin{align*} (\bar{\mu}_{k}^{+} \circ \mu_{k}^{+})(b) & = \bar{\mu}_{k}^{+}(b+\evform{\bplus{\beta(k)}{\dual{\cB}}}{b}\mu_{\cB}(k)) \\
		& = \bar{\mu}_{k}^{+}(b)-\evform{\bplus{\beta(k)}{\dual{\cB}}}{b}k \\
		& = (b+\evform{\bplus{\beta(k)}{\dual{\cB}}}{b}k)-\evform{\bplus{\beta(k)}{\dual{\cB}}}{b}k \\
		& =b.
	\end{align*}
	The remaining case is similar.
\end{proof}

By the comment prior to the lemma, we could also write
\begin{align*}
	\mu_{k}^{+} & \colon \integ[\cB]\to \integ[\mu_{k}(\cB)], \mu_{k}^{+}(b)=\begin{cases} b+[\canform{k}{b}{\beta}]_{+}\mu_{\cB}(k) & \text{if}\ b\neq k \\ -\mu_{\cB}(k) & \text{if}\ b=k \end{cases} \quad\text{and} \\
	\mu_{k}^{-} & \colon \integ[\cB]\to \integ[\mu_{k}(\cB)], \mu_{k}^{-}(b)=\begin{cases} b+[\canform{k}{b}{\beta}]_{-}\mu_{\cB}(k) & \text{if}\ b\neq k \\ -\mu_{\cB}(k) & \text{if}\ b=k \end{cases} 
\end{align*}
which allows a more direct comparison to the formul\ae\ for tropical mutation (\cf \cite{FockGoncharov}).  We choose to label $\mu$ by $+$ or $-$ in line with whether $\bplus{\ }{}$ or $\bminus{\ }{}$ occurs in the definition of the map.

With respect to the chosen bases, the matrices for these maps are those denoted $F_{\pm}$ in the cluster algebra literature (\cf \cite{FZ-CA1} and others).

Similarly, we have a dual\footnote{Strictly speaking, we restrict the duals of these isomorphisms to obtain those above; see Lemma~\ref{l:EF-dual}.} pair of isomorphisms, as follows.

\begin{lemma}\label{l:E-isos} Let $\cB$ be a finite set, $\ex \subseteq \cB$ and fix $k\in \ex$.
	
	Then there exist isomorphisms 
	\begin{align*}
		\mu_{k}^{+} & \colon \dual{\integ[\cB]}\to \dual{\integ[\mu_{k}(\cB)]}, \mu_{k}^{+}(\dual{b})=\begin{cases} \dual{b} & \text{if}\ \dual{b}\neq \dual{k} \\ \bplus{\beta(k)}{\dual{\cB}}-\dual{\mu_{\cB}(k)} & \text{if}\ \dual{b}=\dual{k} \end{cases} \quad \text{and} \\
		\mu_{k}^{-} & \colon \dual{\integ[\cB]}\to \dual{\integ[\mu_{k}(\cB)]}, \mu_{k}^{-}(\dual{b})=\begin{cases} \dual{b} & \text{if}\ \dual{b}\neq \dual{k} \\ \bminus{\beta(k)}{\dual{\cB}}-\dual{\mu_{\cB}(k)} & \text{if}\ \dual{b}=\dual{k} \end{cases} 
	\end{align*}
	whose inverses are, respectively,
	\begin{align*}
		\bar{\mu}_{k}^{+} & \colon \dual{\integ[\mu_{k}(\cB)]}\to \dual{\integ[\cB]}, \bar{\mu}_{k}^{+}(\dual{b})=\begin{cases} \dual{b} & \text{if}\ \dual{b}\neq \dual{\mu_{\cB}(k)} \\ \bplus{\beta(k)}{\dual{\cB}}-\dual{k} & \text{if}\ \dual{b}=\dual{\mu_{\cB}(k)} \end{cases} \quad\text{and}\\
		\bar{\mu}_{k}^{-} & \colon \dual{\integ[\mu_{k}(\cB)]}\to \dual{\integ[\cB]}, \bar{\mu}_{k}^{-}(\dual{b})=\begin{cases} \dual{b} & \text{if}\ \dual{b}\neq \dual{\mu_{\cB}(k)} \\ \bminus{\beta(k)}{\dual{\cB}}-\dual{k} & \text{if}\ \dual{b}=\dual{\mu_{\cB}(k)} \end{cases} .
	\end{align*}
\end{lemma}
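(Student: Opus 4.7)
The structure of the proof should closely follow that of Lemma~\ref{l:F-isos}: the statement is purely combinatorial and we need only verify that the proposed inverse formul\ae\ compose to the identity on basis elements. Since every basis vector in sight is either fixed or sent to a simple polynomial expression, the work amounts to a handful of direct evaluations.

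The first step in the plan is to justify that the formul\ae\ even make sense. The expressions $\bplusminus{\beta(k)}{\dual{\cB}}$ a priori live in $\nat[\dual{\cB}]$, whereas the image of $\mu_{k}^{\pm}$ is meant to sit in $\dual{\integ[\mu_{k}(\cB)]}$. As noted in the paragraph preceding Lemma~\ref{l:F-isos}, skew-symmetrizability forces $\beta(k)(k)=0$, so that $\beta(k)\in\integ[\dual{\cB}\setminus\{\dual{k}\}]$ and hence $\bplusminus{\beta(k)}{\dual{\cB}}\in\nat[\dual{\cB}\setminus\{\dual{k}\}]$. This latter monoid sits naturally inside both $\dual{\integ[\cB]}$ and $\dual{\integ[\mu_{k}(\cB)]}$, so the formul\ae\ for $\mu_{k}^{\pm}$ and $\bar{\mu}_{k}^{\pm}$ determine well-defined $\integ$-linear maps between the required lattices.

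With this in place, the composition checks reduce to two non-trivial basis cases (the others being the identity on basis vectors different from $\dual{k}$ or $\dual{\mu_{\cB}(k)}$). For $\mu_{k}^{+}\circ\bar{\mu}_{k}^{+}$, the key computation is
\[ \mu_{k}^{+}(\bar{\mu}_{k}^{+}(\dual{\mu_{\cB}(k)}))=\mu_{k}^{+}(\bplus{\beta(k)}{\dual{\cB}}-\dual{k})=\bplus{\beta(k)}{\dual{\cB}}-(\bplus{\beta(k)}{\dual{\cB}}-\dual{\mu_{\cB}(k)})=\dual{\mu_{\cB}(k)}, \]
where in the penultimate equality we use that $\mu_{k}^{+}$ fixes every basis vector appearing in $\bplus{\beta(k)}{\dual{\cB}}$ precisely because none of them equal $\dual{k}$. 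An identical cancellation establishes $\bar{\mu}_{k}^{+}\circ\mu_{k}^{+}(\dual{k})=\dual{k}$, and the $-$ versions are verbatim with $\bplus{\ }{}$ replaced by $\bminus{\ }{}$.

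The only subtlety---and therefore the main obstacle---is the well-definedness step; once the absence of a $\dual{k}$ component in $\bplusminus{\beta(k)}{\dual{\cB}}$ is isolated, the composition calculation is entirely routine. As an alternative, and as hinted by the paper's own footnote pointing to a later Lemma~\ref{l:EF-dual}, one could deduce the present statement from Lemma~\ref{l:F-isos} by applying $\Hom_{\integ}(-,\integ)$ and identifying $(\mu_{k}^{\pm})^{*}$ with the candidate inverse $\bar{\mu}_{k}^{\pm}$ here via a direct matrix comparison. This avoids reproving involutivity but introduces bookkeeping for variance, so the direct verification outlined above seems preferable at this point in the exposition.
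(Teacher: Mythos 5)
Your proposal is correct and matches the paper's argument: the paper likewise records first that $\bplusminus{\beta(k)}{\dual{\cB}}\in\nat[\dual{\cB}\setminus\{\dual{k}\}]=\nat[\dual{\mu_{k}(\cB)}\setminus\{\dual{\mu_{\cB}(k)}\}]$ (so the formul\ae\ make sense and the maps fix these elements), and then reduces everything to the single non-trivial cancellation on the exceptional basis vector, exactly as in your displayed computation. The only cosmetic difference is which composite the paper writes out explicitly ($\bar{\mu}_{k}^{\pm}\circ\mu_{k}^{\pm}$ on $\dual{k}$ rather than $\mu_{k}^{+}\circ\bar{\mu}_{k}^{+}$ on $\dual{\mu_{\cB}(k)}$), which is immaterial.
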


Note that here we are using that $\bplusminus{\beta(k)}{\dual{\cB}}\in \nat[\dual{\cB}\setminus \{ \dual{k}\}]=\nat[\dual{\mu_{k}(\cB)}\setminus \{ \dual{\mu_{\cB}(k)} \}]$. 

\begin{proof}
	The only non-trivial check is 
	\begin{align*} (\bar{\mu}_{k}^{\pm}\circ \mu_{k}^{\pm})(\dual{k}) & = \bar{\mu}_{k}^{\pm}(\bplusminus{\beta(k)}{\dual{\cB}}-\dual{\mu_{\cB}(k)}) \\ & = \bplusminus{\beta(k)}{\dual{\cB}}-(\bplusminus{\beta(k)}{\dual{\cB}}-\dual{k})\\ & = \dual{k} \end{align*}
	where the second equality holds because $\bplusminus{\beta(k)}{\dual{\cB}}\in \nat[\dual{\mu_{k}(\cB)}\setminus\{ \dual{\mu_{\cB}(k)}\}]$ so $\mu_{k}^{\pm}$ acts as the identity on this.
\end{proof}

For future use, we formally record the duality claim made above.

\begin{lemma}\label{l:EF-dual}
	The isomorphisms $\mu_{k}^{\pm}\colon \integ[\cB]\to \integ[\mu_{k}(\cB)]$ and $\bar{\mu}_{k}^{\pm}\colon \dual{\integ[\mu_{k}(\cB)]}\to \dual{\integ[\cB]}$ satisfy
	\[ \evform{\dual{b}}{\mu_{k}^{\pm}(c)}=\evform{\bar{\mu}_{k}^{\pm}(\dual{b})}{c} \]
	where $\dual{b}\in \dual{\mu_{k}(\cB)}$ and $c\in \cB$.
\end{lemma}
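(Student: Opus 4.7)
The plan is to prove the duality by direct case analysis, exploiting the fact that both $\mu_{k}^{\pm}$ and $\bar{\mu}_{k}^{\pm}$ are defined piecewise according to whether the input basis element is the ``mutated'' one or not. The four cases come from the two dichotomies: $c = k$ or $c \in \cB \setminus \{k\}$, and $\dual{b} = \dual{\mu_{\cB}(k)}$ or $\dual{b} \in \dual{\mu_{k}(\cB)} \setminus \{\dual{\mu_{\cB}(k)}\}$ (equivalently, $b \in \cB \setminus \{k\}$).

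The key arithmetic ingredients are: first, that the evaluation pairing is bilinear and satisfies $\evform{\dual{b}}{c} = \delta_{bc}$ for basis elements; second, that skew-symmetrizability of $\beta$ ensures $\bplusminus{\beta(k)}{\dual{\cB}} \in \nat[\dual{\cB} \setminus \{\dual{k}\}]$ and hence $\evform{\bplusminus{\beta(k)}{\dual{\cB}}}{k} = 0$; and third, the labelling convention that $\cB \setminus \{k\}$ and $\mu_{k}(\cB) \setminus \{\mu_{\cB}(k)\}$ are literally the same set, so that basis elements indexed by $b \in \cB \setminus \{k\}$ sit naturally in both $\integ[\cB]$ and $\integ[\mu_{k}(\cB)]$, and dually for their duals.

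For example, in the generic case $b \in \cB \setminus \{k\}$ and $c \in \cB \setminus \{k\}$, both sides immediately reduce to $\delta_{bc}$: on the left, $\mu_{k}^{\pm}(c) = c + \evform{\bplusminus{\beta(k)}{\dual{\cB}}}{c}\mu_{\cB}(k)$, and pairing with $\dual{b}$ annihilates the second summand since $b \neq \mu_{\cB}(k)$; on the right, $\bar{\mu}_{k}^{\pm}(\dual{b}) = \dual{b}$ and we recover $\delta_{bc}$. The most informative case is $\dual{b} = \dual{\mu_{\cB}(k)}$ and $c \in \cB \setminus \{k\}$: the left side is $\evform{\bplusminus{\beta(k)}{\dual{\cB}}}{c}$, which is the defining coefficient in $\mu_{k}^{\pm}(c)$, and the right side is $\evform{\bplusminus{\beta(k)}{\dual{\cB}} - \dual{k}}{c}$, which equals the same thing since $c \neq k$. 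The diagonal case $b = \mu_{\cB}(k)$, $c = k$ gives $-1$ on both sides, using the skew-symmetrizability to kill $\evform{\bplusminus{\beta(k)}{\dual{\cB}}}{k}$.

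There is no real obstacle here; the verification is mechanical once the cases are laid out. The only mild care needed is bookkeeping the two parallel dual-basis conventions and remembering that the ``$+$'' and ``$-$'' versions run in perfect parallel, so a single computation with $\bplusminus{\ }{\dual{\cB}}$ handles both simultaneously.
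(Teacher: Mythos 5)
Your proposal is correct and follows essentially the same route as the paper: a direct case analysis on whether $c=k$ and whether $\dual{b}=\dual{\mu_{\cB}(k)}$, using the explicit formulas from Lemmas~\ref{l:F-isos} and \ref{l:E-isos} together with the fact that skew-symmetrizability forces $\evform{\bplusminus{\beta(k)}{\dual{\cB}}}{k}=0$. The only case you did not spell out ($b\neq\mu_{\cB}(k)$, $c=k$, where both sides vanish) is trivial and covered by your stated plan.
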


\begin{proof}
	First, assume $c\neq k$. We compute
	\begin{align*}
		\evform{\dual{b}}{\mu_{k}^{\pm}(c)} & = \evform{\dual{b}}{c+\evform{\bplusminus{\beta(k)}{\dual{\cB}}}{c}\mu_{\cB}(k)} \\
		& = \begin{cases} \delta_{bc} & \text{if}\ \dual{b}\neq \dual{\mu_{\cB}(k)} \\ \evform{\bplusminus{\beta(k)}{\dual{\cB}}}{c} & \text{if}\ \dual{b}=\dual{\mu_{\cB}(k)} \end{cases}
	\end{align*}
	and
	\begin{align*}
		\evform{\bar{\mu}_{k}^{\pm}(\dual{b})}{c} & = \begin{cases} \delta_{bc} & \text{if}\ \dual{b}\neq \dual{\mu_{\cB}(k)} \\ \evform{\bplusminus{\beta(k)}{\dual{\cB}}-\dual{k}}{c} & \text{if}\ \dual{b}=\dual{\mu_{\cB}(k)} \end{cases} \\ 
		& = \begin{cases} \delta_{bc} & \text{if}\ \dual{b}\neq \dual{\mu_{\cB}(k)} \\ \evform{\bplusminus{\beta(k)}{\dual{\cB}}}{c} & \text{if}\ \dual{b}=\dual{\mu_{\cB}(k)} \end{cases}   .
	\end{align*}
	For $c=k$, we have
	\begin{align*} \evform{\dual{b}}{\mu_{k}^{\pm}(k)} & = \evform{\dual{b}}{-\mu_{\cB}(k)} \\ 
		& = -\delta_{b\mu_{\cB}(k)} \\ & = -\delta_{bk} \\ & = \begin{cases} 0=\evform{\dual{b}}{k} & \text{if}\ \dual{b}\neq \dual{\mu_{\cB}(k)} \\ \evform{\bplusminus{\beta(k)}{\dual{\cB}}-\dual{k}}{k} & \text{if}\ \dual{b}=\dual{\mu_{\cB}(k)} \end{cases} \\ & = \evform{\bar{\mu}_{k}^{\pm}(\dual{b})}{k}
	\end{align*}
	so we have the claim.
\end{proof}

Recall that associated to $\beta\colon \integ[\ex]\to \dual{\integ[\cB]}$, we have the form $\canform{\blank}{\blank}{\beta}\colon \integ[\cB]\cross \integ[\ex] \to \integ$, $\canform{v}{w}{\beta}=\evform{\beta(v)}{w}$ with $\canform{\blank}{\blank}{\beta}^{p}$ skew-symmetric.  We may use the above isomorphisms from Lemma~\ref{l:F-isos} to mutate these forms.

\begin{definition}
	Let $k\in \ex$.  We define
	\[ \mu_{k}\canform{\blank}{\blank}{\beta}\colon \integ[\mu_{k}(\cB)]\cross \integ[\mu_{k}(\ex)]\to \integ, \mu_{k}\canform{\blank}{\blank}{\beta}=\canform{\blank}{\blank}{\beta}\circ (\bar{\mu}_{k}^{+} \cross \bar{\mu}_{k}^{+}). \]
\end{definition}

It follows immediately from this definition that $\mu_{k}\canform{\blank}{\blank}{\beta}$ is also skew-symmetrizable.  

From this, we can reconstruct a map $\mu_{k}\beta$ as follows.

\begin{definition} Let $k\in \ex$.  Define $\mu_{k}\beta\colon \integ[\mu_{k}(\ex)]\to \dual{\integ[\mu_{k}(\cB)]}$ by $\mu_{k}\beta(b)=\mu_{k}\canform{b}{\blank}{\beta}$.
\end{definition}

By construction, we have $\canform{\blank}{\blank}{\mu_{k}\beta}=\mu_{k}\canform{\blank}{\blank}{\beta}$ and we will use both expressions as is most relevant to the context.  Furthermore, we see that
\[ \mu_{k}\canform{b}{c}{\beta}=\evform{\beta(\bar{\mu}_{k}^{+}(b))}{\bar{\mu}_{k}^{+}(c)}=\evform{(\mu_{k}^{+}\circ \beta \circ \bar{\mu}_{k}^{+})(b)}{c}\]
by a similar argument to that in Lemma~\ref{l:EF-dual}.  Hence
\begin{equation}\label{eq:beta-mut} \mu_{k}\beta=\mu_{k}^{+}\circ \beta \circ \bar{\mu}_{k}^{+}\end{equation}
which is the equivalent expression to $\mu_{k}(B)=E_{+}BF_{+}$ in the matrix approach.

For later use, we explicitly compute the values of $\mu_{k}\beta$ on the basis $\mu_{k}(\ex)$ in terms of $\beta$.

\begin{lemma}\label{l:mut-form-values}
	Let $k\in \ex$.  Then for $b\in \mu_{k}(\ex)$ and $c\in \mu_{k}(\cB)$ we have
	\[ \canform{b}{c}{\mu_{k}\beta}=\mu_{k}\beta(b)(c) = \begin{cases} \canform{b}{c}{\beta}+\canform{b}{k}{\beta}\bplus{\canform{k}{c}{\beta}}{}-\canform{c}
		{k}{\beta}\bplus{\canform{k}{b}{\beta}}{} & \text{if}\ b,c\neq \mu_{\cB}(k) \\ -\canform{b}{k}{\beta} & \text{if}\ b\neq \mu_{\cB}(k),c=\mu_{\cB}(k) \\ -\canform{k}{c}{\beta} & \text{if}\ b=\mu_{\cB}(k),c\neq \mu_{\cB}(k) \\ 0 & \text{if}\ b=c=\mu_{\cB}(k) \end{cases}. \]
\end{lemma}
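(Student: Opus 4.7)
The plan is to compute $\canform{b}{c}{\mu_k\beta}$ directly from the definition, exploiting the identity \eqref{eq:beta-mut}. Combining that equation with Lemma~\ref{l:EF-dual} (applied in the reverse direction, which the preamble to \eqref{eq:beta-mut} already does implicitly), one immediately has
\[
\canform{b}{c}{\mu_k\beta} = \evform{(\mu_k^+\circ \beta \circ \bar{\mu}_k^+)(b)}{c} = \canform{\bar{\mu}_k^+(b)}{\bar{\mu}_k^+(c)}{\beta}.
\]
So the entire task reduces to substituting the explicit form of $\bar{\mu}_k^+$ from Lemma~\ref{l:F-isos}, which, rewritten using the identification $\evform{\bplus{\beta(k)}{\dual{\cB}}}{v}=\bplus{\canform{k}{v}{\beta}}{}$, becomes $\bar{\mu}_k^+(v)=v+\bplus{\canform{k}{v}{\beta}}{}k$ when $v\neq \mu_\cB(k)$ and $\bar{\mu}_k^+(\mu_\cB(k))=-k$. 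Then one expands by $\integ$-bilinearity.

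I would split into the obvious four cases according to whether $b$ and/or $c$ equals $\mu_\cB(k)$. The three cases in which at least one of the inputs is $\mu_\cB(k)$ terminate almost at once. In each, the substitution produces either a scalar multiple of $\canform{k}{k}{\beta}$, which vanishes by the identity $\canform{k}{k}{\beta}=\evform{\beta(k)}{k}=0$ recorded in the text preceding Lemma~\ref{l:F-isos} (a direct consequence of skew-symmetrizability of $\beta$), or a single term of the form $-\canform{b}{k}{\beta}$ or $-\canform{k}{c}{\beta}$; in particular the $b=c=\mu_\cB(k)$ case collapses to $\canform{-k}{-k}{\beta}=\canform{k}{k}{\beta}=0$.

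The only case with any real content is $b,c\neq \mu_\cB(k)$, where bilinear expansion yields the four terms
\[
\canform{b}{c}{\beta} + \canform{b}{k}{\beta}\bplus{\canform{k}{c}{\beta}}{} + \bplus{\canform{k}{b}{\beta}}{}\canform{k}{c}{\beta} + \bplus{\canform{k}{b}{\beta}}{}\bplus{\canform{k}{c}{\beta}}{}\canform{k}{k}{\beta}.
\]
The fourth vanishes as above, and to reach the stated form one rewrites the third using (skew-)symmetry $\canform{k}{c}{\beta}=-\canform{c}{k}{\beta}$, with the convention that this equality serves as the definition of $\canform{c}{k}{\beta}$ when $c\notin \ex$. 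No deeper idea is required; the only obstacle is notational bookkeeping, namely tracking which arguments live in $\integ[\ex]$ versus $\integ[\cB]$ and being explicit about the skew-symmetry convention used to flip $\canform{k}{c}{\beta}$ in the last term. Once these are dealt with, the result falls out directly from the definitions and \eqref{eq:beta-mut}.
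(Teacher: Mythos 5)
Your proposal is correct and matches the paper's own proof in essence: both reduce to $\canform{b}{c}{\mu_k\beta}=\canform{\bar{\mu}_k^+(b)}{\bar{\mu}_k^+(c)}{\beta}$, substitute the rewritten form of $\bar{\mu}_k^+$ (with $\evform{\bplus{\beta(k)}{\dual{\cB}}}{v}=\bplus{\canform{k}{v}{\beta}}{}$), expand bilinearly over the same case split, and discard the $\canform{k}{k}{\beta}$ term before flipping one factor by skew-symmetry. The only difference is cosmetic: your detour through \eqref{eq:beta-mut} and Lemma~\ref{l:EF-dual} is harmless but unnecessary, since the identity you start from is exactly the definition of $\mu_k\canform{\blank}{\blank}{\beta}$.
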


\begin{proof} We will use the alternative form for the isomorphisms, given immediately after Lemma~\ref{l:F-isos}.  For $b\in \mu_{k}(\ex)$, $b\neq \mu_{\cB}(k)$ and $c\in \mu_{k}(\cB)$ we have
	\begin{align*}
		\mu_{k}\beta(b)(c) & = \mu_{k}\canform{b}{c}{\beta} \\ 
		& = \canform{\bar{\mu}_{k}^{+}(b)}{\bar{\mu}_{k}^{+}(c)}{\beta} \\
		& = \begin{cases} \canform{b+\bplus{\canform{k}{b}{\beta}}{}k}{c+\bplus{\canform{k}{c}{\beta}}{}k}{\beta} & \text{if}\ c\neq \mu_{\cB}(k) \\ \canform{b+\bplus{\canform{k}{b}{\beta}}{}k}{-k}{\beta} & \text{if}\ c=\mu_{\cB}(k) \end{cases} \\
		& = \begin{cases} \canform{b}{c}{\beta}+\bplus{\canform{k}{b}{\beta}}{}\canform{k}{c}{\beta}+\bplus{\canform{k}{c}{\beta}}{}\canform{b}{k}{\beta}+\bplus{\canform{k}{c}{\beta}}{}\bplus{\canform{k}{b}{\beta}}{}\canform{k}{k}{\beta} 
			& \text{if}\ c\neq \mu_{\cB}(k) \\ -\canform{b}{k}{\beta} & \text{if}\ c=\mu_{\cB}(k) \end{cases} \\
		& = \begin{cases} \canform{b}{c}{\beta}+\canform{b}{k}{\beta}\bplus{\canform{k}{c}{\beta}}{}-\canform{c}
			{k}{\beta}\bplus{\canform{k}{b}{\beta}}{}
			& \text{if}\ c\neq \mu_{\cB}(k) \\ -\canform{b}{k}{\beta} & \text{if}\ c=\mu_{\cB}(k) \end{cases} .
	\end{align*}
	Similarly, for $b=\mu_{\cB}(k)$ and $c\in \mu_{k}(\cB)$,
	\begin{align*}
		\mu_{k}\beta(\mu_{\cB}(k))(c) & = \mu_{k}\canform{\mu_{\cB}(k)}{c}{\beta} \\ 
		& = \canform{\mu_{k}^{+}(\mu_{\cB}(k))}{\mu_{k}^{+}(c)}{\beta} \\
		& = \begin{cases} \canform{-k}{c+\bplus{\canform{k}{c}{\beta}}{}k}{\beta} & \text{if}\ c\neq \mu_{\cB}(k) \\ \canform{-k}{-k}{\beta} & \text{if}\ c=\mu_{\cB}(k) \end{cases} \\
		& = \begin{cases} -\canform{k}{c}{\beta}
			& \text{if}\ c\neq \mu_{\cB}(k) \\ 0 & \text{if}\ c=\mu_{\cB}(k) \end{cases} \\
			& = -\beta(k)(c)
	\end{align*}
	noting that the equality $\mu_{k}\beta(\mu_{\cB}(k))=-\beta(k)$ will be useful later.
\end{proof}

The following two results give expressions for compositions of mutations.

\begin{lemma}\label{l:F-mut-invol}
	Let $k\in \ex$.  Let $\nu\colon \cB\to \mu_{\mu_{\cB}(k)}(\mu_{k}(\cB))$ be the bijection defined by $\nu(b)=b$ for $b\neq k$ and $\nu(k)=\mu_{\mu_{\cB}(k)}(\mu_{\cB}(k))$ and also write $\nu$ for the induced isomorphism of $\integ[\cB]$ with $\integ[\mu_{\mu_{\cB}(k)}(\mu_{k}(\cB))]$.
	
	Then the maps $\mu_{k}^{\pm}\colon \integ[\cB]\to\integ[\mu_{k}(\cB)]$ and $\mu_{\mu_{\cB}(k)}^{\mp}\colon \integ[\mu_{k}(\cB)]\to \integ[\mu_{\mu_{\cB}(k)}(\mu_{k}(\cB))]$ satisfy
	\[ \mu_{\mu_{\cB}(k)}^{\mp}\circ \mu_{k}^{\pm}=\nu. \]
	Equivalently, $\mu_{\mu_{\cB}(k)}^{\pm}=\nu \circ \bar{\mu}_{k}^{\mp}$ and $\bar{\mu}_{\mu_{\cB}(k)}^{\pm}=\mu_{k}^{\mp}\circ \nu^{-1}$.
\end{lemma}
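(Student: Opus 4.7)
The plan is to verify the identity by direct calculation on basis elements, hinged on one key observation about how $\beta$ mutates at the new exchangeable index $\mu_{\cB}(k)$. Specifically, the final computation in the proof of Lemma~\ref{l:mut-form-values} yields $\mu_{k}\beta(\mu_{\cB}(k)) = -\beta(k)$, once we make the natural identification between $\dual{\integ[\mu_{k}(\cB)\setminus\{\mu_{\cB}(k)\}]}$ and $\dual{\integ[\cB\setminus\{k\}]}$ (using that $\beta(k) \in \integ[\dual{\cB}\setminus\{\dual{k}\}]$ by skew-symmetrizability, so no $\dual{k}$- or $\dual{\mu_{\cB}(k)}$-component enters). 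Taking positive and negative parts swaps them:
\[ \bplus{(\mu_{k}\beta)(\mu_{\cB}(k))}{\dual{\mu_{k}(\cB)}} = \bminus{\beta(k)}{\dual{\cB}}, \qquad \bminus{(\mu_{k}\beta)(\mu_{\cB}(k))}{\dual{\mu_{k}(\cB)}} = \bplus{\beta(k)}{\dual{\cB}}. \]
This sign swap is precisely what will drive the cancellation in the main identity.

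With that in hand, I would verify $\mu_{\mu_{\cB}(k)}^{\mp} \circ \mu_{k}^{\pm} = \nu$ by checking two cases. For $b = k$, the computation is bookkeeping: $\mu_{k}^{\pm}(k) = -\mu_{\cB}(k)$, and then $\mu_{\mu_{\cB}(k)}^{\mp}(-\mu_{\cB}(k)) = \mu_{\mu_{\cB}(k)}(\mu_{k}(\cB)) = \nu(k)$ directly from the definitions. For $b \neq k$, we have $\mu_{k}^{\pm}(b) = b + \evform{\bplusminus{\beta(k)}{\dual{\cB}}}{b}\mu_{\cB}(k)$, and applying $\mu_{\mu_{\cB}(k)}^{\mp}$ linearly produces $b$, a correction term $\evform{\bminusplus{(\mu_{k}\beta)(\mu_{\cB}(k))}{\dual{\mu_{k}(\cB)}}}{b}\mu_{\mu_{\cB}(k)}(\mu_{k}(\cB))$ from the $b$-summand, and a correction $-\evform{\bplusminus{\beta(k)}{\dual{\cB}}}{b}\mu_{\mu_{\cB}(k)}(\mu_{k}(\cB))$ from the $\mu_{\cB}(k)$-summand. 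The sign swap just recorded makes these two correction terms equal and opposite, leaving $b = \nu(b)$.

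The equivalent formulations $\mu_{\mu_{\cB}(k)}^{\pm} = \nu \circ \bar{\mu}_{k}^{\mp}$ and $\bar{\mu}_{\mu_{\cB}(k)}^{\pm} = \mu_{k}^{\mp} \circ \nu^{-1}$ then follow formally by pre- and post-composing the main identity with the inverses supplied by Lemma~\ref{l:F-isos}, using that $\nu$ is a bijection. The main obstacle is not structural but notational: one must keep careful track of the identifications underlying $\mu_{k}\beta(\mu_{\cB}(k)) = -\beta(k)$, since the two sides lie \emph{a priori} in different dual lattices. Once that preliminary sign identity is written down cleanly, the two cases of the direct calculation unfold mechanically.
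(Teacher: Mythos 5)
Your proposal is correct and follows essentially the same route as the paper: the paper's proof also reduces everything to the observation $\mu_{k}\beta(\mu_{\cB}(k))=-\beta(k)$ from Lemma~\ref{l:mut-form-values} and the resulting swap of $[\,\cdot\,]_{+}$ and $[\,\cdot\,]_{-}$, carried out by a two-case computation on basis elements. The only (immaterial) difference is that the paper verifies the equivalent factored identity $\mu_{\mu_{\cB}(k)}^{\pm}=\nu\circ\bar{\mu}_{k}^{\mp}$ directly by evaluating $\mu_{\mu_{\cB}(k)}^{\pm}(b)$, whereas you evaluate the composition $\mu_{\mu_{\cB}(k)}^{\mp}\circ\mu_{k}^{\pm}$ and watch the correction terms cancel.
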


\begin{proof} For $b\in \mu_{k}(\cB)$ we have
	\begin{align*}
		\mu_{\mu_{\cB}(k)}^{\pm}(b) & =\begin{cases} b+\bplusminus{\canform{\mu_{\cB}(k)}{b}{\mu_{k}\beta}}{}\mu_{\mu_{\cB}(k)}(\mu_{\cB}(k)) & \text{if}\ b\neq \mu_{\cB}(k) \\ -\mu_{\mu_{\cB}(k)}(\mu_{\cB}(k)) & \text{if}\ b=\mu_{\cB}(k) \end{cases} \\ 
		& = \begin{cases}
			b+\bplusminus{-\canform{k}{b}{\beta}}{}\mu_{\mu_{\cB}(k)}(\mu_{\cB}(k)) & \text{if}\ b\neq \mu_{\cB}(k) \\ -\mu_{\mu_{\cB}(k)}(\mu_{\cB}(k)) & \text{if}\ b=\mu_{\cB}(k) \end{cases} \\
		& = \begin{cases}
			\nu(b)+\bminusplus{\canform{k}{b}{\beta}}{}\nu(k) & \text{if}\ b\neq \mu_{\cB}(k) \\ -\nu(k) & \text{if}\ b=\mu_{\cB}(k) \end{cases} \\
		& = (\nu\circ \bar{\mu}_{k}^{\mp})(b). \qedhere
	\end{align*}
\end{proof}

\begin{proposition} The maps $\mu_{k}^{\pm}\colon \integ[\cB]\to \integ[\mu_{k}(\cB)]$ and $\bar{\mu}_{k}^{\pm}\colon \integ[\mu_{k}(\cB)]\to \integ[\cB]$ satisfy
	\[ (\bar{\mu}_{k}^{\mp}\circ \mu_{k}^{\pm})(b) = b\mp \canform{k}{b}{\beta}k.
	\]
\end{proposition}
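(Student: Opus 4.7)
The plan is to verify the identity by a direct case analysis, splitting on whether $b=k$ or $b\neq k$, and using the explicit formulas for $\mu_k^\pm$ and $\bar\mu_k^\pm$ given in Lemma~\ref{l:F-isos}. Since $[\canform{k}{b}{\beta}]_\pm = \evform{\bplusminus{\beta(k)}{\dual{\cB}}}{b}$ for $b\in\cB$, I will use the reformulation stated just after Lemma~\ref{l:F-isos}; this makes the two key arithmetic identities
\[ [n]_+ - [n]_- = n, \qquad n\in\integ, \]
available for collapsing the resulting coefficients.

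For the case $b\neq k$, applying $\mu_k^\pm$ gives $b + [\canform{k}{b}{\beta}]_\pm \mu_\cB(k)$; since $b\in\cB$ we have $b\neq \mu_\cB(k)$, so $\bar\mu_k^\mp$ acts linearly and
\[ (\bar\mu_k^\mp\circ\mu_k^\pm)(b) = b + [\canform{k}{b}{\beta}]_\mp k + [\canform{k}{b}{\beta}]_\pm(-k) = b \mp \canform{k}{b}{\beta}\,k, \]
as required. For the case $b=k$, we have $\mu_k^\pm(k) = -\mu_\cB(k)$, and then $\bar\mu_k^\mp(-\mu_\cB(k)) = -(-k) = k$; this agrees with the formula because the skew-symmetrizability of $\canform{-}{-}{\beta}$ forces $\canform{k}{k}{\beta}=0$ (which is already recorded in the discussion preceding Lemma~\ref{l:F-isos}), so $b\mp\canform{k}{k}{\beta}k = k$.

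There is no genuine obstacle here; the only subtlety is bookkeeping the signs correctly (ensuring that the $\pm$/$\mp$ pattern lines up with the $[\,\cdot\,]_+$/$[\,\cdot\,]_-$ parts of the basis decomposition of $\beta(k)$) and remembering that $b\in\cB$ implies $b\neq\mu_\cB(k)$, so that in the second step of the composition we never trigger the exceptional clause of $\bar\mu_k^\mp$. Extending linearly to arbitrary $b\in\integ[\cB]$ then completes the proof.
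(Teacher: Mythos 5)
Your proof is correct and follows essentially the same route as the paper: a case split on $b=k$ versus $b\neq k$, direct substitution of the formulas from Lemma~\ref{l:F-isos}, the identity $\bplus{n}{}-\bminus{n}{}=n$ to collapse the coefficient, and the observation $\canform{k}{k}{\beta}=0$ (from skew-symmetrizability) to absorb the $b=k$ case into the single closed formula. Indeed you are slightly more explicit than the paper in flagging why $\canform{k}{k}{\beta}=0$ is needed and why $b\neq\mu_{\cB}(k)$, but the argument is the same.
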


\begin{proof}
	Let $b\in \cB$. Then
	\begin{align*}
		(\bar{\mu}_{k}^{-}\circ \mu_{k}^{+})(b) & = \begin{cases} \bar{\mu}_{k}^{-}(b+\bplus{\canform{k}{b}{\beta}}{}\mu_{\cB}(k)) & \text{if}\ b\neq k \\ \bar{\mu}_{k}^{-}(-\mu_{\cB}(k)) & \text{if}\ b=k \end{cases} \\
		& = \begin{cases} (b+\bminus{\canform{k}{b}{\beta}}{}k)+\bplus{\canform{k}{b}{\beta}}{}(-k) & \text{if}\ b\neq k \\ k & \text{if}\ b=k \end{cases} \\
		& = \begin{cases} b-(\bplus{\canform{k}{b}{\beta}}{}-\bminus{\canform{k}{b}{\beta}}{})k & \text{if}\ b\neq k \\ k & \text{if}\ b=k \end{cases} \\ 
		& = \begin{cases} b-\canform{k}{b}{\beta}k & \text{if}\ b\neq k \\ k & \text{if}\ b=k \end{cases} \\
		& = b-\canform{k}{b}{\beta}k.
	\end{align*}
	Similarly, $(\bar{\mu}_{k}^{+}\circ \mu_{k}^{-})(b)= b+\canform{k}{b}{\beta}k$. 
\end{proof}

Recall that $\mu_{k}\canform{\blank}{\blank}{\beta}=\canform{\blank}{\blank}{\beta}\circ (\bar{\mu}_{k}^{+} \cross \bar{\mu}_{k}^{+})$.  The next result shows ``sign invariance'', \ie that using $\bar{\mu}_{k}^{-}$ in the definition yields the same form.

\begin{lemma}\label{l:sign-invar}
	We have 
	\[ \mu_{k}\canform{\blank}{\blank}{\beta}=\canform{\blank}{\blank}{\beta}\circ (\bar{\mu}_{k}^{-} \cross \bar{\mu}_{k}^{-}). \]
\end{lemma}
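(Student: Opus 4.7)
The plan is to compare the two sides on a basis pair $b \in \mu_{k}(\ex)$, $c \in \mu_{k}(\cB)$ by writing $\bar{\mu}_{k}^{+}$ as a rank-one perturbation of $\bar{\mu}_{k}^{-}$ and expanding bilinearly. Directly from the formulas in Lemma~\ref{l:F-isos}, for every $x \in \mu_{k}(\cB)$ we have
\[ \bar{\mu}_{k}^{+}(x) - \bar{\mu}_{k}^{-}(x) \;=\; \bigl(\bplus{\canform{k}{x}{\beta}}{}-\bminus{\canform{k}{x}{\beta}}{}\bigr)\,k \;=\; \canform{k}{x}{\beta}\,k \qquad (x \neq \mu_{\cB}(k)) \]
and $0$ when $x=\mu_{\cB}(k)$. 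So, letting $\gamma \colon \integ[\mu_{k}(\cB)] \to \integ$ be the unique linear functional with $\gamma(x)=\canform{k}{x}{\beta}$ for $x\in \cB\setminus\{k\}$ and $\gamma(\mu_{\cB}(k))=0$, one has the identity $\bar{\mu}_{k}^{+}(x) = \bar{\mu}_{k}^{-}(x) + \gamma(x)\,k$. (Note that equivalently $\gamma=\canform{k}{\blank}{\beta}\circ \bar{\mu}_{k}^{\pm}$, since $\canform{k}{k}{\beta}=0$.)

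Applying bilinearity gives
\[ \canform{\bar{\mu}_{k}^{+}(b)}{\bar{\mu}_{k}^{+}(c)}{\beta} = \canform{\bar{\mu}_{k}^{-}(b)}{\bar{\mu}_{k}^{-}(c)}{\beta} + \gamma(b)\canform{k}{\bar{\mu}_{k}^{-}(c)}{\beta} + \gamma(c)\canform{\bar{\mu}_{k}^{-}(b)}{k}{\beta} + \gamma(b)\gamma(c)\canform{k}{k}{\beta}. \]
The final term vanishes because $\canform{k}{k}{\beta}=\beta(k)(k)=0$, which is the consequence of skew-symmetrizability of $\beta$ noted just before Lemma~\ref{l:F-isos}. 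It remains to show that the two cross terms cancel.

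For the first cross term, by the alternative description $\gamma=\canform{k}{\blank}{\beta}\circ \bar{\mu}_{k}^{-}$ one has $\canform{k}{\bar{\mu}_{k}^{-}(c)}{\beta} = \gamma(c)$, so this term equals $\gamma(b)\gamma(c)$. For the second, observe that $b \in \integ[\mu_{k}(\ex)]$ forces $\bar{\mu}_{k}^{-}(b) \in \integ[\ex]$, and $k \in \ex$, so the principal-part skew-symmetry $\canform{u}{v}{\beta}=-\canform{v}{u}{\beta}$ (valid on $\integ[\ex]\cross\integ[\ex]$) gives $\canform{\bar{\mu}_{k}^{-}(b)}{k}{\beta} = -\canform{k}{\bar{\mu}_{k}^{-}(b)}{\beta} = -\gamma(b)$. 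Hence the cross terms sum to $\gamma(b)\gamma(c)-\gamma(b)\gamma(c)=0$, completing the comparison.

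The main obstacle is simply bookkeeping: making sure that every evaluation of $\canform{\blank}{\blank}{\beta}$ in the expansion has its first argument in $\integ[\ex]$ so that principal-part skew-symmetry is applicable, and checking the degenerate case $x=\mu_{\cB}(k)$ when verifying the rank-one identity for $\bar{\mu}_{k}^{+}-\bar{\mu}_{k}^{-}$. Both are immediate from the piecewise definitions.
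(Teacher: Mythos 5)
Your argument is correct. It differs from the paper's proof mainly in organization: the paper computes $(\canform{\blank}{\blank}{\beta}\circ(\bar{\mu}_{k}^{-}\cross\bar{\mu}_{k}^{-}))(b,c)$ case by case on basis pairs, simplifies using $\bminus{n}{}=\bplus{n}{}-n$ together with $\canform{k}{k}{\beta}=0$ and principal-part skew-symmetry, and then matches the result against the explicit values of $\mu_{k}\canform{\blank}{\blank}{\beta}$ recorded in Lemma~\ref{l:mut-form-values}. You instead bypass Lemma~\ref{l:mut-form-values} and the case split entirely by packaging the difference of the two mutation maps as the rank-one perturbation $\bar{\mu}_{k}^{+}=\bar{\mu}_{k}^{-}+\gamma(\blank)\,k$ and showing the cross terms in the bilinear expansion cancel, using $\gamma=\canform{k}{\blank}{\beta}\circ\bar{\mu}_{k}^{\pm}$, $\canform{k}{k}{\beta}=0$, and skew-symmetry of the restricted form on $\integ[\ex]\cross\integ[\ex]$ (legitimate because $\bar{\mu}_{k}^{-}$ restricts to $\integ[\mu_{k}(\ex)]\to\integ[\ex]$). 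The two proofs rest on exactly the same ingredients, but your version is self-contained, avoids the appeal to the mutated-form values, and makes the role of skew-symmetrizability (beyond just $\beta(k)(k)=0$) explicit; the paper's route has the side benefit of re-using a computation it needs anyway for Proposition~\ref{p:beta-form-mut-invol}.
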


\begin{proof}
	Let $b\in\mu_{k}(\ex)$, $c\in \mu_{k}(\cB)$.  Then
	\begin{align*}
		(\canform{\blank}{\blank}{\beta}\circ (\bar{\mu}_{k}^{-} \cross \bar{\mu}_{k}^{-}))(b,c) & = \begin{cases} \canform{b+\bminus{\canform{k}{b}{\beta}}{}k}{c+\bminus{\canform{k}{c}{\beta}}{}k}{\beta} & \text{if}\ b,c\neq \mu_{\cB}(k) \\ \canform{b+\bminus{\canform{k}{b}{\beta}}{}k}{-k}{\beta} & \text{if}\ b\neq \mu_{\cB}(k),c=\mu_{\cB}(k) \\ \canform{-k}{c+\bminus{\canform{k}{c}{\beta}}{}k}{\beta} & \text{if}\ b=\mu_{\cB}(k),c\neq \mu_{\cB}(k) \\ \canform{-k}{-k}{\beta}=0 & \text{if}\ b=c=\mu_{\cB}(k) \end{cases} \\
		& = \begin{cases} \canform{b}{c}{\beta}+\bminus{\canform{k}{b}{\beta}}{}\canform{k}{c}{\beta}-\bminus{\canform{k}{c}{\beta}}{}\canform{k}{b}{\beta} & \text{if}\ b,c\neq \mu_{\cB}(k) \\ -\canform{b}{k}{\beta} & \text{if}\ b\neq \mu_{\cB}(k),c=\mu_{\cB}(k) \\ -\canform{k}{c}{\beta} & \text{if}\ b=\mu_{\cB}(k),c\neq \mu_{\cB}(k) \\ 0 & \text{if}\ b=c=\mu_{\cB}(k) \end{cases} \\
		& = \begin{cases} \canform{b}{c}{\beta}+(\bplus{\canform{k}{b}{\beta}}{}-\canform{k}{b}{\beta})\canform{k}{c}{\beta} & \\ \qquad -(\bplus{\canform{k}{c}{\beta}}{}-\canform{k}{c}{\beta})\canform{k}{b}{\beta}
			 & \text{if}\ b,c\neq \mu_{\cB}(k) \\ -\canform{b}{k}{\beta} & \text{if}\ b\neq \mu_{\cB}(k),c=\mu_{\cB}(k) \\ -\canform{k}{c}{\beta} & \text{if}\ b=\mu_{\cB}(k),c\neq \mu_{\cB}(k) \\ 0 & \text{if}\ b=c=\mu_{\cB}(k) \end{cases} \\
		& = \mu_{k}\canform{b}{c}{\beta}
	\end{align*}
	by Lemma~\ref{l:mut-form-values}.
\end{proof}

As one expects, the mutation of forms (or equivalently, of the maps) is involutive, up to an identification of $\mu_{\mu_{\cB}(k)}(\mu_{k}(\cB))$ with $\cB$.

\begin{proposition}\label{p:beta-form-mut-invol} Let $k\in \ex$.  Let $\nu\colon \cB\to \mu_{\mu_{\cB}(k)}(\mu_{k}(\cB))$ be the bijection defined by $\nu(b)=b$ for $b\neq k$ and $\nu(k)=\mu_{\mu_{\cB}(k)}(\mu_{\cB}(k)))$ and also write $\nu$ for the induced isomorphism of $\integ[\cB]$ with $\integ[\mu_{\mu_{\cB}(k)}(\mu_{k}(\cB))]$.
	
	Then
	\[ \mu_{\mu_{\cB}(k)}\mu_{k}\canform{\blank}{\blank}{\beta}=\canform{\blank}{\blank}{\beta}\circ (\nu^{-1} \cross \nu^{-1}). \]
\end{proposition}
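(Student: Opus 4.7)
The plan is to prove this by direct computation, unfolding the two mutations in the right order and choosing signs favourably so that the intermediate composition collapses.

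First I would unfold the outer mutation: by the definition of form mutation applied to $\mu_{k}\canform{\blank}{\blank}{\beta}$ at the exchangeable index $\mu_{\cB}(k)\in \mu_{k}(\ex)$, we have
\[ \mu_{\mu_{\cB}(k)}\mu_{k}\canform{\blank}{\blank}{\beta}=\mu_{k}\canform{\blank}{\blank}{\beta}\circ (\bar{\mu}_{\mu_{\cB}(k)}^{+} \cross \bar{\mu}_{\mu_{\cB}(k)}^{+}). \]
Here the outer $\bar{\mu}_{\mu_{\cB}(k)}^{+}$ is defined with respect to the mutated map $\mu_{k}\beta$. The crucial preparatory move is to apply Lemma~\ref{l:sign-invar} (sign invariance) to replace the $+$ by $-$ on this outer mutation, so that
\[ \mu_{\mu_{\cB}(k)}\mu_{k}\canform{\blank}{\blank}{\beta}=\mu_{k}\canform{\blank}{\blank}{\beta}\circ (\bar{\mu}_{\mu_{\cB}(k)}^{-} \cross \bar{\mu}_{\mu_{\cB}(k)}^{-}). \]

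Next, I would unfold the inner mutation using its definition in the $+$ sign, substituting $\mu_{k}\canform{\blank}{\blank}{\beta}=\canform{\blank}{\blank}{\beta}\circ (\bar{\mu}_{k}^{+} \cross \bar{\mu}_{k}^{+})$ to obtain
\[ \mu_{\mu_{\cB}(k)}\mu_{k}\canform{\blank}{\blank}{\beta}=\canform{\blank}{\blank}{\beta}\circ \bigl((\bar{\mu}_{k}^{+}\circ \bar{\mu}_{\mu_{\cB}(k)}^{-}) \cross (\bar{\mu}_{k}^{+}\circ \bar{\mu}_{\mu_{\cB}(k)}^{-})\bigr). \]
It then suffices to show that $\bar{\mu}_{k}^{+}\circ \bar{\mu}_{\mu_{\cB}(k)}^{-}=\nu^{-1}$. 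For this, I would invoke Lemma~\ref{l:F-mut-invol}, which gives the equivalent identity $\bar{\mu}_{\mu_{\cB}(k)}^{-}=\mu_{k}^{+}\circ \nu^{-1}$. Combining with the fact that $\bar{\mu}_{k}^{+}$ is defined as the inverse of $\mu_{k}^{+}$ yields
\[ \bar{\mu}_{k}^{+}\circ \bar{\mu}_{\mu_{\cB}(k)}^{-}=\bar{\mu}_{k}^{+}\circ \mu_{k}^{+}\circ \nu^{-1}=\nu^{-1}, \]
completing the proof.

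The main obstacle---or at least the point requiring the most care---is the bookkeeping of signs and base data. The outer $\bar\mu$ symbols are constructed from $\mu_{k}\beta$, not from $\beta$ itself, so one has to be sure that the ``equivalent'' identity in Lemma~\ref{l:F-mut-invol} is being applied with the correct role for each map. The key insight that makes the proof essentially a one-liner is that Lemma~\ref{l:sign-invar} lets us mismatch the signs $+$ and $-$ between the two stages of the mutation, after which Lemma~\ref{l:F-mut-invol} turns the composition of the two $\bar\mu$'s into the pure relabelling $\nu^{-1}$. Using the same sign throughout (e.g.\ $+$ for both) would instead give $\bar{\mu}_{k}^{+}\circ \mu_{k}^{-}\circ \nu^{-1}$, which is not the identity up to relabelling, so the sign-invariance step is truly essential.
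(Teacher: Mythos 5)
Your proof is correct and is essentially the paper's own argument in mirror image: the paper keeps the $+$ sign on the outer mutation and uses Lemma~\ref{l:sign-invar} to switch the inner one to $\bar{\mu}_{k}^{-}$, so that Lemma~\ref{l:F-mut-invol} gives $\bar{\mu}_{k}^{-}\circ\bar{\mu}_{\mu_{\cB}(k)}^{+}=\bar{\mu}_{k}^{-}\circ\mu_{k}^{-}\circ\nu^{-1}=\nu^{-1}$, whereas you switch the outer one instead and collapse $\bar{\mu}_{k}^{+}\circ\mu_{k}^{+}$. The only point worth noting is that your variant applies sign invariance to the mutated data $(\mu_{k}(\cB),\mu_{k}(\ex),\mu_{k}\beta)$, which is legitimate since the paper records that $\mu_{k}\canform{\blank}{\blank}{\beta}$ is again skew-symmetrizable, so the lemma's hypotheses hold at that stage.
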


\begin{proof}
	By Lemmas~\ref{l:F-mut-invol} and \ref{l:sign-invar}, we have
	\begin{align*} \mu_{\mu_{\cB}(k)}\mu_{k}\canform{\blank}{\blank}{\beta} & = (\canform{\blank}{\blank}{\beta}\circ (\bar{\mu}_{k}^{-}\cross \bar{\mu}_{k}^{-}))\circ (\bar{\mu}_{\mu_{\cB}(k)}^{+} \cross \bar{\mu}_{\mu_{\cB}(k)}^{+}) \\
		& = (\canform{\blank}{\blank}{\beta}\circ (\bar{\mu}_{k}^{-}\cross \bar{\mu}_{k}^{-}))\circ (\mu_{k}^{-} \cross \mu_{k}^{-})\circ (\nu^{-1} \cross \nu^{-1}) \\
		& = (\canform{\blank}{\blank}{\beta}\circ (\nu^{-1} \cross \nu^{-1}). \qedhere
	\end{align*}
\end{proof}

Now, we may similarly mutate $\lambda\colon \dual{\integ[\cB]}\to \integ[\cB]$.  As above, associated to $\lambda\colon \dual{\integ[\cB]}\to \integ[\cB]$, we have the skew-symmetric form $\canform{\blank}{\blank}{\lambda}\colon \dual{\integ[\cB]}\cross \dual{\integ[\cB]} \to \integ$, $\canform{v}{w}{\lambda}=\evform{\lambda(v)}{w}$.  This time, we use the isomorphisms from Lemma~\ref{l:E-isos} to mutate these forms.

\begin{definition}
	Let $k\in \ex$.  We define
	\[ \mu_{k}\canform{\blank}{\blank}{\lambda}\colon \dual{\integ[\mu_{k}(\cB)]}\cross \dual{\integ[\mu_{k}(\cB)]}\to \integ, \mu_{k}\canform{\blank}{\blank}{\lambda}=\canform{\blank}{\blank}{\lambda}\circ (\bar{\mu}_{k}^{+} \cross \bar{\mu}_{k}^{+}). \]
\end{definition}

It follows immediately from this definition that $\mu_{k}\canform{\blank}{\blank}{\lambda}$ is also skew-symmetric.  Thus we have a quantum torus $\qtorus{q}{\mu_{k}\lambda}{\mu_{k}(\cB)}=(\bK \dual{\integ[\mu_{k}(\cB)]})^{\Omega_{q}^{\mu_{k}\lambda}}$.  Below, we will relate this torus to $\qtorus{q}{\lambda}{\cB}$ but first we need to see that the mutation of $\beta$ and $\lambda$ behave suitably with respect to one another.

Firstly, from the mutated form, we can reconstruct a map $\mu_{k}\lambda$ as follows.

\begin{definition} Let $k\in \ex$.  Define $\mu_{k}\lambda\colon \dual{\integ[\mu_{k}(\cB)]}\to \integ[\mu_{k}(\cB)]$ by $\mu_{k}\lambda(\dual{b})=\mu_{k}\canform{\dual{b}}{\blank}{\lambda}$.
\end{definition}

By construction, we have $\canform{\blank}{\blank}{\mu_{k}\lambda}=\mu_{k}\canform{\blank}{\blank}{\lambda}$ and we will use both expressions as is most relevant to the context.  Furthermore, we see that
\[ \mu_{k}\canform{\dual{b}}{\dual{c}}{\lambda}=\evform{\lambda(\bar{\mu}_{k}^{+}(\dual{b})}{\bar{\mu}_{k}^{+}(\dual{c})}=\evform{(\mu_{k}^{+}\circ \lambda \circ \bar{\mu}_{k}^{+})(\dual{b})}{\dual{c}}\]
by Lemma~\ref{l:EF-dual}.  Hence
\begin{equation}\label{eq:lambda-mut} \mu_{k}\lambda=\mu_{k}^{+}\circ \lambda \circ \bar{\mu}_{k}^{+}.\end{equation}

\begin{lemma}\label{l:lambda-form-mut}
	Let $k\in \ex$ and $\dual{b},\dual{c}\in \dual{\integ[\mu_{k}(\cB)]}$. We have
	\[ \canform{\dual{b}}{\dual{c}}{\mu_{k}\lambda}=\mu_{k}\lambda(\dual{b})(\dual{c})	
	= \begin{cases}
		\canform{\dual{b}}{\dual{c}}{\lambda} & \text{if}\ \dual{b},\dual{c}\neq \dual{\mu_{\cB}(k)} \\
			\canform{\dual{k}}{\dual{b}}{\lambda}-\canform{\bplus{\beta(k)}{\dual{\cB}}}{\dual{b}}{\lambda} & \text{if}\ \dual{b}\neq \dual{\mu_{\cB}(k)},\dual{c}=\dual{\mu_{\cB}(k)} \\
			-\canform{\dual{k}}{\dual{c}}{\lambda}+\canform{\bplus{\beta(k)}{\dual{\cB}}}{\dual{c}}{\lambda} & \text{if}\ \dual{b}=\dual{\mu_{\cB}(k)},\dual{c}\neq \dual{\mu_{\cB}(k)} \\
		\canform{\dual{k}}{\dual{k}}{\lambda}=0 & \text{if}\ \dual{b}=\dual{c}=\dual{\mu_{\cB}(k)} 
	\end{cases} .
	 \]
\end{lemma}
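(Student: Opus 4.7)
The plan is to verify the formula by direct unpacking of the defining equation $\mu_{k}\canform{\dual{b}}{\dual{c}}{\lambda}=\canform{\bar{\mu}_{k}^{+}(\dual{b})}{\bar{\mu}_{k}^{+}(\dual{c})}{\lambda}$, splitting into four cases according to whether $\dual{b}$ and $\dual{c}$ equal $\dual{\mu_{\cB}(k)}$ or not. The key ingredients are the explicit formula for $\bar{\mu}_{k}^{+}$ from Lemma~\ref{l:E-isos}, together with the $\integ$-bilinearity and skew-symmetry of $\canform{\blank}{\blank}{\lambda}$.

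In the first case $\dual{b},\dual{c}\neq \dual{\mu_{\cB}(k)}$, the map $\bar{\mu}_{k}^{+}$ acts as the identity on both arguments, yielding $\canform{\dual{b}}{\dual{c}}{\lambda}$ immediately. In the fourth case $\dual{b}=\dual{c}=\dual{\mu_{\cB}(k)}$, both arguments become $\bplus{\beta(k)}{\dual{\cB}}-\dual{k}$, and skew-symmetry of $\canform{\blank}{\blank}{\lambda}$ forces the value to be zero. (Here it is worth remarking that $\canform{\dual{k}}{\dual{k}}{\lambda}=0$ is itself a consequence of skew-symmetry, which justifies the form in which the value in this case is recorded in the statement.)

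The two middle cases are the substantive ones but amount to a single line of computation each. For $\dual{b}\neq \dual{\mu_{\cB}(k)}$ and $\dual{c}=\dual{\mu_{\cB}(k)}$, we expand
\[ \canform{\dual{b}}{\bplus{\beta(k)}{\dual{\cB}}-\dual{k}}{\lambda}=\canform{\dual{b}}{\bplus{\beta(k)}{\dual{\cB}}}{\lambda}-\canform{\dual{b}}{\dual{k}}{\lambda} \]
and apply skew-symmetry to each term to rewrite this as $\canform{\dual{k}}{\dual{b}}{\lambda}-\canform{\bplus{\beta(k)}{\dual{\cB}}}{\dual{b}}{\lambda}$. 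The third case $\dual{b}=\dual{\mu_{\cB}(k)}$, $\dual{c}\neq \dual{\mu_{\cB}(k)}$ is handled identically (or, alternatively, deduced from the second case by skew-symmetry of $\mu_{k}\lambda$ applied to $\canform{\blank}{\blank}{\mu_{k}\lambda}$).

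There is no real obstacle: the proof is essentially a bookkeeping exercise and the only subtlety is recognising that every occurrence of $\bplus{\beta(k)}{\dual{\cB}}$ lies in $\integ[\dual{\cB}\setminus\{\dual{k}\}]$, so that the expressions $\canform{\bplus{\beta(k)}{\dual{\cB}}}{\dual{b}}{\lambda}$ genuinely involve only terms where $\bar{\mu}_{k}^{+}$ acts trivially on the first argument, ensuring consistency with the identifications used. The explicit formula also provides a matrix-level analogue of the usual $L\mapsto E_{+}^{T}LE_{+}$ mutation rule for the quasi-commutation matrix, parallel to \eqref{eq:lambda-mut}.
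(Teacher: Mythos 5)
Your proposal is correct and follows essentially the same route as the paper: unpack $\mu_{k}\canform{\blank}{\blank}{\lambda}=\canform{\blank}{\blank}{\lambda}\circ(\bar{\mu}_{k}^{+}\cross\bar{\mu}_{k}^{+})$ using the explicit formula for $\bar{\mu}_{k}^{+}$ from Lemma~\ref{l:E-isos}, split into the four cases, and expand by bilinearity. The only cosmetic difference is that you make explicit the final skew-symmetry rewriting (e.g.\ turning $\canform{\dual{b}}{\bplus{\beta(k)}{\dual{\cB}}}{\lambda}-\canform{\dual{b}}{\dual{k}}{\lambda}$ into $\canform{\dual{k}}{\dual{b}}{\lambda}-\canform{\bplus{\beta(k)}{\dual{\cB}}}{\dual{b}}{\lambda}$), which the paper leaves implicit.
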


\begin{proof}
	We have
	\begin{align*}
		\mu_{k}\lambda(\dual{b})(\dual{c}) & = \canform{\bar{\mu}_{k}^{+}(\dual{b})}{\bar{\mu}_{k}^{+}(\dual{c})}{\lambda} \\
		& = \begin{cases}
			\canform{\dual{b}}{\dual{c}}{\lambda} & \text{if}\ \dual{b},\dual{c}\neq \dual{\mu_{\cB}(k)} \\
			\canform{\dual{b}}{\bplus{\beta(k)}{\dual{\cB}}-\dual{k}}{\lambda} & \text{if}\ \dual{b}\neq \dual{\mu_{\cB}(k)},\dual{c}=\dual{\mu_{\cB}(k)} \\
			\canform{\bplus{\beta(k)}{\dual{\cB}}-\dual{k}}{\dual{c}}{\lambda} & \text{if}\ \dual{b}=\dual{\mu_{\cB}(k)},\dual{c}\neq \dual{\mu_{\cB}(k)} \\
			\canform{\bplus{\beta(k)}{\dual{\cB}}-\dual{k}}{\bplus{\beta(k)}{\dual{\cB}}-\dual{k}}{\lambda} & \text{if}\ \dual{b}=\dual{c}=\dual{\mu_{\cB}(k)} 
		\end{cases} \\
		& = \begin{cases}
			\canform{\dual{b}}{\dual{c}}{\lambda} & \text{if}\ \dual{b},\dual{c}\neq \dual{\mu_{\cB}(k)} \\
			\canform{\dual{b}}{\bplus{\beta(k)}{\dual{\cB}}}{\lambda}-\canform{\dual{b}}{\dual{k}}{\lambda} & \text{if}\ \dual{b}\neq \dual{\mu_{\cB}(k)},\dual{c}=\dual{\mu_{\cB}(k)} \\
			\canform{\bplus{\beta(k)}{\dual{\cB}}}{\dual{c}}{\lambda}-\canform{\dual{k}}{\dual{c}}{\lambda} & \text{if}\ \dual{b}=\dual{\mu_{\cB}(k)},\dual{c}\neq \dual{\mu_{\cB}(k)} \\
			0=\canform{\dual{k}}{\dual{k}}{\lambda} & \text{if}\ \dual{b}=\dual{c}=\dual{\mu_{\cB}(k)} 
		\end{cases} .
	\end{align*}
\end{proof}

\begin{lemma}\label{l:mut-compatible}
	The maps $\mu_{k}\beta$ and $\mu_{k}\lambda$ are compatible.
\end{lemma}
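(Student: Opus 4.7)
The plan is to derive compatibility abstractly from the ``transport'' formulas $\mu_k\beta = \mu_k^+ \circ \beta \circ \bar{\mu}_k^+$ and $\mu_k\lambda = \mu_k^+ \circ \lambda \circ \bar{\mu}_k^+$ of \eqref{eq:beta-mut} and \eqref{eq:lambda-mut}, together with the compatibility of $\beta$ and $\lambda$ from Definition~\ref{d:compatible} and the adjointness relation of Lemma~\ref{l:EF-dual}. Unpacking $\dual{(\mu_k\lambda)} \circ \mu_k\beta$ using these factorizations gives
\[ \dual{\bar{\mu}_k^+} \circ \dual{\lambda} \circ \dual{\mu_k^+} \circ \mu_k^+ \circ \beta \circ \bar{\mu}_k^+, \]
and the key observation is that the central pair $\dual{\mu_k^+}\circ \mu_k^+$ is the identity on $\dual{\integ[\cB]}$: by Lemma~\ref{l:EF-dual}, the dual of the lattice-level map $\mu_k^+$ equals the dual-lattice-level map $\bar{\mu}_k^+$, which is inverse to the dual-lattice-level $\mu_k^+$ by Lemma~\ref{l:E-isos}.

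After collapsing this and applying the compatibility $\dual{\lambda}\circ \beta = \delta \circ \iota$ of Definition~\ref{d:compatible}, the expression becomes $\dual{\bar{\mu}_k^+} \circ \delta \circ \iota \circ \bar{\mu}_k^+$. Two routine identities complete the argument. Firstly, inspection of the formulas in Lemma~\ref{l:F-isos} shows that $\iota \circ \bar{\mu}_k^+ = \bar{\mu}_k^+ \circ \mu_k\iota$, where $\mu_k\iota \colon \integ[\mu_k(\ex)] \hookrightarrow \integ[\mu_k(\cB)]$ denotes the restricted inclusion; this simply says that $\bar{\mu}_k^+$ on the larger lattice restricts to $\bar{\mu}_k^+$ on the sublattice indexed by mutable elements. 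Secondly, naturality of $\delta$ gives $\delta \circ \bar{\mu}_k^+ = \dual{\dual{\bar{\mu}_k^+}} \circ \delta$, and a second application of Lemma~\ref{l:EF-dual} identifies this with $\dual{\mu_k^+} \circ \delta$. Assembling these steps, the leading composition $\dual{\bar{\mu}_k^+} \circ \dual{\mu_k^+} = \dual{(\mu_k^+ \circ \bar{\mu}_k^+)}$ is once again the identity, leaving exactly $\delta \circ \mu_k\iota$, as required for compatibility.

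The main obstacle is purely notational rather than conceptual: the symbols $\mu_k^+$ and $\bar{\mu}_k^+$ are overloaded to denote maps both on the lattices $\integ[\cB],\integ[\mu_k(\cB)]$ and on their $\integ$-linear duals, and Lemma~\ref{l:EF-dual} identifies the lattice and dual-lattice incarnations as being \emph{dual} to one another, rather than equal. A careful write-up must either temporarily disambiguate the two flavours (say by subscripts indicating the source of the map) or insert type-checking commentary at each step, so that each invocation of an identity such as ``$\dual{\mu_k^+}\circ \mu_k^+ = \id$'' is manifestly valid. Once that is done, every step is a one-line consequence of a previously established lemma; an alternative verification on basis elements using the explicit formulas of Lemmas~\ref{l:mut-form-values} and~\ref{l:lambda-form-mut} is also available but is less conceptually transparent and obscures the role of adjointness.
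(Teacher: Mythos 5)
Your argument is correct and is essentially the paper's own proof: both unpack $\dual{(\mu_{k}\lambda)}\circ\mu_{k}\beta$ via \eqref{eq:beta-mut} and \eqref{eq:lambda-mut}, cancel the middle pair using Lemma~\ref{l:EF-dual} together with Lemma~\ref{l:E-isos}, apply the compatibility $\dual{\lambda}\circ\beta=\delta\circ\iota$ of Definition~\ref{d:compatible}, and finish by commuting $\iota$ and $\delta$ past the mutation isomorphisms (the paper likewise cites Lemma~\ref{l:EF-dual} for the $\delta$-step). Your explicit flagging of the lattice-level versus dual-lattice-level overloading of $\mu_{k}^{+}$ and $\bar{\mu}_{k}^{+}$ is a fair point of care, but it does not change the substance of the argument.
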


\begin{proof} Recall that compatibility of $\beta$ and $\lambda$ means that $\dual{\lambda}\circ \beta=\delta_{\cB}\circ \iota_{\cB}$ for $\delta_{\cB}\colon \integ[\cB]\to \integ[\cB]^{**}$ and $\iota_{\cB}\colon \integ[\ex]\inj \integ[\cB]$.

	For $k\in \ex$, letting $\iota_{\mu_{k}(\cB)}\colon \integ[\mu_{k}(\ex)]\inj \integ[\mu_{k}(\cB)]$, one may check from the definitions that $\mu_{k}^{\pm}\circ \iota_{\cB}=\iota_{\mu_{k}(\cB)} \circ \mu_{k}^{\pm}$; this expresses that the mutation maps on $\integ[\ex]$ are the restrictions of those on $\integ[\cB]$. The analogous expression for $\delta_{\cB}$ and $\delta_{\mu_{k}(\cB)}$ holds by Lemma~\ref{l:EF-dual}.
	Then we have 
	\begin{align*} \dual{(\mu_{k}\lambda)}\circ \mu_{k}\beta & = \dual{(\mu_{k}^{+}\circ \lambda \circ \bar{\mu}_{k}^{+})} \circ (\mu_{k}^{+}\circ \beta \circ \bar{\mu}_{k}^{+}) \\ 
		& = \dual{(\bar{\mu}_{k}^{+})}\circ \dual{\lambda} \circ \dual{(\mu_{k}^{+})} \circ \mu_{k}^{+} \circ \beta \circ \bar{\mu}_{k}^{+} \\
		& = \dual{(\bar{\mu}_{k}^{+})}\circ \dual{\lambda} \circ \bar{\mu}_{k}^{+} \circ \mu_{k}^{+} \circ \beta \circ \bar{\mu}_{k}^{+} \\ 
		& = \dual{(\bar{\mu}_{k}^{+})}\circ \dual{\lambda} \circ \beta \circ \bar{\mu}_{k}^{+} \\ 
		& = \dual{(\bar{\mu}_{k}^{+})}\circ (\delta_{\cB}\circ \iota_{\cB}) \circ \bar{\mu}_{k}^{+} \\ 
		& = (\mu_{k}^{+}\circ (\bar{\mu}_{k}^{+} \circ \delta_{\mu_{k}(\cB)} \circ \mu_{k}^{+}) \circ (\bar{\mu}_{k}^{+}\circ \iota_{\mu_{k}(\cB)} \circ \mu_{k}^{+}) \circ \bar{\mu}_{k}^{+}) \\
		& = \delta_{\mu_{k}(\cB)}\circ \iota_{\mu_{k}(\cB)}
	\end{align*}
	as required.
\end{proof}

We also have the analogous properties of the mutation of $\lambda$ to that of $\beta$, starting with sign invariance (\cf Lemma~\ref{l:sign-invar}).

\begin{lemma}\label{l:lambda-form-sign-inv}
	Let $k\in \ex$.  Then 
	\[ \mu_{k}\canform{\blank}{\blank}{\lambda}= \canform{\blank}{\blank}{\lambda}\circ (\bar{\mu}_{k}^{-}\cross \bar{\mu}_{k}^{-}). \]
\end{lemma}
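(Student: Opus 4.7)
The plan is to mirror the proof strategy of Lemma~\ref{l:sign-invar} (the analogous sign-invariance result for $\beta$): perform a four-way case analysis on whether $\dual{b}$ and/or $\dual{c}$ equals $\dual{\mu_{\cB}(k)}$, using the explicit formulas for $\bar{\mu}_{k}^{-}$ from Lemma~\ref{l:E-isos}, and compare the result with the explicit values of $\mu_{k}\canform{\blank}{\blank}{\lambda}$ computed in Lemma~\ref{l:lambda-form-mut}.

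Two of the four cases are immediate. When $\dual{b},\dual{c}\neq \dual{\mu_{\cB}(k)}$, both maps $\bar{\mu}_{k}^{-}$ act as the identity, so $\canform{\bar{\mu}_{k}^{-}(\dual{b})}{\bar{\mu}_{k}^{-}(\dual{c})}{\lambda}=\canform{\dual{b}}{\dual{c}}{\lambda}$, agreeing with Lemma~\ref{l:lambda-form-mut}. When $\dual{b}=\dual{c}=\dual{\mu_{\cB}(k)}$, both sides vanish: the left by skew-symmetry of $\canform{\blank}{\blank}{\lambda}$, the right by definition.

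The substantive cases are the two mixed ones, and by skew-symmetry of $\canform{\blank}{\blank}{\lambda}$ it suffices to treat $\dual{b}\neq \dual{\mu_{\cB}(k)}$, $\dual{c}=\dual{\mu_{\cB}(k)}$. Expanding using $\bar{\mu}_{k}^{-}(\dual{\mu_{\cB}(k)})=\bminus{\beta(k)}{\dual{\cB}}-\dual{k}$ yields
\[ \canform{\bar{\mu}_{k}^{-}(\dual{b})}{\bar{\mu}_{k}^{-}(\dual{\mu_{\cB}(k)})}{\lambda}=\canform{\dual{b}}{\bminus{\beta(k)}{\dual{\cB}}}{\lambda}-\canform{\dual{b}}{\dual{k}}{\lambda}, \]
while Lemma~\ref{l:lambda-form-mut} asks for $\canform{\dual{k}}{\dual{b}}{\lambda}-\canform{\bplus{\beta(k)}{\dual{\cB}}}{\dual{b}}{\lambda}$. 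The key observation will be that these agree precisely because $\beta$ and $\lambda$ are compatible (Lemma~\ref{l:mut-compatible} guarantees this holds at every cluster, but here we need only compatibility at the initial data): expanding $\beta(k)=\bplus{\beta(k)}{\dual{\cB}}-\bminus{\beta(k)}{\dual{\cB}}$ in the relation $\canform{\dual{b}}{\beta(k)}{\lambda}=\delta_{bk}$ and using that $b\neq k$ (since $\dual{b}\in \dual{\mu_{k}(\cB)}\setminus\{\dual{\mu_{\cB}(k)}\}$ forces $b\in \cB\setminus\{k\}$) gives $\canform{\dual{b}}{\bminus{\beta(k)}{\dual{\cB}}}{\lambda}=\canform{\dual{b}}{\bplus{\beta(k)}{\dual{\cB}}}{\lambda}$. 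Applying skew-symmetry to the $\dual{b},\dual{k}$ term then completes the identification.

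The only mild obstacle is recognizing that one needs \emph{both} the compatibility hypothesis (to interchange $\bplus{\beta(k)}{\dual{\cB}}$ and $\bminus{\beta(k)}{\dual{\cB}}$ in pairings against $\dual{b}$) and the skew-symmetry of $\lambda$ (to flip the remaining argument); neither alone suffices. Otherwise the argument is entirely a parallel of Lemma~\ref{l:sign-invar}, with compatibility playing the role that skew-symmetrizability of $\beta$ played there.
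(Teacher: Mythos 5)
Your proposal is correct and follows essentially the same route as the paper's proof: a four-way case expansion using the formulas for $\bar{\mu}_{k}^{-}$ from Lemma~\ref{l:E-isos}, the compatibility relation $\canform{\dual{b}}{\beta(k)}{\lambda}=\delta_{bk}$ (which for $b\neq k$ lets one exchange $\bplus{\beta(k)}{\dual{\cB}}$ and $\bminus{\beta(k)}{\dual{\cB}}$ in pairings against $\dual{b}$), skew-symmetry of $\lambda$, and comparison with Lemma~\ref{l:lambda-form-mut}. The only cosmetic difference is that you reduce the second mixed case to the first by skew-symmetry, whereas the paper writes out both cases explicitly.
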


\begin{proof}
	For $\dual{b},\dual{c}\in \dual{\mu_{k}(\cB)}$ we have
	\begin{align*}
		(\canform{\blank}{\blank}{\lambda}\circ (\bar{\mu}_{k}^{-}\cross \bar{\mu}_{k}^{-}))(\dual{b},\dual{c}) 
		& = \begin{cases}
			\canform{\dual{b}}{\dual{c}}{\lambda} & \text{if}\ \dual{b},\dual{c}\neq \dual{\mu_{\cB}(k)} \\
			\canform{\dual{b}}{\bminus{\beta(k)}{\dual{\cB}}-\dual{k}}{\lambda} & \text{if}\ \dual{b}\neq \dual{\mu_{\cB}(k)},\dual{c}=\dual{\mu_{\cB}(k)} \\
			\canform{\bminus{\beta(k)}{\dual{\cB}}-\dual{k}}{\dual{c}}{\lambda} & \text{if}\ \dual{b}=\dual{\mu_{\cB}(k)},\dual{c}\neq \dual{\mu_{\cB}(k)} \\
			\canform{\bminus{\beta(k)}{\dual{\cB}}-\dual{k}}{\bminus{\beta(k)}{\dual{\cB}}-\dual{k}}{\lambda} & \text{if}\ \dual{b}=\dual{c}=\dual{\mu_{\cB}(k)} 
		\end{cases} \\
		& = \begin{cases}
			\canform{\dual{b}}{\dual{c}}{\lambda} & \text{if}\ \dual{b},\dual{c}\neq \dual{\mu_{\cB}(k)} \\
			-\canform{\dual{b}}{\dual{k}}{\lambda}+\canform{\dual{b}}{\bminus{\beta(k)}{\dual{\cB}}}{\lambda} & \text{if}\ \dual{b}\neq \dual{\mu_{\cB}(k)},\dual{c}=\dual{\mu_{\cB}(k)} \\
			-\canform{\dual{k}}{\dual{c}}{\lambda}+\canform{\bminus{\beta(k)}{\dual{\cB}}}{\dual{c}}{\lambda} & \text{if}\ \dual{b}=\dual{\mu_{\cB}(k)},\dual{c}\neq \dual{\mu_{\cB}(k)} \\
			0=\canform{\dual{k}}{\dual{k}}{\lambda} & \text{if}\ \dual{b}=\dual{c}=\dual{\mu_{\cB}(k)} 
		\end{cases} \\
		& = \begin{cases}
			\canform{\dual{b}}{\dual{c}}{\lambda} & \text{if}\ \dual{b},\dual{c}\neq \dual{\mu_{\cB}(k)} \\
			-\canform{\dual{b}}{\dual{k}}{\lambda}+\canform{\dual{b}}{\bplus{\beta(k)}{\dual{\cB}}}{\lambda} & \text{if}\ \dual{b}\neq \dual{\mu_{\cB}(k)},\dual{c}=\dual{\mu_{\cB}(k)} \\
			-\canform{\dual{k}}{\dual{c}}{\lambda}+\canform{\bplus{\beta(k)}{\dual{\cB}}}{\dual{c}}{\lambda} & \text{if}\ \dual{b}=\dual{\mu_{\cB}(k)},\dual{c}\neq \dual{\mu_{\cB}(k)} \\
			0=\canform{\dual{k}}{\dual{k}}{\lambda} & \text{if}\ \dual{b}=\dual{c}=\dual{\mu_{\cB}(k)} 
		\end{cases} \\
		& = \mu_{k}\canform{\dual{b}}{\dual{c}}{\lambda}
	\end{align*}
	by Lemma~\ref{l:lambda-form-mut} and since by compatibility $\evform{\beta(k)}{\lambda(\dual{b})}=\evform{k}{\dual{b}}=\delta_{bk}$ we have
	\begin{align*}
		\delta_{bk}=\canform{\dual{b}}{\beta(k)}{\lambda} & = \canform{\dual{b}}{\bplus{\beta(k)}{\dual{\cB}}-\bminus{\beta(k)}{\dual{\cB}}}{\lambda} \\
		& = \canform{\dual{b}}{\bplus{\beta(k)}{\dual{\cB}}}{\lambda}-\canform{\dual{b}}{\bminus{\beta(k)}{\dual{\cB}}}{\lambda}
	\end{align*}
	and hence for $\dual{b}\neq \dual{k}$, 
	\[ \canform{\dual{b}}{\bplus{\beta(k)}{\dual{\cB}}}{\lambda}=\canform{\dual{b}}{\bminus{\beta(k)}{\dual{\cB}}}{\lambda} \]
	with the other case obtained by skew-symmetry.
\end{proof}

\begin{proposition}\label{p:lambda-mut-invol}
	Let $k\in \ex$.  Let $\nu\colon \cB\to \mu_{\mu_{\cB}(k)}(\mu_{k}(\cB))$ be the bijection defined by $\nu(b)=b$ for $b\neq k$ and $\nu(k)=\mu_{\mu_{\cB}(k)}(\mu_{\cB}(k)))$ and write $\dual{\nu}\colon \dual{\integ[\mu_{\mu_{k}(\cB)}(\mu_{k}(\cB))]}\to \dual{\integ[\cB]}$ for the dual of the induced isomorphism of $\integ[\cB]$ with $\integ[\mu_{\mu_{\cB}(k)}(\mu_{k}(\cB))]$.
	
	Then
	\[ \mu_{\mu_{\cB}(k)}\mu_{k}\canform{\blank}{\blank}{\lambda}=\canform{\blank}{\blank}{\lambda}\circ (\dual{\nu} \cross \dual{\nu}). \]
\end{proposition}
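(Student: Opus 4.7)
The plan is to mirror the proof of Proposition~\ref{p:beta-form-mut-invol}, substituting the dual-lattice analogues of the ingredients. Specifically, I will need a dual-lattice analogue of Lemma~\ref{l:F-mut-invol} for the mutation maps on the duals, together with Lemma~\ref{l:lambda-form-sign-inv} (sign-invariance of $\mu_k\canform{\blank}{\blank}{\lambda}$) and the definition of iterated mutation of $\canform{\blank}{\blank}{\lambda}$.

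First, I would establish the dual-lattice version of Lemma~\ref{l:F-mut-invol}: that on the dual lattices one has
\[ \bar{\mu}_{\mu_{\cB}(k)}^{\pm} = \mu_{k}^{\mp}\circ \dual{\nu}. \]
This is obtained by dualizing the lattice-level identity $\bar{\mu}_{\mu_{\cB}(k)}^{\pm} = \mu_k^{\mp}\circ \nu^{-1}$ from Lemma~\ref{l:F-mut-invol}. Applying Lemma~\ref{l:EF-dual}, the dual of the lattice map $\mu_k^{\mp}$ is the dual-lattice map $\bar{\mu}_k^{\mp}$, and the dual of the lattice map $\bar{\mu}_{\mu_{\cB}(k)}^{\pm}$ is the dual-lattice map $\mu_{\mu_{\cB}(k)}^{\pm}$. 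Since taking duals reverses composition and sends $\nu^{-1}$ to $(\dual{\nu})^{-1}$, one obtains $\mu_{\mu_{\cB}(k)}^{\pm} = (\dual{\nu})^{-1}\circ \bar{\mu}_k^{\mp}$ on duals; inverting yields the displayed identity.

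With this in hand, the proof is a direct calculation parallel to the one for $\beta$:
\begin{align*}
\mu_{\mu_{\cB}(k)}\mu_{k}\canform{\blank}{\blank}{\lambda}
&= \mu_k\canform{\blank}{\blank}{\lambda}\circ (\bar{\mu}_{\mu_{\cB}(k)}^{+}\cross \bar{\mu}_{\mu_{\cB}(k)}^{+}) \\
&= \bigl(\canform{\blank}{\blank}{\lambda}\circ (\bar{\mu}_{k}^{-}\cross \bar{\mu}_{k}^{-})\bigr)\circ \bigl((\mu_k^{-}\circ \dual{\nu})\cross (\mu_k^{-}\circ \dual{\nu})\bigr) \\
&= \canform{\blank}{\blank}{\lambda}\circ (\dual{\nu}\cross \dual{\nu}),
\end{align*}
where the first equality is the definition of iterated mutation, the second uses Lemma~\ref{l:lambda-form-sign-inv} (to replace the outer $\bar{\mu}_k^+$'s by $\bar{\mu}_k^-$'s) together with the dual version of Lemma~\ref{l:F-mut-invol} just established, and the third uses $\bar{\mu}_k^{-}\circ \mu_k^{-} = \id$ from Lemma~\ref{l:E-isos}.

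The main point of care is the dualization step itself: one must track that the same symbols $\mu_k^{\pm}$ and $\bar{\mu}_k^{\pm}$ denote maps on both lattices and on their duals, and apply Lemma~\ref{l:EF-dual} precisely to swap between them. Once this bookkeeping is done, the argument is automatic and is the exact mirror, under duality, of the proof of Proposition~\ref{p:beta-form-mut-invol}.
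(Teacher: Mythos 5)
Your proposal is correct and follows exactly the route the paper takes: its proof of Proposition~\ref{p:lambda-mut-invol} simply says the argument mirrors that of Proposition~\ref{p:beta-form-mut-invol}, using Lemma~\ref{l:EF-dual} and Lemma~\ref{l:F-mut-invol} together with Lemma~\ref{l:lambda-form-sign-inv}, which is precisely the chain of equalities you wrote. Your only addition is to spell out explicitly the dualization step $\bar{\mu}_{\mu_{\cB}(k)}^{\pm}=\mu_{k}^{\mp}\circ\dual{\nu}$ on the dual lattices (applying Lemma~\ref{l:EF-dual} at both the initial and the mutated seed), which the paper leaves implicit; this is sound bookkeeping, not a different argument.
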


\begin{proof}
	The argument is essentially the same as the proof of Proposition~\ref{p:beta-form-mut-invol}, using Lemma~\ref{l:EF-dual} and Lemma~\ref{l:F-mut-invol} along with Lemma~\ref{l:lambda-form-sign-inv}.
\end{proof}

\subsection{Quantum cluster algebras}\label{ss:QCAs}

We now begin a sequence of results and definitions that will give us quantum cluster mutation.  Recall that associated to the data of $\cB$ and $\lambda$ we have a quantum torus $\qtorus{q}{\lambda}{\cB}=(\bK \dual{\integ[\cB]})^{\Omega_{q}^{\lambda}}$.

\begin{proposition}\label{p:mut-autom}
	Let $\cB$ be a countable set, $\ex \subseteq \cB$, $\beta\colon \integ[\ex]\to \dual{\integ[\cB]}$ skew-symmetrizable and $\lambda\colon \dual{\integ[\cB]}\to \integ[\cB]$ skew-symmetric and compatible with $\beta$. Let $k\in \ex$ and $\alpha\in \units{\bK}$.
	\begin{enumerate}[label=\textup{(\alph*)}]
		\item There exists a homomorphism $\rho_{\alpha}^{\cB,k}\colon \qtorus{q}{\mu_{k}\lambda}{\mu_{k}(\cB)}\to \curly{F}(\qtorus{q}{\lambda}{\cB})$
		defined on a generating set of its domain by
		\[ \rho_{\alpha}^{\cB,k}(x^{\dual{b}})=\begin{cases} x^{\bar{\mu}_{k}^{+}(\dual{\mu_{\cB}(k)})}(1+\alpha x^{-\beta(k)}) & \text{if}\ \dual{b}=\dual{\mu_{\cB}(k)} \\ x^{\dual{b}} & \text{if}\ \dual{b}\neq \dual{\mu_{\cB}(k)} \end{cases} \]
		for $\dual{b}\in \dual{\mu_{k}(\cB)}$.
		\item There exists a homomorphism $(\rho')_{\alpha}^{\cB,k}\colon \qtorus{q}{\lambda}{\cB}\to \curly{F}(\qtorus{q}{\mu_{k}\lambda}{\mu_{k}(\cB)})$
		defined on a generating set of its domain by
		\[ (\rho')_{\alpha}^{\cB,k}(x^{\dual{b}})=\begin{cases} (1+\alpha x^{\mu_{k}\beta(\mu_{\cB}(k)})x^{\mu_{k}^{+}(\dual{k})} & \text{if}\ \dual{b}=\dual{k} \\ x^{\dual{b}} & \text{if}\ \dual{b}\neq \dual{k} \end{cases} \]
		for $\dual{b}\in \dual{\cB}$.
		\item The homomorphisms $\rho_{\alpha}^{\cB,k}$ and $(\rho')_{\alpha}^{\cB,k}$ extend to inverse isomorphisms
		\[ \rho_{\alpha}^{\cB,k}\colon \curly{F}(\qtorus{q}{\mu_{k}\lambda}{\mu_{k}(\cB)}) \stackrel{\iso}{\longleftrightarrow} \curly{F}(\qtorus{q}{\lambda}{\cB}) \colon (\rho')_{\alpha}^{\cB,k}. \]
	\end{enumerate}
\end{proposition}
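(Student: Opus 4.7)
The strategy is entirely computational. Since $\qtorus{q}{\mu_k\lambda}{\mu_k(\cB)}$ is a twisted group algebra generated by the canonical basis $\{x^{\dual b} : \dual b \in \dual{\mu_k(\cB)}\}$ subject to relations $x^v x^w = \Omega_q^{\mu_k\lambda}(v,w) x^{v+w}$, to prove (a) it suffices to verify that the images satisfy the corresponding quasi-commutation relations of the target. (A fully formal argument would first use the Laurent-polynomial universal property on a freely generated version, then quotient by the relations; we suppress this here.) The plan for (b) is analogous, with roles swapped and using the compatibility of $\mu_k\beta$ and $\mu_k\lambda$ guaranteed by Lemma~\ref{l:mut-compatible}. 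Part (c) then comes from a direct check on generators.

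The decisive observation is that the compatibility identity $\evform{\lambda(\dual b)}{\beta(k)} = \delta_{bk}$ forces $x^{-\beta(k)}$ to commute with $x^{\dual b}$ in $\qtorus{q}{\lambda}{\cB}$ for every $\dual b \in \dual\cB$ with $\dual b \neq \dual k$, since $\Omega_q^\lambda(\dual b, -\beta(k))^2 = q^{-\delta_{bk}} = 1$. Consequently the binomial factor $(1 + \alpha x^{-\beta(k)})$ is central in the subalgebra generated by all $x^{\dual b}$ with $\dual b \neq \dual k$. Dually, using Lemma~\ref{l:mut-compatible}, $(1 + \alpha x^{\mu_k\beta(\mu_\cB(k))})$ is central in the subalgebra of $\qtorus{q}{\mu_k\lambda}{\mu_k(\cB)}$ generated by all $x^{\dual b}$ with $\dual b \neq \dual{\mu_\cB(k)}$.

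For the verification of quasi-commutation under $\rho_\alpha^{\cB,k}$, split into cases. When $\dual b, \dual c \in \dual{\mu_k(\cB)}$ are both distinct from $\dual{\mu_\cB(k)}$, Lemma~\ref{l:lambda-form-mut} gives $\canform{\dual b}{\dual c}{\mu_k\lambda} = \canform{\dual b}{\dual c}{\lambda}$, so the trivially-assigned images $x^{\dual b}, x^{\dual c}$ respect the relation. When $\dual c = \dual{\mu_\cB(k)}$ and $\dual b \neq \dual{\mu_\cB(k)}$, the binomial factor in $\rho_\alpha^{\cB,k}(x^{\dual c})$ is central with respect to $x^{\dual b}$ by the previous paragraph, so the whole quasi-commutation is determined by the monomial part $x^{\bar\mu_k^+(\dual{\mu_\cB(k)})}$; one reads off the required $q$-power from Lemma~\ref{l:lambda-form-mut} together with $\bar\mu_k^+(\dual{\mu_\cB(k)}) = \bplus{\beta(k)}{\dual\cB} - \dual k$. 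The remaining case $\dual b = \dual c = \dual{\mu_\cB(k)}$ is automatic.

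For part (c), checking $(\rho')_\alpha^{\cB,k}\circ \rho_\alpha^{\cB,k} = \id$ on generators $x^{\dual b}$ with $\dual b \neq \dual{\mu_\cB(k)}$ is immediate; the content lies in the case $\dual b = \dual{\mu_\cB(k)}$, where one expands $\rho_\alpha^{\cB,k}(x^{\dual{\mu_\cB(k)}})$ as a sum of two monomials (using $x^{-\beta(k)} = x^{\bminus{\beta(k)}{\dual\cB}-\bplus{\beta(k)}{\dual\cB}}$ and Lemma~\ref{l:sign-invar} to identify the exponents with $\bar\mu_k^+(\dual{\mu_\cB(k)})$ and $\bar\mu_k^-(\dual{\mu_\cB(k)})$), applies $(\rho')_\alpha^{\cB,k}$, and collapses the result using the tropical identities of Lemma~\ref{l:F-mut-invol} and the relation $\mu_k\beta(\mu_\cB(k)) = -\beta(k)$ recorded in the proof of Lemma~\ref{l:mut-form-values}. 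The principal obstacle is the bookkeeping: one must carefully track the $+/-$ sign choices in $\bar\mu_k^{\pm}$ against the monomial/binomial decomposition, invoking Lemma~\ref{l:lambda-form-sign-inv} at the appropriate step to see that the ambiguity is harmless, and ensure the $q$-powers generated by re-ordering in the twisted product exactly cancel those appearing in the mutated quasi-commutation form.
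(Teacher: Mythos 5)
Your proposal is correct and follows essentially the same route as the paper: compatibility forces $x^{\pm\beta(k)}$ (respectively $x^{\mu_k\beta(\mu_\cB(k))}$) to commute with all $x^{\dual b}$, $\dual b\neq\dual k$ (resp.\ $\dual b\neq\dual{\mu_\cB(k)}$), so the quasi-commutation checks reduce to the monomial factor, whose exponent of $q$ matches $\canform{\blank}{\blank}{\mu_k\lambda}$ by the pullback definition of the mutated form, and the inverse check in (c) hinges on $\mu_k\beta(\mu_\cB(k))=-\beta(k)$. The only cosmetic differences are that the paper keeps the binomial factored in (c) rather than expanding it into two monomials (which slightly reduces your bookkeeping burden), and the identities you attribute to Lemma~\ref{l:F-mut-invol} are really those of Lemma~\ref{l:E-isos}.
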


\begin{proof} {\ }
	\begin{enumerate}[label=\textup{(\alph*)}]
		\item It suffices to check that the images $\rho_{\alpha}^{\cB,k}(x^{\dual{b}})$ satisfy the quasi-commutation relations of $\qtorus{q}{\mu_{k}\lambda}{\mu_{k}(\cB)}$.  If $\dual{b},\dual{c}\neq \dual{\mu_{\cB}(k)}$ or $\dual{b}=\dual{c}=\dual{\mu_{\cB}(k)}$, this is immediate.
		
		For the case $\dual{b}=\dual{\mu_{\cB}(k)}$, $\dual{c}\neq \dual{\mu_{\cB}(k)}$, first note that by compatibility, $\canform{\dual{c}}{\beta(k)}{\lambda}=\evform{\lambda(\dual{c})}{\beta(k)}=\delta_{ck}$ so that in $\curly{F}(\qtorus{q}{\lambda}{\cB})$, $x^{\beta(k)}$ commutes with $x^{\dual{c}}$ for $c\neq k$.  Also, $\canform{\bar{\mu}_{k}^{+}(\dual{\mu_{\cB}(k)})}{\dual{c}}{\lambda}=\canform{\dual{\mu_{\cB}(k)}}{\dual{c}}{\mu_{k}\lambda}$ for $c\neq k$.
		
		Then for $\dual{b}=\dual{\mu_{\cB}(k)}$, $\dual{c}\neq \dual{\mu_{\cB}(k)}$,
		\begin{align*}
			\rho_{\alpha}^{\cB,k}(x^{\dual{\mu_{\cB}(k)}})\rho_{\alpha}^{\cB,k}(x^{\dual{c}}) & = x^{\bar{\mu}_{k}^{+}(\dual{\mu_{\cB}(k)})}(1+\alpha x^{-\beta(k)})x^{\dual{c}} \\
			& = x^{\bar{\mu}_{k}^{+}(\dual{\mu_{\cB}(k)})}x^{\dual{c}}(1+\alpha x^{-\beta(k)}) \\
			& = q^{\canform{\bar{\mu}_{k}^{+}(\dual{\mu_{\cB}(k))}}{\dual{c}}{\lambda}}x^{\dual{c}}x^{\bar{\mu}_{k}^{+}(\dual{\mu_{\cB}(k)})}(1+\alpha x^{-\beta(k)}) \\
			& = q^{\canform{\dual{\mu_{\cB}(k)}}{\dual{c}}{\mu_{k}\lambda}}x^{\dual{c}}x^{\bar{\mu}_{k}^{+}(\dual{\mu_{\cB}(k)})}(1+\alpha x^{-\beta(k)}) \\
			& = q^{\canform{\dual{\mu_{\cB}(k)}}{\dual{c}}{\mu_{k}\lambda}}\rho_{\alpha}^{\cB,k}(x^{\dual{c}})\rho_{\alpha}^{\cB,k}(x^{\dual{\mu_{\cB}(k)}})
		\end{align*}
		as required.  
		\item Similarly to the above, we have that $x^{\mu_{k}\beta(\mu_{\cB}(k))}$ commutes with $x^{\dual{c}}$ for $\dual{c}\neq \dual{\mu_{\cB}(k)}$.  We then have for $\dual{b}=\dual{k}$, $\dual{c}\neq \dual{k}$,
		\begin{align*}
			(\rho')_{\alpha}^{\cB,k}(x^{\dual{k}})(\rho')_{\alpha}^{\cB,k}(x^{\dual{c}}) & = (1+\alpha x^{\mu_{k}\beta(\mu_{\cB}(k))})x^{\mu_{k}^{+}(\dual{k})}x^{\dual{c}} \\
			& = q^{\canform{\mu_{k}^{+}(\dual{k})}{\dual{c}}{\mu_{k}\lambda}}(1+\alpha x^{\mu_{k}\beta(\mu_{\cB}(k))})x^{\dual{c}}x^{\mu_{k}^{+}(\dual{k})} \\
			& = q^{\canform{\mu_{k}^{+}(\dual{k})}{\dual{c}}{\mu_{k}\lambda}}x^{\dual{c}}(1+\alpha x^{\mu_{k}\beta(\mu_{\cB}(k))})x^{\mu_{k}^{+}(\dual{k})} \\
			& = q^{\canform{\bar{\mu}_{k}^{+}\mu_{k}^{+}(\dual{k})}{\bar{\mu}_{k}^{+}\dual{c}}{\lambda}}x^{\dual{c}}(1+\alpha x^{\mu_{k}\beta(\mu_{\cB}(k))})x^{\mu_{k}^{+}(\dual{k})} \\
			& = q^{\canform{\dual{k}}{\dual{c}}{\lambda}}(\rho')_{\alpha}^{\cB,k}(x^{\dual{c}})(\rho')_{\alpha}^{\cB,k}(x^{\dual{k}}).
		\end{align*}
		\item We have
		\begin{align*}
			(\rho')_{\alpha}^{\cB,k}\left(\rho_{\alpha}^{\cB,k}(x^{\dual{\mu_{\cB}(k)}})\right) & = (\rho')_{\alpha}^{\cB,k}\left(x^{\bar{\mu}_{k}^{+}(\dual{\mu_{\cB}(k)})}(1+\alpha x^{-\beta(k)})\right) \\ 
			& = (\rho')_{\alpha}^{\cB,k}\left(x^{\bar{\mu}_{k}^{+}(\dual{\mu_{\cB}(k)})}\right)(1+\alpha x^{-\beta(k)}) \\
			& = (\rho')_{\alpha}^{\cB,k}\left(x^{\bplus{\beta(k)}{\dual{\cB}}-\dual{k}}\right)(1+\alpha x^{-\beta(k)}) \\
			& = x^{\bplus{\beta(k)}{\dual{\cB}}}(\rho')_{\alpha}^{\cB,k}\left(x^{\dual{k}}\right)^{-1}(1+\alpha x^{-\beta(k)}) \\
			& = x^{\bplus{\beta(k)}{\dual{\cB}}}\left((1+\alpha x^{\mu_{k}\beta(\mu_{\cB}(k))})x^{\mu_{k}^{+}(\dual{k})}\right)^{-1}(1+\alpha x^{-\beta(k)}) \\
			& = x^{\bplus{\beta(k)}{\dual{\cB}}}(x^{\bplus{\beta(k)}{\dual{B}}-\dual{\mu_{\cB}(k)}})^{-1}(1+\alpha x^{-\beta(k)})^{-1}(1+\alpha x^{-\beta(k)}) \\
			& = x^{\dual{\mu_{\cB}(k)}} 
		\end{align*}
		since $x^{\mu_{k}\beta(\mu_{\cB}(k))}=x^{-\beta(k)}$ and similarly for the other case. \qedhere
	\end{enumerate}
\end{proof}

That is, the two quantum tori $\qtorus{q}{\lambda}{\cB}$ and $\qtorus{q}{\mu_{k}\lambda}{\mu_{k}(\cB)}$ are birationally equivalent.  Note that this is a much stronger claim in the non-commutative setting, where the two quantum tori are not isomorphic except in degenerate cases.

We now use the canonical toric frames associated to $\lambda$ and $\mu_{k}\lambda$,
\begin{align*} M_{\lambda} & \colon \dual{\integ[\cB]}\to \mathcal{F}(\qtorus{q}{\lambda}{\cB}), \quad M_{\lambda}(\dual{b})=x^{\dual{b}}, \\ M_{\mu_{k}\lambda} & \colon \dual{\integ[\mu_{k}(\cB)]}\to \mathcal{F}(\qtorus{q}{\mu_{k}\lambda}{\mu_{k}(\cB)}), \quad M_{\mu_{k}\lambda}(\dual{b})=x^{\dual{b}}  \end{align*}
and the above isomorphism to produce a map from $\integ[\mu_{k}(\cB)]$ to the skew-field of fractions of the ``root'' quantum torus, $\curly{F}(\qtorus{q}{\lambda}{\cB})$.

\begin{definition}\label{d:one-step-mut}
	Let $\cB$ be a countable set, $\ex \subseteq \cB$, $\beta\colon \integ[\ex]\to \dual{\integ[\cB]}$ skew-symmetrizable and $\lambda\colon \dual{\integ[\cB]}\to \integ[\cB]$ skew-symmetric and compatible with $\beta$.
	
	For $k\in \ex$, define
	\[ \mu_{k}M_{\lambda}\colon \dual{\integ[\mu_{k}(\cB)]}\to \curly{F}(\qtorus{q}{\lambda}{\cB}), \mu_{k}M_{\lambda}=\rho_{\Omega_{q}^{\mu_{k}\lambda}(\mu_{k}\beta(\mu_{\cB}(k)),\dual{\mu_{\cB}(k)})}^{\cB,k}\circ M_{\mu_{k}\lambda} . \]
\end{definition}

\begin{lemma} {\ }
	\begin{enumerate}[label=\textup{(\alph*)}]
		\item The function $\mu_{k}M_{\lambda}$ satisfies
		\[ \mu_{k}M_{\lambda}(\dual{b}) = \begin{cases} M_{\lambda}(\bar{\mu}_{k}^{+}(\dual{\mu_{\cB}(k)}))+M_{\lambda}(\bar{\mu}_{k}^{-}(\dual{\mu_{\cB}(k)})) & \text{if}\ \dual{b}=\dual{\mu_{\cB}(k)} \\ M_{\lambda}(\dual{b}) & \text{if}\ \dual{b}\neq \dual{\mu_{\cB}(k)} \end{cases}. \]
		\item The $\bK$-span of the image of $\mu_{k}M_{\lambda}$ is isomorphic to $\qtorus{q}{\mu_{k}\lambda}{\mu_{k}(\cB)}$.
	\end{enumerate}
\end{lemma}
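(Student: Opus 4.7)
The plan is to unwind Definition~\ref{d:one-step-mut}, namely $\mu_k M_\lambda = \rho_\alpha^{\cB,k} \circ M_{\mu_k\lambda}$ with $\alpha = \Omega_q^{\mu_k\lambda}(\mu_k\beta(\mu_\cB(k)),\dual{\mu_\cB(k)})$, and apply Proposition~\ref{p:mut-autom}(a) case by case, using $M_{\mu_k\lambda}(\dual b)=x^{\dual b}$ and the explicit formulas for $\bar\mu_k^\pm$ from Lemma~\ref{l:E-isos} throughout.

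For part~(a), the case $\dual b\neq \dual{\mu_\cB(k)}$ is immediate from the first branch of $\rho_\alpha^{\cB,k}$. For $\dual b=\dual{\mu_\cB(k)}$, I would first rewrite the target RHS: since $\bar\mu_k^-(\dual{\mu_\cB(k)})=\bar\mu_k^+(\dual{\mu_\cB(k)})-\beta(k)$ by Lemma~\ref{l:E-isos}, the multiplication rule $x^vx^w=\Omega_q^\lambda(v,w)x^{v+w}$ gives
\[ M_\lambda(\bar\mu_k^+(\dual{\mu_\cB(k)}))+M_\lambda(\bar\mu_k^-(\dual{\mu_\cB(k)}))=x^{\bar\mu_k^+(\dual{\mu_\cB(k)})}\bigl(1+\Omega_q^\lambda(\bar\mu_k^+(\dual{\mu_\cB(k)}),-\beta(k))^{-1}x^{-\beta(k)}\bigr). \]
On the other hand, Proposition~\ref{p:mut-autom}(a) gives $\rho_\alpha^{\cB,k}(x^{\dual{\mu_\cB(k)}})=x^{\bar\mu_k^+(\dual{\mu_\cB(k)})}(1+\alpha x^{-\beta(k)})$. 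Matching the two expressions thus reduces to the key identity $\alpha = \Omega_q^\lambda(\bar\mu_k^+(\dual{\mu_\cB(k)}),-\beta(k))^{-1}$.

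Verifying this identity is the main obstacle. For the right-hand side, I would expand $\canform{\bar\mu_k^+(\dual{\mu_\cB(k)})}{-\beta(k)}{\lambda}$ using $\bar\mu_k^+(\dual{\mu_\cB(k)})=\bplus{\beta(k)}{\dual\cB}-\dual k$ and bilinearity; the compatibility condition $\evform{\lambda(\dual c)}{\beta(k)}=\delta_{ck}$ (Definition~\ref{d:compatible}), together with the skew-symmetrizability consequence that the $\dual k$-component of $\bplus{\beta(k)}{\dual\cB}$ vanishes, evaluates this to $1$, so the right-hand side is $q^{-1/2}$. For the left-hand side, I would use $\mu_k\beta(\mu_\cB(k))=-\beta(k)$ (established inside the proof of Lemma~\ref{l:mut-form-values}) and the identity $\mu_k\canform{\cdot}{\cdot}{\lambda}=\canform{\cdot}{\cdot}{\lambda}\circ(\bar\mu_k^+\times\bar\mu_k^+)$ to rewrite $\canform{\mu_k\beta(\mu_\cB(k))}{\dual{\mu_\cB(k)}}{\mu_k\lambda}$ as $\canform{-\beta(k)}{\bar\mu_k^+(\dual{\mu_\cB(k)})}{\lambda}$ (noting $\bar\mu_k^+$ fixes $-\beta(k)$ since its $\dual{\mu_\cB(k)}$-component is zero), which by skew-symmetry equals $-1$; hence $\alpha=q^{-1/2}$, as required.

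For part~(b), I would observe that $\mu_k M_\lambda=\rho_\alpha^{\cB,k}\circ M_{\mu_k\lambda}$ implies the $\bK$-span of its image equals $\rho_\alpha^{\cB,k}\bigl(\qtorus{q}{\mu_k\lambda}{\mu_k(\cB)}\bigr)$, since the $\bK$-span of the image of $M_{\mu_k\lambda}$ is $\qtorus{q}{\mu_k\lambda}{\mu_k(\cB)}$ by construction. By Proposition~\ref{p:mut-autom}(a) and (c), $\rho_\alpha^{\cB,k}$ is a $\bK$-algebra homomorphism that extends to an isomorphism of skew-fields of fractions, hence is injective; therefore its image is a subalgebra of $\curly F(\qtorus{q}{\lambda}{\cB})$ isomorphic to $\qtorus{q}{\mu_k\lambda}{\mu_k(\cB)}$, completing the proof.
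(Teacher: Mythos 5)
Your proposal is correct and follows the same skeleton as the paper's proof: unwind Definition~\ref{d:one-step-mut}, apply Proposition~\ref{p:mut-autom}(a), and reduce part (a) to the single scalar identity $\alpha=\Omega_{q}^{\lambda}(\bar{\mu}_{k}^{+}(\dual{\mu_{\cB}(k)}),-\beta(k))^{-1}$. The only genuine divergence is how that identity is verified: you evaluate both exponents explicitly, getting $\canform{\bar{\mu}_{k}^{+}(\dual{\mu_{\cB}(k)})}{-\beta(k)}{\lambda}=1$ and $\canform{\mu_{k}\beta(\mu_{\cB}(k))}{\dual{\mu_{\cB}(k)}}{\mu_{k}\lambda}=-1$ directly from compatibility and the vanishing of the $\dual{k}$-component of $\bplus{\beta(k)}{\dual{\cB}}$, so that $\alpha=q^{-1/2}$; the paper instead never computes a numerical value in this proof, showing only that the two bicharacter factors cancel via the chain $\canform{\bar{\mu}_{k}^{+}(\dual{\mu_{\cB}(k)})}{-\beta(k)}{\lambda}=\canform{\beta(k)}{\dual{\mu_{\cB}(k)}}{\mu_{k}\lambda}$ and skew-symmetry (the explicit $q^{-1/2}$ appears only later, in \S\ref{ss:QCAs}). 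Your evaluation buys a concrete constant (consistent with the paper's later computation) at the cost of invoking compatibility here, while the paper's cancellation is slightly more economical. For part (b) your argument via injectivity of $\rho_{\alpha}^{\cB,k}$ (as a restriction of the skew-field isomorphism) is correct and in fact spells out what the paper's brief quasi-commutation remark leaves implicit.
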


\begin{proof} {\ }
	\begin{enumerate}[label=\textup{(\alph*)}]
		\item The case $\dual{b}\neq \dual{\mu_{\cB}(k)}$ is immediate.  For the other case we have
		\begin{align*}
			\mu_{k}M_{\lambda}(\dual{\mu_{\cB}(k)}) & = (\rho_{\Omega_{q}^{\mu_{k}\lambda}(\mu_{k}\beta(\mu_{\cB}(k)),\dual{\mu_{\cB}(k)})}^{\cB,k}\circ M_{\mu_{k}\lambda})(\dual{\mu_{\cB}(k)}) \\
			& = \rho_{\Omega_{q}^{\mu_{k}\lambda}(\mu_{k}\beta(\mu_{\cB}(k)),\dual{\mu_{\cB}(k)})}^{\cB,k}(x^{\dual{\mu_{\cB}(k)}}) \\
			& = x^{\bar{\mu}_{k}^{+}(\dual{\mu_{\cB}(k)})}(1+\Omega_{q}^{\mu_{k}\lambda}(\mu_{k}\beta(\mu_{\cB}(k)),\dual{\mu_{\cB}(k)}) x^{-\beta(k)}) \\
			& = x^{\bar{\mu}_{k}^{+}(\dual{\mu_{\cB}(k)})}(1+\Omega_{q}^{\mu_{k}\lambda}(\dual{\mu_{\cB}(k)},\beta(k)) x^{-\beta(k)}) \\
			& = M_{\lambda}(\bar{\mu}_{k}^{+}(\dual{\mu_{\cB}(k)}))+\Omega_{q}^{\mu_{k}\lambda}(\dual{\mu_{\cB}(k)},\beta(k)) \Omega_{q}^{\lambda}(\bar{\mu}_{k}^{+}(\dual{\mu_{\cB}(k)}),-\beta(k)) M_{\lambda}(\bar{\mu}_{k}^{+}(\dual{\mu_{\cB}(k)})-\beta(k)) \\
			& = M_{\lambda}(\bar{\mu}_{k}^{+}(\dual{\mu_{\cB}(k)}))+\Omega_{q}^{\mu_{k}\lambda}(\dual{\mu_{\cB}(k)},\beta(k)) \Omega_{q}^{\lambda}(\bar{\mu}_{k}^{+}(\dual{\mu_{\cB}(k)}),-\beta(k)) M_{\lambda}(\bar{\mu}_{k}^{+}(\dual{\mu_{\cB}(k)})-\beta(k)) \\
			& = M_{\lambda}(\bar{\mu}_{k}^{+}(\dual{\mu_{\cB}(k)}))+\Omega_{q}^{\mu_{k}\lambda}(\dual{\mu_{\cB}(k)},\beta(k)) \Omega_{q}^{\mu_{k}\lambda}(\beta(k),\dual{\mu_{\cB}(k)}) M_{\lambda}(\bar{\mu}_{k}^{+}(\dual{\mu_{\cB}(k)})-\beta(k)) \\
			& = M_{\lambda}(\bar{\mu}_{k}^{+}(\dual{\mu_{\cB}(k)}))+M_{\lambda}(\bar{\mu}_{k}^{-}(\dual{\mu_{\cB}(k)}))
		\end{align*}
		since
		\[ \beta(k)=\bplus{\beta(k)}{\dual{\cB}}-\bminus{\beta(k)}{\dual{\cB}}=\bar{\mu}_{k}^{+}(\dual{\mu_{\cB}(k)})-\bar{\mu}_{k}^{-}(\dual{\mu_{\cB}(k)}) \]
		and
		\begin{align*}
			\canform{\bar{\mu}_{k}^{+}(\dual{\mu_{\cB}(k)})}{-\beta(k)}{\lambda} & = \canform{\bar{\mu}_{k}^{+}(\dual{\mu_{\cB}(k)})}{\bar{\mu}_{k}^{-}(\dual{\mu_{\cB}(k)})-\bar{\mu}_{k}^{+}(\dual{\mu_{\cB}(k)})}{\lambda} \\
			& = \canform{\bar{\mu}_{k}^{+}(\dual{\mu_{\cB}(k)})}{\bar{\mu}_{k}^{-}(\dual{\mu_{\cB}(k)})}{\lambda} \\
			& = \canform{\dual{\mu_{\cB}(k)}}{\mu_{k}^{+}\bar{\mu}_{k}^{-}(\dual{\mu_{\cB}(k)})}{\mu_{k}\lambda}  \\
			& = \canform{\dual{\mu_{\cB}(k)}}{\dual{\mu_{\cB}(k)}-\beta(k)}{\mu_{k}\lambda}  \\
			& = \canform{\beta(k)}{\dual{\mu_{\cB}(k)}}{\mu_{k}\lambda}.
		\end{align*}
		\item This follows from Proposition~\ref{p:mut-autom}: we saw that the elements 
		\[ \left\{ \mu_{k}M_{\lambda}(\dual{b})=\rho_{\Omega_{q}^{\mu_{k}\lambda}(\mu_{k}\beta(\mu_{\cB}(k)),\dual{\mu_{\cB}(k)})}^{\cB,k}\circ M_{\mu_{k}\lambda}(\dual{b}) \mid \dual{b}\in \dual{\mu_{k}(\cB)} \right\}\]
		quasi-commute and that this quasi-commutation is encoded by $\mu_{k}\lambda$. \qedhere
	\end{enumerate}
\end{proof}

Let $\cB$ be a countable set and $\ex \subseteq \cB$.  Recall from Definition~\ref{d:mut-of-index-sets} that for $k\in \ex$, we have $\mu_{k}(\ex) \subseteq \mu_{k}(\cB)$ with $\mu_{k}(\cB)=(\cB\setminus \{k\})\union \{ \mu_{\cB}(k)\}$ and $\mu_{k}(\ex)=\mu_{k}(\cB)\setminus \ex^{c}$.  Furthermore, there are bijections \label{pg:mut-bij}
\begin{align*}
	\mu_{k} \colon \cB \to \mu_{k}(\cB),\ & \mu_{k}(b)= \begin{cases} b & \text{if}\ b\neq k \\ \mu_{\cB}(k) & \text{if}\ b=k \end{cases} \\ \intertext{and}
	\mu_{k} \colon \ex \to \mu_{k}(\ex),\ & \mu_{k}(b)= \begin{cases} b & \text{if}\ b\neq k \\ \mu_{\cB}(k) & \text{if}\ b=k \end{cases}.
\end{align*}
We may iterate this construction: given $l\in \mu_{k}(\ex)$, we have $\mu_{l}(\mu_{k}(\ex))\subseteq \mu_{l}(\mu_{k}(\cB))$ defined similarly to above, with $\mu_{l}(\mu_{k}(\cB))=(\mu_{k}(\cB)\setminus \{ l \})\union \{ \mu_{\mu_{k}(\ex)}(l) \}$.  Again, these admit bijections $\mu_{l}\colon \mu_{k}(\cB)\to \mu_{l}(\mu_{k}(\cB))$ and $\mu_{l}\colon \mu_{k}(\ex)\to \mu_{l}(\mu_{k}(\ex))$.  Repeating this process, we obtain for any tuple $\uk=(k_{r},k_{r-1},\dotsc ,k_{2},k_{1})$ with 
\begin{align*}
	k_{1} & \in \ex, \\
	k_{2} & \in \mu_{k_{1}}(\ex), \\
	& \phantom{\in}\!\vdots \\
	k_{r-1} & \in \mu_{k_{r-2}}(\dotsm (\mu_{k_{2}}(\mu_{k_{1}}(\ex)))), \\
	k_{r} & \in \mu_{k_{r-1}}(\mu_{k_{r-2}}(\dotsm (\mu_{k_{2}}(\mu_{k_{1}}(\ex))))),
\end{align*}
a set 
\begin{equation}\label{eq:mu-k-B} \mu_{\uk}(\cB) \defeq \mu_{k_{r}}(\mu_{k_{r-1}}(\mu_{k_{r-2}}(\dotsm (\mu_{k_{2}}(\mu_{k_{1}}(\cB)))))) \end{equation}
and a subset
\begin{equation}\label{eq:mu-k-ex} \mu_{\uk}(\ex) \defeq \mu_{\uk}(\cB) \setminus \ex^{c} \end{equation}
with associated bijections $\mu_{k_{r}}\colon \mu_{(k_{r-1},\dotsc ,k_{1})}(\cB)\to \mu_{\uk}(\cB)$ and $\mu_{k_{r}}\colon \mu_{(k_{r-1},\dotsc ,k_{1})}(\ex)\to \mu_{\uk}(\ex)$.

\begin{definition}\label{d:admissible-mut-seq}
	Let $\cB$ be a finite set and $\ex \subseteq \cB$.
	
	Denote by $\curly{K}(\ex,\cB)$ the set of all tuples $\uk=(k_{r},\dotsc ,k_{1})$, such that for all $1\leq i \leq r$, $k_{i}\in \mu_{k_{i-1}}(\dotsm (\mu_{k_{2}}(\mu_{k_{1}}(\ex))))$, including the empty tuple $()$.
	
	We say that an element $\uk\in \curly{K}(\ex,\cB)$ is an $(\ex,\cB)$-admissible mutation sequence.
\end{definition}

We adopt the natural convention that $(\mu_{()}(\ex),\mu_{()}(\cB))=(\ex,\cB)$.  We also need to iterate the construction in Definition~\ref{d:one-step-mut}.  

We first extend the above notation to encompass the iterated mutation of $\beta$ and $\lambda$ to obtain
\begin{align*}
	\mu_{\uk}\beta & =\mu_{k_{r}}(\mu_{k_{r-1}}(\dotsm (\mu_{k_{2}}(\mu_{k_{1}}\beta)))) \\ \intertext{and}
	\mu_{\uk}\lambda & =\mu_{k_{r}}(\mu_{k_{r-1}}(\dotsm (\mu_{k_{2}}(\mu_{k_{1}}\lambda))))  
\end{align*}
for $\uk=(k_{r},\dotsc ,k_{1})\in \curly{K}$.

By analogy with Definition~\ref{d:one-step-mut}, for $\uk=(k_{r},\dotsc ,k_{1})\in \curly{K}$, define
\[ \alpha_{i}=\Omega_{q}^{\mu_{(k_{i},\dotsc, k_{1})}\lambda}(\mu_{(k_{i},\dotsc,k_{1})}\beta(\mu_{\mu_{(k_{i-1},\dotsc ,k_{1})}(\cB)}(k_{i})),\dual{\mu_{\mu_{(k_{i-1},\dotsc ,k_{1})}(\cB)}(k_{i})}) \]
for $1\leq i\leq r$.  Perhaps surprisingly, the apparent complexity of this expression is misleading: we claim that $\alpha_{i}=q^{-\frac{1}{2}}$ for all valid choices of input data.

This follows from compatibility; induction on the length of $\uk$ yields that $\mu_{\uk}\lambda$ and $\mu_{\uk}\beta$ are compatible, via Lemma~\ref{l:mut-compatible}.  That is, 
\[ \evform{\mu_{\uk}\lambda(\dual{b})}{\mu_{\uk}\beta(c)}=\canform{\dual{b}}{\mu_{\uk}\beta(c)}{\mu_{\uk}\lambda}=\delta_{bc}. \]
Then, for $\uk=(k_{i},\dotsc ,k_{1})$,
\begin{align*} \alpha_{i} & =q^{\frac{1}{2}\canform{\mu_{(k_{i},\dotsc,k_{1})}\beta(\mu_{\mu_{(k_{i-1},\dotsc ,k_{1})}(\cB)}(k_{i}))}{\dual{\mu_{\mu_{(k_{i-1},\dotsc ,k_{1})}(\cB)}(k_{i})}}{\mu_{(k_{i},\dotsc, k_{1})}\lambda}} \\ &
	=q^{-\frac{1}{2}\canform{\dual{\mu_{\mu_{(k_{i-1},\dotsc ,k_{1})}(\cB)}(k_{i})}}{\mu_{(k_{i},\dotsc,k_{1})}\beta(\mu_{\mu_{(k_{i-1},\dotsc ,k_{1})}(\cB)}(k_{i}))}{\mu_{(k_{i},\dotsc, k_{1})}\lambda}} \\ 
	& =q^{-\frac{1}{2}(\delta_{\mu_{\mu_{(k_{i-1},\dotsc ,k_{1})}(\cB)}(k_{i}),\mu_{\mu_{(k_{i-1},\dotsc ,k_{1})}(\cB)}(k_{i})})}\\ & =q^{-\frac{1}{2}}. \end{align*}

\begin{definition}
	Let $\cB$ be a countable set, $\ex \subseteq \cB$, $\beta\colon \integ[\ex]\to \dual{\integ[\cB]}$ skew-symmetrizable and $\lambda\colon \dual{\integ[\cB]}\to \integ[\cB]$ skew-symmetric and compatible with $\beta$.
	
 Define $\mu_{\uk}M_{\lambda}\colon \dual{\integ[\mu_{\uk}(\cB)]}\to \curly{F}(\qtorus{q}{\lambda}{\cB})$ by 
	\[ \mu_{\uk}M_{\lambda}\defeq \rho_{q^{-1/2}}^{\cB,k_{1}}\circ \rho_{q^{-1/2}}^{\mu_{k_{1}}(\cB),k_{2}} \circ \dotsm \circ \rho_{q^{-1/2}}^{\mu_{(k_{r-2},\dotsc ,k_{1})}(\cB),k_{r-1}} \circ \rho_{q^{-1/2}}^{\mu_{(k_{r-1},\dotsc ,k_{1})}(\cB),k_{r}}\circ M_{\mu_{\uk}\lambda}. \]
\end{definition}

\begin{definition}
	Let $\cB$ be a countable set, $\ex \subseteq \cB$, $\beta\colon \integ[\ex]\to \dual{\integ[\cB]}$ skew-symmetrizable and $\lambda\colon \dual{\integ[\cB]}\to \integ[\cB]$ skew-symmetric and compatible with $\beta$. Let $\bK$ be a field and $q^{\frac{1}{2}}\in \units{\bK}$.
	
	Define the \emph{quantum cluster algebra} $\curly{C}_{q}(\ex,\cB,\beta,\lambda)$ to be the $\bK$-subalgebra of $\curly{F}(\qtorus{q}{\lambda}{\cB})$ generated by the elements
	\[ \{ \mu_{\uk}M_{\lambda}(\dual{b}) \mid \dual{b}\in \dual{\mu_{\uk}(\cB)}, \uk\in \curly{K} \}. \]
	Elements of this generating set are called \emph{quantum cluster variables}.
\end{definition}

\begin{remark}\label{r:frozens}
	Note that for $\dual{f}\in \dual{(\cB\setminus \ex)}$, $\mu_{\uk}M_{\lambda}(\dual{f})=x^{\dual{f}}$ for any $\uk\in \curly{K}$ since all the maps $\rho_{\bullet}^{\bullet}$ act as the identity on these elements.  Consequently, for any $\dual{b}\in \dual{\mu_{\uk}(\cB)}$, both $\mu_{\uk}M_{\lambda}(\dual{b})$ and $\mu_{\uk}M_{\lambda}(\dual{f})=x^{\dual{f}}$ are generators of the quantum torus $\qtorus{q}{\mu_{\uk}\lambda}{\mu_{\uk}(\cB)}$ and therefore quasi-commute.
	
	That is, frozen initial quantum cluster variables are frozen variables in every quantum cluster and quasi-commute with every quantum cluster variable.
\end{remark}

\begin{remark} Note that choosing $q^{\frac{1}{2}}=1$ yields a commutative algebra $\cC_{1}(\ex,\cB,\beta,\lambda)$, since the associated bicharacter $\Omega_{1}^{\lambda}$ is equal to the constant bicharacter $\mathbbm{1}\colon \dual{\integ[\cB]}\cross \dual{\integ[\cB]}\to \units{\bK}$, $\mathbbm{1}(v,w)=1_{\bK}$.  Moreover, if $\lambda'\colon \dual{\integ[\cB]}\to \integ[\cB]$ is also skew-symmetric and compatible with $\beta$ then $\cC_{1}(\ex,\cB,\beta,\lambda)=\cC_{1}(\ex,\cB,\beta,\lambda')$.
	
	One might then observe that $\lambda=0$ also gives rise to $\Omega_{q}^{0}=\mathbbm{1}$ for any choice of $q^{\frac{1}{2}}$.  However, there is an obstruction here: the requirement that $\beta$ and $\lambda$ be compatible means that $\lambda=0$ is not allowed.
	
	Also, compatibility implies that $\beta$ is injective, so while in the case that $\beta$ is injective, any choice of compatible $\lambda$ gives the same associated commutative cluster algebra $\cC_{1}(\ex,\cB,\beta,\lambda)$, strictly speaking we have not yet defined cluster algebras for $\beta$ not injective. Fortunately, this is just a technicality: all the above theory involving just $\beta$ goes through without modification and taking $\alpha=1$ in Proposition~\ref{p:mut-autom} we still obtain birational maps for the cluster tori, where instead of $\qtorus{q}{\lambda}{\cB}$ we simply take $\qtorus{}{}{\cB}\defeq \bK \dual{\integ[\cB]}$, with no need to introduce $\lambda$.  
	
	Following through the definitions above, we see that we can define $\cC(\ex,\cB,\beta)$ a (commutative) cluster algebra.  If $\beta$ is injective and $\lambda$ is compatible with $\beta$, then $\cC_{1}(\ex,\cB,\beta,\lambda)=\cC(\ex,\cB,\beta)$, as one would wish.	
\end{remark}

Above, we saw that tropical mutation is involutive up to the canonical bijection $\nu$ of the second mutation $\mu_{\mu_{\cB}(k)}(\mu_{k}(\cB))$ with the initial basis $\cB$.  In the next lemma, we see the analogous claim for mutation of quantum cluster variables. 

\begin{proposition} Let $\cB$ be a countable set, $\ex \subseteq \cB$, $\beta\colon \integ[\ex]\to \dual{\integ[\cB]}$ skew-symmetrizable and $\lambda\colon \dual{\integ[\cB]}\to \integ[\cB]$ skew-symmetric and compatible with $\beta$. Let $\bK$ be a field and $q^{\frac{1}{2}}\in \units{\bK}$.
	
	Let $k\in \ex$ and $\uk=(\mu_{\cB}(k),k)$.  Let $\nu\colon \cB\to \mu_{\mu_{\cB}(k)}(\mu_{k}(\cB))$ be the bijection defined by $\nu(b)=b$ for $b\neq k$ and $\nu(k)=\mu_{\mu_{\cB}(k)}(\mu_{\cB}(k)))$ and $\dual{\nu}$ its dual.
	
	Then there is an induced isomorphism $\psi_{\dual{\nu}}\colon  \qtorus{q}{\mu_{\uk}\lambda}{\mu_{\uk}(\cB)}\to \qtorus{q}{\lambda}{\cB}$ defined by $\psi_{\dual{\nu}}(x^{\dual{b}})=x^{\dual{\nu}(\dual{b})}$ and for all $\dual{b}\in \dual{\mu_{\mu_{\cB}(k)}(\mu_{k}(\cB))}$ we have
	\[ \mu_{\uk}M_{\lambda}(\dual{b})=x^{\dual{\nu}(\dual{b})} =\psi_{\dual{\nu}}(x^{\dual{b}}).\]
	That is, mutation in the direction $k$ followed by mutation back, in the direction $\mu_{\cB}(k)$, is the identity modulo $\dual{\nu}$.
\end{proposition}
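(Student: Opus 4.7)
The plan is to show that the composition $\rho_{q^{-1/2}}^{\cB,k}\circ \rho_{q^{-1/2}}^{\mu_k(\cB),\mu_{\cB}(k)}$ (which defines $\mu_{\uk}M_{\lambda}$ after precomposition with $M_{\mu_{\uk}\lambda}$) coincides with the change-of-basis isomorphism $\psi_{\dual{\nu}}$ itself. First, I would verify that $\psi_{\dual{\nu}}$ is well-defined: by Proposition~\ref{p:lambda-mut-invol} we have $\mu_{\mu_{\cB}(k)}\mu_k\canform{\blank}{\blank}{\lambda}=\canform{\blank}{\blank}{\lambda}\circ(\dual{\nu}\cross\dual{\nu})$, so $\Omega_q^{\mu_{\uk}\lambda}=\Omega_q^\lambda\circ(\dual{\nu}\cross\dual{\nu})$, and Lemma~\ref{l:can-basis-invar} yields the required isomorphism of quantum tori carrying canonical generators to canonical generators.

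Since $(\rho')_{q^{-1/2}}^{\cB,k}$ is the two-sided inverse of $\rho_{q^{-1/2}}^{\cB,k}$ by Proposition~\ref{p:mut-autom}(c), the claim $\rho_{q^{-1/2}}^{\cB,k}\circ\rho_{q^{-1/2}}^{\mu_k(\cB),\mu_{\cB}(k)}=\psi_{\dual{\nu}}$ is equivalent to the identity $\rho_{q^{-1/2}}^{\mu_k(\cB),\mu_{\cB}(k)}=(\rho')_{q^{-1/2}}^{\cB,k}\circ\psi_{\dual{\nu}}$, which I would check on each canonical generator $x^{\dual{b}}$ of $\qtorus{q}{\mu_{\uk}\lambda}{\mu_{\uk}(\cB)}$. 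For $\dual{b}\neq \dual{\mu_{\mu_k(\cB)}(\mu_{\cB}(k))}$, we have $\dual{b}\in\dual{\cB}\setminus\{\dual{k}\}$ and $\dual{\nu}(\dual{b})=\dual{b}$; both $\rho_{q^{-1/2}}^{\mu_k(\cB),\mu_{\cB}(k)}$ and $(\rho')_{q^{-1/2}}^{\cB,k}$ act trivially on such generators by the definitions in Proposition~\ref{p:mut-autom}, so the equality is immediate and the result in this case follows.

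The substantive case is $\dual{b}=\dual{k''}$, writing $k'=\mu_{\cB}(k)$ and $k''=\mu_{\mu_k(\cB)}(k')$, where $\dual{\nu}(\dual{k''})=\dual{k}$. Using the identity $\mu_k\beta(k')=-\beta(k)$ extracted from Lemma~\ref{l:mut-form-values}, together with the computation $\bar{\mu}_{k'}^{+}(\dual{k''})=\bminus{\beta(k)}{\dual{\cB}}-\dual{k'}$, the definitions of $\rho$ and $\rho'$ give
\[
\rho_{q^{-1/2}}^{\mu_k(\cB),k'}(x^{\dual{k''}}) = x^{\bminus{\beta(k)}{\dual{\cB}}-\dual{k'}}(1+q^{-1/2}x^{\beta(k)})
\]
and
\[
(\rho')_{q^{-1/2}}^{\cB,k}(\psi_{\dual{\nu}}(x^{\dual{k''}})) = (1+q^{-1/2}x^{-\beta(k)})\,x^{\bplus{\beta(k)}{\dual{\cB}}-\dual{k'}}
\]
as elements of $\curly{F}(\qtorus{q}{\mu_k\lambda}{\mu_k(\cB)})$. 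Expanding each side into a sum of two quantum monomials using the quasi-commutation rule $x^v x^w = \Omega_q^{\mu_k\lambda}(v,w)\,x^{v+w}$, the required coefficients simplify by invoking compatibility of $\mu_k\lambda$ with $\mu_k\beta$ (Lemma~\ref{l:mut-compatible}): this yields $\canform{\dual{b}}{\beta(k)}{\mu_k\lambda}=-\delta_{b,k'}$, so in particular $\canform{\dual{k'}}{\beta(k)}{\mu_k\lambda}=-1$ while $\canform{\bplusminus{\beta(k)}{\dual{\cB}}}{\beta(k)}{\mu_k\lambda}=0$ (the supports of $\bplusminus{\beta(k)}{\dual{\cB}}$ avoid $\dual{k'}$). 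A short calculation shows both expansions collapse to $x^{\bplus{\beta(k)}{\dual{\cB}}-\dual{k'}}+x^{\bminus{\beta(k)}{\dual{\cB}}-\dual{k'}}$, so the two maps agree. Applying the resulting equality of compositions to $x^{\dual{b}}=M_{\mu_{\uk}\lambda}(\dual{b})$ yields $\mu_{\uk}M_{\lambda}(\dual{b})=\psi_{\dual{\nu}}(x^{\dual{b}})=x^{\dual{\nu}(\dual{b})}$ as required.

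The main obstacle is the precise tracking of $q^{\pm 1/2}$ quasi-commutation factors in the final binomial expansions: the very specific choice of scalar $q^{-1/2}$ in Definition~\ref{d:one-step-mut} (rather than any other element of $\units{\bK}$) is exactly what forces the two expansions to match, and this relies in an essential way on compatibility forcing the relevant form values to be exactly $-1$ or $0$. Everything else reduces to unwinding definitions and reading off the tropical involutivity results of Lemmas~\ref{l:F-mut-invol} and \ref{l:EF-dual} and Proposition~\ref{p:lambda-mut-invol}.
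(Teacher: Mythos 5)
Your proposal is correct and follows essentially the same route as the paper: both establish $\psi_{\dual{\nu}}$ via Proposition~\ref{p:lambda-mut-invol} and Lemma~\ref{l:can-basis-invar}, dispose of the generators with $\dual{b}\neq\dual{\mu_{\mu_{k}(\cB)}(\mu_{\cB}(k))}$ trivially, and reduce the remaining case to the identity $\rho_{q^{-1/2}}^{\mu_{k}(\cB),\mu_{\cB}(k)}(x^{\dual{\mu_{\mu_{k}(\cB)}(\mu_{\cB}(k))}})=(\rho')_{q^{-1/2}}^{\cB,k}(x^{\dual{k}})$, using $\mu_{k}\beta(\mu_{\cB}(k))=-\beta(k)$ and compatibility (Lemma~\ref{l:mut-compatible}) to pin the $q^{\pm 1/2}$ scalars. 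The only cosmetic difference is that you expand both sides into two-term sums and compare, where the paper rewrites the left-hand side into the right by a single chain of equalities.
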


\begin{proof}
	To see that $\nu$ is indeed an isomorphism of quantum tori, recall from Lemma~\ref{l:can-basis-invar} that $\dual{\nu}$ induces an isomorphism 		
	\[ \psi_{\dual{\nu}}\colon (\bK \dual{\integ[\mu_{\mu_{\cB}(k)}(\mu_{k}(\cB))]})^{\Omega_{q}^{\lambda,\dual{\nu}}}\to (\bK \dual{\integ[\cB]})^{\Omega_{q}^{\lambda}}=\qtorus{q}{\lambda}{\cB} \]
	with $\psi_{\dual{\nu}}(x^{\dual{b}})=x^{\dual{\nu}(\dual{b})}$ is an isomorphism, with inverse given by $\psi_{\dual{\nu}}^{-1}(x^{\dual{b}})=x^{(\dual{\nu})^{-1}(\dual{b})}$.
	 
	Then, by Proposition~\ref{p:lambda-mut-invol}, 
	\begin{align*} \Omega_{q}^{\lambda,\dual{\nu}} & =\Omega_{q}^{\lambda}\circ (\dual{\nu} \cross \dual{\nu}) \\ & = q^{\frac{1}{2}\canform{\dual{\nu}(\blank)}{\dual{\nu}(\blank)}{\lambda}} \\ & = q^{\frac{1}{2}\mu_{\mu_{\cB}(k)}\mu_{k}\canform{\blank}{\blank}{\lambda}} \\ & = \Omega_{q}^{\mu_{\uk\lambda}}
	\end{align*}
	so that $\psi_{\dual{\nu}}\colon \qtorus{q}{\mu_{\uk}\lambda}{\mu_{\uk}(\cB)}\to \qtorus{q}{\lambda}{\cB}$ is an isomorphism as claimed.
		
	Now,
		\[ \mu_{\uk}M_{\lambda}=\rho_{q^{-1/2}}^{\cB,k}\circ \rho_{q^{-1/2}}^{\mu_{k}(\cB),\mu_{\cB}(k)} \circ M_{\mu_{\uk}\lambda} \]
		and for $\dual{b}\in \dual{\mu_{\mu_{\cB}(k)}(\mu_{k}(\cB))}\setminus \{ \dual{\mu_{\mu_{\cB}(k)}(\mu_{\cB}(k))} \}$ we have $M_{\mu_{\uk\lambda}}(\dual{b})=x^{\dual{b}}$, $\rho_{q^{-1/2}}^{\mu_{k}(\cB),\mu_{\cB}(k)}(x^{\dual{b}})=x^{\dual{b}}$ and $\rho_{q^{-1/2}}^{\cB,k}(x^{\dual{b}})=x^{\dual{b}}$, since this is the situation where no mutation takes place in the direction $b$ at either step. So $\mu_{\uk}M_{\lambda}(\dual{b})=x^{\dual{b}}=x^{\dual{\nu}(\dual{b})}=\psi_{\dual{\nu}}(x^{\dual{b}})$.
		
		So it remains to compute $\mu_{\uk}M_{\lambda}(\dual{\mu_{\mu_{k}(\cB)}(\mu_{\cB}(k))})$. Firstly, $M_{\mu_{\uk\lambda}}(\dual{\mu_{\mu_{k}(\cB)}(\mu_{\cB}(k))})=x^{\dual{\mu_{\mu_{k}(\cB)}(\mu_{\cB}(k))}}$. Then, using Lemma~\ref{l:E-isos}, Lemma~\ref{l:mut-form-values} and the dual of Lemma~\ref{l:F-mut-invol}, we have that
		\begin{align*} \rho_{q^{-1/2}}^{\mu_{k}(\cB),\mu_{\cB}(k)}(x^{\dual{\mu_{\mu_{k}(\cB)}(\mu_{\cB}(k))}}) 
		& =x^{\bar{\mu}^{+}_{\mu_{\cB}(k)}(\dual{\mu_{\mu_{k}(\cB)}(\mu_{\cB}(k))})}(1+q^{-1/2}x^{-\mu_{k}\beta(\mu_{\cB}(k))}) \\ 
		& = x^{\bplus{\mu_{k}\beta(\mu_{\cB}(k))}{\dual{\mu_{k}(\cB)}}-\dual{\mu_{\cB}(k)}}(1+q^{-1/2}x^{\bminus{\mu_{k}\beta(\mu_{\cB}(k))}{\dual{\mu_{k}(\cB)}}-\bplus{\mu_{k}\beta(\mu_{\cB}(k))}{\dual{\mu_{k}(\cB)}}}) \\ 
		& = x^{\mu_{k}\beta(\mu_{\cB}(k))+\bminus{\mu_{k}\beta(\mu_{\cB}(k))}{\dual{\mu_{k}(\cB)}}-\dual{\mu_{\cB}(k)}}+q^{-1/2}\alpha_{1}x^{\bminus{\mu_{k}\beta(\mu_{\cB}(k))}{\dual{\mu_{k}(\cB)}}-\dual{\mu_{\cB}(k)}} \\ 
		& = x^{\mu_{k}\beta(\mu_{\cB}(k))+\bar{\mu}^{-}_{\mu_{\cB}(k)}(\dual{\mu_{\mu_{k}(\cB)}(\mu_{\cB}(k))})}+q^{-1/2}\alpha_{1}x^{\bar{\mu}^{-}_{\mu_{\cB}(k)}(\dual{\mu_{\mu_{k}(\cB)}(\mu_{\cB}(k))})} \\ 
		& = \alpha_{2}x^{\mu_{k}\beta(\mu_{\cB}(k))}x^{\bar{\mu}^{-}_{\mu_{\cB}(k)}(\dual{\mu_{\mu_{k}(\cB)}(\mu_{\cB}(k))})}+q^{-1/2}\alpha_{1}x^{\bar{\mu}^{-}_{\mu_{\cB}(k)}(\dual{\mu_{\mu_{k}(\cB)}(\mu_{\cB}(k))})} \\ 
		& = (q^{-1/2}\alpha_{1}+\alpha_{2}x^{\mu_{k}\beta(\mu_{\cB}(k))})x^{\bar{\mu}^{-}_{\mu_{\cB}(k)}(\dual{\mu_{\mu_{k}(\cB)}(\mu_{\cB}(k))})} \\ 
		& = (q^{-1/2}\alpha_{1}+\alpha_{2}x^{\mu_{k}\beta(\mu_{\cB}(k))})x^{\mu^{+}_{k}(\dual{k})} \\
		& =  (1+q^{-1/2}x^{\mu_{k}\beta(\mu_{\cB}(k)})x^{\mu_{k}^{+}(\dual{k})} \\
		& = (\rho')_{q^{-1/2}}^{\cB,k}(x^{\dual{k}}) \\
		& = (\rho_{q^{-1/2}}^{\cB,k})^{-1}(x^{\dual{k}})
		\end{align*}
		since
		\begin{align*} \alpha_{1} & =\Omega_{q}^{\mu_{k}\lambda}(\bplus{\mu_{k}\beta(\mu_{\cB}(k))}{\dual{\mu_{k}(\cB)}}-\dual{\mu_{\cB}(k)},-\mu_{k}\beta(\mu_{\cB}(k))) \\
			& = q^{\frac{1}{2}\canform{\bplus{\mu_{k}\beta(\mu_{\cB}(k))}{\dual{\mu_{k}(\cB)}}-\dual{\mu_{\cB}(k)}}{-\mu_{k}\beta(\mu_{\cB}(k))}{\mu_{k}\lambda}} \\
			& = q^{\frac{1}{2}\evform{\bplus{\mu_{k}\beta(\mu_{\cB}(k))}{\dual{\mu_{k}(\cB)}}-\dual{\mu_{\cB}(k)}}{-\mu_{\cB}(k)}} \\
			& = q^{\frac{1}{2}}
		\end{align*}
		and
		\begin{align*}
			\alpha_{2} & = \Omega_{q}^{\mu_{k}\lambda}(\mu_{k}\beta(\mu_{\cB}(k)),\bar{\mu}^{-}_{\mu_{\cB}(k)}(\dual{\mu_{\mu_{k}(\cB)}(\mu_{\cB}(k))}))^{-1} \\
				& = \Omega_{q}^{\mu_{k}\lambda}(\bminus{\mu_{k}\beta(\mu_{\cB}(k))}{\dual{\mu_{k}(\cB)}}-\dual{\mu_{\cB}(k)},\mu_{k}\beta(\mu_{\cB}(k))) \\
				& = q^{\frac{1}{2}\evform{\bminus{\mu_{k}\beta(\mu_{\cB}(k))}{\dual{\mu_{k}(\cB)}}-\dual{\mu_{\cB}(k)}}{\mu_{\cB}(k)}} \\
				& = q^{-\frac{1}{2}}.
		\end{align*}
		Hence 
		\[ \mu_{\uk}M_{\lambda}(\dual{\mu_{\mu_{k}(\cB)}(\mu_{\cB}(k))})=x^{\dual{k}}=x^{\dual{\nu}(\dual{\mu_{\mu_{k}(\cB)}(\mu_{\cB}(k))})} \]
		as required.
\end{proof}

In some instances, it is desirable to allow some of the coefficients (that is, the frozen variables $M_{\lambda}(\dual{f})$ for $f\in \dual{\cB}\setminus \dual{\ex}$) to be invertible in $\cC_{q}(\ex,\cB,\beta,\lambda)$.  The following shows that we may form the localisation of a quantum cluster algebra at a set of coefficients.

\begin{proposition}
	Let $\cB$ be a countable set, $\ex \subseteq \cB$, $\beta\colon \integ[\ex]\to \dual{\integ[\cB]}$ skew-symmetrizable and $\lambda\colon \dual{\integ[\cB]}\to \integ[\cB]$ skew-symmetric and compatible with $\beta$. Let $\bK$ be a field and $q^{\frac{1}{2}}\in \units{\bK}$.
	
	Let $\invfrozen \subseteq \cB\setminus \ex$.  The set $F(\invfrozen)=\{ M_{\lambda}(\dual{f}) \mid f\in \integ[\invfrozen] \}$ is an Ore set in $\cC_{q}(\ex,\cB,\beta,\lambda)$.
\end{proposition}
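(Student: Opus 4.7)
The plan is to reduce the Ore conditions to the quasi-commutation of frozen initial quantum cluster variables with arbitrary quantum cluster variables, which is exactly the content of Remark~\ref{r:frozens}. Because every element of $F(\invfrozen)$ is a monomial (with integer exponents) in the frozen initial quantum cluster variables indexed by $\invfrozen$, it suffices to observe that each element of $F(\invfrozen)$ is a unit in the ambient skew-field and quasi-commutes with every quantum cluster variable of $\cC_q(\ex,\cB,\beta,\lambda)$; then the Ore condition becomes essentially formal.

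First, I would verify that $F(\invfrozen)$ is multiplicatively closed (up to units in $\bK$, which is harmless): from Definition~\ref{d:q-torus} we have $M_{\lambda}(\dual{f})M_{\lambda}(\dual{g})=\Omega_{q}^{\lambda}(\dual{f},\dual{g})\, M_{\lambda}(\dual{f}+\dual{g})$, and the exponent $\dual{f}+\dual{g}$ again lies in $\dual{\integ[\invfrozen]}$. Additively closing $F(\invfrozen)$ under $\units{\bK}$-scaling gives a multiplicative submonoid of $\cC_q(\ex,\cB,\beta,\lambda)$ containing $1=M_{\lambda}(0)$.

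Next, the left (and symmetrically right) Ore condition. Fix $s=M_{\lambda}(\dual{f})\in F(\invfrozen)$. By Remark~\ref{r:frozens}, for every quantum cluster variable $y=\mu_{\uk}M_{\lambda}(\dual{b})$ there exists a scalar $\omega(s,y)\in\units{\bK}$ with
\[ s\,y = \omega(s,y)\, y\,s. \]
Given a monomial $y_{1}y_{2}\cdots y_{n}$ in quantum cluster variables, iterating this identity yields $s\,(y_{1}\cdots y_{n}) = \bigl(\prod_{i}\omega(s,y_{i})\bigr)(y_{1}\cdots y_{n})\,s$. For an arbitrary element $a=\sum_{j}c_{j}\,y_{1,j}\cdots y_{n_{j},j}$ of $\cC_{q}$, setting $a'\defeq \sum_{j}c_{j}\bigl(\prod_{i}\omega(s,y_{i,j})\bigr)y_{1,j}\cdots y_{n_{j},j}\in\cC_{q}$ gives $s\,a = a'\,s$, so the Ore condition holds with the choice $s'=s$. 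Symmetrically, $a\,s = s\,a''$ for some $a''\in\cC_{q}$, so $F(\invfrozen)$ is both left and right Ore.

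Finally, regularity: by construction $\cC_{q}(\ex,\cB,\beta,\lambda)$ is a $\bK$-subalgebra of the skew-field $\curly{F}(\qtorus{q}{\lambda}{\cB})$, and each $M_{\lambda}(\dual{f})=x^{\dual{f}}$ has the two-sided inverse $x^{-\dual{f}}$ in that skew-field (as noted after Definition~\ref{d:q-torus}); hence no element of $F(\invfrozen)$ is a zero-divisor in $\cC_{q}$. Combining the three steps gives the claim.

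The main substantive point is the passage from the single-variable quasi-commutation supplied by Remark~\ref{r:frozens} to quasi-commutation with arbitrary monomials; this is really just induction, so I do not expect any genuine obstacle. The only care required is to confirm that Remark~\ref{r:frozens}'s quasi-commutation does indeed hold for frozen variables against quantum cluster variables in \emph{every} cluster $\mu_{\uk}(\cB)$ (not merely the initial one), which follows because the frozen variables persist as frozens in every mutated quantum torus $\qtorus{q}{\mu_{\uk}\lambda}{\mu_{\uk}(\cB)}$ and, within such a torus, quasi-commutation of generators is encoded by $\mu_{\uk}\lambda$.
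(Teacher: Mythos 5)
Your proposal is correct and follows essentially the same route as the paper's own proof: multiplicativity from the twisted group algebra product, and the Ore conditions from the quasi-commutation of the frozen variables $M_{\lambda}(\dual{f})=x^{\dual{f}}$ with every quantum cluster variable as recorded in Remark~\ref{r:frozens}. The extra steps you spell out (extending quasi-commutation to monomials and linear combinations, and noting invertibility in $\curly{F}(\qtorus{q}{\lambda}{\cB})$) are exactly the details the paper leaves implicit in the phrase ``from which the claim follows''.
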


\begin{proof}
	The set $F$ is multiplicative by definition.  It is left and right Ore since, as noted above in Remark~\ref{r:frozens}, the $M_{\lambda}(\dual{f})=x^{\dual{f}}$ quasi-commute with every quantum cluster variable, \ie with every generator of $\cC_{q}(\ex,\cB,\beta,\lambda)$, from which the claim follows.
\end{proof}

\begin{definition}\label{d:QCA-with-invfroz}
	Let $\cB$ be a countable set, $\ex \subseteq \cB$, $\invfrozen \subseteq \cB\setminus \ex$, $\beta\colon \integ[\ex]\to \dual{\integ[\cB]}$ skew-symmetrizable and $\lambda\colon \dual{\integ[\cB]}\to \integ[\cB]$ skew-symmetric and compatible with $\beta$. Let $\bK$ be a field and $q^{\frac{1}{2}}\in \units{\bK}$.
	
	Define $\cC_{q}(\ex,\cB,\invfrozen,\beta,\lambda)\defeq \cC_{q}(\ex,\cB,\beta,\lambda)[F(\invfrozen)^{-1}]$ for $F(\invfrozen)=\{ M_{\lambda}(\dual{f}) \mid f\in \integ[\invfrozen] \}$.
\end{definition}

We will extend the terminology above and also say that $\cC_{q}(\ex,\cB,\invfrozen,\beta,\lambda)$ is a quantum cluster algebra.  Noting that $\cC_{q}(\ex,\cB,\emptyset,\beta,\lambda)=\cC_{q}(\ex,\cB,\beta,\lambda)$, if we include $\invfrozen$ in the initial data we mean the localised quantum cluster algebra and if it is omitted, we mean the quantum cluster algebra without localisation.

Furthermore, if we write ``let $\cC_{q}=\cC_{q}(\ex,\cB,\invfrozen,\beta,\lambda)$ be a quantum cluster algebra'', we mean that $\cC_{q}$ is the quantum cluster algebra obtained from the given data, which satisfy the conditions of Definition~\ref{d:QCA-with-invfroz}.

\begin{definition}
	For $\uk\in \curly{K}$, define $\qtorusaffine{q}{\mu_{\uk}\lambda}{\mu_{\uk}(\cB)}$ to be the $\bK$-subalgebra of $\curly{F}(\qtorus{q}{\lambda}{\cB})$ generated by the set
	\[ \{ \mu_{\uk}M_{\lambda}(\dual{b}) \mid \dual{b}\in \dual{\mu_{\uk}(\cB)} \} \union \{ \mu_{\uk}M_{\lambda}(\dual{b})^{-1} \mid \dual{b}\in \dual{\mu_{\uk}(\ex)}\union \dual{\invfrozen} \} .\]
\end{definition}

Then $\qtorusaffine{q}{\mu_{\uk}\lambda}{\mu_{\uk}(\cB)}$ is a mixed quantum torus-quantum affine algebra generated by the cluster variables from the cluster obtained from the initial one by the mutation sequence $\uk$, in which the mutable quantum cluster variables (indexed by $\mu_{\uk}(\ex)$) and those frozen variables indexed by $\invfrozen$ are invertible but the remaining frozen variables are not.

In (quantum) cluster algebra theory, we have the (quantum) \emph{Laurent phenomenon} (\cite{FZ-CA1},\cite{BZ-QCA}), which---loosely put---states that every cluster variable can be written as a Laurent polynomial in the variables of an initial (and hence of any) cluster.  In the quantum case, this is expressed as follows, using the notion of the upper quantum cluster algebra.

\begin{definition}
	Let $\cB$ be a finite set, $\ex \subseteq \cB$, $\invfrozen \subseteq \cB\setminus \ex$, $\beta\colon \integ[\ex]\to \dual{\integ[\cB]}$ skew-symmetrizable and $\lambda\colon \dual{\integ[\cB]}\to \integ[\cB]$ skew-symmetric and compatible with $\beta$. Let $\bK$ be a field and $q^{\frac{1}{2}}\in \units{\bK}$.
	
	Define the \emph{upper quantum cluster algebra} $\curly{U}_{q}(\ex,\cB,\invfrozen,\beta,\lambda)$ to be the $\bK$-algebra defined as
	\[ \curly{U}_{q}(\ex,\cB,\invfrozen,\beta,\lambda) = \bigintersection_{\uk\in \curly{K}} \qtorusaffine{q}{\mu_{\uk}\lambda}{\mu_{\uk}(\cB)}.\]
\end{definition}

Then the quantum Laurent phenomenon states that $\cC_{q}(\ex,\cB,\invfrozen,\beta,\lambda)$ embeds in the natural way into its upper quantum cluster algebra, $\curly{U}_{q}(\ex,\cB,\invfrozen,\beta,\lambda)$.  Note too the weaker but also useful statement that $\cC_{q}(\ex,\cB,\invfrozen,\beta,\lambda)$ is a subalgebra of $\qtorus{q}{\lambda}{\cB}$ (and not just the skew-field of fractions of the latter).

\subsection{The abstract quantum cluster structure arising from a quantum cluster algebra}\label{ss:AQCS-from-QCA}

Let $\cC_{q}=\cC_{q}(\ex,\cB,\invfrozen,\beta,\lambda)$ be a quantum cluster algebra.  We can associate to this a canonical abstract quantum cluster structure, as follows.

Let $E(\cC_{q})$ be the directed exchange tree associated to $\cC_{q}$.  That is, $E(\cC_{q})$ has vertex set $\curly{K}$ and an arrow $\mu_{k}\colon \uk\to \underline{l}$ from $\uk=(k_{r},\dotsc ,k_{1})$ to $\underline{l}$ if there exists $k$ such that $\underline{l}=(k,k_{r},\dotsc ,k_{1})$.

\begin{theorem}\label{t:AQCS-from-QCA}
	The quantum cluster algebra $\cC_{q}=\cC_{q}(\ex,\cB,\invfrozen,\beta,\lambda)$ gives rise to an abstract quantum cluster structure $\cC(\cC_{q})=(\cE,\cX,\beta,\lambda,\cA,\ip{\blank}{\blank})$ with
	\begin{enumerate}
		\item $\cE=\cE(E(\cC_{q}))$, the path category of the exchange tree $E(\cC_{q})$;
		\item $\cX\colon \op{\cE}\to \Abcat$ defined by $\cX \uk=\integ[\mu_{\uk}(\ex)]$ and $\cX(\mu_{k}^{\pm}\colon \uk\to \underline{l})=\bar{\mu}_{k}^{\pm}$ (for $\bar{\mu}_{k}^{\pm}$ the isomorphisms of Lemma~\ref{l:F-isos});
		\item $\beta_{\uk}=\mu_{\uk}\beta$;
		\item $\lambda_{\uk}=\mu_{\uk}\lambda$;
		\item $\cA\colon \op{\cE}\to \Abcat$ defined by $\cA \uk=\dual{\integ[\mu_{\uk}(\cB)]}$ and $\cA(\mu_{k}^{\pm}\colon \uk\to \underline{l})=\mu_{k}^{\pm}$ (for $\mu_{k}^{\pm}$ the isomorphisms of Lemma~\ref{l:E-isos}); and
		\item $\ip{\blank}{\blank}_{\uk}\colon \cA \uk \tensor \cX \uk \to \sZ$ given by $\ip{\dual{b}}{c}_{\uk}=\evform{\dual{b}}{c}$ for $\dual{b}\in \dual{\mu_{\uk}(\cB)}$ and $c\in \mu_{\uk}(\ex)$.
	\end{enumerate}
\end{theorem}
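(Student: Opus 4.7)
The plan is to verify each piece of Definition~\ref{d:ACS}, plus the quantum structure condition from Section~\ref{ss:quantum-str}, for the data specified in the theorem. Most of the verification is bookkeeping on top of the technical lemmas proved in Section~\ref{ss:toric-frames}.

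First I would set up the categorical skeleton. The directed exchange tree $E(\cC_{q})$ has no digons (it is a tree rooted at the empty sequence $()$), so $\cE(E(\cC_{q}))$ is simply the free category on the bi-signed version of $E(\cC_{q})$: the quotient by the digon relations is trivial. Consequently $\cX$ and $\cA$ extend uniquely from their values on the elementary generators $\mu_{k}^{\pm}\colon \uk\to(k,\uk)$ to functors $\op{\cE}\to\Abcat$ and $\cE\to\Abcat$ respectively. Each $\cX\uk=\integ[\mu_{\uk}(\ex)]$ is free Abelian by construction, and each $\cA\uk=\dual{\integ[\mu_{\uk}(\cB)]}$ lies in $\fabcat$ (since $\integ[\mu_{\uk}(\cB)]$ does and $\fabcat$ is closed under $\integ$-duals). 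Thus $\cX$ and $\cA$ are free Abelian sheaves.

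Next I would verify the two factorization conditions. By induction on path length these reduce to elementary morphisms, so for $f=\mu_{k}^{\pm}\colon\uk\to(k,\uk)$ we need the identity $\beta_{(k,\uk)}=\cA(f)\circ\beta_{\uk}\circ\cX(f)$, i.e.\ $\mu_{(k,\uk)}\beta=\mu_{k}^{\pm}\circ\mu_{\uk}\beta\circ\bar{\mu}_{k}^{\pm}$. For the $+$ sign this is precisely (the iterated form of) equation~\eqref{eq:beta-mut}. For the $-$ sign one combines the sign-invariance result of Lemma~\ref{l:sign-invar} with Lemma~\ref{l:EF-dual} to derive the parallel identity $\mu_{k}\beta=\mu_{k}^{-}\circ\beta\circ\bar{\mu}_{k}^{-}$, applied to $\mu_{\uk}\beta$. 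The factorization condition for $\lambda$ is handled symmetrically, using \eqref{eq:lambda-mut} and Lemma~\ref{l:lambda-form-sign-inv}.

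Then I would verify the pairing. Dinaturality, via Lemma~\ref{l:adjoint}, amounts to $\evform{\dual{b}}{\bar{\mu}_{k}^{\pm}(c)}=\evform{\mu_{k}^{\pm}(\dual{b})}{c}$ for elementary morphisms, which is exactly Lemma~\ref{l:EF-dual}. Right non-degeneracy follows since each component $\delta_{\cX\uk}\colon\integ[\mu_{\uk}(\ex)]\to\dual{(\dual{\integ[\mu_{\uk}(\cB)]})}$ factors as the inclusion $\integ[\mu_{\uk}(\ex)]\hookrightarrow\integ[\mu_{\uk}(\cB)]$ followed by the canonical injection into the double dual. Finally, the quantum compatibility condition in the AQCS sense, in the form $\adj{\beta_{\uk}}\circ\lambda_{\uk}=\delta_{\cA\uk}$ of Lemma~\ref{l:compat-equiv-conds}\ref{l:compat-equiv-conds-BTL}, unpacks under the canonical identifications to exactly the compatibility of $\mu_{\uk}\beta$ and $\mu_{\uk}\lambda$ in the sense of Definition~\ref{d:compatible}; this holds by induction on the length of $\uk$ from Lemma~\ref{l:mut-compatible}, starting from the initial compatibility that is part of the data of $\cC_{q}$.

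The main obstacle is not mathematical content but bookkeeping: $\cX$ is contravariant while $\cA$ is covariant, and $\cA\uk$ is itself a dual lattice, so the AQCS compatibility is naturally phrased using a double dual that in the infinite rank case is no longer canonically isomorphic to the original lattice. One must therefore be careful in identifying $\lambda_{\uk}=\mu_{\uk}\lambda\colon\dual{\integ[\mu_{\uk}(\cB)]}\to\integ[\mu_{\uk}(\cB)]$ with the required map $\cA\uk\to\dual{\cA\uk}$ via the evaluation map $\delta^{\text{ev}}$, and to check that the translation between Definition~\ref{d:compatible} and the various equivalent forms of compatibility in Lemma~\ref{l:compat-equiv-conds} is correct under this identification. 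Once this bookkeeping is settled, every required identity has already been isolated in Section~\ref{ss:toric-frames} and the proof reduces to assembly.
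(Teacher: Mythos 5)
Your proposal is correct and follows essentially the same route as the paper: the factorization properties for $\beta$ and $\lambda$ are reduced to \eqref{eq:beta-mut} and \eqref{eq:lambda-mut} together with the sign-invariance Lemmas~\ref{l:sign-invar} and~\ref{l:lambda-form-sign-inv}, and the quantum condition follows from compatibility of the mutated data (Lemma~\ref{l:mut-compatible} and induction on $\card{\uk}$). The additional detail you supply (freeness of $\cE$ over the digon-free exchange tree, dinaturality via Lemma~\ref{l:EF-dual}, right non-degeneracy through the double-dual embedding, and verifying compatibility in the form \ref{l:compat-equiv-conds}\ref{l:compat-equiv-conds-BTL} rather than \ref{l:compat-equiv-conds}\ref{l:compat-equiv-conds-forms}) is routine bookkeeping that the paper's proof leaves implicit.
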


\begin{proof}
The non-trivial part is the factorization property for $\beta$, which follows from \eqref{eq:beta-mut} and sign-invariance (Lemma~\ref{l:sign-invar}) to obtain the other version of this equation with the opposite sign, and similarly the factorization property for $\lambda$ via \eqref{eq:lambda-mut} and Lemma~\ref{l:lambda-form-sign-inv}.  Skew-symmetry for $\lambda$ and the required commuting diagram relating $\lambda\tensor_{\integ} \beta$ and $\ip{\blank}{\blank}$ follow from skew-symmetry and compatibility of the respective data for $\cC_{q}$.
\end{proof}

\begin{remark} We may regard the passage from cluster algebra to cluster structure as a form of ``tropicalization'' or ``taking logarithms'', since we have that $\cA$ is given by the reverse operation to the ``exponentiation'' that forms the twisted group algebra $\qtorus{q}{\mu_{\uk}\lambda}{\mu_{\uk}(\cB)}=(\bK \dual{\integ[\mu_{\uk}(\cB)]})^{\Omega_{q}^{\mu_{\uk}(\lambda)}}$ from $\cA \uk=\dual{\integ[\mu_{\uk}(\cB)]}$.
\end{remark}

\subsection{The quantum cluster algebra arising from an abstract quantum cluster structure}\label{ss:QCA-from-AQCS}

Conversely, we can use the data in an abstract quantum cluster structure of finite rank to obtain a quantum cluster algebra.

\begin{theorem}\label{t:QCA-from-AQCS}
	Let $\cC=(\cE,\cX,\beta,\lambda,\cA,\ip{\blank}{\blank})$ be a skew-symmetrizable abstract quantum cluster structure of finite rank. Let $\bK$ be a field and $q^{\frac{1}{2}}\in \units{\bK}$.  Fix $c\in \cE$. Choose $\dual{\ex}$ a basis for $\cX c$ and $\cB$ a basis for $\dual{(\cA c)}$ such that $\delta_{\cX c}(\dual{\ex})\subseteq \cB$.
	
	There exists an associated quantum cluster algebra $\cC_{c,q}=\cC_{q}(\ex,\cB,\invfrozen,\beta,\lambda)$ with
	\begin{enumerate}
		\item $\ex=\delta_{\cX c}(\dual{\ex})$;
		\item $\cB$ the chosen basis for $\dual{(\cA c)}$;
		\item $\invfrozen \subseteq \cB\setminus \ex$ arbitrary;
		\item $\beta\colon \integ[\ex]\to \dual{\integ[\cB]}$ given by $\beta(b)=\beta_{c}(\dual{b})$ for $b\in \ex$ and $\dual{b}=\delta_{\cX c}^{-1}(b)\in \dual{\ex}$; and
		\item $\lambda\colon \dual{\integ[\cB]}\to \integ[\cB]$ given by $\lambda(\dual{b})=\lambda_{c}(\dual{b})$ for $\dual{b}\in \dual{\cB}$.		
	\end{enumerate}
\end{theorem}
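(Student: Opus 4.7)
The plan is to verify that the specified data $(\ex,\cB,\invfrozen,\beta,\lambda)$ satisfies the hypotheses of Definition~\ref{d:QCA-with-invfroz}, after which the associated quantum cluster algebra $\cC_{c,q}$ is produced by invoking the construction of Section~\ref{ss:QCAs} directly. Since that construction already supplies mutation, involutivity, and the full quantum cluster algebra once compatible $(\beta,\lambda)$ on bases $\ex\subseteq\cB$ is in hand, the work reduces to checking that the abstract maps $\beta_{c}$ and $\lambda_{c}$, transported along the chosen bases, yield maps of the required form satisfying skew-symmetrizability, skew-symmetry and compatibility in the concrete sense of Section~\ref{ss:QCAs}.

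First I would record the canonical identifications coming from the chosen bases. Since $\cA c$ has finite rank, the basis $\cB$ of $\dual{(\cA c)}$ yields an identification $\dual{\integ[\cB]}\cong\cA c$; the basis $\dual{\ex}$ of $\cX c$ gives $\cX c\cong\integ[\dual{\ex}]$, and as $\delta_{\cX c}$ is injective by right non-degeneracy of $\canform{\blank}{\blank}{c}$, its restriction to $\dual{\ex}$ is a bijection $\dual{\ex}\to\ex\subseteq\cB$, hence $\integ[\dual{\ex}]\cong\integ[\ex]$. Under these identifications the definitions of $\beta$ and $\lambda$ given in the theorem are precisely the transports of $\beta_{c}\colon\cX c\to\cA c$ and $\lambda_{c}\colon\cA c\to\dual{\cA c}$. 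The hypothesis $\delta_{\cX c}(\dual{\ex})\subseteq\cB$ guarantees that $\iota\colon\integ[\ex]\inj\integ[\cB]$ is defined and corresponds, under the identification $\integ[\cB]\cong\dual{\cA c}$, to $\delta_{\cX c}$.

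Next I would verify the three conditions by translating what the abstract framework already gives. Skew-symmetrizability of $\beta$ in the sense of Section~\ref{ss:toric-frames} means $\canform{\blank}{\blank}{\beta}$ is skew-symmetric on the principal part, equivalently $\delta_{\cA c}\circ\beta_{c}$ is skew-symmetric, which is Proposition~\ref{p:ACS-skew-symmetrizable} applied to the skew-symmetrizability assumption on $\cC$. Skew-symmetry of $\lambda$ in the sense $\dual{\lambda}=-\delta\circ\lambda$ matches the condition that $\lambda_{c}$ is skew-symmetric, which is the last clause of the Corollary at the end of Section~\ref{ss:quantum-str}. Compatibility $\dual{\lambda}\circ\beta=\delta\circ\iota$ in the sense of Definition~\ref{d:compatible} is, once the identifications are in place, exactly condition~\ref{l:compat-equiv-conds-LTB} of Lemma~\ref{l:compat-equiv-conds} (namely $\adj{\lambda}\circ\beta=\delta_{\cX}$), which holds because $\lambda$ is a quantum structure for $\cC$. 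With all three properties verified, Definition~\ref{d:QCA-with-invfroz} applies and produces $\cC_{c,q}$ for any chosen $\invfrozen\subseteq\cB\setminus\ex$.

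The main obstacle is purely bookkeeping: the abstract framework works with $\cX c$, $\cA c$, $\dual{\cA c}$ and with $\delta_{\cX c}$ playing the role of an inclusion, whereas the concrete construction of Section~\ref{ss:QCAs} uses $\integ[\ex]\subseteq\integ[\cB]$ together with $\dual{\integ[\cB]}$, and the translation swaps a basis identification on one side with a dual-basis identification on the other. Once the dictionary is spelled out carefully, each of the three verifications is essentially a one-line rewriting of a statement already established in Section~\ref{s:ACS}, and no new argument is required.
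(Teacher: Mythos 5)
Your proposal is correct and follows essentially the same route as the paper's own (very terse) proof: use right non-degeneracy of $\ip{\blank}{\blank}_{c}$ to get injectivity of $\delta_{\cX c}$ and finite rank to make the choices, transport $\beta_{c}$ and $\lambda_{c}$ along the chosen bases, and observe that skew-symmetrizability, skew-symmetry and compatibility transfer (via Proposition~\ref{p:ACS-skew-symmetrizable}, the corollary in \S\ref{ss:quantum-str} and Lemma~\ref{l:compat-equiv-conds}) so that Definition~\ref{d:QCA-with-invfroz} applies. Your write-up simply makes the basis identifications and citations explicit where the paper leaves them implicit.
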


\begin{proof}
	By assumption, $\ip{\blank}{\blank}$ is right non-degenerate, meaning that $\delta_{\cX c}$ is injective.  Since $\cC$ is of finite rank, we may then make the required choices.
	
	Now $\beta$ is skew-symmetrizable since $\beta_{c}$ is, $\lambda$ is skew-symmetric since $\lambda_{c}$ is and $\lambda_{c}$ and $\beta_{c}$ are compatible.
	
	Hence the chosen data is what is required to define a quantum cluster algebra as per Definition~\ref{d:QCA-with-invfroz}.
\end{proof}

\begin{remark}
	An important immediate question is the dependence of $\cC_{c,q}$ on the various choices made, not least that of $c\in\cE$.  We will return to this in Section~\ref{s:morphisms-of-reps}.
\end{remark}

\begin{remark}
	Careful examination of the constructions involved enable us to identify the results of applying the above theorem to the product of abstract cluster structures and the coproduct of abstract quantum cluster structures, as obtained in Section~\ref{ss:props-of-ACS-cat}.  
	
	Namely, the cluster algebra associated to the product of abstract cluster structures is the tensor product of the input cluster algebras.  This is in line with expectations from cluster algebra theory.

	The (quantum) cluster algebra associated to the coproduct of abstract (quantum) cluster structures has as underlying vector space the direct sum of the underlying vector spaces of the input (quantum) cluster algebras.
	
	It would be natural to think that we should be considering the direct product algebra structure, but this does not have the ``obvious'' cluster algebra structure: a cluster algebra should have one initial seed and its cluster variables are obtained from this.  But the direct product does not have a single initial cluster, but in some sense would have two!  
	
	Nevertheless, we would argue that this phenomenon suggests that one way to understand why ring-theoretic and categorical approaches to cluster algebras encounter difficulties is that the class of objects under consideration is not large enough.  For example, the failure of many cluster algebras to admit non-trivial surjective maps is in part due to insufficiently many projections in the corresponding category.  As we observed in Section~\ref{ss:initial-terminal}, the categories $\ACScat$ and $\AQCScat$ are simply not as nice as one might have hoped but we suggest that these are better categories than one might end up with if, for example, the ability to take products and coproducts were also absent.
\end{remark}

\sectionbreak
\section{Geometric representations}\label{s:geom-reps}

\subsection{Cluster varieties}\label{ss:cluster-varieties}

We follow \cite{CGSS} in the use of terminology in this section, briefly recalling first the relationship between cluster algebras and cluster varieties.  To begin with, we will discuss the non-quantum situation and then enhance this with the additional data required to consider the quantum case too.

The (affine) \emph{cluster variety} associated to a (commutative) cluster algebra $\cC=\cC(\ex,\cB,\beta)$ is simply the affine scheme $V=\Spec{\cC}$.  We also have $U=\Spec{\cU}$ for $\cU=\cU(\ex,\cB,\cB\setminus \ex,\beta)$ the associated upper cluster algebra (where we choose $\invfrozen=\cB\setminus \ex$, so that all frozen variables are invertible).  

For each $\uk\in \curly{K}$, we have an associated torus $\qtorus{}{}{\mu_{\uk}(\cB)}$ and, via the maps $\mu_{k}^{\pm}$ of Lemma~\ref{l:E-isos}, birational maps of these along which we may glue.  Then $U$ contains the scheme obtained by gluing these tori, $\bigunion_{\uk\in \curly{K}} \qtorus{}{}{\mu_{\uk}(\cB)}$.  This scheme of glued tori is the cluster $\cA$-variety of \cite{FockGoncharov} and is sometimes called the cluster manifold (\cite{GSV-Book}).

Note that $V$ contains but is not equal to the union $\bigunion_{\uk\in \curly{K}} \qtorus{}{}{\mu_{\uk}(\cB)}$ of the tori; the complement is known as the \emph{deep locus} and is where one may find singularities of $V$, should they exist.

As we see below, it is the version with glued tori that most directly relates abstract cluster structures to geometry.  However, it is clear that should one wish to associate a cluster variety in the above sense to an abstract cluster structure, the most direct way to do this is to bootstrap from our previous results: form the associated cluster algebra and then take $\Spec{}$.

\subsubsection{The Poisson cluster varieties arising from an abstract quantum cluster structure}\label{sss:Poisson-CV-from-AQCS}

The philosophy we wish to promote is that an abstract cluster structure consists of data resembling the notion of a cluster modular groupoid from \cite{FockGoncharov}, in the form of $\cE$ (although we do not require $\cE$ to be a groupoid), and a tropical version of the $\cA$- and $\cX$-positive spaces of loc.\ cit., in the form of the sheaves $\cA$ and $\cX$.  Then it is natural to think that we may obtain cluster varieties of one kind or another by exponentiating appropriately.

Let $\bK$ be a field and $\mathbb{G}_{m}$ its multiplicative group; to avoid excessive complications, let us assume $\bK$ is of characteristic zero. Consider the functor $\Hom{\Abcat}{\blank}{\mathbb{G}_{m}}$.  On free Abelian groups of finite rank, this functor gives us a split algebraic torus.  There is also a functor in the opposite direction, by taking the character lattice of a split algebraic torus.  It is the former functor that naturally takes us from an abstract cluster structure to geometry.  Let $\Sch{\bK}$ denote the category of schemes over $\bK$.

Now, let $\cC=(\cE,\cX,\beta,\cA,\ip{\blank}{\blank})$ be an abstract cluster structure of finite rank.  From this, we obtain functors 
\[ A=\Hom{\Abcat}{\blank}{\mathbb{G}_{m}}\circ \cA\colon \cE \to \Sch{\bK} \] and \[  X=\Hom{\Abcat}{\blank}{\mathbb{G}_{m}}\circ \cX\colon \op{\cE}\to \Sch{\bK} .\] These functors give us preschemes $\mathbb{A}$ and $\mathbb{X}$, obtained by gluing the tori $\qtorus{\cA}{}{c}=\Hom{\Abcat}{\cA c}{\mathbb{G}_{m}}$ and $\qtorus{\cX}{}{c}=\Hom{\Abcat}{\cX c}{\mathbb{G}_{m}}$ using the morphisms of tori $A\alpha^{+}\colon \qtorus{\cA}{}{c}\to \qtorus{\cA}{}{d}$ (for $\alpha^{+}\colon c\to d$) and $X\alpha^{+}$, respectively\footnote{In this level of generality, it is almost surely the case that choosing $A\alpha^{-}$ and $X\alpha^{-}$ will give different schemes in many examples.  Indeed, one has to work relatively hard to prove sign invariance claims for cluster algebras.}.

Notwithstanding some minor differences due to our setup, the schemes $\mathbb{A}$ and $\mathbb{X}$ are the direct analogues of what are termed cluster $\cA$- and $\cX$-schemes in \cite{FockGoncharov}.  

To make stronger statements, we need to connect to the previous set of constructions.  Via Theorem~\ref{t:QCA-from-AQCS} (forgetting the additional quantum structure there), we have that from an abstract cluster structure of finite rank with skew-symmetric part, we obtain a cluster algebra $\cC_{c}$ associated to a chosen $c\in \cE$ (effectively, a chosen initial seed).  Then, following through the constructions, in some cases---depending on precisely which localisations are made---the algebra of regular functions on $\mathbb{A}$ will recover the upper cluster algebra $\cU_{c}$.

That is, an abstract cluster structure gives us cluster schemes and when that abstract cluster structure also gives rise to a cluster algebra, the relationship between the $\cA$-scheme and the algebra is as expected.  We have not considered in detail the $\cX$-side analogue of the ($\cA$-side) cluster algebra.  However, one may use the above as motivation for defining the $\cX$-side algebras via the geometry and check that this gives back the expected algebraic constructions when suitable additional assumptions are made.

Note too that, tautologically (if we mimic the Fock--Goncharov definition of positive space as a functor of the type above) the factorization $\beta$ is a map from $\mathbb{A}$ to $\mathbb{X}$, which coincides with the map $p$ of \cite{FockGoncharov} in the aforementioned situation.

In order to handle the quantum case, we briefly recall the Poisson cluster theory introduced in \cite{GSV-Book} and \cite{FockGoncharov}.  A Poisson manifold $M$ is a smooth real manifold whose algebra $C^{\infty}(M)$ of smooth functions is a Poisson algebra, i.e.\ it admits a skew-symmetric bilinear map $\Pip{\blank}{\blank}\colon C^{\infty}(M)\cross C^{\infty}(M)\to C^{\infty}(M)$ satisfying the Leibniz and Jacobi identities (see \cite[Section~1.3]{GSV-Book}).  More generally, an affine Poisson variety is one whose algebra of rational functions is a Poisson algebra.

Gekhtman--Shapiro--Vainshtein's notion of a Poisson structure that is compatible with a cluster algebra structure is via the notion of log-canonical sets of functions (\cite[Section~4.1]{GSV-Book}).  Namely, a collection of rational functions $\{ f_{i} \}$ is said to be log-canonical with respect to a Poisson bracket $\Pip{\blank}{\blank}$ if there exist integers $\omega_{ij}$ such that $\Pip{f_{i}}{f_{j}}=\omega_{ij}f_{i}f_{j}$, for all $i,j$.  Given a cluster algebra $A$, a Poisson bracket $\Pip{\blank}{\blank}$ on $A$ is said to be compatible with the cluster structure if every cluster of $A$ is log-canonical.

Then the main result we will need (\cite[Theorem~4.5]{GSV-Book}) is that if the exchange matrix $B$ of a cluster of $A$ has full rank\footnote{Hence all do, since rank is preserved under mutation, cf. Lemma~\ref{l:constant-rank}.}, then there exists a compatible Poisson structure on $A$ and furthermore, such a Poisson structure is determined by any choice of skew-symmetric integer matrix $L$ such that $B^{T}L$ is zero except on diagonal entries corresponding to mutable indices, where the value is strictly positive.  The Poisson algebra structure is obtained by using $L$ to define a Poisson structure on the torus associated to the cluster (with respect to which the natural transcendence base is log-canonical with coefficients encoded in $L$) and restricting, since the Laurent phenomenon tells us that the cluster algebra is contained in this Laurent polynomial ring.

Note that, up to an unimportant scale factor, this condition is exactly that of Definition~\ref{d:compatible}, i.e.\ compatibility for quantum cluster structures.  That is, a Poisson $\cA$-cluster variety is obtained from an abstract quantum cluster structure by the above construction, using the component $\lambda_{c}$ to endow the torus $Ac$ with a Poisson structure.  Compatibility then ensures that these Poisson tori glue to give a Poisson $\cA$-cluster variety.

\begin{remark}
	It is important to note that there are a number of different additional geometric structures on cluster varieties.  Indeed, some authors refer to the $\cA$-cluster varieties as cluster $K_{2}$-varieties and to $\cX$-cluster varieties as cluster Poisson varieties.  There is a potential for confusion here, so let us explain.
	
	Any $\cX$-cluster variety, in our sense above, admits a Poisson structure, determined by $\beta$.  Specifically, let $\Pip{\blank}{\blank}_{\cX c}$ be a skew-symmetrization of the form $\ip{\blank}{\blank}_{\cX c}$ of Definition~\ref{d:sX-form}; for this, we should assume $\beta_{c}$ is skew-symmetrizable.  This determines a log-canonical Poisson structure for $\qtorus{\cX}{}{c}$ and the factorization property of $\beta$ ensures that these glue to endow $\mathbb{X}$ with the structure of a Poisson (cluster) variety.
	
	For the data associated to a cluster algebra, or (unquantized) abstract cluster structure, there is no such Poisson structure on $\mathbb{A}$.  However, as above, we can restore symmetry by choosing a quantum structure $\lambda$; by the same mechanism (i.e.\ via the skew-symmetric form $\ip{a}{b}_{\cA c}=\evform{\lambda_{c}(a)}{b}$ as per Definition~\ref{d:A-form}), we obtain the Poisson cluster variety $\mathbb{A}$.
	
	Without the additional choice of quantum data, $\mathbb{A}$ has only the structure of a $K_{2}$-variety.  As explaining this in detail would take us too far afield, we refer the reader to \cite{FockGoncharov} and associated literature.
\end{remark}

\subsubsection{The abstract quantum cluster structure arising from Poisson cluster varieties}\label{sss:AQCS-from-Poisson-CV}

As the above discussion should make clear, the data of a pair of Poisson cluster varieties  $(\mathbb{A},\mathbb{X})$ gives us a pair of families of Poisson tori together with some gluing maps, where both families share a common indexing set.  Then the character lattices of the (split) tori give us the specification on objects of our desired free Abelian presheaves, e.g. set $\cA c=\Hom{}{\qtorus{\cA}{}{c}}{\mathbb{G}_{m}}$.  The Poisson structures on $\mathbb{X}$ and $\mathbb{A}$ give us $\beta$ and $\lambda$ respectively and as noted in Section~\ref{ss:quantum-str}, $\ip{\blank}{\blank}$ is determined by these (due to compatibility).

Hence, pairs of Poisson cluster varieties give us abstract quantum cluster structures.  Note that we need more than simply the underlying variety: we must know its \emph{cluster variety} structure.  We have also taken advantage of the Poisson structure on $\mathbb{A}$ to recover the pairing $\ip{\blank}{\blank}$; we expect that it is sufficient to know the $K_{2}$-structure but leave this for future investigation\footnote{This ought only to be relevant when $\beta$ is not skew-symmetric, since the delicacy around possible non-uniqueness of $\ip{\blank}{\blank}$ is related to the choice of skew-symmetrizers.}.  

\subsection{Triangulations and geometric models}\label{ss:triangulations}

We claim in this section that there is a natural abstract cluster structure associated to a triangulation of a surface, where we will follow the originators of the cluster approach to these (\cite{FST}) and consider connected oriented Riemann surfaces $\mathbf{S}$ with boundary and a finite set of marked points $\mathbf{M}$, with at least one marked point on each boundary component.  For simplicity, we will not address punctured surfaces, tagging of arcs, etc.

We need to begin by identifying the simple directed graph $E$ underlying our abstract cluster structure.  Let the vertices of $E$ be triangulations of $\mathbf{S}$ (in the sense of \cite{FST}, i.e.\ a maximal collection of non-crossing arcs in the surface between marked points, up to isotopy).  We include the boundary arcs in our triangulations.

Then if $c$ and $d$ are triangulations that differ by the flip of exactly one arc, we have arrows $c\to d$ and $d\to c$ in $E$; that is, $E$ is bi-directed and indeed is the bi-directed graph associated to the usual exchange graph for triangulations.  That is, this is precisely the bi-directed version of the graph $\mathbf{E}^{\circ}(\mathbf{S},\mathbf{M})$ of \cite{FST}.

Let $\curly{A}(c)$ denote the set of arcs of the triangulation $c$, $\curly{B}(c)$ the set of boundary arcs of $c$ and $\curly{T}(c)$ the set of (ideal) triangles of $c$.  In a triangulation, every non-boundary arc is an edge of exactly two distinct triangles.  Let us call such a pair of triangles (sharing a unique non-boundary arc) a \emph{quadrilateral} and let $\curly{Q}(c)$ denote the set of quadrilaterals of the triangulation $c$.  

For a non-boundary arc $a\in \curly{A}(c)$, write $q(a)$ for the quadrilateral containing $a$; more formally, let $q\colon (\curly{A}(c)\setminus \curly{B}(c))\to \curly{Q}(c)$ be the function sending a non-boundary arc to the quadrilateral containing it. This is a bijection, since by the non-crossing property, each quadrilateral contains exactly one such arc (which is the common edge of the two triangles comprising the quadrilateral).

The remaining data for an abstract cluster structure is chosen as follows:
\begin{enumerate}
	\item $\cX\colon \op{\cE}\to \Abcat$ is defined on objects (i.e.\ triangulations) by $\cX c=\integ[\curly{Q}(c)]$, the free Abelian group on the set $\curly{Q}(c)$;
	\item $\cA\colon \cE\to \Abcat$ is defined on objects (i.e.\ triangulations) by $\cA c=\integ[\curly{A}(c)]$, the free Abelian group on the set $\curly{A}(c)$;
	\item $\ip{\blank}{\blank}$ is defined by $\ip{a}{q}_{c}=1$ if $a\in \cA(c)\setminus \cB(c)$ and $q(a)=q$ and equals $0$ otherwise.
\end{enumerate}

This is not yet sufficient, of course: we need to determine the effect of $\cA$ and $\cX$ on morphisms and to define the factorization $\beta$.  However, in order to see that what we obtain satisfies the required properties, we make these definitions together.

We first re-interpret the construction of the exchange matrix associated to a triangulation, as follows.  Fix a triangulation $c$; for ease of the discussion that follows, assume that $c$ has no self-folded triangles.  As above, for each quadrilateral $q\in \curly{Q}(c)$, there is a unique non-boundary arc $a$ such that $q(a)=q$ and hence exactly two triangles in the triangulation having $a$ as an edge.  In each of these, there are two other arcs forming the edges of the triangle.

Thus, for each quadrilateral $q\in \curly{Q}(c)$, the triangulation contains four arcs which we call $q_{\circ,+}$, $q_{\circ,-}$, $q_{\bullet,+}$ and $q_{\bullet,-}$ such that $(a,q_{\circ,+},q_{\circ,-})$ and $(a,q_{\bullet,+},q_{\bullet,-})$ are the two triangles of $q$ with the arcs written in order in accordance with the orientation of the surface (so that $q_{\circ,+}$ follows $a$ with respect to the orientation, etc.).

Then we may define a $\integ$-linear map $\beta_{c}\colon \integ[\curly{Q}(c)]\to \integ[\curly{A}(c)]$ by 
\begin{equation}\label{eq:beta-for-surfaces} \beta_{c}(q)=(q_{\circ,+}+q_{\bullet,+})-(q_{\circ,-}+q_{\bullet,-}). \end{equation}
As we see in the example below, $\beta_{c}(q)$ is computed by taking the pairs of opposite edges of the quadrilateral, summing those of the same sign and taking the difference of these sums.  Equivalently, we take a signed sum of the edges of the quadrilateral, with signs derived from the orientation with respect to the arc $q(a)$.

We excluded the case of self-folded triangles above for simplicity, but note that the prescription in \cite[Definition~4.1]{FST} covers this case in addition; a more general definition of $\beta_{c}$ is then obtained by adjusting the exchange matrix of loc.\ cit. to include rows corresponding to boundary vertices and to change the column indexing set to $\curly{Q}(c)$ (via the bijection with $\curly{A}(c)\setminus \curly{B}(c)$).

Aligning with the approach and definitions of Section~\ref{ss:toric-frames}, let $c$ and $d$ be triangulations such that $d$ is obtained from $c$ by flipping the non-boundary arc $a$ in the quadrilateral $q=q(a)$; that is, we remove $a$ and replace it by the unique distinct arc $a'$ such that the result is again a triangulation. Then let us write $d=\mu_{q(a)}(c)$ to indicate that $d$ is obtained from $c$ by mutating at $a$ (i.e.\ by flip at $a$) and write $\mu_{c}(a)$ for the arc in $\mu_{q(a)}(c)$ obtained by flipping $a$.  Then $\cA(\mu_{q(a)}(c))$, the set of arcs of the mutated triangulation is equal to $(\cA(c)\setminus \{ a \})\union \{ \mu_{c}(a) \}$, corresponding to $\mu_{k}(\cB)$ as in Definition~\ref{d:mut-of-index-sets}.  

Already we see a slight deviation from the setting of Definition~\ref{d:mut-of-index-sets}: our ``mutable indices'' are a subset  $\ex \subseteq \cB=\cA(c)$ but will be identified with $\curly{Q}(c)$.  However, it follows from the above discussion that there is an injective function $q^{*}\colon \curly{Q}(c)\to \curly{A}(c)$ taking each quadrilateral to the unique non-boundary arc it contains, allowing us to mostly gloss over this tacit identification. This introduces a modest, but not unnatural, asymmetry: mutation of arcs is denoted $\mu_{c}(a)$ but mutation of triangulations is $\mu_{q}(c)$.

Note that the above inner product $\canform{\blank}{\blank}{c}$ can then also be expressed as $\canform{a}{q}{c}\delta_{a,q^{*}(q)}$.

Mutation of quadrilaterals is denoted $\mu_{c}(q)$: by this we mean that $\mu_{c}(q)$ is the quadrilateral in $\mu_{q}(c)$ such that $\mu_{c}(q^{*}(q))=q^{*}(\mu_{q}(c))$ (that is, the arc obtained by flipping the non-boundary arc $q^{*}(q)$ in $q$ is the unique non-boundary arc of the mutation $\mu_{c}(q)$).

Now, from $\beta_{c}$ we obtain isomorphisms $\mu_{q}^{\pm}\colon \integ[\cX(c)]\to \integ[\cX(\mu_{q}(c))]$,
\[ 	\mu_{q}^{\pm}(p)= \begin{cases} p+\evform{\bplusminus{\beta_{c}(q)}{}}{p}\mu_{c}(q) & \text{if}\ p\neq q \\ -\mu_{c}(q) & \text{if}\ p=q \end{cases} \]
and $\mu_{q}^{\pm}\colon \integ[\cA(c)]\to \integ[\cA(\mu_{q}(c))]$,
\[ \mu_{q}^{\pm}(a)=\begin{cases} a & \text{if}\ a\neq q^{*}(q) \\ \bplusminus{\beta_{c}(q)}{}-\mu_{c}(a) & \text{if}\ a=q^{*}(q) \end{cases}. \]

These formul\ae\ are exactly as in Lemmas~\ref{l:F-isos} and \ref{l:E-isos} and as noted earlier, these are precisely the maps required to mutate an exchange matrix in accordance with Fomin--Zelevinsky (matrix) mutation.  This has two consequences: firstly, we immediately deduce that the family of maps $\beta_{c}$ determines a factorization with respect to $\cX\alpha^{\pm}=(\mu_{q}^{\pm})^{-1}$ and $\cA \alpha^{\pm}=\mu_{q}^{\pm}$, since we have checked this in Section~\ref{ss:toric-frames}.  

Secondly, the fundamental result underlying the claim that triangulations of surfaces model cluster combinatorics is that the exchange matrices of triangulations obtained from each other by flips of arcs are related by Fomin--Zelevinsky matrix mutation.  Therefore, using the above prescription of $\cX \alpha^{\pm}$ and $\cA \alpha^{\pm}$ does indeed give us that $\mu_{q}\beta_{c}=\mu_{q}^{\pm}\circ \beta_{c}\circ (\mu_{q}^{\pm})^{-1}$ is equal to $\beta_{\mu_{q}(c)}$, which \emph{a priori} is not clear.

We illustrate and interpret the above in the canonical example, triangulations of a hexagon ($\mathbf{S}$ is a closed disc and there are six marked points on the boundary; we straighten the boundary edges between the marked points to obtain the hexagon). Let us take the anti-clockwise orientation on this surface.

First, we recall arc mutation: on the left in the following diagram is a triangulation $c$ of the hexagon, with an arc $a$ and the quadrilateral $q=q(a)$ with edges $q_{\circ,+}$, $q_{\circ,-}$, $q_{\bullet,+}$ and $q_{\bullet,-}$.  Note that $a$ is the common boundary edge of the two triangles $(a,q_{\circ,+},q_{\circ,-})$ and $(a,q_{\bullet,+},q_{\bullet,-})$.  In the middle picture, the arc $a$ is removed and in the third it is replaced by $\mu_{c}(a)$, the unique distinct (non-boundary) arc giving us another triangulation $\mu_{q}(c)$.
	\begin{center}
		\scalebox{1}{\begin{tikzpicture}[font=\footnotesize]

\node (Hexagon1) at (90:0cm) [regular polygon, regular polygon sides=6, draw,minimum size=3cm,label=below:{$c$}] {};
\node (Hexagon2) [right=of Hexagon1, regular polygon, regular polygon sides=6, draw,minimum size=3cm] {};
\node (Hexagon3) [right=of Hexagon2, regular polygon, regular polygon sides=6, draw,minimum size=3cm,label=below:{$\mu_{q}(c)$}] {};

\draw[->,shorten <=5pt,shorten >=5pt] (Hexagon1) to (Hexagon2);
\draw[->,shorten <=5pt,shorten >=5pt] (Hexagon2) to (Hexagon3);

\draw[red,thick] (Hexagon1.corner 2) -- (Hexagon1.corner 6) node [midway,above] {$a$};
\draw[blue,thick] (Hexagon1.corner 2) -- (Hexagon1.corner 5) node [midway,left] {$q_{\bullet,+}$};
\draw[blue,thick] (Hexagon1.corner 5) -- (Hexagon1.corner 6) node [midway,right] {$q_{\bullet,-}$};
\draw[blue,thick] (Hexagon1.corner 6) -- (Hexagon1.corner 1) node [midway,right] {$q_{\circ,+}$};
\draw[blue,thick] (Hexagon1.corner 1) -- (Hexagon1.corner 2) node [midway,above] {$q_{\circ,-}$};
\draw[] (Hexagon1.corner 2) -- (Hexagon1.corner 4);

\draw[blue,thick] (Hexagon2.corner 2) -- (Hexagon2.corner 5);
\draw[blue,thick] (Hexagon2.corner 5) -- (Hexagon2.corner 6);
\draw[blue,thick] (Hexagon2.corner 6) -- (Hexagon2.corner 1);
\draw[blue,thick] (Hexagon2.corner 1) -- (Hexagon2.corner 2);
\draw[] (Hexagon2.corner 2) -- (Hexagon2.corner 4);

\draw[red,thick] (Hexagon3.corner 1) -- (Hexagon3.corner 5) node [midway,above left,yshift=0.5em] {$\mu_{c}(a)$};
\draw[blue,thick] (Hexagon3.corner 2) -- (Hexagon3.corner 5) node [midway,left] {};
\draw[blue,thick] (Hexagon3.corner 5) -- (Hexagon3.corner 6) node [midway,right] {};
\draw[blue,thick] (Hexagon3.corner 6) -- (Hexagon3.corner 1) node [midway,right] {};
\draw[blue,thick] (Hexagon3.corner 1) -- (Hexagon3.corner 2) node [midway,above] {};
\draw[] (Hexagon3.corner 2) -- (Hexagon3.corner 4);

\end{tikzpicture}}
	\end{center}

Borrowing an idea we will see much more of in Section~\ref{ss:cluster-cats}, the values of the maps $\cA \alpha^{\pm}=\mu_{q}^{\pm}$ (for $\alpha^{\pm}\colon c\to \mu_{q}(c)$) on $a$ can be written explicitly as
	\begin{align*}
		\mu_{q}^{+}(a)=(q_{\circ,+}+q_{\bullet,+})-\mu_{c}(a)=q_{\circ,+}-\mu_{c}(a)+q_{\bullet,+}; \\
		\mu_{q}^{-}(a)=(q_{\circ,-}+q_{\bullet,-})-\mu_{c}(a)=q_{\circ,-}-\mu_{c}(a)+q_{\bullet,-}. 
	\end{align*}
	Note that $\mu_{q}^{\pm}$ acts as the identity on all other arcs, as one would expect, since these are unchanged.
	
	Then these maps can be thought of taking $a$ to its two ``$\mu_{q}(c)$-approximations''.  For $a$ is not an arc in $\mu_{q}(c)$, but we can consider what combination of arcs might have the same endpoints.  Specifically, we see that traversing the sequences of arcs $(q_{\circ,+},\mu_{c}(a),q_{\bullet,+})$ or $(q_{\circ,-},\mu_{c}(a),q_{\bullet,-})$ traces a path that starts and ends where $a$ did:
	\begin{center}
		\scalebox{1}{\begin{tikzpicture}[font=\footnotesize]

\node (Hexagon1) at (90:0cm) [regular polygon, regular polygon sides=6, draw,minimum size=3cm,label=below:{$c$}] {};
\node (Hexagon2) [right=of Hexagon1, regular polygon, regular polygon sides=6, draw,minimum size=3cm,label=below:{$\mu_{q}(c)$}] {};

\draw[->,shorten <=5pt,shorten >=5pt] (Hexagon1) to (Hexagon2);

\draw[red,thick] (Hexagon1.corner 2) -- (Hexagon1.corner 6) node [midway,above] {$a$};
\draw[] (Hexagon1.corner 2) -- (Hexagon1.corner 5);
\draw[] (Hexagon1.corner 2) -- (Hexagon1.corner 4);

\draw[red,dashed,thick] (Hexagon2.corner 2) -- (Hexagon2.corner 6) node [midway,above] {};
\draw[] (Hexagon2.corner 2) -- (Hexagon2.corner 5);
\draw[blue,thick] (Hexagon2.corner 5) -- (Hexagon2.corner 6) node [midway,right] {$q_{\bullet,-}$};
\draw[blue,thick] (Hexagon2.corner 1) -- (Hexagon2.corner 2) node [midway,above] {$q_{\circ,-}$};
\draw[] (Hexagon2.corner 2) -- (Hexagon2.corner 4);
\draw[blue,thick] (Hexagon2.corner 1) -- (Hexagon2.corner 5) node [midway,left,yshift=0.5em,xshift=0.3em] {$\mu_{c}(a)$};


\end{tikzpicture}}
	\end{center}
	
Quadrilateral mutation can also be visualised.  The following illustrates the three quadrilaterals $\curly{Q}(c)=\{q=q_{1},q_{2},q_{3}\}$ in $c$ and the three quadrilaterals $\curly{Q}(\mu_{q}(c))=\{q_{1}',q_{2}',q_{3}'\}$ in $\mu_{q}(c)$.  Notice that $q_{1}$ contains our chosen arc $a$, $q_{2}$ has $a$ as an edge and $q_{3}$ does not intersect with $a$.
	\begin{center}
		\scalebox{1}{\begin{tikzpicture}

\begin{scope}
\node (Hexagon1) at (90:0cm) [regular polygon, regular polygon sides=6, draw,minimum size=3cm,label=above:{$q=q_{1}$}] {};
\node (Hexagon2) [right=of Hexagon1, regular polygon, regular polygon sides=6, draw,minimum size=3cm,label=above:{$q_{2}$}] {};
\node (Hexagon3) [right=of Hexagon2, regular polygon, regular polygon sides=6, draw,minimum size=3cm,label=above:{$q_{3}$}] {};
\node (Hexagon4) [right=of Hexagon3, regular polygon, regular polygon sides=6, draw,minimum size=3cm,label=above:{$c$}] {};

\draw[blue,thick] (Hexagon1.corner 2) -- (Hexagon1.corner 5);
\draw[blue,thick] (Hexagon1.corner 5) -- (Hexagon1.corner 6);
\draw[blue,thick] (Hexagon1.corner 6) -- (Hexagon1.corner 1);
\draw[blue,thick] (Hexagon1.corner 1) -- (Hexagon1.corner 2);

\draw[red,thick] (Hexagon2.corner 2) -- (Hexagon2.corner 6);
\draw[red,thick] (Hexagon2.corner 6) -- (Hexagon2.corner 5);
\draw[red,thick] (Hexagon2.corner 5) -- (Hexagon2.corner 4);
\draw[red,thick] (Hexagon2.corner 4) -- (Hexagon2.corner 2);

\draw[green,thick] (Hexagon3.corner 2) -- (Hexagon3.corner 5);
\draw[green,thick] (Hexagon3.corner 5) -- (Hexagon3.corner 4);
\draw[green,thick] (Hexagon3.corner 4) -- (Hexagon3.corner 3);
\draw[green,thick] (Hexagon3.corner 3) -- (Hexagon3.corner 2);

\draw[red,thick] (Hexagon4.corner 2) -- (Hexagon4.corner 6) node [midway,above] {$a$};
\draw[] (Hexagon4.corner 2) -- (Hexagon4.corner 5);
\draw[] (Hexagon4.corner 2) -- (Hexagon4.corner 4);
\end{scope}

\begin{scope}[yshift=-3.5cm]
\node (Hexagon1) at (90:0cm) [regular polygon, regular polygon sides=6, draw,minimum size=3cm,label=below:{$q_{1}'$}] {};
\node (Hexagon2) [right=of Hexagon1, regular polygon, regular polygon sides=6, draw,minimum size=3cm,label=below:{$q_{2}'$}] {};
\node (Hexagon3) [right=of Hexagon2, regular polygon, regular polygon sides=6, draw,minimum size=3cm,label=below:{$q_{3}'$}] {};
\node (Hexagon4) [right=of Hexagon3, regular polygon, regular polygon sides=6, draw,minimum size=3cm,label=below:{$\mu_{q}(c)$}] {};

\draw[blue,thick] (Hexagon1.corner 2) -- (Hexagon1.corner 5) node [midway,above,xshift=0.33cm] {$\circlearrowright$};
\draw[blue,thick] (Hexagon1.corner 5) -- (Hexagon1.corner 6);
\draw[blue,thick] (Hexagon1.corner 6) -- (Hexagon1.corner 1);
\draw[blue,thick] (Hexagon1.corner 1) -- (Hexagon1.corner 2);

\draw[red,thick] (Hexagon2.corner 2) -- (Hexagon2.corner 1);
\draw[red,thick] (Hexagon2.corner 1) -- (Hexagon2.corner 5);
\draw[red,thick] (Hexagon2.corner 5) -- (Hexagon2.corner 4);
\draw[red,thick] (Hexagon2.corner 4) -- (Hexagon2.corner 2);

\draw[green,thick] (Hexagon3.corner 2) -- (Hexagon3.corner 5);
\draw[green,thick] (Hexagon3.corner 5) -- (Hexagon3.corner 4);
\draw[green,thick] (Hexagon3.corner 4) -- (Hexagon3.corner 3);
\draw[green,thick] (Hexagon3.corner 3) -- (Hexagon3.corner 2);

\draw[red,thick] (Hexagon4.corner 1) -- (Hexagon4.corner 5) node [midway,left,yshift=1.5em,xshift=0.25em] {$\mu_{c}(a)$};
\draw[] (Hexagon4.corner 2) -- (Hexagon4.corner 5);
\draw[] (Hexagon4.corner 2) -- (Hexagon4.corner 4);
\end{scope}

\end{tikzpicture}}
	\end{center}

Now as before, working through the definitions of $\mu_{q}^{\pm}$, we find that
	\begin{align*}
		\mu_{q}^{+}(q_{1}) & =-q_{1}' & \mu_{q}^{+}(q_{2}) & =q_{2}'-q_{1}' &  \mu_{q}^{+}(q_{3}) & =q_{3}' \\
		\mu_{q}^{-}(q_{1}) & =-q_{1}' & \mu_{q}^{-}(q_{2}) & =q_{2}'+q_{1}' &  \mu_{q}^{-}(q_{3}) & =q_{3}' 
	\end{align*}
That is, $\mu_{q}^{\pm}$ is algebraically a sign change on $q=q_{1}$ (which we can interpret as a change in orientation), is more complicated on $q_{2}$ (but can be seen as removing the triangle in the intersection of $q$ and $q_{2}$ and replacing it by a triangle of $q_{1}'$, noting that the triangles $q_{1}'$ are determined by $\mu_{c}(a)$) and is ``the identity'' on $q_{3}$ as one would expect.  The general rule is unsurprisingly more complex, involving (signed) adjacency to the quadrilateral $q$.

Finally, as noted above, the map $\beta_{c}$ can be thought of as a ``boundary map'', i.e.\ a signed sum of edges of quadrilaterals:
	\begin{center}
		\scalebox{1}{\begin{tikzpicture}[font=\footnotesize]

\node (Hexagon1) at (90:0cm) [regular polygon, regular polygon sides=6, draw,minimum size=3cm,label=below:{$c$}] {};
\node (Hexagon2) [right=of Hexagon1] {$(q_{\circ,+}+q_{\bullet,+})-(q_{\circ,-}+q_{\bullet,-})$};

\draw[->,shorten <=5pt,shorten >=5pt] (Hexagon1) to (Hexagon2);


\draw[blue,thick] (Hexagon1.corner 2) -- (Hexagon1.corner 5) node [midway,left] {$q_{\bullet,+}$};
\draw[blue,thick] (Hexagon1.corner 5) -- (Hexagon1.corner 6) node [midway,right] {$q_{\bullet,-}$};
\draw[blue,thick] (Hexagon1.corner 6) -- (Hexagon1.corner 1) node [midway,right] {$q_{\circ,+}$};
\draw[blue,thick] (Hexagon1.corner 1) -- (Hexagon1.corner 2) node [midway,above] {$q_{\circ,-}$};
\draw[] (Hexagon1.corner 2) -- (Hexagon1.corner 4);

\end{tikzpicture}}
	\end{center}
Computing $\beta_{c}$ on $\curly{Q}(c)$, we obtain the exchange matrix of the triangulation $c$; converting this to a quiver in the usual way gives us the following, where we have just written $*$ for arcs we have not labelled in the above diagrams:
\begin{center}
	\scalebox{1}{\begin{tikzpicture}[node distance=2cm,on grid,>=angle 90]

\node (10) at (0,0) [rectangle,draw=black,thick] {$*$};
\node (11) [right=of 10] {$*$}; 
\node (12) [right=of 11] {$q_{\bullet,+}$};
\node (13) [right=of 12] {$a$};
\node (14) [right=of 13,rectangle,draw=black,thick] {$q_{\circ,-}$};

\node (21) [below=of 11,rectangle,draw=black,thick] {$*$}; 
\node (22) [right=of 21,rectangle,draw=black,thick] {$*$};
\node (23) [right=of 22,rectangle,draw=black,thick] {$q_{\bullet,-}$};
\node (24) [right=of 23,rectangle,draw=black,thick] {$q_{\circ,+}$};

\draw[semithick,->] (11) to (10);
\draw[semithick,->] (12) to (11);
\draw[semithick,->] (13) to (12);
\draw[semithick,->] (14) to (13);

\draw[semithick,->] (21) to (11);
\draw[semithick,->] (22) to (12);
\draw[semithick,->] (23) to (13);
\draw[semithick,->] (24) to (14);

\draw[semithick,->] (11) to (22);
\draw[semithick,->] (12) to (23);
\draw[semithick,->] (13) to (24);

\end{tikzpicture}}
\end{center}

To summarise, therefore, to a pair $(\mathbf{S},\mathbf{M})$ we may associate an abstract cluster structure $\cC(\mathbf{S},\mathbf{M})=\cC(\cE,\cX,\beta,\cA,\ip{\ }{\ })$ where
\begin{enumerate}
	\item $\cE$ is the signed path category of $E$, the bi-directed version of the graph $\mathbf{E}^{\circ}(\mathbf{S},\mathbf{M})$ of \cite{FST};
	\item $\cX\colon \op{\cE}\to \Abcat$ is defined on triangulations by $\cX c=\integ[\curly{Q}(c)]$, the free Abelian group on the set of quadrilaterals of $c$, $\curly{Q}(c)$ and on flips by the ($\cX$-)mutation maps $(\mu_{q}^{\pm})^{-1}$;
	\item $\cA\colon \cE\to \Abcat$ is defined on triangulations by $\cA c=\integ[\curly{A}(c)]$, the free Abelian group on the set of arcs of $c$, $\curly{A}(c)$ and on flips by the ($\cA$-)mutation maps $\mu_{q}^{\pm}$; and
	\item $\ip{\blank}{\blank}$ is defined by $\ip{a}{q}_{c}=\delta_{q(a),q}=\delta_{a,q^{*}(q)}$, that is, $\ip{a}{q}_{c}=1$ if $a$ is contained in $q$ in $c$ and equals $0$ otherwise.
\end{enumerate}

Although we shall return to this later, we note now that it should be clear that the (usual, $\cA$-side) cluster algebra associated to $\cC(\mathbf{S},\mathbf{M})$ as obtained via the non-quantum version of Theorem~\ref{t:QCA-from-AQCS} is precisely the usual cluster algebra associated to triangulations of $(\mathbf{S},\mathbf{M})$, as per \cite{FST}, modulo some trivial work to correctly identify the relevant indexing sets.  For this simply amounts to the claim that we have encoded the correct exchange matrices in our abstract cluster structure.

We note, however, that $\cC(\mathbf{S},\mathbf{M})$ contains information not recorded (or at least, not recorded explicitly) in the associated cluster algebra, namely the $\cX$-side quadrilateral mutation and its relationship with arc mutation.  Note too that unlike in previous examples, we make no claim to be able to produce a geometric model from an abstract cluster structure: indeed, given that exchange matrices in geometric models have very restrictive properties, it is clear no such general construction could exist.

\sectionbreak
\section{Categorical representations}\label{s:cat-reps}

\subsection{Cluster categories}\label{ss:cluster-cats}
	
In this section, we show that a cluster category (in the sense of \cite{CSFRT}) has a natural associated abstract cluster structure. Indeed, as with the other cases above, the definition of an abstract cluster structure has been developed with this in mind.  The proofs required to establish the claim here are substantial: we will not give the details, as they are contained in our principal source, namely the recent work of the first author and Pressland \cite{CSFRT}.  Rather, here we will sketch the relevant constructions of the input data for an abstract cluster structure and make appropriate citations.

Note that, as with the previous example, we make no claim to be able obtain cluster categories from abstract cluster structures.  However, as is the case with geometric models, this is in some sense not the point.  Rather, we claim that having abstract cluster structures underpin all the different realisations of cluster combinatorics means that we can make claims on relationships between the realisations via the abstract cluster structures, even when there is not a direct connection.  For example, in type $A$, we know cluster categorifications and geometric models, but expressing the claim of these having the same cluster combinatorics (as each other, and as a cluster algebra of the same type) has hitherto been relatively informal.

Inevitably, we will require significant background knowledge of representation theory to state what follows.  We refer the reader to \cite{CSFRT} and references therein for undefined terminology.  For this reason, we sketch the main ideas and give the reader full licence to jump directly to Section~\ref{s:morphisms-of-reps} should they wish.

Let $\cC$ be a cluster category of finite rank\footnote{\cite{CSFRT} works in greater generality than this but the infinite rank case introduces a number of additional technicalities, so we will restrict this discussion to the finite rank case and leave to the interested reader the task of adjusting to the general setting.} and $\cT$ a cluster-tilting subcategory of $\cC$ (\cite[Section~2]{CSFRT}).  The graph $E$ is taken to have as vertex set the collection of all cluster-tilting subcategories of $\cC$ and is complete\footnote{It is one of the main results of \cite{CSFRT} that the maps to be defined shortly exist for any pair of cluster-tilting subcategories.}.

The free Abelian sheaves $\cA$ and $\cX$ are given on objects by taking a cluster-tilting subcategory $\cT$ to Grothendieck groups: we set $\cA \cT=\Kgp{\cT}$ and $\cX \cT=\Kgp{\fd \stab{\cT}}$.  Here, $\Kgp{\fd \stab{\cT}}$ is the Grothendieck group of finite-dimensional $\stab{\cT}$-modules, that is, functors $\stab{\cT} \to \fd \bK$ from the stable category of $\cT$, $\stab{\cT}$, to the category of finite-dimensional $\bK$-vector spaces.

Given a cluster-tilting subcategory, we have the notion of left and right $\cT$-approximations of objects $X\in \cC$: one can view these as being maps in $\cC$ with domain or codomain $X$ respectively satisfying a particular property with respect to a Hom-functor (\cite[Appendix~A]{CSFRT}). Alternatively, approximations can be encoded in conflations\footnote{The paper \cite{CSFRT} works at the level of generality of extriangulated categories.  The reader who is unfamiliar with these will not lose anything significant by reading ``exact sequence'' for ``conflation''.}
\begin{equation}
	\label{eq:index-seq}
	\begin{tikzcd}
		\rightker{\cT}{X}\arrow[infl]{r}&\rightapp{\cT}{X}\arrow[defl]{r}{}&{X}\arrow[confl]{r}&\phantom{}
	\end{tikzcd}
\end{equation}
and
\begin{equation}
	\label{eq:coindex-seq}
	\begin{tikzcd}
		X\arrow[infl]{r}&\leftapp{\cT}{X}\arrow[defl]{r}&\leftcok{\cT}{X}\arrow[confl]{r}&\phantom{}
	\end{tikzcd}
\end{equation}
such that \(\rightker{\cT}{X},\rightapp{\cT}{X},\leftapp{\cT}{X},\leftcok{\cT}{X}\in\cT\),
Moreover, the values of \([{\rightapp{\cT}{X}}]-[{\rightker{\cT}{X}}]\in \Kgp{\cT}\) and $[{\leftapp{\cT}{X}}]-[{\leftcok{\cT}{X}}]\in \Kgp{\cT}$ are independent of the choice of approximation conflation, so we obtain homomorphisms of Grothendieck groups (\cite[Section~3.1]{CSFRT})
\[
\ind{\cC}{\cT}  \colon \Kgp{\addcat{\cC}} \to \Kgp{\cT}, \ind{\cC}{\cT}(X)=[{\rightapp{\cT}{X}}]-[{\rightker{\cT}{X}}] \]
and
\[ 
\coind{\cC}{\cT}  \colon \Kgp{\addcat{\cC}} \to \Kgp{\cT}, \coind{\cC}{\cT}(X)=[{\leftapp{\cT}{X}}]-[{\leftcok{\cT}{X}}].
\]
Moreover, if the input is restricted to $\Kgp{\cU}\leq \Kgp{\addcat{\cC}}$ for a second cluster-tilting subcategory $\cU$, we in fact obtain isomorphisms (\cite[Section~3.4]{CSFRT})
\[
\ind{\cU}{\cT}  \colon \Kgp{\cU} \to \Kgp{\cT}, \quad \coind{\cU}{\cT}  \colon \Kgp{\cU} \to \Kgp{\cT}.
\]
For $\alpha^{\pm}\colon \cU \to \cT$, we then define $\cA \alpha^{+}=\ind{\cU}{\cT}$ and $\cA \alpha^{-}=\coind{\cU}{\cT}$.

The functor $\cX$ is a little more tricky, and in fact we need to discuss $\ip{\blank}{\blank}$ first.  The inner product here turns out to be extremely natural: for any additive category \(\cT\), viewed as a split exact category, there is a bilinear form \(\ip{\blank}{\blank}\colon\Kgp{\lfd{\cT}}\times\Kgp{\cT}\to\integ\) given by
\begin{equation}
	\label{eq:fundamental-pairing}
	\ip{[M]}{[T]}=\dim_{\bK} M(T)
\end{equation}
for objects \(M\in \lfd \cT\) and \(T\in \cT\) (and extended linearly to differences of classes of objects). For us, $\lfd \cT=\fd \cT$ and for our form $\ip{\blank}{\blank}_{\cT}$ we need to take the opposite of this form and restrict it to $\Kgp{\fd \stab{\cT}}\leq \Kgp{\fd \cT}$:
\begin{equation}
	\ip{[T]}{[M]}_{\cT}=\dim_{\bK} M(T)
\end{equation}
 since we need $\cA \cT=\Kgp{\cT}$ in the first argument and $\cX \cT=\Kgp{\fd \stab{\cT}}$ in the second.  Notwithstanding these alterations, the pairing $\ip{\blank}{\blank}_{\cT}$ is right non-degenerate (\cite[Section~3.3]{CSFRT}), because it amounts to pairing indecomposables with their corresponding simple modules.
 
 Then one can show that it is possible to define
 	\begin{align*}
 		\stabcoindbar{\cT}{\cU}  =\adj{(\stabind{\cU}{\cT})}&\colon \Kgp{\fd \stab{\cT}} \to \Kgp{\fd \stab{\cU}}, \\
 		\stabindbar{\cT}{\cU} =\adj{(\stabcoind{\cU}{\cT})}&\colon \Kgp{\fd \stab{\cT}} \to \Kgp{\fd \stab{\cU}},
 	\end{align*}
 	by restricting $\ind{}{}$ and $\coind{}{}$ appropriately and taking adjoints with respect to \(\canform{\blank}{\blank}{\cT}\) and \(\canform{\blank}{\blank}{\cU}\).
We then set $\cX \alpha^{+}=\stabcoindbar{\cT}{\cU}$ and $\cX \alpha^{-}=\stabindbar{\cT}{\cU}$.
 
 Here, adjunction means
 \begin{equation}
 	\canform{\stabcoindbar{\cT}{\cU}[M]}{[U]}{\cU}=\canform{[M]}{\ind{\cU}{\cT}[U]}{\cT},
 \end{equation}
 for all \(T\in\cT\) and \(M\in\fd{\stab{\cT}}\) and similarly for \(\stabindbar{}{}\) and \(\coind{}{}\). That these equations hold tell us that $\ip{\blank}{\blank}\colon \cA \tensor_{\integ} \cX \to \sZ$ is a dinatural transformation.

Obtaining $\beta$ can be done in one of two ways (\cite[Section~4]{CSFRT}).  The philosophically correct way is to show that there is a map $p_{\cT}\colon \Kgp{\fpmod \stab{\cT}}\to \Kgp{\cT}$, where $\fpmod \stab{\cT}$ is the category of finitely presented $\stab{\cT}$-modules.  This map $p_{\cT}$ is essentially given by taking projective resolutions: we compute a projective resolution of our module, take the class of this and then note that we may pass from $\Kgp{\proj \cT}$ to $\Kgp{\cT}$ via the Yoneda isomorphism.  However to do this we have to pass via an exact lift of $\cC$, which entails some technicalities.

More concretely, which is helpful both for subsequent proofs and for calculations, any \(M\in\fpmod{\stab{\cT}}\) is isomorphic to the module \(\Ext{1}{\cC}{\blank}{X}|_{\cT}\) for some \(X\in\cC\), and for any such \(X\) we have
\begin{equation}
	\label{eq:p-vs-ind-coind}
	p_{\cT}[M]=\ind{\cC}{\cT}[X]-\coind{\cC}{\cT}[X].
\end{equation}
In particular, \(p_{\cT}\) depends only on \(\cT\ctsubcat\cC\), and not on the choice of exact lift.  

We define $\beta_{\cT}=-p_{\cT}$, with the sign choice being due to existing cluster conventions.  Key for us is that the following diagrams commute (\cite[Section~4.3]{CSFRT}):
  \[\begin{tikzcd}
 \Kgp{\fd \stab{\cU}} \arrow{r}{\beta_{\cU}} & \Kgp{\cU} \arrow{d}{\ind{\cU}{\cT}=\cA\alpha^{+}} \\
 \Kgp{ \fd \stab{\cT}} \arrow{r}{\beta_{\cT}} \arrow{u}{\cX \alpha^{+}=\stabcoindbar{\cT}{\cU}} & \Kgp{\cT}
 \end{tikzcd}\qquad
 \begin{tikzcd}
 \Kgp{\fd \stab{\cU}} \arrow{r}{\beta_{\cU}} & \Kgp{\cU} \arrow{d}{\coind{\cU}{\cT}=\cA \alpha^{-}} \\
 \Kgp{ \fd \stab{\cT}} \arrow{r}{\beta_{\cT}} \arrow{u}{\cX \alpha^{-}=\stabindbar{\cT}{\cU}} & \Kgp{\cT}
 \end{tikzcd}\]
 showing that $\beta$ is a factorization with components $\beta_{\cT}$, as required.
 
Hence, from a cluster category of finite rank $\cC$, we obtain an abstract cluster structure $\cC(\cC)=\cC(\cE,\cX,\beta,\cA,\ip{\blank}{\blank})$ where
\begin{enumerate}
	\item $E$ is the complete bi-directed graph on the set of cluster-tilting subcategories of $\cC$;
	\item $\cX \cT=\Kgp{\fd \stab{\cT}}$ and $\cX\alpha^{+}=\stabcoindbar{\cU}{\cT}$, $\cX\alpha^{-}=\stabindbar{\cU}{\cT}$ for $\alpha^{\pm}\colon \cT \to \cU$;
	\item $\beta_{\cT}=-p_{\cT}$ is (essentially) given by taking projective resolutions (or via \eqref{eq:p-vs-ind-coind});
	\item $\cA \cT=\Kgp{\cT}$ and $\cA\alpha^{+}=\ind{\cT}{\cU}$, $\cA\alpha^{-}=\coind{\cT}{\cU}$ for $\alpha^{\pm}\colon \cT \to \cU$; and
	\item $\ip{\blank}{\blank}$ has $\cT$-component given by $\ip{[T]}{[M]}_{\cT}=\dim_{\bK} M(T)$.
\end{enumerate}

\sectionbreak
\section{Morphisms of representations}\label{s:morphisms-of-reps}
	
In this final section, we look to move beyond objects of $\ACScat$ and to morphisms; as in any category, this is where the interesting things really are.

There will be plenty left for future exploration, as we concentrate on two particular types of morphism.  The first is to show that our framework encompasses what is known already, in the form of (rooted) cluster morphisms between cluster algebras.  More precisely, we show in Section~\ref{ss:mor-from-RCM} that given two cluster algebras and a rooted cluster morphism between them, there is an induced morphism of the associated abstract cluster structures.

The second aspect we cover briefly, in Section~\ref{ss:classifying-mor}, is morphisms between cluster representations (to use the language of this Part) of different types.  This is an attempt to formalise what it means for a cluster category to decategorify to a cluster algebra, or for a surface to be a geometric model for a cluster algebra.  Specifically, we assert that a reasonable definition is that there is a morphism between the respective abstract cluster structures, perhaps with particular properties.  That is, while we cannot say that a cluster algebra and a geometric model for it are ``the same'', we can claim that they have the same underlying cluster combinatorics, as captured by their abstract cluster structures.  We will use the example of the Grassmannian cluster algebra $\curly{O}(\mathrm{Gr}(2,6))$ and its surface model by triangulations of a hexagon, since the latter was already introduced in Section~\ref{ss:triangulations}.
	
We will attempt to simplify matters somewhat by working only in the non-quantum case, i.e.\ with abstract cluster structures and commutative cluster algebras.

\subsection{Morphisms of exchange trees}\label{ss:mor-of-exch-tree}

In order to provide technical underpinning for the main result of the next subsection, we first give some constructions associated to exchange trees.  First, recall\footnote{We have made some minor adjustments to notation here in order to help disambiguation, given what will shortly follow, e.g.\ adding $\vec{\ }$ to the name of the edges.} that the directed exchange tree $E(\ex,\cB)$ associated to $\ex \subseteq \cB$ has vertices the admissible sequences $\uk\in \curly{K}(\ex,\cB)$ and arrows $\vec{\mu}_{k}\colon \uk\to (k,\uk)$ (where $(\underline{a},\underline{b})$ means the concatenation of $\underline{a}$ and $\underline{b}$). The exchange tree is rooted, with root the empty sequence $()$.

In more detail, we recall the following definition.  Given $\uk\in \curly{K}(\ex,\cB)$, set
\[ \uk_{\leq i}= (k_{i},k_{i-1},\dotsc ,k_{1})\]
(with the convention that $\uk_{\leq 0}=()$) and
\[ \mu_{\uk_{\leq i}}(\ex)=\mu_{k_{i}}(\dotsm (\mu_{k_{2}}(\mu_{k_{1}}(\ex)))) \]
	for $\mu_{j}\colon \mu_{\uk_{\leq j-1}}(\ex)\to \mu_{\uk_{\leq j}}(\ex)$ the bijections\footnote{Originally we defined the set $\mu_{k}(\ex)$, with $\mu_{k}(\blank)$ simply being notation, but later we upgraded this via the bijections referred to here, so that the set $\mu_{k}(\ex)$ is indeed the image of $\ex$ under $\mu_{k}$, so no ambiguity in fact arises.} obtained iteratively as described on page~\pageref{pg:mut-bij}.

\begin{definition*}[{Definition~\ref{d:admissible-mut-seq}; cf.\ \cite[Definition~1.3]{ADS}}] 
	Let $\cB$ be a finite set and $\ex \subseteq \cB$.
	
	Denote by $\curly{K}(\ex,\cB)$ the set of all tuples $\uk=(k_{r},\dotsc ,k_{1})$, such that for all $1\leq i \leq r$, $k_{i}\in \mu_{\uk_{\leq i-1}}(\ex)$, including the empty tuple $()$.
	
	We say that an element $\uk\in \curly{K}(\ex,\cB)$ is an $(\ex,\cB)$-admissible (mutation) sequence, of length $\card{\uk}=r$.
\end{definition*}

By construction, $\uk_{\leq i}\in \curly{K}(\ex,\cB)$ for all $1\leq i\leq \card{\uk}$.
	
The usual construction of the (undirected) exchange tree has a root initial seed, the remaining vertices being seeds\footnote{In practice, the exchange matrix is often suppressed, so that the vertices of the exchange graph are considered to be the clusters; in the presence of assumptions such that one knows that clusters determine their seeds (as originally conjectured in \cite{FZ-CA2}), this is natural.} obtained by one-step mutation and an edge for each such mutation.  The next construction is closer to this than the definition above, but rather than using cluster variables themselves, we just record their labels; the role this plays in relation to morphisms of cluster algebra and abstract cluster structures will become clearer later.

From the directed exchange tree $E(\ex,\cB)$, we obtain, as previously, a path category $\cE(\ex,\cB)$; by contrast to Section~\ref{ss:def-of-ACS}, we take the ordinary path category, as $E(\ex,\cB)$ is not signed.  Note that we could also consider the \emph{bi}directed exchange tree, with a pair of edges forming a 2-cycle for each one-step mutation and the construction below would go through with only the obvious changes required, but for simplicity we will use the directed exchange tree for now.

Let us define a functor $\mu_{\cB}\colon \cE(\ex,\cB)\to \Setcat$ using the definitions of Sections~\ref{ss:toric-frames} and \ref{ss:QCAs} similarly to the discussion in Section~\ref{ss:def-of-ACS}, we could think of this as a (pre)sheaf of sets on the graph $E(\ex,\cB)$. Specifically, for each vertex of the exchange tree, we put an indexing set for the associated cluster variables.

For a set $S$, let us write $S^{0}$ for $S\disjointunion \{0\}$; for all sets to which we apply this notation, we will have $0\notin S$.  Indeed, $0$ could easily be $*$ or $\bullet$ or some other symbol, provided none of our sets contain this.  We use $0$ because $\log 1=0$ and, as we will see, this additional element provides a means to record specialisation of variables to integers, but at the tropical level.

First, we define $\mu_{\cB}$ on objects $\uk\in \cE(\ex,\cB)$ (the objects of the path category being the vertices of the underlying graph) by 
\[ \mu_{\cB}(\uk)=\mu_{\uk}(\cB)^{0} \]
for $\mu_{\uk}(\cB)$ as defined in \eqref{eq:mu-k-B}.  Then it suffices to specify $\mu$ on paths of length one, i.e.\ arrows in $E(\ex,\cB)$.  For $\vec{\mu}_{k}\colon \uk\to (k,\uk)$, we define
\[ \mu_{\cB}(\vec{\mu}_{k}) = \mu_{k}^{0} \]
where $\mu_{k}^{0}$ is the bijection $\mu_{k}^{0}\colon \mu_{\uk}(\cB)^{0}\to \mu_{(k,\uk)}(\cB)^{0}$ such that $\mu_{k}^{0}|_{\mu_{\uk}(\cB)}=\mu_{k}$ (the bijection $\mu_{k}\colon \mu_{\uk}(\cB)\to \mu_{(k,\uk)}(\cB)$ defined immediately prior to Definition~\ref{d:admissible-mut-seq}) and $\mu_{k}^{0}(0)=0$.

\begin{remark}
	In principle, given $\mu_{k}^{0}$ (or just $\mu_{k}$) its domain, $\mu_{\uk}(\cB)^{0}$ (respectively, $\mu_{\uk}(\cB)$), tells us that we are mutating in direction $k$ from the $\uk$-iterated mutation of $\cB$.  That is, the notation hides the previous mutations, which may be unhelpful.  When we work more with admissible sequences below, we will want to be more careful about this, so to avoid confusion, we will sometimes enhance the notation to also record $\uk$: that is, we will write $\mu_{k,\uk}^{0}$ for the bijection $\mu_{k}^{0}\colon \mu_{\uk}(\cB)^{0}\to \mu_{(k,\uk)}(\cB)^{0}$ such that $\mu_{k}^{0}|_{\mu_{\uk}(\cB)}=\mu_{k,\uk}$, also enhancing the notation so that we have $\mu_{k,\uk}\colon \mu_{\uk}(\cB)\to \mu_{(k,\uk)}(\cB)$ for the bijection defined immediately prior to Definition~\ref{d:admissible-mut-seq}), again with $\mu_{k,\uk}^{0}(0)=0$.
	
	Then the functor $\mu_{\cB}$ assigns to the path of length one $\uk\stackrel{\vec{\mu}_{k}}{\to}(k,\uk)$ the bijection $\mu_{k,\uk}^{0}$.
	
	Here we see that if we had used the bidirected exchange graph construction, we would naturally assign $(\mu_{k,\uk}^{0})^{-1}$ to the reverse arrow $\uk\from (k,\uk)$.
\end{remark} 

\begin{proposition}
	The above assignment $\mu_{\cB}(\uk)=\mu_{\uk}(\cB)^{0}$ and $\mu_{\cB}(\vec{\mu}_{k})=\mu_{k,\uk}^{0}$ may be extended via composition of functions to define a functor $\mu_{\cB}\colon \cE(\ex,\cB)\to \Setcat$.
\end{proposition}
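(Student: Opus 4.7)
The plan is to invoke the universal property of the free category on a directed graph. Since $\cE(\ex,\cB)$ is the ordinary path category of the directed graph $E(\ex,\cB)$ (no relations being imposed here, by contrast with the signed path category construction of Section~\ref{ss:def-of-ACS}), it is the free category generated by $E(\ex,\cB)$: specifying a functor out of $\cE(\ex,\cB)$ is equivalent to specifying a morphism of directed graphs from $E(\ex,\cB)$ into the underlying graph of the target. Accordingly, to define $\mu_{\cB}$ it suffices to verify that our assignment on vertices and arrows yields such a graph morphism into $\Setcat$; functoriality will then follow automatically from the universal property.

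First, I would observe that the object assignment is well-defined: each admissible sequence $\uk \in \curly{K}(\ex,\cB)$ is sent to the set $\mu_{\uk}(\cB)^{0}$, defined via \eqref{eq:mu-k-B} together with adjoining $0$. Next, I would check the arrow assignment. Each arrow of $E(\ex,\cB)$ has the form $\vec{\mu}_{k}\colon \uk\to (k,\uk)$ with $k\in \mu_{\uk}(\ex)$, and we send it to $\mu_{k,\uk}^{0}\colon \mu_{\uk}(\cB)^{0}\to \mu_{(k,\uk)}(\cB)^{0}$. Using the enhanced notation $\mu_{k,\uk}^{0}$ (which records the source configuration $\uk$ explicitly, as discussed in the remark preceding the proposition) it is immediate that the source and target of $\mu_{k,\uk}^{0}$ match those required by the graph morphism $\mu_{\cB}$. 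Moreover $\mu_{k,\uk}^{0}$ is a bijection, since it restricts on $\mu_{\uk}(\cB)$ to the bijection $\mu_{k,\uk}$ constructed just before Definition~\ref{d:admissible-mut-seq} and fixes $0$; in particular it is a morphism in $\Setcat$.

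Having established that the data constitutes a morphism of directed graphs $E(\ex,\cB)\to \Setcat$, the universal property extends it uniquely to a functor $\mu_{\cB}\colon \cE(\ex,\cB)\to \Setcat$. Explicitly, a non-identity morphism in $\cE(\ex,\cB)$ from $\uk_{0}$ to some $\uk_{n}$ is a path
\[ \uk_{0}\xrightarrow{\vec{\mu}_{k_{1}}} \uk_{1}\xrightarrow{\vec{\mu}_{k_{2}}} \cdots \xrightarrow{\vec{\mu}_{k_{n}}} \uk_{n} \]
with $\uk_{i}=(k_{i},\uk_{i-1})$, and $\mu_{\cB}$ sends it to the composition $\mu_{k_{n},\uk_{n-1}}^{0}\circ \cdots \circ \mu_{k_{1},\uk_{0}}^{0}$; identities are sent to identities by the empty composition convention. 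There is no real obstacle to this proof; the only point requiring any care is the bookkeeping in the subscript $\uk$ on $\mu_{k,\uk}^{0}$, to ensure composability of the assigned bijections matches composability of paths, which is precisely what the enhanced notation is designed to track.
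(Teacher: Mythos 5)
Your proof is correct and takes essentially the same route as the paper: define $\mu_{\cB}$ on the generating arrows and extend along paths by composition, with nothing further to verify. The only cosmetic difference is that you package this via the universal property of the path category as the free category on $E(\ex,\cB)$, whereas the paper argues directly by induction on path length and appeals to acyclicity of the exchange tree; your freeness argument in fact shows the acyclicity remark is not needed.
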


\begin{proof}
	Given $\uk\stackrel{\vec{\mu}_{k}}{\to}(k,\uk)\stackrel{\vec{\mu}_{l}}{\to}(l,k,\uk)$, we intend to define $\mu_{\cB}$ by assigning to this path of length two the composition $\mu_{l,(k,\uk)}^{0}\circ \mu_{k,\uk}^{0}$.  This is well-defined, and the same is true by induction for longer paths.  The directed exchange tree is acyclic, so there is no obstruction to extending $\mu_{\cB}$ to a functor.
\end{proof}

With our sincere apologies for subjecting the reader to yet more complex notation, we will also have need to work with subsequences and define certain maps iteratively.  To enable this, we extend the notation $\uk_{\leq i}$ and $\mu_{\uk_{\leq i}}(\ex)$ as follows.

For $\uk=(k_{r},\dotsc ,k_{1})\in \curly{K}(\ex,\cB)$ and $j\leq r$, define
$\mu_{\uk_{\leq j}}\colon \cB \to \mu_{\uk_{\leq j}}(\cB)$ by
\[ \mu_{\uk_{\leq j}} = \mu_{k_{j},\uk_{\leq j-1}}\circ \mu_{k_{j-1},\uk_{\leq j-2}}\circ \dotsm \circ \mu_{k_{2},\uk_{\leq 1}}\circ \mu_{k_{1},\uk_{\leq 0}}. \]
For $j=r=\card{\uk}$, we write $\mu_{\uk}\colon \cB \to \mu_{\uk}(\cB)$ and let $\mu_{\uk}^{0}\colon \cB^{0}\to \mu_{\uk}(\cB)^{0}$ be the extension of this with $\mu_{\uk}^{0}(0)=0$.  

We note the following:
\begin{itemize}
	\item Since $\uk$ is assumed to be admissible, all of these maps also restrict to maps \hfill \hfill \linebreak $\mu_{\uk_{\leq j}}\colon \ex \to \mu_{\uk_{\leq j}}(\ex)$ and $\mu_{\uk}\colon \ex \to \mu_{\uk}(\ex)$.  We will not overload the notation further by indicating the restriction.
	\item Since all the maps involved in the composition above are bijections, so are $\mu_{\uk_{\leq j}}$ and $\mu_{\uk}$.
	\item The image of $\mu_{\uk}\colon \cB \to \mu_{\uk}(\cB)$ is $\mu_{\uk}(\cB)$, since $\mu_{\uk}$ is in particular surjective. That is, the two uses of the notation $\mu_{\uk}(\cB)$ are compatible.
	\item By extending with sending $0\mapsto 0$ as above, we also have $\mu_{\uk_{\leq j}}^{0}$ and $\mu_{\uk}^{0}$ variants, with domains and codomains extended by disjoint union with $\{0\}$.
\end{itemize}

To reiterate, since there are (deliberately) many similar but different uses of $\mu$ above,
\begin{enumerate}
	\item the admissible sequences $\uk$ have admissible subsequences $\uk_{\leq i}$;
	\item the directed exchange tree has vertices the admissible sequences $\uk\in \curly{K}(\ex,\cB)$ and arrows $\uk \stackrel{\vec{\mu}_{k}}{\to} (k,\uk)$ corresponding to one-step mutation;
	\item\label{mu-k-B} there are sets $\mu_{\uk}(\cB)$ and $\mu_{\uk}(\ex)$ obtained by an iterated construction and bijections $\mu_{\uk}$ whose domains are $\cB$ and $\ex$ and whose images are these sets;
	\item the claims in \ref{mu-k-B} hold, \emph{mutatis mutandis}\footnote{We make no apology for our satisfaction in being able to legitimately use this phrase in the current context.}, 
	for admissible subsequences $\uk_{\leq i}$ of an admissible sequence $\uk$ as well as for the sets and maps with the additional ${}^{0}$ decoration indicating ``extension by $0$'';
	\item the functor $\mu_{\cB}\colon \cE(\ex,\cB)\to \Setcat$ has $\mu_{\cB}(\uk)=\mu_{\uk}(\cB)^{0}$ and $\mu_{\cB}(\vec{\mu}_{k})=\mu_{k,\uk}^{0}$ (with $\vec{\mu}_{k}$ a path of length one corresponding to an arrow in $E(\ex,\cB)$), so writing $\vec{\mu}_{\uk}$ for the canonical path of length $\card{\uk}$ from $()$ to $\uk$ determined by $\uk$, we have $\mu_{\cB}(\vec{\mu}_{\uk})=\mu_{\uk}^{0}$.
\end{enumerate}
We hope this summary serves to persuade the reader that the overloading of notation here is in fact fully consistent.  Nevertheless, we will still need to take care to type-check statements as we proceed.

Next, in preparation for examining morphisms of abstract cluster structures arising from morphisms of cluster algebras, we want to investigate the consequences of having a map between two indexing sets in relation to the functors introduced above.

\begin{definition}\label{d:phi-ex-adm} Let $\cB_{1}$ and $\cB_{2}$ be countable sets with distinguished subsets $\ex_{1}\subseteq \cB_{1}$, $\ex_{2}\subseteq \cB_{2}$.  As above, for a set $S$, write $S^{0}$ for $S\disjointunion \{0\}$ (implicitly assuming $0\notin S$).
	
	We say that a function $\phi\colon \cB_{1}^{0}\to \cB_{2}^{0}$ is $\ex$-admissible with respect to $\cB_{1}^{0}$ and $\cB_{2}^{0}$ (or simply, with respect to its domain and codomain) if $\phi(0)=0$ and $\phi(\ex_{1})\subseteq \ex_{2}^{0}$.
\end{definition}

Then if $\phi$ is $\ex$-admissible, there is a well-defined induced function $\ex_{1}^{0}\to \ex_{2}^{0}$ obtained by restricting the domain and codomain of $\phi$ appropriately.  We will also denote this restriction by $\phi$.

\begin{lemma}\label{l:mu-admissible} For $\uk\in \curly{K}(\ex,\cB)$, the bijections $\mu_{k,\uk}^{0}\colon \mu_{\uk}(\cB)^{0}\to \mu_{(k,\uk)}(\cB)^{0}$ (and respectively $\mu_{\uk}^{0}\colon \cB^{0} \to \mu_{\uk}(\cB)^{0}$) are $\ex$-admissible with respect to their domains and codomains where the distinguished subsets are $\mu_{\uk}(\ex)$ and $\mu_{(k,\uk)}(\ex)$ (respectively $\ex$ and $\mu_{\uk}(\ex)$).
\end{lemma}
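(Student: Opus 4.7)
The plan is to verify the two admissibility conditions directly from the definitions, first for one-step mutations and then extending to arbitrary admissible sequences by composition.

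First, I would handle $\mu_{k,\uk}^{0}$. By construction, this map is the extension of the bijection $\mu_{k,\uk}\colon \mu_{\uk}(\cB)\to \mu_{(k,\uk)}(\cB)$ by sending $0\mapsto 0$, so the first condition of $\ex$-admissibility is immediate. For the second condition, recall that $\mu_{k,\uk}$ acts as the identity on elements of $\mu_{\uk}(\cB)\setminus\{k\}$ and sends $k$ to $\mu_{\mu_{\uk}(\cB)}(k)\in \mu_{(k,\uk)}(\cB)$. Since $(k,\uk)$ being admissible requires $k\in \mu_{\uk}(\ex)$, unwinding the definitions (equations~\eqref{eq:mu-k-B} and \eqref{eq:mu-k-ex}) yields
\[ \mu_{(k,\uk)}(\ex) = (\mu_{\uk}(\ex)\setminus\{k\})\union\{\mu_{\mu_{\uk}(\cB)}(k)\}, \]
and therefore $\mu_{k,\uk}(\mu_{\uk}(\ex))=\mu_{(k,\uk)}(\ex)$. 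Together with $\mu_{k,\uk}^{0}(0)=0$, this gives $\mu_{k,\uk}^{0}(\mu_{\uk}(\ex))\subseteq \mu_{(k,\uk)}(\ex)^{0}$, as required.

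Next, for the iterated map $\mu_{\uk}^{0}$, I would argue by induction on the length of $\uk$. The base case $\uk=()$ is trivial, as $\mu_{()}^{0}=\id_{\cB^{0}}$. For the inductive step, write $\uk=(k_{r},\uk_{\leq r-1})$ and use the factorization
\[ \mu_{\uk}^{0} = \mu_{k_{r},\uk_{\leq r-1}}^{0}\circ \mu_{\uk_{\leq r-1}}^{0}. \]
By the inductive hypothesis, $\mu_{\uk_{\leq r-1}}^{0}$ is $\ex$-admissible, so it sends $0\mapsto 0$ and $\ex$ into $\mu_{\uk_{\leq r-1}}(\ex)^{0}$. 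The preceding paragraph then gives that $\mu_{k_{r},\uk_{\leq r-1}}^{0}$ sends $\mu_{\uk_{\leq r-1}}(\ex)^{0}$ into $\mu_{\uk}(\ex)^{0}$. Composing these inclusions, and noting that both maps preserve $0$, establishes $\ex$-admissibility of $\mu_{\uk}^{0}$.

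There is no real obstacle here: the result is essentially bookkeeping, ensuring that our definitions of $\mu_{\uk}(\ex)$ and of $\ex$-admissibility are consistent with one another. The only point requiring minor care is to keep track of which domain/codomain pair (and hence which distinguished ``exchangeable'' subset) each map is being tested against, which is exactly what the enhanced notation $\mu_{k,\uk}$ versus $\mu_{k}$ was introduced to manage.
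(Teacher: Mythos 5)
Your proof is correct and follows essentially the same route as the paper, which simply observes that the claims follow from the definitions since mutable elements are replaced by mutable elements; your write-up just makes the one-step verification and the induction over the length of $\uk$ explicit.
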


\begin{proof}
	The claims follow immediately on examination of the definitions of $\mu_{k,\uk}$ and $\mu_{\uk}$: mutable elements are replaced by mutable elements.
\end{proof}

Given such an $\ex$-admissible function $\phi$, our immediate goal is to construct induced functions $\phi_{\uk}\colon \mu_{\cB_{1}}(\uk)\to \mu_{\cB_{2}}(F\uk)$, where (as above) $\mu_{\cB_{1}}(\uk)=\mu_{\uk}(\cB_{1})^{0}$ is the indexing set\footnote{Strictly, this set after disjoint union with $\{0\}$.} obtained by mutation along an $(\ex_{1},\cB_{1})$-admissible sequence $\uk$  and $\mu_{\cB_{2}}(F\uk)=\mu_{F\uk}(\cB_{2})^{0}$ similarly, with respect to an $(\ex_{2},\cB_{2})$-admissible mutation sequence $F\uk$ obtained from $\uk$ via $\phi$.

Consider the empty sequence $()$.  For the purposes of the iterative definitions to come, let us set $\phi_{()}=\phi$ and $F()=()$.  If $\uk$ is an admissible sequence, since $\uk_{\leq 0}=()$ by convention, we also have $\phi_{\uk_{\leq 0}}=\phi_{()}=\phi$ and $F\uk_{\leq 0}=F()=()$ for any admissible sequence $\uk$.

Let us next consider the $(\ex_{1},\cB_{1})$-admissible sequence $(k)$, where $k\in \ex_{1}$; that is, we make a single one-step mutation from our initial seed.

Since $\phi$ is $\ex$-admissible, either $\phi(k)\in \ex_{2}$ or $\phi(k)=0$.  We define $F(k)$ by
\[ F(k) = \begin{cases} (\phi(k)) & \text{if}\ \phi(k)\in \ex_{2} \\ () & \text{if}\ \phi(k)=0 \end{cases}. \]
Note in particular that, since $\phi$ is $\ex$-admissible, $F(k)$ is $(\ex_{2},\cB_{2})$-admissible.

Since this definition is key to understanding what follows, we give a little extra commentary.  We have a valid mutation direction, $k\in \ex_{1}$, and under $\phi$, we may have that its image $\phi(k)$ is also a valid mutation direction, i.e.\ $\phi(k)\in \ex_{2}$.  In this case, it is natural to ``pair up'' $(k)$ with $F(k)=(\phi(k))$, and then both $(k)$ and $F(k)$ are admissible sequences with respect to their natural domains.

The case $\phi(k)=0$ is intended to model specialisation of the corresponding variable to an integer, we shall see when we discuss rooted cluster morphisms below.  That is, at this point, the associated variable stops being mutable and no further mutations can be made in this direction.  We then set as the image of $(k)$ the empty sequence $()$.  It is natural to think of this in terms of the image of the map $F$ from $E(\ex_{1},\cB_{1})$ to $E(\ex_{2},\cB_{2})$ induced by $\phi$ as being the contraction of edges in $E(\ex_{1},\cB_{1})$ in directions $k$ where $\phi(k)\notin \ex_{2}$.
	
Next, we define an induced map $\phi_{(k)}\colon \mu_{\cB_{1}}((k))\to \mu_{\cB_{2}}(F(k))$, recalling that $\mu_{\cB_{1}}((k))=\mu_{k}(\cB_{1})^{0}$ and $\mu_{\cB_{2}}(F(k))=\mu_{F(k)}(\cB_{2})^{0}$.  The definition of $\phi_{(k)}$ will depend on whether $\phi(k)$ is exchangeable or zero (these being the two options, since $k\in \ex_{1}$ and since $\phi$ is $\ex$-admissible).

We set
\[ \phi_{(k)} = \begin{cases} \mu_{\phi_{()}(k),F()}^{0} \circ \phi_{()} \circ (\mu_{k,()}^{0})^{-1} & \text{if}\ \phi_{()}(k)\neq 0 \\ \phi_{()} \circ (\mu_{k,()}^{0})^{-1} & \text{if}\ \phi_{()}(k)= 0 \end{cases} .\]

\begin{remark} We have written empty sequences where they are not necessary (e.g.\ $\phi_{()}=\phi$, $F()=()$) to help with comparison with the definition to come, making it clear that this is the base case for the construction.
	
	We can also avoid the case split if we extend the notation $\mu_{k,\uk}^{0}$ to include the possibility that $k=0$.  That is, if we put $\mu_{0,\uk}^{0}=\id_{\mu_{\uk}(\cB)}$, we can simply write
\begin{equation}\label{e:phi-k} \phi_{(k)} = \mu_{\phi_{()}(k),F()}^{0} \circ \phi_{()} \circ (\mu_{k,()}^{0})^{-1} \end{equation}
and observe that $\mu_{\phi_{()}(k),F()}^{0}=\id_{\mu_{F()}(\cB_{2})^{0}}$ when $\phi_{()}(k)=0$.
\end{remark}

Let us compute this more explicitly, in this initial case where this is feasible. First, note that
$\mu_{k,()}^{0}\colon \cB_{1}^{0}\to \mu_{k}(\cB_{1})^{0}$ is given by
\[ \mu_{k,()}^{0}(b)=\begin{cases} b & \text{if}\ b\neq k,0 \\ \mu_{\cB_{1}}(k) & \text{if}\ b=k \\ 0 & \text{if}\ b=0 \end{cases} \]
for $b\in \cB_{1}^{0}$, so that
\[ (\mu_{k,()}^{0})^{-1}(\mu_{k}(b))=\begin{cases} b & \text{if}\ \mu_{k}(b)\neq \mu_{k}(k)\ (=\mu_{\cB_{1}}(k)),0 \\ k & \text{if}\ \mu_{k}(b)=\mu_{k}(k)\ (=\mu_{\cB_{1}}(k)) \\ 0 & \text{if}\ \mu_{k}(b)=0 \end{cases} \]
for $\mu_{k}(b)\in \mu_{k}(\cB_{1})^{0}$.

Similarly,
\[ \mu_{\phi_{()}(k),F()}^{0}(\phi_{()}(b))=\begin{cases} \phi_{()}(b) & \text{if}\ \phi_{()}(b)\neq \phi_{()}(k), 0 \\ \mu_{\cB_{2}}(\phi_{()}(k)) & \text{if}\ \phi_{()}(b)=\phi_{()}(k) \\ 0 & \text{if}\ \phi_{()}(b)=0 \end{cases} \]
for $\phi_{()}(b)\in \Image{\phi_{()}}\subseteq \cB_{2}^{0}$.

Then if $\phi_{()}(k)\neq 0$, 
\begin{align*}
 \phi_{(k)}(\mu_{k}(b)) & = (\mu_{\phi_{()}(k),F()}^{0} \circ \phi_{()} \circ (\mu_{k,()}^{0})^{-1})(\mu_{k}(b)) \\	
 & = \begin{cases}
 	(\mu_{\phi_{()}(k),F()}^{0} \circ \phi_{()})(b) & \text{if}\ \mu_{k}(b)\neq \mu_{k}(k),0 \\
 	(\mu_{\phi_{()}(k),F()}^{0} \circ \phi_{()})(k) & \text{if}\ \mu_{k}(b)=\mu_{k}(k) \\
 	0 & \text{if}\ \mu_{k}(b)=0
 \end{cases} \\
 & = \begin{cases}
 	\phi_{()}(b) & \text{if}\ \mu_{k}(b)\neq \mu_{k}(k),0\ \text{and}\ \phi_{()}(b)\neq \phi_{()}(k) \\
 	\mu_{\cB_{2}}(\phi_{()}(k)) & \text{if}\ \mu_{k}(b)\neq \mu_{k}(k),0\ \text{and}\ \phi_{()}(b)=\phi_{()}(k) \\
 	\mu_{\cB_{2}}(\phi_{()}(k)) & \text{if}\ \mu_{k}(b)=\mu_{k}(k) \\
  	0 & \text{if}\ \mu_{k}(b)=0
 \end{cases} \\
 & = \begin{cases}
 	\phi_{()}(b) & \text{if}\ \mu_{k}(b)\neq \mu_{k}(k),0\ \text{and}\ \phi_{()}(b)\neq \phi_{()}(k) \\
 	\mu_{\cB_{2}}(\phi_{()}(k)) & \text{if \emph{either}}\ \mu_{k}(b)=\mu_{k}(k)\ \text{\emph{or}}\ (\mu_{k}(b)\neq \mu_{k}(k)\ \text{and}\ \phi_{()}(b)=\phi_{()}(k)) \\
 	0 & \text{if}\ \mu_{k}(b)=0
 \end{cases}. \\
\end{align*}
Correspondingly, if $\phi_{()}(k)=0$,
\begin{align*}
	\phi_{(k)}(\mu_{k}(b)) & = (\phi_{()} \circ (\mu_{k,()}^{0})^{-1})(\mu_{k}(b)) \\	
	& = \begin{cases}
		\phi_{()}(b) & \text{if}\ \mu_{k}(b)\neq \mu_{k}(k),0 \\
		\phi_{()}(k)=0 & \text{if}\ \mu_{k}(b)=\mu_{k}(k) \\
		0 & \text{if}\ \mu_{k}(b)=0
	\end{cases} \\
	& = \begin{cases}
		\phi_{()}(b) & \text{if}\ \mu_{k}(b)\neq \mu_{k}(k),0 \\
		0 & \text{if \emph{either}}\ \mu_{k}(b)=\mu_{k}(k)\ \text{\emph{or}}\ \mu_{k}(b)=0
	\end{cases}.
\end{align*}

Note in particular that $\phi_{(k)}(\mu_{k}(\ex_{1}))\subseteq \mu_{F(k)}(\ex_{2})^{0}$, so that $\phi_{(k)}$ is $\ex$-admissible with respect to its domain and codomain.  This follows from Lemma~\ref{l:mu-admissible} and $\ex$-admissibility of $\phi_{\leq 0}=\phi_{()}=\phi$.

For $\uk=(k_{r},\dotsc ,k_{1})$ admissible, as we saw with respect to $\uk_{\leq 0}$,  we have $F\uk_{\leq 1}=F(k_{1})$ and $\phi_{\uk_{\leq 1}}=\phi_{(k_{1})}$.

With these established as the base cases, we can now make general definitions of $F\uk$ and $\phi_{\uk}$ for $\uk$ admissible, as follows.

First, recall that for $\uk=(k_{r},\dotsc ,k_{1})\in \curly{K}(\ex,\cB)$ we set
\[ \mu_{\uk} = \mu_{k_{r},\uk_{\leq r-1}}\circ \mu_{k_{r-1},\uk_{\leq r-2}}\circ \dotsm \circ \mu_{k_{2},\uk_{\leq 1}}\circ \mu_{k_{1},\uk_{\leq 0}} \]
and $\mu_{\uk}^{0}\colon \cB^{0}\to \mu_{\uk}(\cB)^{0}$ is the extension by zero of this (and is equal to the compositions of the extension by zero of the constituent maps).

\begin{definition}\label{d:Fk-phi-k}
	Let $\cB_{1},\cB_{2}$ be countable sets with distinguished subsets $\ex_{i}\subseteq \cB_{i}$ ($i=1,2$) and let $\phi\colon \cB_{1}^{0}\to \cB_{2}^{0}$ be $\ex$-admissible.  Let $\uk\in \curly{K}(\ex_{1},\cB_{1})$ be $(\ex_{1},\cB_{1})$-admissible and set $r=\card{\uk}$.
	
	Define 
	\[ F\uk =\begin{cases}
		(\phi_{\uk_{\leq r-1}}(k_{r}),F\uk_{\leq r-1}) & \text{if}\ \phi_{\uk_{\leq r-1}}(k_{r})\in \mu_{F\uk_{\leq r-1}}(\ex_{2}) \\
		F\uk_{\leq r-1} & \text{if}\ \phi_{\uk_{\leq r-1}}(k_{r})=0
	\end{cases}\]
	and define $\phi_{\uk}\colon \mu_{\cB_{1}}(\uk)\to \mu_{\cB_{2}}(F\uk)$ by
	\[ \phi_{\uk}=\mu_{F\uk}^{0}\circ \phi_{()} \circ (\mu_{\uk}^{0})^{-1} .\]
\end{definition}

Note that, as $F$ and $\phi_{\bullet}$ appear in each other's definitions, we need to define these simultaneously.  But since the definition of $F\uk$ only requires $\phi_{\uk_{\leq r-1}}$, we can validly make the definitions iteratively, by induction on the length of $\underline{k}$.

\begin{lemma}\label{l:Fk-phi-admissible}
	With notation as in Definition~\ref{d:Fk-phi-k}, we have that
	\begin{enumerate}
		\item $F\uk$ is $(\ex_{2},\cB_{2})$-admissible and
		\item $\phi_{\uk}$ is $\ex$-admissible with respect to its domain and codomain.
	\end{enumerate}
\end{lemma}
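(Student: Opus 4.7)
The plan is to prove both statements simultaneously by induction on the length $r=\card{\uk}$. The base case $r=0$ is immediate: $F()=()$ is trivially $(\ex_2,\cB_2)$-admissible, and $\phi_{()}=\phi$ is $\ex$-admissible by hypothesis. This also handles $r=1$ effectively, since the formulas in Definition~\ref{d:Fk-phi-k} specialize correctly when $\uk_{\leq r-1}=\uk_{\leq 0}=()$, matching the explicit construction given earlier for $F(k)$ and $\phi_{(k)}$.

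For the inductive step, assume the claims hold for all admissible sequences of length less than $r$, and let $\uk=(k_r,\dotsc,k_1)$. The admissibility of $F\uk$ then falls out directly from the case analysis in its definition: in the case $\phi_{\uk_{\leq r-1}}(k_r)\in \mu_{F\uk_{\leq r-1}}(\ex_2)$, we have $F\uk=(\phi_{\uk_{\leq r-1}}(k_r),F\uk_{\leq r-1})$, where $F\uk_{\leq r-1}$ is $(\ex_2,\cB_2)$-admissible by the inductive hypothesis and the prepended element lies in the mutable set at that stage by the condition defining this branch; and in the case $\phi_{\uk_{\leq r-1}}(k_r)=0$, we have $F\uk=F\uk_{\leq r-1}$, so there is nothing to check.

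For the admissibility of $\phi_{\uk}$, unpack the formula $\phi_{\uk}=\mu_{F\uk}^{0}\circ \phi_{()} \circ (\mu_{\uk}^{0})^{-1}$. Each of the three maps in the composition sends $0$ to $0$, so $\phi_{\uk}(0)=0$. For $b\in \mu_{\uk}(\ex_1)$, Lemma~\ref{l:mu-admissible} implies that $(\mu_{\uk}^{0})^{-1}$ restricts to a bijection $\mu_{\uk}(\ex_1)\to \ex_1$, so $(\mu_{\uk}^{0})^{-1}(b)\in \ex_1$. Then $\ex$-admissibility of $\phi_{()}=\phi$ gives $\phi((\mu_{\uk}^{0})^{-1}(b))\in \ex_2^{0}$. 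Finally, having just established that $F\uk$ is $(\ex_2,\cB_2)$-admissible, another application of Lemma~\ref{l:mu-admissible} (this time applied to the iterated mutation composing $\mu_{F\uk}^{0}$, which is $\ex$-admissible as a composition of $\ex$-admissible maps) shows $\mu_{F\uk}^{0}$ restricts to $\ex_2^{0}\to \mu_{F\uk}(\ex_2)^{0}$, placing $\phi_{\uk}(b)$ in $\mu_{F\uk}(\ex_2)^{0}$ as required.

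There is no real obstacle here beyond bookkeeping: the inductive structure is forced by the iterative definitions of $F\uk$ and $\phi_{\uk}$, and the two parts of the lemma must be proved simultaneously because the admissibility of $\phi_{\uk}$ in part (b) relies on the admissibility of $F\uk$ from part (a) (so as to invoke Lemma~\ref{l:mu-admissible} for the ``outer'' mutation $\mu_{F\uk}^{0}$). The only mild subtlety is verifying that Lemma~\ref{l:mu-admissible}, stated for a single one-step mutation, iterates properly to the full composition defining $\mu_{\uk}^{0}$ and $\mu_{F\uk}^{0}$; this is a straightforward secondary induction that can be dispatched in a sentence.
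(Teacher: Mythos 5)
Your argument is correct and follows essentially the same route as the paper: a linked induction on $\card{\uk}$, with admissibility of $F\uk$ coming from the $\ex$-admissibility of $\phi_{\uk_{\leq r-1}}$, and admissibility of $\phi_{\uk}$ obtained by unpacking $\phi_{\uk}=\mu_{F\uk}^{0}\circ\phi\circ(\mu_{\uk}^{0})^{-1}$ and invoking Lemma~\ref{l:mu-admissible} together with the $\ex$-admissibility of $\phi$. Note only that your ``mild subtlety'' is vacuous, since Lemma~\ref{l:mu-admissible} is already stated for the iterated maps $\mu_{\uk}^{0}$ as well as the one-step ones, so no secondary induction is needed.
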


\begin{proof}
	The verification of these claims follows by a linked induction along the lines indicated above for the case $\uk=(k)$: the $(\ex_{2},\cB_{2})$-admissibility of $F\uk$ follows from the $\ex$-admissibility of $\phi_{\uk_{\leq r-1}}$ and the $\ex$-admissibility of $\phi_{\uk}$ follows from the admissibility of $F\uk$, the $\ex$-admissibility of $\mu_{\uk}^{0}$ and $\mu_{F\uk}^{0}$ (via Lemma~\ref{l:mu-admissible}) and $\phi$.
\end{proof}

The following is immediate from examining the (many) above definitions, in particular that of $\mu_{\uk}$:

\begin{lemma}\label{l:phi-k-uk}
	Consider the admissible sequence $(k,\uk)\in \curly{K}(\ex_{1},\cB_{1})$.  Then
	\[ F(k,\uk) = \begin{cases} (\phi_{\uk}(k),F\uk) & \text{if}\ \phi_{\uk}(k)\neq 0 \\ F\uk & \text{if}\ \phi_{\uk}(k)=0 \end{cases} \]
	and 
	\[ \phi_{(k,\uk)}=\mu_{\phi_{\uk}(k),F\uk}^{0} \circ \phi_{\uk} \circ (\mu_{k,\uk}^{0})^{-1} .\] \qed
\end{lemma}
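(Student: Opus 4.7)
The proof plan is to unpack Definition~\ref{d:Fk-phi-k} for the sequence $(k,\uk)$, which has length $r+1$, and match terms against the claimed formulas.

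First I would handle the statement about $F(k,\uk)$. Writing $(k,\uk)=(k_{r+1},k_r,\dotsc,k_1)$ with $k_{r+1}=k$, we have $(k,\uk)_{\leq r}=\uk$, so Definition~\ref{d:Fk-phi-k} gives
\[ F(k,\uk)=\begin{cases} (\phi_{\uk}(k),F\uk) & \text{if}\ \phi_{\uk}(k)\in \mu_{F\uk}(\ex_{2}) \\ F\uk & \text{if}\ \phi_{\uk}(k)=0. \end{cases} \]
Since $(k,\uk)$ is $(\ex_1,\cB_1)$-admissible we have $k\in \mu_{\uk}(\ex_1)$, and by Lemma~\ref{l:Fk-phi-admissible} the map $\phi_{\uk}$ is $\ex$-admissible, so $\phi_{\uk}(k)\in \mu_{F\uk}(\ex_2)^{0}$. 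Hence the two cases $\phi_{\uk}(k)\in \mu_{F\uk}(\ex_2)$ and $\phi_{\uk}(k)=0$ are exhaustive, giving the first assertion.

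For the formula for $\phi_{(k,\uk)}$, the key ingredient is the factorization
\[ \mu_{(k,\uk)}^{0}=\mu_{k,\uk}^{0}\circ \mu_{\uk}^{0}, \]
which is immediate from the inductive definition of $\mu_{\uk}$ recalled just before Definition~\ref{d:Fk-phi-k} (extended by $0\mapsto 0$ throughout). Inverting gives $(\mu_{(k,\uk)}^{0})^{-1}=(\mu_{\uk}^{0})^{-1}\circ (\mu_{k,\uk}^{0})^{-1}$. Then by Definition~\ref{d:Fk-phi-k},
\[ \phi_{(k,\uk)}=\mu_{F(k,\uk)}^{0}\circ \phi_{()}\circ (\mu_{\uk}^{0})^{-1}\circ (\mu_{k,\uk}^{0})^{-1}. \]
In the case $\phi_{\uk}(k)\neq 0$, the first part of the lemma gives $\mu_{F(k,\uk)}^{0}=\mu_{\phi_{\uk}(k),F\uk}^{0}\circ \mu_{F\uk}^{0}$ (again by the factorization of iterated mutation maps), so pulling $\mu_{F\uk}^{0}\circ \phi_{()}\circ (\mu_{\uk}^{0})^{-1}=\phi_{\uk}$ together yields the required expression $\mu_{\phi_{\uk}(k),F\uk}^{0}\circ \phi_{\uk}\circ (\mu_{k,\uk}^{0})^{-1}$.

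The case $\phi_{\uk}(k)=0$ is handled uniformly once one adopts the convention $\mu_{0,F\uk}^{0}=\id$ introduced in the remark preceding \eqref{e:phi-k}: then $F(k,\uk)=F\uk$ gives $\mu_{F(k,\uk)}^{0}=\mu_{F\uk}^{0}=\mu_{\phi_{\uk}(k),F\uk}^{0}\circ \mu_{F\uk}^{0}$, and the same rearrangement as above delivers the formula. I do not anticipate any real obstacle here; the only point demanding care is keeping the two parallel inductions (on $F\uk$ and on $\phi_{\uk}$) aligned so that the identity $\mu_{F(k,\uk)}^{0}=\mu_{\phi_{\uk}(k),F\uk}^{0}\circ \mu_{F\uk}^{0}$ holds in both branches of the case split, which is exactly what the extend-by-identity convention for $\mu_{0,\bullet}^{0}$ ensures.
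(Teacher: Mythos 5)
Your proposal is correct and matches the paper's intent exactly: the paper declares this lemma immediate from Definition~\ref{d:Fk-phi-k} together with the composite form of $\mu_{\uk}$ (and the convention $\mu_{0,\bullet}^{0}=\id$), and your argument is precisely that unpacking, using the factorizations $\mu_{(k,\uk)}^{0}=\mu_{k,\uk}^{0}\circ\mu_{\uk}^{0}$ and $\mu_{F(k,\uk)}^{0}=\mu_{\phi_{\uk}(k),F\uk}^{0}\circ\mu_{F\uk}^{0}$ in both branches of the case split. The observation that $\ex$-admissibility of $\phi_{\uk}$ (Lemma~\ref{l:Fk-phi-admissible}) makes the two cases exhaustive is also the right justification for the first assertion.
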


Note that in the latter expression we again use the convention that $\mu_{\phi_{\uk}(k),F\uk}^{0}=\id_{\mu_{F\uk}(\cB_{2})^{0}}$ if $\phi_{\uk}(k)=0$, to avoid case splits.  Indeed, if we went one step further and adopted the convention that $(0,\underline{l})=\underline{l}$ for any sequence $\underline{l}$, we could also write $F(k,\underline{k})=(\phi_{\uk}(k),F\uk)$ without cases.

Consequently, from a pair of indexing sets $\cB_{1},\cB_{2}$ with $\ex_{i}\subseteq \cB_{i}$ and an $\ex$-admissible map $\phi\colon \cB_{1}^{0}\to \cB_{2}^{0}$, we have a function $F\colon \curly{K}(\ex_{1},\cB_{1})\to \curly{K}(\ex_{2},\cB_{2})$ sending $(\ex_{1},\cB_{1})$-admissible sequences to $(\ex_{2},\cB_{2})$-admissible sequences and also functions $\phi_{\uk}$ from $\mu_{\cB_{1}}(\uk)$ to $\mu_{\cB_{2}}(F\uk)$.  

Let us examine $F$ more closely.

\begin{lemma}\label{l:Fk-is-a-functor}
	Let $\cB_{1},\cB_{2}$ be countable sets with distinguished subsets $\ex_{i}\subseteq \cB_{i}$ ($i=1,2$) and let $\phi\colon \cB_{1}^{0}\to \cB_{2}^{0}$ be $\ex$-admissible.
	
	There is a functor $\bF\colon \cE(\ex_{1},\cB_{1})\to \cE(\ex_{2},\cB_{2})$ defined on objects $\uk\in \curly{K}(\ex_{1},\cB_{1})$ by $\bF\uk=F\uk$ and on morphisms $\vec{\mu}_{k}\colon \uk\to (k,\uk)$ by
	\[ \bF\vec{\mu}_{k}=\begin{cases}
		\vec{\mu}_{\phi_{\uk}(k)} & \text{if}\ \phi_{\uk}(k)\in \mu_{F\uk}(\ex_{2}) \\
		\id_{F\uk} & \text{if}\ \phi_{\uk}(k)=0
	\end{cases}. \]
\end{lemma}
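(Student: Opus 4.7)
The plan is to leverage the fact that $\cE(\ex_1,\cB_1)$ is the free category on the directed graph $E(\ex_1,\cB_1)$, which is acyclic (indeed, a rooted tree), so that defining $\bF$ on objects and elementary arrows---with matching source and target data---suffices to determine a functor. I would first check that $\bF$ is well-defined on objects: by Lemma~\ref{l:Fk-phi-admissible}, $F\uk$ is an $(\ex_2,\cB_2)$-admissible sequence and so is a vertex of $E(\ex_2,\cB_2)$, i.e.\ an object of $\cE(\ex_2,\cB_2)$.

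Next I would verify that $\bF$ is well-defined on the generating arrows. For $\vec{\mu}_{k}\colon \uk\to (k,\uk)$, the required condition is that $\bF\vec{\mu}_k$ is a morphism $\bF\uk=F\uk \to \bF(k,\uk)=F(k,\uk)$ in $\cE(\ex_2,\cB_2)$. Since $k\in \mu_{\uk}(\ex_1)$ and $\phi_{\uk}$ is $\ex$-admissible by Lemma~\ref{l:Fk-phi-admissible}, we have $\phi_{\uk}(k)\in \mu_{F\uk}(\ex_2)^{0}$, splitting into exactly the two cases of the statement. In the first case, Lemma~\ref{l:phi-k-uk} gives $F(k,\uk)=(\phi_{\uk}(k),F\uk)$, and $\phi_{\uk}(k)\in \mu_{F\uk}(\ex_2)$ means that $\vec{\mu}_{\phi_{\uk}(k)}$ is an honest arrow of $E(\ex_2,\cB_2)$ from $F\uk$ to $(\phi_{\uk}(k),F\uk)=F(k,\uk)$. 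In the second case, Lemma~\ref{l:phi-k-uk} gives $F(k,\uk)=F\uk$, so $\id_{F\uk}$ is a morphism with the correct source and target.

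Finally, because $E(\ex_1,\cB_1)$ is a rooted tree and hence acyclic, $\cE(\ex_1,\cB_1)$ is free on this graph: every non-identity morphism is uniquely a composition of elementary arrows, and there are no relations to impose beyond the associativity of path composition. Setting $\bF(\id_{\uk})=\id_{F\uk}$ and extending by $\bF(f_n\circ \dotsb \circ f_1)=\bF f_n\circ \dotsb \circ \bF f_1$ therefore determines a unique functor, provided only that the source--target matching survives composition---which follows immediately from the componentwise check above, since the target of $\bF\vec{\mu}_{k}$ is $F(k,\uk)$ regardless of the case, and this is precisely the source of $\bF\vec{\mu}_{l}$ for any subsequent arrow $\vec{\mu}_{l}\colon (k,\uk)\to (l,k,\uk)$.

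The only mildly subtle point---the ``obstacle'', such as it is---is the consistency of the contraction $\bF\vec{\mu}_{k}=\id_{F\uk}$ when $\phi_{\uk}(k)=0$ with subsequent compositions: one might worry that losing the arrow disrupts the concatenation of later arrows in the image. But Lemma~\ref{l:phi-k-uk} guarantees $F(k,\uk)=F\uk$ in precisely this case, so any following arrow $\vec{\mu}_{l}$ with source $(k,\uk)$ is mapped to an arrow or identity with source $F(k,\uk)=F\uk$, and the composition $\bF\vec{\mu}_{l}\circ \id_{F\uk}=\bF\vec{\mu}_{l}$ goes through cleanly.
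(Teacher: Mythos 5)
Your proposal is correct and is essentially the paper's argument: define $\bF$ on objects and on the generating arrows of the exchange tree and extend along paths, using that $\cE(\ex_{1},\cB_{1})$ is the path category of a tree. The only difference is cosmetic—where the paper verifies compatibility by an explicit four-case check on composable pairs $\vec{\mu}_{k},\vec{\mu}_{l}$, you invoke the universal property of the free path category and reduce everything to the source--target check supplied by Lemma~\ref{l:phi-k-uk} (together with Lemma~\ref{l:Fk-phi-admissible} for admissibility), which is a legitimate streamlining of the same idea.
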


\begin{proof}
	We have an assignment of objects: $\bF\uk=F\uk$ as defined in Definition~\ref{d:Fk-phi-k} is an object of $\cE(\ex_{2},\cB_{2})$ by Lemma~\ref{l:Fk-phi-admissible}.
	
	For functoriality, it suffices to check that we can extend the given specification of $\bF$ on paths of length one consistently under composition.  So, let $\vec{\mu}_{k}\colon \uk\to (k,\uk)$ and $\vec{\mu}_{l}\colon (k,\uk)\to (l,k,\uk)$ be composable morphisms in $\cE(\ex_{1},\cB_{1})$ corresponding to paths of length one (i.e.\ derived from arrows in $E(\ex_{1},\cB_{1})$).  
	
	There are four cases to consider, depending on the images of $k$ and $l$ under $\phi_{\uk}$ and $\phi_{(k,\uk)}$ respectively.
	\begin{enumerate}
		\item If $\phi_{\uk}(k)\in \mu_{F\uk}(\ex_{2})$ and $\phi_{(k,\uk)}(l)\in \mu_{F(k,\uk)}(\ex_{2})$ then
		\begin{align*}
			\bF(k,\uk) & = (\phi_{\uk}(k),F\uk), \\
			\bF(l,k,\uk) & = (\phi_{(k,\uk)}(l),\phi_{\uk}(k),F\uk), \\
			\bF\vec{\mu}_{k} & = \vec{\mu}_{\phi_{\uk}(k)}\quad \text{and} \\
			\bF\vec{\mu}_{l} & = \vec{\mu}_{\phi_{(k,\uk)}(l)}
		\end{align*}
		so that we may define $\bF(\vec{\mu}_{l}\circ \vec{\mu}_{k})= \vec{\mu}_{(\phi_{(k,\uk)}(l),\phi_{\uk}(k))}=\vec{\mu}_{\phi_{(k,\uk)}(l)}\circ\vec{\mu}_{\phi_{\uk}(k)}=\bF\vec{\mu}_{l}\circ \bF\vec{\mu}_{k}$.
		\item If $\phi_{\uk}(k)\in \mu_{F\uk}(\ex_{2})$ and $\phi_{(k,\uk)}(l)=0$ then
		\begin{align*}
			\bF(k,\uk) & = (\phi_{\uk}(k),F\uk), \\
			\bF(l,k,\uk) & = F(k,\uk)=(\phi_{\uk}(k),F\uk), \\
			\bF\vec{\mu}_{k} & = \vec{\mu}_{\phi_{\uk}(k)}\quad \text{and} \\
			\bF\vec{\mu}_{l} & = \id_{F(k,\uk)}
		\end{align*}
		so that we may define $\bF(\vec{\mu}_{l}\circ \vec{\mu}_{k})= \vec{\mu}_{\phi_{\uk}(k)}=\id_{F(k,\uk)}\circ\vec{\mu}_{\phi_{\uk}(k)}=\bF\vec{\mu}_{l}\circ \bF\vec{\mu}_{k}$.
		\item If $\phi_{F\uk}(k)=0$ and $\phi_{(k,\uk)}(l)\in \mu_{F(k,\uk)}(\ex_{2})$ then
		\begin{align*}
			\bF(k,\uk) & = F\uk, \\
			\bF(l,k,\uk) & = (\phi_{(k,\uk)}(l),(),F\uk)=(\phi_{\uk}(l),F\uk), \\
			\bF\vec{\mu}_{k} & = \id_{F\uk}\quad \text{and} \\
			\bF\vec{\mu}_{l} & = \vec{\mu}_{\phi_{\uk}(l)}
		\end{align*}
		so that we may define $\bF(\vec{\mu}_{l}\circ \vec{\mu}_{k})= \vec{\mu}_{\phi_{\uk}(l)}=\vec{\mu}_{\phi_{\uk}(l)}\circ\id_{F\uk}=\bF\vec{\mu}_{l}\circ \bF\vec{\mu}_{k}$.
		\item If $\phi_{\uk}(k)=\phi_{(k,\uk)}(l)=0$, then  
		\begin{align*} \bF(k,\uk) & =F(k,\uk)=F\uk, \\
		\bF(l,k,\uk) & = F(k,\uk)=F\uk, \\
		\bF\vec{\mu}_{k} & =\id_{F\uk}\quad \text{and} \\
		\bF\vec{\mu}_{l} & =\id_{F\uk}
		\end{align*}
		so that we may define $\bF(\vec{\mu}_{l}\circ \vec{\mu}_{k})=\id_{F\uk}=\id_{F\uk}\circ \id_{F\uk}=\bF\vec{\mu}_{l}\circ \bF\vec{\mu}_{k}$.
	\end{enumerate}
	Since $E(\ex_{1},\cB_{1})$ is a directed tree, hence acyclic, it follows that the specification of $\bF$ on paths of length one can be extended to obtain a functor as claimed.
\end{proof}

We now show that the $\phi_{\uk}$ in fact encode the data of a morphism of functors\footnote{Depending on our preferred perspective, we could also call this a morphism of representations (of categories, valued in $\Setcat$) or a morphism of sheaves of sets.} from $\mu_{\cB_{1}}$ to $\mu_{\cB_{2}}\circ \bF$. Bear in mind that $\bF$ also depends on the choice of $\phi$.

\begin{proposition}\label{p:phi-is-nat-transf}
	Let $\cB_{1},\cB_{2}$ be countable sets with distinguished subsets $\ex_{i}\subseteq \cB_{i}$ ($i=1,2$) and let $\phi\colon \cB_{1}^{0}\to \cB_{2}^{0}$ be $\ex$-admissible.
	
	Then $\phi$ induces a natural transformation $\phi\colon \mu_{\cB_{1}}\to \mu_{\cB_{2}}\circ \bF$.
\end{proposition}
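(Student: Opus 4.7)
The plan is to verify the naturality of $\phi$ by checking the naturality square on each generating morphism of $\cE(\ex_1,\cB_1)$, namely the paths of length one $\vec{\mu}_k\colon \uk\to (k,\uk)$, and then extending by composition. Since the directed exchange tree $E(\ex_1,\cB_1)$ is acyclic, any morphism in $\cE(\ex_1,\cB_1)$ is a unique composition of such elementary arrows, so functoriality of both $\mu_{\cB_1}$ and $\mu_{\cB_2}\circ \bF$ reduces the problem to the case of a single arrow.

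So first, I would fix $\vec{\mu}_k\colon \uk\to (k,\uk)$ and recall the two sides of the target square. On one side, $\mu_{\cB_1}(\vec{\mu}_k)=\mu^0_{k,\uk}$; on the other, by Lemma~\ref{l:Fk-is-a-functor},
\[
(\mu_{\cB_2}\circ \bF)(\vec{\mu}_k)=\begin{cases} \mu^0_{\phi_{\uk}(k),F\uk} & \text{if}\ \phi_{\uk}(k)\in \mu_{F\uk}(\ex_2), \\ \id_{\mu_{F\uk}(\cB_2)^0} & \text{if}\ \phi_{\uk}(k)=0. \end{cases}
\]
The naturality square then demands $\phi_{(k,\uk)}\circ \mu^0_{k,\uk}=(\mu_{\cB_2}\circ \bF)(\vec{\mu}_k)\circ \phi_{\uk}$. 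Using Lemma~\ref{l:phi-k-uk}, the defining expression for $\phi_{(k,\uk)}$ is already $\mu^0_{\phi_{\uk}(k),F\uk}\circ \phi_{\uk}\circ (\mu^0_{k,\uk})^{-1}$, under the convention that $\mu^0_{0,F\uk}=\id$. Hence, precomposing with $\mu^0_{k,\uk}$ cancels the rightmost factor and yields exactly $(\mu_{\cB_2}\circ \bF)(\vec{\mu}_k)\circ \phi_{\uk}$ in both subcases.

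Next, I would extend this argument to a path $\vec{\mu}_{k_r}\circ \cdots \circ \vec{\mu}_{k_1}$ corresponding to an admissible sequence $\uk=(k_r,\ldots,k_1)$ by induction on $r$. At each step, the inductive hypothesis provides the naturality square for $\uk_{\leq r-1}$, and the base case just established supplies the square for the arrow $\vec{\mu}_{k_r}$; pasting these two squares along the common edge gives naturality for the composite, using that $\mu_{\cB_2}\circ \bF$ respects composition (Lemma~\ref{l:Fk-is-a-functor}). This confirms naturality on arbitrary morphisms in $\cE(\ex_1,\cB_1)$.

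I do not anticipate a substantive obstacle here: once Lemma~\ref{l:phi-k-uk} is in hand, the verification is essentially bookkeeping with the convention $\mu^0_{0,F\uk}=\id$ which was introduced precisely to unify the two cases in the definition of $\phi_{(k,\uk)}$. The only point deserving care is to check the case $\phi_{\uk}(k)=0$ matches up: here $F(k,\uk)=F\uk$, so the codomain of $\phi_{(k,\uk)}$ equals that of $\phi_{\uk}$, and the square degenerates into the trivial identity $\phi_{\uk}\circ (\mu^0_{k,\uk})^{-1}\circ \mu^0_{k,\uk}=\phi_{\uk}$, which is self-evident.
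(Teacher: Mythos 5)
Your proposal is correct and follows essentially the same route as the paper: reduce naturality to the length-one arrows $\vec{\mu}_k\colon \uk\to (k,\uk)$ and observe that the resulting square is precisely the identity $\phi_{(k,\uk)}=\mu^0_{\phi_{\uk}(k),F\uk}\circ\phi_{\uk}\circ(\mu^0_{k,\uk})^{-1}$ of Lemma~\ref{l:phi-k-uk} (with the convention $\mu^0_{0,F\uk}=\id$). The only difference is that you spell out the routine pasting/induction step for longer paths, which the paper leaves implicit.
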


\begin{proof}
	Let us unpack the notation somewhat first. Let $i=1,2$. Recall that $\mu_{\cB_{i}}$ is a functor from $\cE(\ex_{i},\cB_{i})$ to $\Setcat$, where $\cE(\ex_{i},\cB_{i})$ is the (ordinary) path category associated to the directed exchange tree $E(\ex_{i},\cB_{i})$, so that the objects of $\cE(\ex_{i},\cB_{i})$ are the vertices of $E(\ex_{i},\cB_{i})$, namely the $(\ex_{i},\cB_{i})$-admissible sequences $\curly{K}(\ex_{i},\cB_{i})$.  The arrows in $E(\ex_{i},\cB_{i})$ are $\vec{\mu}_{k}\colon \uk\to (k,\uk)$ for $k\in \mu_{\uk_{\leq \card{\uk}-1}}(\ex_{i})$.
	
	The functor $\mu_{\cB_{i}}$ sends an object $\uk\in \curly{K}(\ex_{i},\cB_{i})$ to the set $\mu_{\cB_{i}}(\uk)\defeq \mu_{\uk}(\cB_{i})^{0}$ and is defined on morphisms by $\mu_{\cB_{i}}(\vec{\mu}_{\uk})=\mu_{\uk}^{0}\colon \cB_{i}^{0}\to \mu_{\uk}(\cB_{i})^{0}$.  
	
	Then the composition $\mu_{\cB_{2}}\circ \bF$ sends $\uk$ to $\mu_{\cB_{2}}(F\uk)=\mu_{F\uk}(\cB_{2})^{0}$ and similarly $\vec{\mu}_{\uk}$ is sent to $\mu_{F\uk}^{0}\colon \cB_{2}^{0} \to \mu_{F\uk}(\cB_{2})^{0}$.
	
	It will suffice to consider paths of length one, i.e.\ of the form $\mu_{\cB_{1}}(\vec{\mu}_{k})=\mu_{k,\uk}^{0}\colon \mu_{\uk}(\cB_{1})^{0}\to \mu_{(k,\uk)}(\cB_{1})^{0}$, where $\uk$ is an $(\ex_{1},\cB_{1})$-admissible sequence of length $\card{\uk}=r$.  Specifically, we see that to prove the claim, we need to establish that the following diagram commutes:
		\[\begin{tikzcd}[column sep=25pt]
			\mu_{\cB_{1}}(\uk) \arrow{r}[above]{\phi_{\uk}} \arrow{d}[left]{\mu_{\cB_{1}}(\vec{\mu}_{k})} & \mu_{\cB_{2}}(F\uk) \arrow{d}[right]{(\mu_{\cB_{2}}\circ \bF)(\vec{\mu}_{k})} \\ \mu_{\cB_{1}}((k,\uk)) \arrow{r}[above]{\phi_{(k,\uk)}} & \mu_{\cB_{2}}(F(k,\uk))
		\end{tikzcd} \]
	That is, inserting the aforementioned definitions of the functors, we require
			\[\begin{tikzcd}[column sep=25pt,row sep=25pt]
		\mu_{\uk}(\cB_{1})^{0} \arrow{r}[above]{\phi_{\uk}} \arrow{d}[left]{\mu_{k,\uk}^{0}} & \mu_{F\uk}(\cB_{2})^{0} \arrow{d}[right]{\mu_{F(k,\uk)}^{0}} \\ \mu_{(k,\uk)}(\cB_{1})^{0} \arrow{r}[above]{\phi_{(k,\uk)}} & \mu_{F(k,\uk)}(\cB_{2})^{0}
	\end{tikzcd} \]
	to commute.
	
	But this is the content of Lemma~\ref{l:phi-k-uk}, so we are done.
\end{proof}

\subsection{Induced morphisms}\label{ss:mor-from-RCM}

For $i=1,2$, let $\cC_{i}=\cC(\ex_{i},\cB_{i},\invfrozen_{i},\beta_{i})$ be cluster algebras, as in Section~\ref{ss:QCAs}, and let $\cC(\cC_{i})=\cC(\cE_{i},\cX_{i},\beta_{i},\cA_{i},\ip{\blank}{\blank}_{i})$ be the abstract cluster structures associated to them, as per Theorem~\ref{t:AQCS-from-QCA}.  Note that the graphs $E_{i}=E(\cC_{i})$ underlying $\cE_{i}=\cE(E_{i})$ are rooted, in the sense of Section~\ref{ss:connectedness}, since they are chosen to be the directed exchange trees associated to $\cC_{i}$. The root vertices are those labelled by $()$, the empty sequence in $\curly{K}=\curly{K}(\ex_{i},\cB_{i})$.  
	
Let $\sigma_{i}$ be the corresponding seed in $\cC_{i}$ (that is, $\sigma_{i}=(\ex_{i},\cB_{i},\invfrozen_{i},\beta_{i})$, the data determining $\cC_{i}$); the pair $(\cC_{i},\sigma_{i})$ is also called a \emph{rooted cluster algebra} in the literature\footnote{In effect, by construction and notation, all of our (quantum) cluster algebras are rooted, and as per the discussion of this paragraph, so are their abstract cluster structures.}

Recall that there is a canonical toric frame associated to an initial seed $\sigma=(\ex,\cB,\invfrozen,\beta)$, namely $M\colon \dual{\integ[\cB]}\to \curly{F}(\qtorus{}{}{\cB})$, given by $M(\dual{b})=x^{\dual{b}}$.  By mutation, we obtain the cluster variables, as $\mu_{\uk}M(\dual{b})$ for $\dual{b}\in \dual{\mu_{\uk}(\cB)}$, $\uk\in \curly{K}$.  As a shorthand, let us write $M(\dual{\cB})$ for the set $\{ M(\dual{b}) \mid \dual{b}\in \dual{\cB} \}$; since $\ex \subseteq \cB$, we also have $M(\dual{\ex})\defeq \{ M(\dual{b}) \mid \dual{b}\in \dual{\ex} \}\subseteq M(\dual{\cB})$.  Then $M(\dual{\cB})$ is the set of initial cluster variables and $M(\dual{\ex})$ the subset of these that are mutable.

We recall and make the following definitions, adapting the original corresponding ones in \cite{ADS} to our notation and terminology.

Recall the definition of an $(\ex,\cB)$-admissible sequence (Definition~\ref{d:admissible-mut-seq}) $\underline{k}\in \curly{K}(\ex,\cB)$ as a tuples such that $k_{i}\in \mu_{\uk_{\leq i-1}}(\ex)$ for all $i$.  Recall too that for $\ex_{i}\subseteq \cB_{i}$ ($i=1,2$) and $S\defeq S\disjointunion\{0\}$, a function $\phi\colon \cB_{1}^{0}\to \cB_{2}^{0}$ is said to be $\ex$-admissible if $\phi(0)=0$ and $\phi(\ex_{1})\subseteq \ex_{2}^{0}$ (Definition~\ref{d:phi-ex-adm}).

Then in Definition~\ref{d:Fk-phi-k} and Lemma~\ref{l:Fk-phi-admissible}, we obtained from an $\ex$-admissible function $\phi$ a function $F\colon \curly{K}(\ex_{1},\cB_{2})\to \curly{K}(\ex_{2},\cB_{2})$, sending $(\ex_{1},\cB_{1})$-admissible sequences to $(\ex_{2},\cB_{2})$-admissible sequences.  This definition and lemma are a substitute for the definition of biadmissible sequence given as \cite[Definition~2.1]{ADS}.  That definition refers to sequences of elements of cluster algebras, whereas our version moves this to the corresponding indexing sets.  Moreover, by asking for the $\ex$-admissibility property in $\phi$, every $(\ex_{1},\cB_{1})$-admissible sequence $\uk$ has a counterpart $F\uk$ that is $(\ex_{2},\cB_{2})$-admissible: the latter is no longer something to be checked for each $\uk$.

Next we adjust the key definition of \cite{ADS}, that of \emph{rooted cluster morphism}, to take account of the above.

For $i=1,2$, let $(\cC_{i},\sigma_{i})$ be rooted cluster algebras with seeds $\sigma_i = (\ex_{i},\cB_{i},\invfrozen_{i},\beta_{i})$. Let us assume we have an algebra homomorphism $f \colon \cC_{1}\to \cC_{2}$ such that the following are satisfied:
\begin{enumerate}
	\item $f(M_{1}(\dual{\cB_1})) \subseteq M_{2}(\dual{\cB_2}) \union \integ$ (that is, $f$ maps initial cluster variables of $(\cC_{1},\sigma_{1})$ to initial cluster variables of $(\cC_{2},\sigma_{2})$ or integers);
	\item $f(M_{1}(\dual{\ex_1})) \subseteq M_{2}(\dual{\ex_2}) \union \integ$ (that is, $f$ maps mutable initial cluster variables of $(\cC_{1},\sigma_{1})$ to mutable cluster variables of $(\cC_{2},\sigma_{2})$ or integers).
\end{enumerate}
We say such a homomorphism is \emph{compatible with the initial seeds} $\sigma_{1}$ and $\sigma_{2}$.

\begin{lemma}\label{l:RCM-induces-phi} Let $f\colon \cC_{1}\to \cC_{2}$ be an algebra homomorphism of rooted cluster algebras that is compatible with the initial seeds.  Then $f$ induces an $\ex$-admissible function $\phi\colon \cB_{1}^{0}\to \cB_{2}^{0}$.
\end{lemma}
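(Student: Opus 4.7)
The plan is to define $\phi$ by reading off how $f$ acts on the initial cluster variables $M_1(\dual{b})$ for $b\in\cB_1$, since by compatibility with the seeds each such image lands in $M_2(\dual{\cB_2})\union\integ$. Concretely, I would set $\phi(0)=0$, and for $b\in\cB_1$ put
\[
\phi(b)=\begin{cases} c & \text{if}\ f(M_1(\dual{b}))=M_2(\dual{c})\ \text{for some}\ c\in\cB_2,\\ 0 & \text{if}\ f(M_1(\dual{b}))\in\integ.\end{cases}
\]

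First I would verify that this is well-defined, which has two components. One: the two cases are mutually exclusive, because $M_2(\dual{c})=x^{\dual{c}}$ is a non-constant element of the quantum torus $\qtorus{q}{\lambda_2}{\cB_2}$ (indeed a basis element, hence not in $\bK\cdot 1$, let alone in $\integ$). Two: when the first case holds, the element $c\in\cB_2$ is unique, because the map $M_2\colon\dual{\cB_2}\to\curly{F}(\qtorus{q}{\lambda_2}{\cB_2})$ is injective---the images $\{x^{\dual{c}}\mid c\in\cB_2\}$ form part of the canonical $\bK$-basis of the quantum torus (Definition~\ref{d:q-torus}), so distinct $\dual{c}$ give distinct cluster variables.

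Next I would check $\ex$-admissibility in the sense of Definition~\ref{d:phi-ex-adm}, namely that $\phi(0)=0$ (immediate from the definition) and $\phi(\ex_1)\subseteq\ex_2^0$. For the latter, take $b\in\ex_1$. By the second compatibility hypothesis, $f(M_1(\dual{b}))\in M_2(\dual{\ex_2})\union\integ$. In the integer case, $\phi(b)=0\in\ex_2^0$ by construction. In the other case, $f(M_1(\dual{b}))=M_2(\dual{c})$ for some $c\in\ex_2$; by the uniqueness established above, this is the same $c$ as produced by the first hypothesis, so $\phi(b)=c\in\ex_2\subseteq\ex_2^0$.

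There is no serious obstacle here; the statement is essentially bookkeeping once the two compatibility conditions are written out. The only point that needs care is the disjointness and uniqueness argument, and this is handled by the basis property of $\{x^{\dual{c}}\}$ in the quantum torus, which was recorded explicitly in Lemma~\ref{l:can-basis-invar}. The more substantive content---that $\phi$ then propagates through mutation sequences to give the functor $\bF$ and the natural transformation of Proposition~\ref{p:phi-is-nat-transf}---is already done in the previous subsection; this lemma is merely the input that feeds that machinery.
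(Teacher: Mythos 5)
Your proposal is correct and follows essentially the same route as the paper: define $\phi$ by sending $b$ to $c$ when $f(M_{1}(\dual{b}))=M_{2}(\dual{c})$, to $0$ when the image is an integer, set $\phi(0)=0$, and then read off $\ex$-admissibility from the two compatibility hypotheses. Your extra well-definedness checks (disjointness of the two cases and uniqueness of $c$, via the canonical basis of the torus as in Definition~\ref{d:q-torus}) are left implicit in the paper's proof but are a harmless and sensible addition.
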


\begin{proof}
	Consider $\dual{b}\in \dual{\cB}_{1}$.  If $f(M_{1}(\dual{b}))\in M_{2}(\dual{\cB}_{2})$, there exists $\dual{c}\in \dual{\cB}_{2}$ such that $f(M_{1}(\dual{b}))=M_{2}(\dual{c})$.  In this situation, define $\phi(b)=c$.  Otherwise, $f(M_{1}(\dual{b}))\in \integ$; in this case, define $\phi(b)=0$.  Finally, let $\phi(0)=0$.  Hence, from $f$, we obtain a function $\phi\colon \cB_{1}^{0}\to \cB_{2}^{0}$.
	
	Then $\phi$ is $\ex$-admissible: $\phi(0)=0$ by construction and if $\dual{b}\in \dual{\ex}_{1}$, we have that $\phi(b)\in \ex_{2}^{0}$ since $f$ maps mutable initial variables to mutable variables or integers.
\end{proof}

Being compatible with the initial seed data is not, in general, a sufficiently strong condition, however.  It is natural to ask that $f$ commutes with mutation and indeed this is the extra condition imposed in \cite[Definition~2.2]{ADS}.

\begin{definition}[{cf.\ \cite[Definition~2.2]{ADS}}]
For $i=1,2$, let $(\cC_{i},\sigma_{i})$ be rooted cluster algebras with seeds $\sigma_i = (\ex_{i},\cB_{i},\invfrozen_{i},\beta_{i})$. A \emph{rooted cluster morphism} is an algebra homomorphism $f \colon \cC_{1}\to \cC_{2}$
	such that the following are satisfied:
	\begin{enumerate}
		\item $f$ is compatible with the initial seeds, with associated $\ex$-admissible function $\phi$ and 
		\item\label{d:RCM-mut-compat} 
		for all $\uk\in \curly{K}(\ex_{1},\cB_{1})$ and $\mu_{\uk}(b)\in \mu_{\uk}(\cB_{1})$ such that $\phi_{\uk}(\mu_{\uk}(b))\neq 0$, we have
		\[ f(\mu_{\uk}M_{1}(\dual{\mu_{\uk}(b)}))=\mu_{F\uk}M_{2}(\dual{\phi_{\uk}(\mu_{\uk}(b))}) \]
	\end{enumerate}
	for $F\colon \curly{K}(\ex_{1},\cB_{1})\to \curly{K}(\ex_{2},\cB_{2})$ derived from $\phi$.
	
	We say that a rooted cluster morphism as above is \emph{without specialisations} if we have \hfill \hfill \linebreak $f(M_{1}(\dual{\cB}_1)) \subseteq M_{2}(\dual{\cB}_2)$, or equivalently $\phi(\cB_{1})\subseteq \cB_{2}$.
\end{definition}

Here, $\mu_{\uk}M_{1}(\dual{\mu_{\uk}(b)})\in \cC_{1}\subseteq \curly{F}(\qtorus{}{}\cB_{1})$ is a cluster variable, obtained by mutation along $\uk$ from the initial cluster, labelled by $\mu_{\uk}(b)\in \mu_{\uk}(\cB_{1})$, so that the left-hand side of the equation in~ \ref{d:RCM-mut-compat} is the image of this under $f$.  On the other side, we compare this with the mutation in $\cC_{2}$, where we compute the corresponding variable obtained by mutation along $F\uk$, having mapped the label $\mu_{\uk}(b)$ under $\phi_{\uk}$ to the domain of $\mu_{F\uk}M_{2}$.

This varies presentationally from the original definition of \cite{ADS}, usually written as
\[ f(\mu_{x_{l}}\circ \dotsm \circ \mu_{x_{1}}(y))=\mu_{f(x_{l})}\circ \dotsm \circ \mu_{f(x_{1})}(f(y)) \]
because our mutation sequences are not sequences of cluster variables, but in our setup $F\uk$ plays the role of $\mu_{f(x_{l})}\circ \dotsm \circ \mu_{f(x_{1})}$ and $\phi_{\uk}$ that of $f$ in $f(y)$; both $\phi$ and $F$ are ultimately derived from $f$, of course.

The following lemma is needed for our main result and is analogous to results in \cite[Section~3]{ADS}.

\begin{lemma}\label{l:phi-beta-compat}
	For $f\colon \cC_{1}\to \cC_{2}$ a rooted cluster morphism, we have that either
		\[ \dual{\bar{\phi}}(\bplusminus{\beta_{1}(k)}{\dual{\cB_{1}}})=\bplusminus{\beta_{2}(\phi(k))}{\dual{\cB_{2}}} \]	
	or
	\[ \dual{\bar{\phi}}(\bplusminus{\beta_{1}(k)}{\dual{\cB_{1}}})=\bminusplus{\beta_{2}(\phi(k))}{\dual{\cB_{2}}} .\]	
	Hence, either
		\[ \dual{\bar{\phi}}(\beta_{1}(k))=\beta_{2}(\bar{\phi}(k)) \]
	or
	\[ \dual{\bar{\phi}}(\beta_{1}(k))=-\beta_{2}(\bar{\phi}(k)) \]	
	where $\bar{\phi}(b)=\phi(b)$ for all $b\in \ex_{1}$ and $\dual{\bar{\phi}}(\dual{b})=\dual{\phi(b)}$ for all $b\in \cB_{1}$, both extended $\integ$-linearly, with $\phi$ the $\ex$-admissible function induced by $f$.
\end{lemma}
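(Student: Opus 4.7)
My plan is to apply the mutation-compatibility axiom of the rooted cluster morphism $f$ at the single-step admissible sequence $\uk=(k)$, for $k\in\ex_1$ with $\phi(k)\neq 0$, and then extract the claim by comparing Laurent monomials. Throughout, I write $x^v$ for the monomial basis of the cluster torus $\qtorus{}{}{\cB_1}$ containing $\cC_1$ and $y^w$ for that of $\qtorus{}{}{\cB_2}$ containing $\cC_2$.

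First, specialising the one-step mutation formula from the Lemma following Definition~\ref{d:one-step-mut} to the commutative case $q^{1/2}=1$ and using the explicit values of $\bar{\mu}_k^{\pm}$ from Lemma~\ref{l:E-isos}, I obtain
\[ \mu_k M_1(\dual{\mu_{\cB_1}(k)}) \;=\; x^{\bplus{\beta_1(k)}{\dual{\cB_1}} - \dual{k}} \;+\; x^{\bminus{\beta_1(k)}{\dual{\cB_1}} - \dual{k}}, \]
and the analogous formula for $\mu_{\phi(k)}M_2(\dual{\mu_{\cB_2}(\phi(k))})$ expressed in the $y^w$ monomials. Since $F(k)=(\phi(k))$ and $\phi_{(k)}(\mu_{\cB_1}(k))=\mu_{\cB_2}(\phi(k))$ (from the analysis after Definition~\ref{d:Fk-phi-k}), the rooted cluster morphism axiom at $\uk=(k)$ with $b=k$ gives
\[ f\bigl(\mu_k M_1(\dual{\mu_{\cB_1}(k)})\bigr) \;=\; \mu_{\phi(k)} M_2(\dual{\mu_{\cB_2}(\phi(k))}). \]

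Second, I would expand the left-hand side using that $f$ is an algebra homomorphism. By the construction of $\phi$ in Lemma~\ref{l:RCM-induces-phi}, $f(x^{\dual{b}})=y^{\dual{\phi(b)}}$ when $\phi(b)\neq 0$ and $f(x^{\dual{b}})=c_b\in\integ$ otherwise. Hence each monomial $x^{\bplusminus{\beta_1(k)}{\dual{\cB_1}} - \dual{k}}$ maps to $\alpha_{\pm}\cdot y^{\dual{\bar{\phi}}(\bplusminus{\beta_1(k)}{\dual{\cB_1}}) - \dual{\phi(k)}}$, where $\alpha_{\pm}=\prod_{b\in\cB_1,\,\phi(b)=0} c_b^{(\bplusminus{\beta_1(k)}{\dual{\cB_1}})_b}\in\integ$, and the displayed identity becomes
\[ \alpha_+\, y^{\dual{\bar{\phi}}(\bplus{\beta_1(k)}{\dual{\cB_1}}) - \dual{\phi(k)}} + \alpha_-\, y^{\dual{\bar{\phi}}(\bminus{\beta_1(k)}{\dual{\cB_1}}) - \dual{\phi(k)}} = y^{\bplus{\beta_2(\phi(k))}{\dual{\cB_2}} - \dual{\phi(k)}} + y^{\bminus{\beta_2(\phi(k))}{\dual{\cB_2}} - \dual{\phi(k)}}. \]

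Third, since the monomials $\{y^v\mid v\in\dual{\integ[\cB_2]}\}$ are $\bK$-linearly independent (Definition~\ref{d:q-torus}), the two monomials on the left must match those on the right pairwise, in one of exactly two ways: either sign-preserving, yielding the first case of the lemma, or sign-reversing, yielding the second case. Coefficient comparison also forces $\alpha_{\pm}=1$. Cancelling the common factor $y^{-\dual{\phi(k)}}$ delivers the first claim of the lemma, and subtracting the two resulting equations (and using that $\bplus{v}{\dual{\cB}}-\bminus{v}{\dual{\cB}}=v$) yields $\dual{\bar{\phi}}(\beta_1(k))=\pm\beta_2(\bar{\phi}(k))$.

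The main obstacle I anticipate lies in the degenerate matchings: if $\dual{\bar{\phi}}(\bplus{\beta_1(k)}{\dual{\cB_1}})=\dual{\bar{\phi}}(\bminus{\beta_1(k)}{\dual{\cB_1}})$ or if $\beta_2(\phi(k))=0$, then some of the monomials above coincide and the pairwise matching becomes ambiguous. A brief case analysis shows that in each such situation both alternatives of the statement collapse to the common equality $\dual{\bar{\phi}}(\beta_1(k))=0=\beta_2(\bar{\phi}(k))$, so the conclusion is unaffected.
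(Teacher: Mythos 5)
Your proposal is correct and follows essentially the same route as the paper's proof: apply the mutation-compatibility condition of the rooted cluster morphism at the one-step sequence $(k)$ (with $\phi(k)\neq 0$, exactly as the paper does), expand both sides via the exchange relation into sums of two Laurent monomials, and match monomials by algebraic independence to get the two sign alternatives, then subtract to obtain $\dual{\bar{\phi}}(\beta_{1}(k))=\pm\beta_{2}(\bar{\phi}(k))$. Your explicit treatment of the integer factors coming from specialised variables and of the degenerate matchings is a mild refinement of what the paper compresses into ``by algebraic independence'' and ``comparing these monomials'', but it is the same argument.
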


\begin{proof}
	Let us consider condition~\ref{d:RCM-mut-compat} in the definition of rooted cluster morphism for $\uk=(k)$ at $\dual{\mu_{\cB_{1}}(k)}$, with $\phi_{(k)}(\mu_{\cB_{1}}(k))\neq 0$:
		\[ f(\mu_{k}M_{1}(\dual{\mu_{\cB_{1}}(k)})=\mu_{Fk}M_{2}(\dual{\phi_{(k)}(\mu_{\cB_{1}}(k))}). \]
	From Definition~\ref{d:one-step-mut} and the subsequent Lemma, the left-hand side is
	\[ f(\mu_{k}M_{1}(\dual{\mu_{\cB_{1}}(k)}))= f(M_{1}(\bar{\mu}_{k}^{+}(\dual{\mu_{\cB_{1}}(k)})))+f(M_{1}(\bar{\mu}_{k}^{-}(\dual{\mu_{\cB_{1}}(k)}))). \]
	
	On the right-hand side,	$\phi_{(k)}(\mu_{\cB_{1}}(k))=0$ if $\phi(k)=0$, so we must have $\phi(k)\neq 0$ and 
	\begin{align*} \mu_{Fk}M_{2}(\dual{\phi_{(k)}(\mu_{\cB_{1}}(k))}) & = \mu_{\phi(k)}M_{2}(\dual{\phi_{(k)}(\mu_{\cB_{1}}(k))})  \\
		& =\mu_{\phi(k)}M_{2}(\dual{\mu_{\cB_{2}}(\phi(k))}) \\
		&  = M_{2}(\bar{\mu}_{\phi(k)}^{+}(\dual{\mu_{\cB_{2}}(\phi(k))}))+M_{2}(\bar{\mu}_{\phi(k)}^{-}(\dual{\mu_{\cB_{2}}(\phi(k))})) .
	\end{align*}
	
	By algebraic independence, we have either
	\[ f(M_{1}(\bar{\mu}_{k}^{\pm}(\dual{\mu_{\cB_{1}}(k)})))=M_{2}(\bar{\mu}_{\phi(k)}^{\pm}(\dual{\mu_{\cB_{2}}(\phi(k))})) \]	
	or
	\[ f(M_{1}(\bar{\mu}_{k}^{\pm}(\dual{\mu_{\cB_{1}}(k)})))=M_{2}(\bar{\mu}_{\phi(k)}^{\mp}(\dual{\mu_{\cB_{2}}(\phi(k))})) .\]	
	Comparing these monomials, and by the definition of $\phi$ from $f$, we conclude that 
	\[ \dual{\phi}(\bar{\mu}_{k}^{\pm}(\dual{\mu_{\cB_{1}}(k)}))=\bar{\mu}_{\phi(k)}^{\pm}(\dual{\mu_{\cB_{2}}(\phi(k))}) \]	
	or
	\[ \dual{\phi}(\bar{\mu}_{k}^{\pm}(\dual{\mu_{\cB_{1}}(k)}))=\bar{\mu}_{\phi(k)}^{\mp}(\dual{\mu_{\cB_{2}}(\phi(k))}) \]	
	where $\dual{\bar{\phi}}\colon \dual{\integ[\cB_{1}]}\to\dual{\integ[\cB_{2}]}$ is defined on basis elements by $\dual{\bar{\phi}}(\dual{b})\defeq \dual{\phi(b)}$ and extended linearly.
	
	Since $\bar{\mu}_{k}^{\pm}(\dual{\mu_{\cB_{1}}(k)})=\bplusminus{\beta(k)}{\dual{\cB}}-\dual{k}$, these simplify to
	\[ \dual{\bar{\phi}}(\bplusminus{\beta_{1}(k)}{\dual{\cB_{1}}}-\dual{k})=\bplusminus{\beta_{2}(\phi(k))}{\dual{\cB_{2}}}-\dual{\phi(k)} \]	
	or
	\[ \dual{\bar{\phi}}(\bplusminus{\beta_{1}(k)}{\dual{\cB_{1}}}-\dual{k})=\bminusplus{\beta_{2}(\phi(k))}{\dual{\cB_{2}}}-\dual{\phi(k)}. \]	
	
	Using that $\beta_{i}(k)=\bplus{\beta_{i}(k)}{\cB}-\bminus{\beta_{i}(k)}{\cB}$, we conclude that
	\[ \dual{\bar{\phi}}(\beta_{1}(k))=\beta_{2}(\bar{\phi}(k)) \]
	or
	\[ \dual{\bar{\phi}}(\beta_{1}(k))=-\beta_{2}(\bar{\phi}(k)) \]
	and since $k$ was arbitrary, we have the claim.
\end{proof}

Let us say that $f$ is \emph{consistently positive} (respectively, \emph{consistently negative}) if $\dual{\bar{\phi}}(\beta_{1}(k))=\beta_{2}(\bar{\phi}(k))$ (respectively, $\dual{\bar{\phi}}(\beta_{1}(k))=-\beta_{2}(\bar{\phi}(k))$) for all $k$.  Then $f$ being consistently positive means that the diagram
	\[ \begin{tikzcd} \integ[\ex_{1}] \arrow{r}{\bar{\phi}} \arrow{d}[left]{(\beta_{1})_{()}=\beta_{1}} & \integ[\ex_{2}] \arrow{d}[right]{(\beta_{2})_{()}=\beta_{2}} \\ \dual{\integ[\cB_{1}]}  \arrow{r}[below]{\dual{\bar{\phi}}} & \dual{\integ[\cB_{2}]}
	\end{tikzcd} \]
commutes and $f$ consistently negative corresponds to the diagram
	\[ \begin{tikzcd} \integ[\ex_{1}] \arrow{r}{\bar{\phi}} \arrow{d}[left]{(\beta_{1})_{()}=\beta_{1}} & \integ[\ex_{2}] \arrow{d}[right]{-(\beta_{2})_{()}=-\beta_{2}} \\ \dual{\integ[\cB_{1}]}  \arrow{r}[below]{\dual{\bar{\phi}}} & \dual{\integ[\cB_{2}]}
	\end{tikzcd} \]
commuting.  As a shorthand, let us say that $f$ is consistently signed if it is either consistently positive or consistently negative.

The next lemma is based on the first part of \cite[Lemma~3.7]{Gratz}. Denote by $\ex^\phi$ the set \[  \ex^\phi=\ex_{1}\intersection \phi^{-1}(\ex_{2});\] this is the set of exchangeable indices in $\ex_{1}$ such that their image under $\phi$ is also exchangeable.

\begin{lemma}
	Let $f\colon \cC_{1}\to \cC_{2}$ be a rooted cluster morphism and $\phi\colon \cB_{1}^{0}\to \cB_{2}^{0}$ the associated $\ex$-admissible function. If $b\neq c$ and $\phi(b)=\phi(c)$, then $b,c\in \cB_{1}\setminus \ex_{1}$.
\end{lemma}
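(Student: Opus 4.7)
The plan is to bootstrap from the initial-seed result of Lemma~\ref{l:phi-beta-compat} to the mutated data via induction on the length of an admissible sequence $\uk$. The key observation is that condition~\ref{d:RCM-mut-compat} in the definition of a rooted cluster morphism is an equation of cluster variables in $\cC_{2}$ that must hold \emph{simultaneously} at every $\uk$, and unwinding the exchange-relation definition at the root of this equation is what controlled the signs in Lemma~\ref{l:phi-beta-compat}. Applying the very same expansion formula (Definition~\ref{d:one-step-mut}) at a vertex $\uk$ rather than the root $()$ produces an analogous identification relating $\dual{\bar\phi_{\uk}}$, $\mu_{\uk}\beta_{1}$ and $\mu_{F\uk}\beta_{2}$, where $\bar\phi_{\uk}$ is the $\integ$-linearization of $\phi_{\uk}$ restricted to $\mu_{\uk}(\ex^\phi)$.

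First, I would observe that on the subset $\ex^\phi\subseteq \ex_{1}$ (where mutability is preserved by $\phi$) the natural transformation $\phi$ of Proposition~\ref{p:phi-is-nat-transf} has no ``collapses'' in the single-step from $\uk$ to $(k,\uk)$ provided $k\in \mu_{\uk}(\ex^\phi)$; in this case $F\vec\mu_{k}=\vec\mu_{\phi_{\uk}(k)}$ and $\phi_{(k,\uk)}$ is the natural mutation-conjugate of $\phi_{\uk}$ as recorded in Lemma~\ref{l:phi-k-uk}. Then I would apply the rooted-cluster-morphism equation at the sequence $(k,\uk)$ to the element $\mu_{(k,\uk)}M_{1}(\dual{\mu_{\cB_{1}}(\ldots)})$, expand both sides using the exchange-style identity of Definition~\ref{d:one-step-mut}, and compare monomials by algebraic independence in the ambient torus (using the Laurent phenomenon to justify that the monomial expressions in the mutated toric frame are algebraically independent). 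This reduction mirrors the argument in Lemma~\ref{l:phi-beta-compat} applied ``one cluster up''.

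Next comes the sign-coherence step, which I expect to be the main obstacle. A priori Lemma~\ref{l:phi-beta-compat} leaves the choice of sign free at each $k$, so at each pair $(k,\uk)$ we gain a sign $\epsilon_{k,\uk}\in\{\pm 1\}$ with $\dual{\bar\phi_{\uk}}(\mu_{\uk}\beta_{1}(k))=\epsilon_{k,\uk}\mu_{F\uk}\beta_{2}(\phi_{\uk}(k))$. To show that these signs are forced to be constant---so that $f$ is globally consistently positive or consistently negative---I would combine two steps: first, that two applications of mutation in the same direction $k$ from a seed $\uk$ must recover (up to the canonical relabelling $\nu$) the original seed (Proposition~\ref{p:beta-form-mut-invol}), which forces $\epsilon_{k,\uk}=\epsilon_{\mu_{\cB_{1}}(k),(k,\uk)}$; and second, the compatibility of $\beta_{2}$-mutation with $F$ as recorded in the functoriality of $\bF$ (Lemma~\ref{l:Fk-is-a-functor}), which propagates the sign along any path in the exchange tree. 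Since the exchange tree of $\cC_{1}$ is connected, a single global sign results.

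Finally, the restriction to $\ex^\phi$ is automatic: whenever $\phi_{\uk}(k)=0$ the equation in \ref{d:RCM-mut-compat} is vacuous, and otherwise the argument above applies. Packaging this yields the natural statement that for all $\uk\in\curly{K}(\ex_{1},\cB_{1})$ and $k\in\mu_{\uk}(\ex^\phi)$ the diagram
\[\begin{tikzcd}[column sep=35pt] \integ[\mu_{\uk}(\ex^\phi)] \arrow{r}{\bar\phi_{\uk}} \arrow{d}[left]{\mu_{\uk}\beta_{1}} & \integ[\mu_{F\uk}(\ex_{2})] \arrow{d}[right]{\epsilon\cdot\mu_{F\uk}\beta_{2}} \\ \dual{\integ[\mu_{\uk}(\cB_{1})]} \arrow{r}[below]{\dual{\bar\phi_{\uk}}} & \dual{\integ[\mu_{F\uk}(\cB_{2})]} \end{tikzcd}\]
commutes, with a single $\epsilon\in\{\pm1\}$ independent of $\uk$. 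This is exactly the data needed in the next subsection to produce a natural transformation $\alpha\colon\cA_{1}\to\cA_{2}\bF$ from the collection $\{\dual{\bar\phi_{\uk}}\}$, and hence (together with Proposition~\ref{p:phi-is-nat-transf}) a morphism of the associated abstract cluster structures.
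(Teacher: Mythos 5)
The lemma you were asked to prove has its conclusion missing from the printed statement, but the surrounding text pins it down: it is modelled on the first part of the cited result of Gratz and concerns the set $\ex^{\phi}=\ex_{1}\intersection \phi^{-1}(\ex_{2})$ defined immediately beforehand, and the intended conclusion is that $\phi$ restricted to $\ex^{\phi}$ is injective. The paper's proof is a short contradiction argument: if $b\neq c$ in $\ex^{\phi}$ with $\phi(b)=\phi(c)$, then mutation in direction $b$ leaves the variable labelled by $c$ unchanged, $\mu_{b}M_{1}(\dual{c})=M_{1}(\dual{c})$, so condition~\ref{d:RCM-mut-compat} of the definition of rooted cluster morphism gives $f(M_{1}(\dual{c}))=f(\mu_{b}M_{1}(\dual{c}))=\mu_{\phi(b)}M_{2}(\dual{\phi_{(b)}(c)})=\mu_{\phi(c)}M_{2}(\dual{\phi(c)})$, whence $M_{2}(\dual{\phi(c)})=\mu_{\phi(c)}M_{2}(\dual{\phi(c)})$, i.e.\ a cluster variable equal to its own mutation, which is impossible by algebraic independence. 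Your proposal never addresses this injectivity statement at all; it instead sketches a mutated, globally sign-coherent version of Lemma~\ref{l:phi-beta-compat}, which is a different claim, so the lemma is not established.

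Beyond proving the wrong statement, the statement you do argue for is false in the generality of the paper. Your sign-coherence step asserts that connectedness of the exchange tree forces a single global sign $\epsilon$, but the sign ambiguity in Lemma~\ref{l:phi-beta-compat} is local to each mutation direction $k$: involutivity of mutation and functoriality of $\bF$ relate the signs of successive directions along a path in the exchange tree, yet they say nothing about two directions lying in different connected components of the initial exchange quiver (equivalently, of the abstract cluster structure with respect to the product of Section~\ref{ss:props-of-ACS-cat}), since $\beta_{1}(k)$ has no support there. The remark following Theorem~\ref{t:mor-ACS-from-RCM} makes exactly this point---a rooted cluster morphism can be consistently positive on one component and consistently negative on another---which is why consistent positivity is imposed as a hypothesis in that theorem rather than derived. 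So even viewed as an auxiliary result, your argument contains a genuine gap at the sign-propagation step.
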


\begin{proof} We translate the proof given in \cite{Gratz} into our setting. 
	We assume for a contradiction that we have $b,c\in \ex^{\phi}$ with $b\neq c$ and $\phi(b)=\phi(c)$. Then $\mu_{b}M_{1}(\dual{c})=M_{1}(\dual{c})$ and by condition~\ref{d:RCM-mut-compat} of the definition of a rooted cluster morphism $f(\mu_{b}M_{1}(\dual{c}))=\mu_{\phi(b)}M_{2}(\dual{\phi_{(b)}(c)})$, so that
	\begin{align*} f(M_{1}(\dual{c})) & = f(\mu_{b}M_{1}(\dual{c})) \\
		& = \mu_{\phi(b)}M_{2}(\dual{\phi_{(b)}(c)}) \\
		& = \mu_{\phi(c)}M_{2}(\dual{\phi(c)}).
	\end{align*}
Then since $\phi(b)=\phi(c)\in \ex_{2}$, we have 
\[ M_{2}(\dual{\phi(c)})=f(M_{1}(\dual{c}))=\mu_{\phi(c)}M_{2}(\dual{\phi(c)}). \]
But a cluster variable cannot be equal to its own mutation, by algebraic independence, contradicting the assumption.
\end{proof}

We claim that consistently positive rooted cluster morphisms induce morphisms of abstract cluster structures.

\begin{theorem}\label{t:mor-ACS-from-RCM}
	For $i=1,2$, let $(\cC_{i},\sigma_{i})$ be rooted cluster algebras with seeds $\sigma_i = (\ex_{i},\cB_{i},\invfrozen_{i},\beta_{i})$. Let $\cC(\cC_{i})=\cC(\cE_{i},\cX_{i},\beta_{i},\cA_{i},\ip{\blank}{\blank}_{i})$ denote the abstract cluster structure associated to $\cC_{i}$.
	
	Then for each consistently positive rooted cluster morphism $f\colon \cC_{1}\to \cC_{2}$, there exists a morphism of abstract cluster structures \[ \curly{F}=(\bar{\bF},\chi^{f},\alpha^{f})\colon\cC(\cC_{1}) \to \cC(\cC_{2}) \] 
	with $\bar{\bF}$ the functor $\bF\colon \cE(\ex_{1},\cB_{1})\to \cE(\ex_{2},\cB_{1})$ given by
		\[ \bar{\bF}\mu_{k}^{\pm} = \begin{cases} \mu_{\phi_{\uk}(k)}^{\pm} & \text{if}\ \phi_{\uk}(k)\in \mu_{F\uk}(\ex_{2}) \\ \id_{F\uk} & \text{if}\ \phi_{\uk}(k)=0 \end{cases} \]
	and $\alpha^{f}_{\uk}=\dual{\bar{\phi}}_{\uk}$, $\chi^{f}_{\uk}=\bar{\phi}_{\uk}$ natural transformations defined in terms of the $\ex$-admissible function $\phi$ associated to $f$ and its mutations $\phi_{\uk}$.
\end{theorem}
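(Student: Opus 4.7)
The plan is to verify the three pieces of data $(\bar{\bF},\chi^{f},\alpha^{f})$ satisfy the conditions of Definition~\ref{d:morphism-in-ACS}, that is: $\bar{\bF}\colon \cE_1\to\cE_2$ is a functor, $\chi^{f}\colon \cX_1\to \cX_2\op{\bar{\bF}}$ and $\alpha^{f}\colon \cA_1\to \cA_2\bar{\bF}$ are natural transformations, and the compatibility square $\alpha^{f}\circ\beta_1=\beta_2\op{\bar{\bF}}\circ\chi^{f}$ commutes. The groundwork is already largely laid by Section~\ref{ss:mor-of-exch-tree}: Lemma~\ref{l:RCM-induces-phi} produces an $\ex$-admissible $\phi\colon \cB_1^0\to\cB_2^0$ from $f$, Lemma~\ref{l:Fk-is-a-functor} yields a functor $\bF$ between the ordinary path categories of the directed exchange trees, and Proposition~\ref{p:phi-is-nat-transf} packages the family $\{\phi_{\uk}\}$ into a natural transformation of set-valued functors. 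The remaining work is largely to lift these to the signed setting, $\integ$-linearise, and then verify the $\beta$-compatibility.

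First I would promote $\bF$ to a functor $\bar{\bF}$ on the signed path categories by declaring $\bar{\bF}(\mu_k^{+})$ and $\bar{\bF}(\mu_k^{-})$ to be the respective signed versions of $\bF(\vec{\mu}_k)$ (or the identity in the specialisation case), and observe that the relations $\mu_k^{\mp}\circ\mu_k^{\pm}=\id$ in $\cE_i$ are preserved because the underlying exchange graphs are trees; functoriality then reduces to the length-one case already handled in Lemma~\ref{l:Fk-is-a-functor}. Next I would define $\chi^{f}_{\uk}\defeq\bar{\phi}_{\uk}\colon \integ[\mu_{\uk}(\ex_1)]\to\integ[\mu_{F\uk}(\ex_2)]$ and $\alpha^{f}_{\uk}\defeq\dual{\bar{\phi}}_{\uk}\colon \dual{\integ[\mu_{\uk}(\cB_1)]}\to\dual{\integ[\mu_{F\uk}(\cB_2)]}$ as the $\integ$-linear extensions of $\phi_{\uk}$ (sending elements mapped to $0$ to $0\in\Abcat$), with the obvious dualisation for $\alpha^{f}$. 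Naturality of $\chi^{f}$ and $\alpha^{f}$ on one-step arrows $\mu_k^{\pm}$ amounts to the identity
\[ \phi_{(k,\uk)}\circ \mu_{k,\uk}^{0}=\mu_{\phi_{\uk}(k),F\uk}^{0}\circ \phi_{\uk} \]
which is exactly Lemma~\ref{l:phi-k-uk}/Proposition~\ref{p:phi-is-nat-transf}, together with the observation that $\bar{\mu}_k^{\pm}$ is the inverse of $\mu_k^{\pm}$ (Lemma~\ref{l:F-isos}); the $\cX$-side uses the signed versions but the same Proposition adapts since the sign-choice on $\mu_k^{\pm}$ matches the one on the target.

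The main obstacle is verifying the $\beta$-square
\[ \dual{\bar{\phi}}_{\uk}\circ \mu_{\uk}\beta_1=\mu_{F\uk}\beta_2\circ \bar{\phi}_{\uk} \]
for every $\uk$. My plan is to induct on the length of $\uk$: the base case $\uk=()$ is precisely the consistent positivity of $f$ as recorded in Lemma~\ref{l:phi-beta-compat} and the commutative diagram displayed immediately after its proof. For the inductive step at $(k,\uk)$, I would use the key formulae $\mu_{(k,\uk)}\beta_1=\mu_k^{+}\circ\mu_{\uk}\beta_1\circ\bar{\mu}_k^{+}$ and similarly for $\beta_2$ (equation~\eqref{eq:beta-mut}), and then split on whether $\phi_{\uk}(k)\in\mu_{F\uk}(\ex_2)$ or $\phi_{\uk}(k)=0$. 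In the generic case where $\phi_{\uk}(k)\in\mu_{F\uk}(\ex_2)$, compatibility between the $\mu_k^{\pm}$ and the $\phi_{\uk}$ (the natural-transformation identity above) together with the inductive hypothesis makes the two sides agree; in the specialisation case $\phi_{\uk}(k)=0$, the functor $\bar{\bF}$ sends $\mu_k^{\pm}$ to the identity, $F(k,\uk)=F\uk$, and the mutated diagram collapses to the one for $\uk$, so the inductive hypothesis applies directly.

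The delicate point, and where I expect most of the work, is keeping the bookkeeping straight between the two cases at the signed level: one must check that the mutation formulae of Lemmas~\ref{l:F-isos}/\ref{l:E-isos} intertwine with $\phi_{\uk}$ on the nose (as opposed to up to sign), and this is precisely where ``consistently positive'' (as opposed to consistently negative) is used, to eliminate the potential sign ambiguity identified in Lemma~\ref{l:phi-beta-compat}. Once this compatibility is nailed down for the generic case, the specialisation case is easier, and then the statement of the theorem follows by assembling these components.
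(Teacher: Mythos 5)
Your overall route is the paper's: the same data $(\bar{\bF},\chi^{f},\alpha^{f})$, the same preparatory lemmas, consistent positivity to remove the sign ambiguity of Lemma~\ref{l:phi-beta-compat}, and an induction on $\card{\uk}$ for the $\beta$-square. The one step that is not valid as written is the claim that naturality of $\chi^{f}$ and $\alpha^{f}$ ``amounts to'' the set-level identity $\phi_{(k,\uk)}\circ\mu^{0}_{k,\uk}=\mu^{0}_{\phi_{\uk}(k),F\uk}\circ\phi_{\uk}$ of Lemma~\ref{l:phi-k-uk}/Proposition~\ref{p:phi-is-nat-transf}. Those statements concern the relabelling bijections $\mu^{0}_{k,\uk}$ underlying the set-valued functor $\mu_{\cB}$, whereas the sheaves $\cX_{i}$ and $\cA_{i}$ of the abstract cluster structures act on arrows by the tropical mutation isomorphisms $\bar{\mu}_{k}^{\pm}$ and $\mu_{k}^{\pm}$ of Lemmas~\ref{l:F-isos} and \ref{l:E-isos}, which carry the correction terms $\bplusminus{\beta(k)}{\dual{\cB}}$. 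So the naturality squares do not reduce to the set-level statement: evaluating the $\cA$-side square on $\dual{k}$ one needs $\dual{\bar{\phi}}_{\uk}(\bplusminus{\mu_{\uk}\beta_{1}(k)}{})=\bplusminus{\mu_{F\uk}\beta_{2}(\phi_{\uk}(k))}{}$, i.e.\ exactly the refined, $[\,\cdot\,]_{\pm}$-level form of Lemma~\ref{l:phi-beta-compat} in the consistently positive case, and similarly on the $\cX$-side at $\mu_{k}(k)$. You do acknowledge this intertwining check in your final paragraph, so the necessary content is present, but it is the substance of the naturality step, not bookkeeping attached to the $\beta$-square; as literally organised, your argument would let naturality go through without consuming consistent positivity, which cannot be right.

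With that relocation made, your plan matches the paper, with the weighting reversed: the paper does the explicit $[\,\cdot\,]_{\pm}$ computation to establish naturality of $\alpha^{f}$ and $\chi^{f}$ (this is where consistent positivity is used), and then the compatibility $\alpha^{f}\circ\beta_{1}=\beta_{2}\op{\bar{\bF}}\circ\chi^{f}$ follows almost formally from the initial-cluster identity $\dual{\bar{\phi}}\circ\beta_{1}=\beta_{2}\circ\bar{\phi}$, the factorization property of the $\beta_{i}$, and the naturality just proved, by induction on $\card{\uk}$ --- in substance the same induction you propose via \eqref{eq:beta-mut}. Your treatment of $\bar{\bF}$ (extending $\bF$ to the signed path category, specialisation directions going to identities) and the definitions $\chi^{f}_{\uk}=\bar{\phi}_{\uk}$, $\alpha^{f}_{\uk}=\dual{\bar{\phi}}_{\uk}$ agree with the paper.
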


\begin{proof}
	Let us first briefly recall the above constructions leading to the definition of $\bF$.  From the rooted cluster morphism $f$, we obtain the $\ex$-admissible map $\phi\colon \cB_{1}^{0}\to \cB_{2}^{0}$ (Lemma~\ref{l:RCM-induces-phi}) and hence a functor $\bF\colon \cE(\ex_{1},\cB_{1})\to \cE(\ex_{2},\cB_{1})$ defined on objects $\uk\in \curly{K}(\ex_{1},\cB_{1})$ by $\bF\uk=F\uk$ and on morphisms $\vec{\mu}_{k}\colon \uk\to (k,\uk)$ by
	\[ \bF\vec{\mu}_{k}=\begin{cases}
		\vec{\mu}_{\phi_{\uk}(k)} & \text{if}\ \phi_{\uk}(k)\in \mu_{F\uk}(\ex_{2}) \\
		\id_{F\uk} & \text{if}\ \phi_{\uk}(k)=0
	\end{cases} \]
	(Lemma~\ref{l:Fk-is-a-functor}), via the maps $F$ and $\phi_{\uk}$ defined in Definition~\ref{d:Fk-phi-k}.  It is straightforward to see that $\bF$ extends to a functor $\bar{\bF}\colon \cE_{1}\to \cE_{2}$: the category $\cE_{i}$ is the signed path category on the same underlying directed exchange graph $E(\ex_{i},\cB_{i})$ as $\cE(\ex_{i},\cB_{i})$, hence these have the same objects, so we may define $\bar{\bF}\uk=\bF\uk$.  Then on arrows, we act in a similar fashion, respecting the signs: 
	\[ \bar{\bF}\mu_{k}^{\pm} = \begin{cases} \mu_{\phi_{\uk}(k)}^{\pm} & \text{if}\ \phi_{\uk}(k)\in \mu_{F\uk}(\ex_{2}) \\ \id_{F\uk} & \text{if}\ \phi_{\uk}(k)=0 \end{cases} .\]
	
	Then, to satisfy Definition~\ref{d:morphism-in-ACS}, we need to identify natural transformations $\chi^{f}\colon \cX_{1}\to \cX_{2}\op{\bar{\bF}}$ and $\alpha^{f}\colon \cA_{1}\to \cA_{2}\bar{\bF}$ such that $\alpha^{f}\circ \beta_{1}=\beta_{2}\op{\bar{\bF}}\circ \chi^{f}$.  From Theorem~\ref{t:AQCS-from-QCA}, we have that
	\begin{itemize}
		\item $\cX_{i}\colon \op{\cE}_{i}\to \Abcat$ is defined by $\cX_{i} \uk=\integ[\mu_{\uk}(\ex_{i})]$ and $\cX_{i}(\mu_{k}^{\pm}\colon \uk\to (k,\underline{k}))=\bar{\mu}_{k}^{\pm}$ (for $\bar{\mu}_{k}^{\pm}$ the isomorphisms of Lemma~\ref{l:F-isos}) and
		\item $\cA_{i}\colon \op{\cE}_{i}\to \Abcat$ is defined by $\cA_{i} \uk=\dual{\integ[\mu_{\uk}(\cB_{i})]}$ and $\cA_{i}(\mu_{k}^{\pm}\colon \uk\to (k,\underline{k}))=\mu_{k}^{\pm}$ (for $\mu_{k}^{\pm}$ the isomorphisms of Lemma~\ref{l:E-isos}).
	\end{itemize}
	
	Let $\alpha_{()}^{f}\colon \dual{\integ[\cB_{1}]}\to \dual{\integ[\cB_{2}]}$ be given by $\alpha_{()}^{f}(\dual{b})=\dual{\phi(b)}$ on basis elements $\dual{b}\in \dual{\cB_{1}}$ and extended $\integ$-linearly; this is the map $\dual{\bar{\phi}}$ appearing in Lemma~\ref{l:phi-beta-compat}.  That is, $\alpha_{()}^{f}$ is a linearization of $\phi$ combined with taking dual basis elements.  Explicitly,
	\[ \alpha_{()}^{f}(\dual{b}) = \begin{cases} \dual{c} \quad (\text{for}\ f(M_{1}(\dual{b}))=M_{2}(\dual{c})) & \text{if}\ \phi(b)\neq 0,\ \text{so that such a $\dual{c}$ exists} \\ 0 & \text{if}\ \phi(b)=0 \end{cases}. \]
	Here we see the payoff for using $0$ as our additional element disjointly adjoined: we can completely naturally interpret the above definition of $\dual{\phi(b)}$ as being an element of $\dual{\integ[\cB_{2}]}$, with $0$ being the zero element of the group. 
		
	That is, from the map $\phi_{()}=\phi$ on indexing elements obtained from $f$, we can first write down the corresponding function on dual indexing elements (this is notational: we simply add a $*$) and then linearise to obtain a homomorphism of the required free Abelian groups, from the specification on their bases.  In particular, basis elements $\dual{b}$ from $\dual{\integ[\cB_{1}]}$ are either sent to basis elements of $\dual{\integ[\cB_{2}]}$ or specialised to zero.  This corresponds exactly to the function $f$ sending cluster variables either to cluster variables or specialising them to an integer.
	
	Let us similarly define $\chi_{()}^{f}\colon \integ[\ex_{1}]\to \integ[\ex_{2}]$, by $\chi_{()}^{f}(b)=\bar{\phi}(b)=\phi(b)$ on basis elements $b\in \ex_{1}$ and extended $\integ$-linearly.  Indeed, from $\phi_{\uk}$ as in Definition~\ref{d:Fk-phi-k}, we may obtain similarly $\dual{\bar{\phi}}_{\uk}\colon \dual{\integ[\mu_{\uk}(\cB_{1})]}\to \dual{\integ[\mu_{F\uk}(\cB_{2})]}$ and $\bar{\phi}\colon \integ[\mu_{\uk}(\ex_{1})]\to \integ[\mu_{\uk}(\ex_{2})]$, so we may set $\alpha^{f}_{\uk}=\dual{\bar{\phi}}_{\uk}$ and $\chi^{f}_{\uk}=\bar{\phi}_{\uk}$.
	
	To verify that the maps $\alpha^{f}_{\uk}$ define a natural transformation $\alpha^{f}$, we show that the diagram
	\[ \begin{tikzcd}[column sep=25pt,row sep=25pt]
		\cA_{1}\uk \arrow{r}[above]{\alpha_{\uk}^{f}} \arrow{d}[left]{\cA_{1}\mu_{k}^{\pm}} & \cA_{2}\bar{\bF}\uk \arrow{d}[right]{\cA_{2}\bar{\bF}\mu_{k}^{\pm}} \\
		\cA_{1}(k,\uk) \arrow{r}[below]{\alpha_{(k,\uk)}^{f}} & \cA_{2}\bar{\bF}(k,\uk)
	\end{tikzcd} \]
	commutes.
	
	We will do this explicitly for $\uk=()$: the argument is the same in the general case (corresponding to simply shifting our initial clusters) but notationally more complex.  That is, we will show that
	\[ \begin{tikzcd}[column sep=25pt,row sep=25pt]
		\cA_{1}()=\dual{\integ[\cB_{1}]} \arrow{r}[above]{\alpha_{()}^{f}=\dual{\bar{\phi}}} \arrow{d}[left]{\cA_{1}\mu_{k}^{\pm}=\mu_{k}^{\pm}} & \dual{\integ[\cB_{2}]}=\cA_{2}\bar{\bF}() \arrow{d}[right]{\cA_{2}\bar{\bF}\mu_{k}^{\pm}=\mu_{Fk}^{\pm}} \\
		\cA_{1}(k)=\dual{\integ[\mu_{k}(\cB_{1})]} \arrow{r}[below]{\alpha_{(k)}^{f}=\dual{\bar{\phi}}_{(k)}} & \dual{\integ[\mu_{Fk}(\cB_{2})]}=\cA_{2}\bar{\bF}(k)
	\end{tikzcd} \]
	where we have expanded the various definitions.

	Now, for a basis element $\dual{b}\in \dual{\mu_{k}(\cB_{1})}$ of $\cA_{1}(k)$, we have that
	\[ \mu_{k}^{\pm}(\dual{b})= \begin{cases} \dual{b} & \text{if}\ \dual{b}\neq \dual{k} \\
	\bplusminus{\beta_{1}(k))}{\dual{\cB_{1}}}-\dual{\mu_{\cB_{1}}(k)}	& \text{if}\ \dual{b}=\dual{k} 
	\end{cases}. \]
	Then 
	\begin{align*} 
		(\alpha_{(k)}^{f}\circ \mu_{k}^{\pm})(\dual{b}) & =\begin{cases} \dual{\bar{\phi}}_{(k)} & \text{if}\ \dual{b}\neq \dual{k} \\ \dual{\bar{\phi}}_{(k)}(\bplusminus{\beta_{1}(k))}{\dual{\cB_{1}}})-\dual{\bar{\phi}}_{(k)}(\dual{\mu_{\cB_{1}}(k)}) & \text{if}\ \dual{b}=\dual{k} \end{cases}  \\
			& = \begin{cases} \dual{\phi(b)} & \text{if}\ \dual{b}\neq \dual{k} \\
			\dual{\bar{\phi}}(\bplusminus{\beta_{1}(k))}{\dual{\cB_{1}}})-\dual{\mu_{\cB_{2}}(\phi(k))} & \text{if}\ \dual{b}=\dual{k}
			\end{cases}.
	\end{align*}
	On the other hand,
	\begin{align*} 
		(\mu_{Fk}^{\pm}\circ \alpha_{()}^{f})(\dual{b}) & = \begin{cases} \mu_{(\phi(k))}^{\pm}(\dual{\phi(b)}) & \text{if}\ \phi(k)\neq 0 \\ \dual{\phi(b)} & \text{if}\ \phi(k)=0 \end{cases}  \\
		& = \begin{cases} \dual{\phi(b)} & \text{if}\ \phi(k)\neq 0, \dual{\phi(b)}\neq \dual{\phi(k)} \\
			\bplusminus{\beta_{2}(\phi(k))}{\dual{\cB_{2}}}-\dual{\mu_{\cB_{2}}(\phi(k))} & \text{if}\ \phi(k)\neq 0, \dual{\phi(b)}=\dual{\phi(k)} \\ \dual{\phi(b)} & \text{if}\phi(k)=0
		\end{cases}.
	\end{align*}
	By Lemma~\ref{l:phi-beta-compat} and that $f$ is consistently positive, we have that $\alpha^{f}_{(k)}\circ \mu_{k}^{\pm}=\mu_{Fk}^{\pm}\circ \alpha^{f}_{()}$ as required.
	
	Similarly, we have
	\begin{align*} 
		(\chi_{()}^{f}\circ \bar{\mu}_{k}^{\pm})(\mu_{k}(b)) & =\begin{cases} \phi(b)+\evform{\bplusminus{\beta_{1}(k)}{\dual{\cB_{1}}}}{b}\phi(k) & \text{if}\ \mu_{k}(b)\neq \mu_{k}(k) \\ -\phi(k) & \text{if}\ \mu_{k}(b)=\mu_{k}(k) \end{cases}
	\end{align*}
	and
	\begin{align*} 
		(\mu_{Fk}^{\pm}\circ \chi_{(k)}^{f})(\mu_{k}(b)) & = \begin{cases}
			\phi(b)+\evform{\bplusminus{\beta_{2}(k)}{\dual{\cB_{1}}}}{\phi(b)}\phi(k) & \text{if}\ \mu_{k}(b)\neq \mu_{k}(k) \\
			-\phi(k) & \text{if}\ \mu_{k}(b)=\mu_{k}(k), \phi(k)\neq 0 \\
			0=-\phi(k) & \text{if}\ \phi(k)=0
		\end{cases}.
	\end{align*}
	Hence $\chi_{()}^{f}\circ \bar{\mu}_{k}^{\pm}=\mu_{Fk}^{\pm}\circ \chi_{(k)}^{f}$.
	
	That is, $\{ \alpha^{f}_{\uk} \}$ and $\{ \chi^{f}_{uk} \}$ define natural transformations $\alpha^{f}$ and $\chi^{f}$ as required.
	
	It remains to check that $\alpha^{f}\circ \beta_{1}=\beta_{2}\op{\bar{\bF}}\circ \chi^{f}$.  In fact, this follows essentially immediately from the definitions, that $\alpha^{f}$ and $\chi^{f}$ are factorizations and that the $\beta_{i}$ are factorizations, by gluing together the relevant diagrams.
	
	To be more precise, by the remarks after Lemma~\ref{l:phi-beta-compat} defining consistently positive rooted cluster morphisms, we have $\dual{\bar{\phi}}\circ \beta_{1}=\beta_{2}\circ \bar{\phi}$ at the initial cluster.  Then since $\beta_{1}$ and $\beta_{2}$ are factorizations, we see that  $(\beta_{i})_{(k)}=\cA_{i}\mu_{k}^{\pm}\circ (\beta_{i})_{()} \circ \cX_{i}\mu_{k}^{\pm}$.  Then $\alpha^{f}_{(k)}\circ (\beta_{1})_{(k)}=(\beta_{2})_{\op{\bar{\bF}(k)}}\circ \chi^{f}_{(k)}$ follows from $\alpha^{f}_{()}\circ (\beta_{1})_{()}=(\beta_{2})_{\op{\bar{\bF}}()}\circ \chi^{f}_{()}$, and the general case from this, by induction on the length of $\uk$.
	
	Then $\curly{F}$ is a morphism of abstract cluster structures, as claimed.
\end{proof}

\begin{remark}
	We expect that the case of $f$ consistently negative is essentially identical, except that $\curly{F}$ should be a morphism to $\cC(\cE_{2},\cX_{2},-\beta_{2},\cA_{2},\canform{\blank}{\blank}{i})$, i.e.\ with $\beta_{2}$ replaced by its negative.  Then this sign propagates through and via $\bplusminus{-\beta(k)}{}=\bminusplus{\beta(k)}{}$, we will obtain the desired conclusion.
\end{remark}

\begin{remark}
	It is not true, in this level of generality, that any rooted cluster morphism is consistently signed.  Indeed, the claim in Lemma~\ref{l:phi-beta-compat} is local to each $k$ and we have made no such assumption nor connectedness assumptions on our initial data (equivalently, we have not assumed that our abstract cluster structures are indecomposable with respect to the categorical product $\dsum$ of Section~\ref{ss:props-of-ACS-cat} which, as we noted there, models the disjoint union of clusters).  In particular, one would expect that a rooted cluster morphism can be consistently positive or negative on connected components of the exchange quiver but not be consistent over the union. 
	
	An earlier result in this direction is given in \cite{ADS}, when the rooted cluster morphism is bijective, and the claim follows in general by translating the argument of \cite[Lemma~4]{Gratz} to this setting. That is, indeed, rooted cluster morphisms are consistently signed on connected  components. By keeping track of the signs appropriately, one should then be able to remove the hypothesis from Theorem~\ref{t:mor-ACS-from-RCM}.
		
	We suggest that it would be reasonable to investigate other similar properties previously identified for rooted cluster morphisms at the level of morphisms of abstract cluster structures: as the discussion in Section~\ref{s:cat-of-ACS} indicates, the category $\ACScat$ has some advantages over other candidates and, of course, the results then apply in much greater generality to abstract cluster structures coming from geometric, categorical or other types of representation.
\end{remark}

\subsection{Classification via morphisms}\label{ss:classifying-mor}
	
We conclude with a short, sketched worked example of an isomorphism of abstract cluster structures coming from representations of two different types.  We will briefly recall the cluster structure on the coordinate algebra $\curly{O}(\mathrm{Gr}(2,6))$ and write down its associated abstract cluster structure, following the recipe of Section~\ref{ss:AQCS-from-QCA}.  We will then show that this is isomorphic to the abstract cluster structure obtained from the surface model of an unpunctured disk with six marked points (i.e.\ the usual hexagon model), as seen in Section~\ref{ss:triangulations}.
	
As described in \cite{Scott-Grassmannians} (see also \cite{Gr2nSchubertQCA}), the homogeneous coordinate ring $\curly{O}(\mathrm{Gr}(2,6))$ has a cluster algebra structure.  The clusters are collections of Pl\"{u}cker coordinates $\Delta_{ij}$ labelled by pairs $ij$ with $i,j\in \{1,\dotsc ,6\}$; indeed, they are exactly the collections of Pl\"{u}cker coordinates such that their indexing pairs form families of maximally weakly separated sets in the sense of \cite{Leclerc-Zelevinsky}. The Pl\"{u}cker coordinates $\Delta_{i,i+1}$ ($i=1,\dotsc ,6$) are frozen variables and in every cluster.

Furthermore, one can show that there are exchange matrices with skew-symmetric principal part for these clusters that precisely encode three-term Pl\"{u}cker relations and that are related by Fomin--Zelevinsky matrix mutation for adjacent clusters.  It follows that the set of all cluster variables is the set of Pl\"{u}cker coordinates, so that $\curly{O}(\mathrm{Gr}(2,6))$ is a cluster algebra of finite type; in fact, type $A_{3}$.

In what follows, let $c$ denote a cluster of $\curly{O}(\mathrm{Gr}(2,6))$ and write $\curly{P}(c)$ for the set of pairs $ij$ such that $\Delta_{ij}\in c$.  Let $\overline{\curly{P}}(c)$ be the set of pairs $ij$ in $\curly{P}(c)$ such that $|i-j|>1$, i.e.\ such that the associated Pl\"{u}cker coordinates are mutable and not frozen variables.

By the application of Theorem~\ref{t:AQCS-from-QCA}, we obtain an abstract cluster structure $\cC=\curly{G}(2,6)=(\cE,\cX,\beta,\cA,\canform{\blank}{\blank}{})$.  Here, $\cA c=\integ[\curly{P}(c)]$ and $\cX c=\integ[\overline{\curly{P}}(c)]$ and $\beta$ is the $\integ$-linear map such that the associated form $\canform{\blank}{\blank}{\cX}$ has Gram matrix the exchange matrix of $c$.

As is well known, the Grassmannian cluster structure above has a surface model via triangulations of the (unpunctured) hexagon (i.e.\ disk with six marked points).  From the correspondence of the indexing pairs $ij$ with arcs and by examining the construction in Section~\ref{ss:triangulations}, we see that there are bijections between (i) the clusters of $c$ and triangulations of the hexagon, (ii) the sets $\curly{P}(c)$ and $\curly{A}(c)$ (the latter being the arcs) and (iii) the sets $\overline{\curly{P}}(c)$ and $\curly{Q}(c)$ (the latter being the quadrilaterals having pairs $ij\in \overline{\curly{P}}$ as unique non-boundary arc).

One may check---for it is true but not automatic---that the rule \eqref{eq:beta-for-surfaces} for $\beta_{c}$ does indeed coincide with the three-term Pl\"{u}cker relation; an example of this is given in \cite[\S3.1]{Gr2nSchubertQCA} (for $n=8$ rather than $6$, but the computation is local to the four indices involved in the Ptolemy relation).  In the three-term Pl\"{u}cker relation for a quadrilateral whose vertices are sequentially numbered from $1$ to $4$,
\[ \Delta_{13}\Delta_{24}=\Delta_{12}\Delta_{34}+\Delta_{14}\Delta_{23} \]
the left-hand side corresponds to the product of the variables associated to the two diagonals of the quadrilateral and the right-hand side to the two products of variables associated to opposite pairs of sides.

Then, as noted in Section~\ref{ss:triangulations}, the formul\ae\ for quadrilateral and arc mutation, $\mu_{q}^{\pm}(p)$ and $\mu_{q}^{\pm}(a)$, are exactly those occurring in Section~\ref{ss:toric-frames}.

Therefore, using the underlying bijection of the indexing sets for all the data involved, we conclude that there is a morphism $\curly{F}$ of abstract cluster structures from $\curly{G}(2,6)$ to the abstract cluster structure obtained from the hexagon surface model, where the components are an isomorphism $F$ of the path categories coming from the identification of the exchange trees and the natural transformations $\chi$ and $\alpha$ are the natural isomorphisms induced by the bijections of the sets $\overline{\curly{P}}(c)$ (respectively $\curly{P}(c)$) and $\curly{Q}(c)$ (respectively $\curly{A}(c)$).  As all maps involved are essentially ``the identity'', the required equations hold.

That is, $\curly{F}$ is an isomorphism of abstract cluster structures from $\curly{G}(2,6)$ to the abstract cluster structure obtained from the hexagon surface model, as claimed.

We finish by recognising that this is not a surprising result: indeed, were it not as claimed, one ought to challenge our definitions.  The much more interesting question, ripe for exploration, is whether or not there exist ``non-trivial'' isomorphisms among abstract cluster structures and what these might be.  Such a question of course requires a framework in which it can be posed, and we hope that this work establishes such a framework.
	
	\bibliographystyle{halpha}
	\bibliography{biblio}\label{references}

\begin{thebibliography}{CGSS24}

\bibitem[ADS14]{ADS}
Ibrahim Assem, Gr{\'e}goire Dupont, and Ralf Schiffler.
\newblock On a category of cluster algebras.
\newblock {\em J. Pure Appl. Algebra}, 218(3):553--582, 2014.

\bibitem[BZ05]{BZ-QCA}
Arkady Berenstein and Andrei Zelevinsky.
\newblock Quantum cluster algebras.
\newblock {\em Adv. Math.}, 195(2):405--455, 2005.

\bibitem[CGSS24]{CGSS}
Marco Castronovo, Mikhail Gorsky, José Simental, and David~E Speyer.
\newblock Cluster deep loci and mirror symmetry.
\newblock 2024, arXiv:2402.16970.

\bibitem[FG09]{FockGoncharov}
Vladimir~V. Fock and Alexander~B. Goncharov.
\newblock Cluster ensembles, quantization and the dilogarithm.
\newblock {\em Ann. Sci. \'Ec. Norm. Sup\'er. (4)}, 42(6):865--930, 2009,
  arXiv:math/0311245.

\bibitem[FST08]{FST}
Sergey Fomin, Michael Shapiro, and Dylan Thurston.
\newblock Cluster algebras and triangulated surfaces. {I}: {Cluster} complexes.
\newblock {\em Acta Math.}, 201(1):83--146, 2008.

\bibitem[FZ02]{FZ-CA1}
Sergey Fomin and Andrei Zelevinsky.
\newblock Cluster algebras. {I}. {F}oundations.
\newblock {\em J. Amer. Math. Soc.}, 15(2):497--529 (electronic), 2002.

\bibitem[FZ03]{FZ-CA2}
Sergey Fomin and Andrei Zelevinsky.
\newblock Cluster algebras. {II}. {F}inite type classification.
\newblock {\em Invent. Math.}, 154(1):63--121, 2003.

\bibitem[GL11]{Gr2nSchubertQCA}
Jan~E. Grabowski and St\'{e}phane Launois.
\newblock Quantum cluster algebra structures on quantum {G}rassmannians and
  their quantum {S}chubert cells: the finite-type cases.
\newblock {\em Int. Math. Res. Not.}, 2011(10):2230--2262, 2011, 0912.4397.

\bibitem[GP24]{CSFRT}
Jan~E. Grabowski and Matthew Pressland.
\newblock Cluster structures via representation theory: cluster ensembles,
  tropical duality, cluster characters and quantisation.
\newblock 2024, arXiv:2411.11633.

\bibitem[Gra15]{Gratz}
Sira Gratz.
\newblock Cluster algebras of infinite rank as colimits.
\newblock {\em Math. Z.}, 281(3-4):1137--1169, 2015.

\bibitem[GSV10]{GSV-Book}
Michael Gekhtman, Michael Shapiro, and Alek Vainshtein.
\newblock {\em Cluster algebras and {P}oisson geometry}, volume 167 of {\em
  Mathematical Surveys and Monographs}.
\newblock American Mathematical Society, Providence, RI, 2010.

\bibitem[GY]{GoodearlYakimovQCA}
Ken~R. Goodearl and Milen Yakimov.
\newblock Quantum cluster algebra structures on quantum nilpotent algebras.
\newblock arXiv:1309.7869, 1309.7869.

\bibitem[Isb66]{Isbell}
John~R. Isbell.
\newblock Epimorphisms and dominions.
\newblock In {\em Proc. {C}onf. {C}ategorical {A}lgebra ({L}a {J}olla,
  {C}alif., 1965)}, pages 232--246. Springer-Verlag New York, Inc., New York,
  1966.

\bibitem[LZ98]{Leclerc-Zelevinsky}
Bernard Leclerc and Andrei Zelevinsky.
\newblock Quasicommuting families of quantum {P}l\"ucker coordinates.
\newblock In {\em Kirillov's seminar on representation theory}, volume 181 of
  {\em Amer. Math. Soc. Transl. Ser. 2}, pages 85--108. Amer. Math. Soc.,
  Providence, RI, 1998.

\bibitem[MLM94]{MacLaneMoerdijk}
Saunders Mac~Lane and Ieke Moerdijk.
\newblock {\em Sheaves in geometry and logic}.
\newblock Universitext. Springer-Verlag, New York, 1994.
\newblock A first introduction to topos theory, Corrected reprint of the 1992
  edition.

\bibitem[Sco06]{Scott-Grassmannians}
Jeanne~S. Scott.
\newblock Grassmannians and cluster algebras.
\newblock {\em Proc. London Math. Soc. (3)}, 92(2):345--380, 2006.

\bibitem[Spe50]{Specker}
Ernst Specker.
\newblock Additive {Gruppen} von {Folgen} ganzer {Zahlen}.
\newblock {\em Port. Math.}, 9:131--140, 1950.

\end{thebibliography}
	
\end{document}